\tikzset{double line with arrow/.style args={#1,#2}{decorate,decoration={markings,%
mark=at position 0 with {\coordinate (ta-base-1) at (0,1pt);
\coordinate (ta-base-2) at (0,-1pt);},
mark=at position 1 with {\draw[#1] (ta-base-1) -- (0,1pt);
\draw[#2] (ta-base-2) -- (0,-1pt);
}}}}
\tikzset{Equal/.style={-,double line with arrow={-,-}}}
\numberwithin{itemcounter}{subsection}
\newcommand{\nolisttopbreak}{\vspace{\topsep}\nobreak\@afterheading}
\newcommand{\rellab}[2]{%
   \protected@write \@auxout {}{\string \newlabel {#1}{{#2}{\thepage}{#2}{#1}{}} }%
   \hypertarget{#1}{#2}
}
\def\C{\mathbb{C}}
\def\Hb{\mathbf{H}}
\def\Gm{\mathbb{G}_m}
\def\N{\mathbb{N}}
\def\Z{\mathbb{Z}}
\def\Q{\mathbb{Q}}
\def\Coh{\mathcal{C}oh}
\def\oS{{\overline{S}}}
\def\RCoh{\mathfrak{Coh}}
\def\RHilb{\mathfrak{Hilb}}
\def\Hom{\operatorname{Hom}\nolimits}
\def\RHom{\operatorname{RHom}\nolimits}
\def\Ext{\operatorname{Ext}\nolimits}
\def\Hbar{\overline{H}}
\def\r{\mathbf{r}}
\def\ch{\mathrm{ch}}
\def\red{\mathrm{red}}
\def\supp{\mathrm{supp}}
\def\Hilb{\mathfrak{Hilb}}
\def\Hilbrm{\mathrm{Hilb}}
\def\Higgs{\mathfrak{Higgs}}
\def\geq{{\geqslant}}
\def\leq{{\leqslant}}
\def\Gr{\operatorname{Gr}\nolimits}
\def\Sym{\operatorname{Sym}\nolimits}
\def\Spec{\operatorname{Spec}\nolimits}
\def\Exp{\operatorname{Exp}\nolimits}
\def\End{\operatorname{End}\nolimits}
\def\Ad{\operatorname{Ad}\nolimits}
\def\dim{\operatorname{dim}\nolimits}
\def\codim{\operatorname{codim}\nolimits}
\def\eu{\mathrm{eu}}
\def\ev{\operatorname{ev}\nolimits}
\def\rk{\operatorname{rk}\nolimits}
\def\Ec{\mathcal{E}}
\def\tilD{\widetilde{D}}
\def\c{\mathbf{c}}
\def\op{\mathrm{op}}
\def\taut{\mathrm{taut}}
\def\vtaut{\mathrm{vtaut}}
\def\Pic{\mathfrak{Pic}}
\def\pt{[\mathrm{pt}]}
\def\Heis{\mathfrak{h}}
\def\Vir{\mathrm{Vir}}
\def\len{\mathrm{lg}}
\def\Hpat{\mathfrak{H}}
\def\oHpat{\overline{\mathfrak{H}}}
\def\Tot{\mathrm{Tot}}
\def\Id{\mathrm{Id}}
\def\Td{\mathrm{Td}}
\def\Wc{W_\downarrow}
\def\Wn{W_\uparrow}
\def\wnn{W_{\uparrow\mathrel{\mspace{-2mu}}\uparrow}}
\def\wcn{W_{\downarrow\mathrel{\mspace{-2mu}}\uparrow}}
\def\wnc{W_{\uparrow\mathrel{\mspace{-2mu}}\downarrow}}
\def\wcc{W_{\downarrow\mathrel{\mspace{-2mu}}\downarrow}}
\def\Wnn#1{W_{\uparrow\mathrel{\mspace{-2mu}}\uparrow}^{(#1)}}
\def\Wnc#1{W_{\uparrow\mathrel{\mspace{-2mu}}\downarrow}^{(#1)}}
\def\Wcn#1{W_{\downarrow\mathrel{\mspace{-2mu}}\uparrow}^{(#1)}}
\def\Wcc#1{W_{\downarrow\mathrel{\mspace{-2mu}}\downarrow}^{(#1)}}
\def\DW#1{W^{(#1)}}
\def\cI{\mathcal{I}}
\def\cJ{\mathcal{J}}
\def\uch{\underline{\mathrm{ch}}}
\def\Span{\mathrm{Span}}
\def\Mukai{\mathfrak{Mukai}}
\theoremstyle{plain}
\newtheorem*{corollary*}{Corollary}
\newtheorem{theorem}{Theorem}[section]
\newtheorem{lemma}[theorem]{Lemma}
\newtheorem{lemma-definition}[theorem]{Lemma-Definition}
\newtheorem{definition-lemma}[theorem]{Definition-Lemma}
\newtheorem{proposition}[theorem]{Proposition}
\newtheorem{conjecture}[theorem]{Conjecture}
\newtheorem{corollary}[theorem]{Corollary}
\newcounter{thmintros}
\newtheorem{thmintro}[thmintros]{Theorem}
\newenvironment{thmintrop}[1]{
  
  \thmintroalt
}{\endthmintroalt}
\theoremstyle{definition}
\newtheorem{definition}[theorem]{Definition}
\theoremstyle{remark}
\newtheorem{remark}[theorem]{Remark}
\newtheorem{example}[theorem]{Example}
\numberwithin{equation}{section}
\author[A.~Mellit, A.~Minets, O.~Schiffmann, E.~Vasserot]{Anton Mellit, Alexandre Minets, Olivier Schiffmann, Eric Vasserot}
\address[Anton Mellit]{Faculty of Mathematics, University of Vienna, Oskar-Morgenstern-Platz 1, 1020 Vienna, Austria}
\email{anton.mellit@univie.ac.at}
\address[Alexandre Minets]{Max Planck Institute for Mathematics, Vivatsgasse 7, 53111 Bonn, Germany \\ and Mathematisches Institut, University of Bonn, Endenicher Allee 60, 53115 Bonn, Germany}
\email{minets@mpim-bonn.mpg.de}
\address[Olivier Schiffmann]{D\'epartement de Math\'ematiques, Universit\'e de Paris-Sud Paris-Saclay, Bat. 307, 91405 Orsay Cedex, France \\ and Simion Stoilow Institute of Mathematics
P.O. Box 1-764, RO-014700 Bucharest, Romania }
\email{olivier.schiffmann@universite-paris-saclay.fr}
\address[Eric Vasserot]{Université Paris Cité, 75013 Paris, France, UMR7586 (CNRS), Institut Universitaire de France (IUF)}
\email{eric.vasserot@imj-prg.fr}
\title[Coherent sheaves, COHAs and deformed $W_{1+\infty}$-algebras]{Coherent sheaves on surfaces, COHAs and \\deformed $W_{1+\infty}$-algebras} 
\begin{document}

\setcounter{section}{-1}

\begin{abstract}We compute the COHA of zero-dimensional sheaves on an arbitrary smooth quasi-projective surface $S$ with pure cohomology, deriving an explicit presentation by generators and relations. When $S$ has trivial canonical bundle, this COHA is isomorphic to the enveloping algebra of deformed trigonometric $W_{1+\infty}$-algebra associated to the ring $H^*(S,\Q)$. We also define a double of this COHA, show that it acts on the homology of various moduli stacks of sheaves on $S$ and explicitly describe this action on the products of tautological classes. 
Examples include Hilbert schemes of points on surfaces, the moduli stack of Higgs bundles on a smooth projective curve and the moduli stack of $1$-dimensional sheaves on a $K3$ surface in an ample class. The double COHA is shown to contain Nakajima's Heisenberg algebra, as well as a copy of the Virasoro algebra.

\end{abstract}

\maketitle

\setcounter{tocdepth}{1}
\tableofcontents

\section{Introduction}

Let $S$ be a smooth quasi-projective complex surface. In the pioneering work \cite{NakLectures}, Nakajima constructed an action of a Heisenberg algebra $\mathfrak{h}_S$ on the direct sum $\mathbf{V}(S)=\bigoplus_{n \geq 0} H^*(\Hilbrm_n(S),\Q)$ of cohomology groups of the Hilbert schemes of points on $S$. Here, $\mathfrak{h}_S$ is modeled on the cohomology ring $H^*(S,\Q)$. What's more, Nakajima identified $\mathbf{V}(S)$ with the Fock space representation of $\mathfrak{h}_S$, thereby providing a very fruitful bridge between the enumerative geometry of $\Hilbrm(S)$ and the representation theory of Heisenberg algebras.
This has led to a flurry of remarkable results on the topology of Hilbert schemes of points on surfaces or of instanton spaces, e.g., \cite{Lehn}, \cite{Lehnsorger}, \cite{Vasserot}, \cite{SVV}, ~\ldots, and has served as model for the theory of quiver varieties. Similar constructions exist also in the $K$-theoretic context, see e.g.~\cite{NegutShuffle}, and may be upgraded to the $T$-equivariant setting in the presence of a torus action on $S$. 

\medskip

Nakajima operators arise from the correspondences
$$\xymatrix{\Hilbrm_n(S) \times S & \Hilbrm_{n,n + k}(S) \ar[r]^-{p} \ar[l]_-{q} & \Hilbrm_{n+k}(S)}$$
and their transposes. Here $\Hilbrm_{n,n+ k}(S)$ is the nested Hilbert scheme parametrizing pairs of subschemes $Z\subset Z'$ of respective lengths $n$, $n+k$.
The scheme $\Hilbrm_{n,n+ k}(S)$ carries the tautological vector bundle $H^0(S, \mathcal{O}_{Z'}/\mathcal{O}_Z)$. Taking cup product with the characteristic classes of these bundles yields additional operators, generating a much larger algebra than $U(\mathfrak{h}_S)$.
For $S=\C^2$ equipped with the natural $(\C^*)^2$-action, this algebra was studied in \cite{SVIHES}, where it was 
identified with the affine Yangian of $\mathfrak{gl}_1$. See \cite{MO} for a different approach\footnote{the $K$-theoretic version was considered in \cite{SVDuke} where it was identified with the elliptic Hall algebra (see also \cite{FeiginTsymbaliuk})}. In loc.cit. the same algebra was shown to act on the cohomology of any of the instanton 
spaces, which are moduli spaces of framed, torsion-free sheaves on $\C^2$. The affine Yangian of $\mathfrak{gl}_1$ 
is a two-parameter deformation of the algebra $W_{1+\infty}$ of differential operators on the circle. Its 
representation theory is strongly related to that of affine $W$-algebras of $\mathfrak{gl}_r$ (for all $r$), in accordance with the Alday-Gaiotto-Tachikawa conjecture, \cite{AGT}.

\medskip

The aim of this paper is to provide a generalization of the above results to the case of an arbitrary smooth quasi-projective surface $S$ which is cohomologically pure (for instance, projective). This provides actions of explicit 
infinite-dimensional algebras that we call \textit{deformed $W_{1+\infty}$-algebras} on the Borel-Moore homology of 
many interesting moduli stacks of coherent sheaves on $S$. Our approach is based on the theory of  cohomological 
Hall algebras, which we now recall.

\medskip

\subsection{Cohomological Hall algebras.} Let $\mathcal{C}$ be a $\C$-linear Abelian category satisfying suitable finiteness conditions, such as in \cite[\S~5.1]{DHSM}, and let $\mathfrak{M}_\mathcal{C}$ denote the derived stack of objects in $\mathcal{C}$. The prime example of interest for us is the category of coherent sheaves on an algebraic surface $S$. Extensions in $\mathcal{C}$ are controled by the Hecke correspondence
\begin{equation}\label{E:Heckegeneral}
\xymatrix{\mathfrak{M}_\mathcal{C} \times \mathfrak{M}_\mathcal{C} & \widetilde{\mathfrak{M}}_\mathcal{C} \ar[r]^-{p}\ar[l]_-{q} & \mathfrak{M}_\mathcal{C}}
\end{equation}
where $\widetilde{\mathfrak{M}}_\mathcal{C}$ is the stack of short exact sequences in $\mathcal{C}$. Here $p$ and $q$ associate to a sequence its middle and extreme terms. The properties of the maps $p,$ $q$ depend heavily on 
the global dimension of $\mathcal{C}$. Crucially, the map $q$ is quasi-smooth when $\mathcal{C}$ is of global 
dimension at most $2$. When in addition $p$ is proper, the composition 
$$p_*q^! :H_*(\mathfrak{M}_{\mathcal{C}},\Q)^{\otimes 2} \to H_*(\mathfrak{M}_{\mathcal{C}},\Q)$$ 
yields a structure of an associative algebra on the Borel-Moore homology $H_*(\mathfrak{M}_{\mathcal{C}},\Q)$. This is the cohomological Hall algebra (COHA) of $\mathcal{C}$ introduced in \cite{KV}; it can also be seen as a dimensionally reduced version of 3d COHA introduced earlier in~\cite{KS}.
Furthermore, any locally closed susbtack $\mathfrak{M}^\circ_\mathcal{C} \subset \mathfrak{M}_\mathcal{C}$ for which \eqref{E:Heckegeneral} restricts to a correspondence
\begin{equation*}
\xymatrix{\mathfrak{M}_\mathcal{C} \times \mathfrak{M}^\circ_\mathcal{C} & \widetilde{\mathfrak{M}}^\circ_\mathcal{C} \ar[r]^-{p}\ar[l]_-{q} & \mathfrak{M}^\circ_\mathcal{C}}
\end{equation*}
gives rise to a $H_*(\mathfrak{M}_\mathcal{C},\Q)$-module structure on $H_*(\mathfrak{M}^\circ_\mathcal{C},\Q)$.
Such substacks are called Hecke patterns in \cite{KV}. In other words, the same algebra 
$H_*(\mathfrak{M}_\mathcal{C},\Q)$ acts simultaneously on the homology of \textit{all} Hecke patterns. 
Hecke patterns may for instance be constructed using stability conditions and/or framings. 
This construction appears in \cite{SVDuke} in the $K$-theoretic setting and in \cite{SVIHES} 
in relation to quiver varieties and instanton spaces, where it gives rise to Yangians of Kac-Moody algebras. 
COHAs were further defined and studied in more general contexts, see, e.g., \cite{KS}, \cite{Yang-Zhao}, \cite{Minets}, \cite{SalaSchiffmann}, \cite{DavisonCY2}, \cite{KV}, \cite{Zhao}, \cite{Porta-Sala},\,\ldots 

\medskip

The main object of study of this paper is the COHA of the category of zero-dimensional coherent sheaves on a smooth surface $S$. Such COHAs were previously considered in \cite{Minets}, \cite{KV} and, in the $K$-theoretical context, in \cite{Zhao} and \cite{NegutShuffle}, where quadratic relations of Ding-Iohara type between degree one generators were found and actions on smooth moduli spaces were constructed. Here, we focus on the Borel-Moore homology COHA and fully determine this COHA under the assumption that $S$ has a pure cohomology. Our method does not presuppose that $S$ is toric, or rely on localisation techniques.
As far as we are aware, the only case in which such a COHA was fully determined before was $S=\mathbb{A}^2$ with a torus action, see \cite{Davison22}, \cite{SVIHES}.

\medskip

\subsection{Deformed $W_{1+\infty}$-algebras associated to a surface.} In \S\ref{sec:Fock-ops} and \S\ref{sec:W-open-surf}, to which we refer for details, we introduce and begin the study of a family of associative algebras $W(S)$ attached to smooth, pure surfaces $S$. Let us begin by assuming that $S$ is proper. Let $c_1, c_2$ be the Chern classes of $S$ and $s_2=c_1^2-c_2$. The algebra $\DW{\c}(S)$ is generated by collections of elements 
$$\{T^\pm_n(\lambda), \psi_n(\lambda)\,:\, n \geq 0, \lambda \in H^*(S,\Q)\}$$ and a central element $\c$ modulo relations among which the most important ones are
\[ [\psi_m(\lambda), T^\pm_n(\mu)] = \pm m T^\pm_{m+n-1}(\lambda\mu), \]

\[ \begin{split}[T^\pm_{m}&(\lambda), T^\pm_{n+3}(\mu)] 
-3 [T^\pm_{m+1}(\lambda), T^\pm_{n+2}(\mu)] + 3 [T^\pm_{m+2}(\lambda), T^\pm_{n+1}(\mu)]  - [T^\pm_{m+3}(\lambda), T^\pm_{n}(\mu)]\\
 &- [T^\pm_{m}(\lambda), T^\pm_{n+1}(s_2 \mu)] + 
 [T^\pm_{m+1}(\lambda), T^\pm_{n}(s_2 \mu)] \pm \{T^\pm_m, T^\pm_n\}(c_1\Delta_S\lambda \mu)=0,\end{split}\]

\[\sum_{w\in S_3} w \cdot [T^\pm_{m_3}(\lambda_3), [T^\pm_{m_2}(\lambda_2), T^\pm_{m_1+1}(\lambda_1)]] = 0\]
as well as the double relation~\eqref{E:+-relationsdouble}, which expresses the commutators $[T^+_m(\lambda),T^-_{m'}(\mu)]$ as polynomials in $\psi_n$'s.
We denote by $W^\pm(S)$, resp. $W^0(S)$ the subalgebras generated by $\{T^\pm_n(\lambda)\}$ and $\{\psi_n(\lambda),\c\}$ respectively. The algebra $\DW{\c}(S)$ is $\Z \times \N$-graded.
Here $T^\pm_n(\lambda)$ and $\psi_n(\lambda)$ have degrees $(\pm 1, 2n-2+\deg(\lambda))$ and $(0,2n-2+\deg(\lambda))$ respectively.

\medskip

\subsection{Main results} Let us now describe our main results. We refer to the body of the text for details.  

\begin{thmintro}[Theorem~\ref{Prop:defHeis}, Propositions~\ref{prop:big-W-triang}, \ref{prop:heis-in-double}, \ref{prop:Vir-in-double}]\label{thmA} Let $S$ be a smooth and proper surface. 
\begin{enumerate}[label=$\mathrm{(\alph*)}$,leftmargin=8mm]
\item There is a triangular decomposition $\DW{\c}(S) \simeq W^-(S) \otimes W^0(S) \otimes W^+(S)$,
\item The graded character of $W^+(S)$ is given by
\begin{equation*}
    P_{W^+(S)}(z,w)=\Exp\left(\frac{P_S(z)z^{-2}w}{(1-z^2)(1-w)} \right)    
\end{equation*} 
where $P_S(z)$ is the Poincar\'e polynomial of $S$, and $\Exp$ is the plethystic exponential,
\item There are embeddings $U(\mathfrak{h}_S) \subset\DW{\c}(S)$ and 
$U(\Vir_S) \subset \DW{\c}(S)$, where $\mathfrak{h}_S$ and $\Vir_S$ are the Heisenberg and Virasoro algebras 
modeled on $H^*(S,\Q)$. The central charges of $\mathfrak{h}_S$ and $\Vir_S$ as functions of 
$\lambda,\mu\in H^*(S)$ are given by
$$ C_\mathfrak{h}= \c\int_S\lambda\mu, \qquad \eta_{\Vir}=\c\left(\int_S c_2\lambda\mu - (1-\c^2)\int_S c_1^2\lambda\mu+2\psi_0(c_1\lambda\mu)\right).$$
\end{enumerate}
\end{thmintro}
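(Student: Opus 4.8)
The three parts are best attacked in the order (a), (b), (c), since the triangular decomposition underpins both the character count and the two embeddings. For (a), I would first prove that the multiplication map $W^-(S)\otimes W^0(S)\otimes W^+(S)\to\DW{\c}(S)$ is surjective by a straightening argument: the relation $[\psi_m(\lambda),T^\pm_n(\mu)]=\pm m\,T^\pm_{m+n-1}(\lambda\mu)$ lets one move every $\psi$-factor to the middle, and the double relation \eqref{E:+-relationsdouble} lets one move each $T^+$ to the right of each $T^-$ at the cost of a polynomial in the $\psi_n$, so any monomial reduces to normal-ordered form. The real content is injectivity, which I expect to be the hardest point. The clean route is to exhibit a faithful representation --- the Fock-type module of \S\ref{sec:Fock-ops} --- on which $W^+(S)$ acts by raising, $W^-(S)$ by lowering, and $W^0(S)$ diagonally, so that a nonzero normal-ordered tensor cannot act by zero; equivalently one identifies $W^\pm(S)$ with the (opposite) COHA and realizes $\DW{\c}(S)$ as the free product of the two halves amalgamated over $W^0(S)$ modulo \eqref{E:+-relationsdouble}, for which a PBW theorem holds.

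For the character in (b), the crucial input is the identification of $W^+(S)$ with the COHA of zero-dimensional sheaves, so that $P_{W^+(S)}(z,w)$ is the bigraded Poincaré series of $\bigoplus_n H_*(\mathfrak{Coh}_{0,n}(S),\Q)$, with $w$ recording the length (charge) and $z$ the cohomological degree. I would compute this geometrically: the support map $\mathfrak{Coh}_{0,n}(S)\to\Sym^n(S)$ together with the purity hypothesis forces the total series to factor as a plethystic exponential over the points of $S$. The single-point contribution is the Poincaré series of finite-length modules at a smooth surface point (the local $\mathbb{A}^2$ computation), which gives the factor $\tfrac{z^{-2}w}{(1-z^2)(1-w)}$, and globalizing over $S$ multiplies it by the Poincaré polynomial $P_S(z)$. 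Purely algebraically, the presentation yields a PBW spanning set whose series is the claimed $\Exp(\cdots)$, proving the inequality $\leq$, and the geometric (or representation-theoretic) lower bound matches it; the extra factor $\tfrac{1}{1-w}$ beyond the naive generator count records the higher primitive root vectors present in each charge, exactly as for the affine Yangian of $\mathfrak{gl}_1$ when $S=\mathbb{A}^2$.

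For the embeddings in (c) I would write explicit generators and verify the defining relations. For $\mathfrak{h}_S$ one takes charge-$n$ elements $P^+_n(\lambda)\in W^+(S)$ (and their transposes $P^-_n(\lambda)\in W^-(S)$), defined by explicit iterated commutators of the generators; commutativity $[P^+_m(\lambda),P^+_n(\mu)]=0$ of the creation operators is checked inside $W^+(S)$ using the cubic and $S_3$-Serre relations, while the mixed bracket is computed from \eqref{E:+-relationsdouble} and collapses to a scalar multiple of $m\,\delta_{mn}\,\c\!\int_S\lambda\mu$, yielding $C_\mathfrak{h}=\c\!\int_S\lambda\mu$; injectivity is then immediate from (a) and the PBW basis of (b). The Virasoro generators $L_n(\lambda)$ are the spin-$2$ currents, built from quadratic combinations of the $T^\pm$ corrected by $\psi$-terms; one checks $[L_m(\lambda),L_n(\mu)]=(m-n)L_{m+n}(\lambda\mu)+\cdots$ and reads off the anomaly $\eta_{\Vir}$. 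Computing $\eta_{\Vir}$ is, I expect, the single hardest calculation of the theorem, as it is precisely where the deformation data enter: the $s_2=c_1^2-c_2$ terms of the cubic relation produce the $\int_S c_2\lambda\mu$ and $\int_S c_1^2\lambda\mu$ contributions (the coefficient $1-\c^2$ arising from the interplay with the central $\c$ in \eqref{E:+-relationsdouble}), while the anticommutator term $\pm\{T^\pm_m,T^\pm_n\}(c_1\Delta_S\lambda\mu)$ yields the operator correction $2\psi_0(c_1\lambda\mu)$. Careful bookkeeping of these, organized by the $S_3$-symmetry of the Serre relation, gives the stated $\eta_{\Vir}$.
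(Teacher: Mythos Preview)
Your outline for part~(a) is close to what the paper does (which simply cites the argument of \cite[Appendix~A]{Tsymbaliuk}), and your approach to~(c) is in the right spirit: the paper also writes down explicit elements $D_{m,0}(\lambda)$ and $\mathfrak{L}_n(\lambda)$ and checks the relations by direct inductive computation.

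The genuine gap is in~(b). You propose to deduce the character formula from the identification $W^+(S)\simeq \Hb_0(S)$ together with the Kapranov--Vasserot computation of the COHA Poincar\'e series. In the paper's logical structure this is circular: the isomorphism $\Theta_S:W^+(S)\simeq\Hb_0(S)$ (Theorem~\ref{thmB}) is established \emph{using} the character formula of~(b) as one of its two inputs---specifically, the proof in \S\ref{sec:Tmain} bounds the graded dimension of $\Psi^-(\Hb_0(S))$ between that of $W^+(S)$ and of $\Hb_0(S)$, and concludes only because these are known in advance to coincide. So you need an independent, purely algebraic proof of the character.

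The paper's actual argument for~(b) runs as follows. The upper bound comes from the order filtration $F_\bullet$ (Proposition~\ref{thm:W-PBW}), which exhibits $\Gr_\bullet W^+(S)$ as a quotient of $\Sym(\mathfrak{w}_0^+(S))$. The lower bound is the delicate step: one introduces a second, descending filtration $G^\bullet$ on $\DW{1}(S)$ by the total cohomological degree of the labels $\lambda$, observes that the defining relations degenerate in $\Gr^\bullet$ to those of the undeformed algebra $\DW{1}_0(S)=U(\mathfrak{w}_0(S))$, and then shows that the resulting surjection $\DW{1}_0(S)\twoheadrightarrow\Gr^\bullet\DW{1}(S)$ is injective. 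This last injectivity is where the Heisenberg algebra enters, via the key Lemma~\ref{lem:Heis-faithful}: any two-sided ideal of $\DW{\c}_\red(S)$ meeting $U(\mathfrak{h}_S)$ trivially must vanish (proved by a leading-term argument using the operators $\Ad(D_{l,0}(1))$). Since the Heisenberg has nonzero central charge at $\c=1$, its image in $\Gr^\bullet\DW{1}(S)$ is a faithful copy of $U(\mathfrak{h}_S)$, forcing the kernel to be zero. Note that this makes part~(c) for $\mathfrak{h}_S$ logically \emph{prior} to~(b), not a consequence of it.
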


\medskip

When the surface $S$ has trivial canonical bundle, the $W$-algebra turns out to be the enveloping algebra of a Lie algebra. 

\begin{thmintrop}{\ref*{thmA}$'$}[Theorem~\ref{T:W(S)}]\label{thmAp} Let $S$ be a proper surface.
Assume that $c_1=0$ and $s_2=q^2$ for some $q \in H^2(S)$. Let $\mathfrak{w}^+(S)$ 
be the Lie algebra spanned by elements $ z^mD^n\lambda$ with $m\geq 1,n \geq 0, \lambda \in H^*(S,\Q)$, 
subject to the relations
$$ [z^mD^n\lambda\,,\,z^{m'}D^{n'}\mu] = z^{m+m'}\frac{ (D+m'q)^nD^{n'} - D^n(D+mq)^{n'} }{q}\lambda\mu.$$
We have an isomorphism
$$W^+ (S) \simeq U(\mathfrak{w}^+(S)).$$
\end{thmintrop}

\medskip

We also construct a natural representation $\mathbf{F}^{(r)}(S)$ of the algebra
$\DW{r}(S)=\DW{\c}(S)|_{\c=r}$ for any integer $r \geq 0$.
We call it the level $r$ Fock space. As a vector space, we have
$$\mathbf{F}^{(r)}(S)=\Lambda(S)[s,s^{-1}]|_{\r=r}$$
where 
$$\Lambda(S)=\Q[p_n(\lambda),\r\,:\, n \geq 0, \lambda \in H^*(S,\Q)].$$ The space $\Lambda(S)$ may be understood as the ring of universal tautological classes on the stack $\RCoh(S)$ of coherent sheaves on $S$, see \S\ref{sec:taut}. We prove in \S\ref{sec:Fock4} the following

\begin{thmintrop}{\ref*{thmA}$''$}[Corollary~\ref{cor:Fock-is-module}, Remark~\ref{E:vertexoperators}]\label{thmApp} For any $r\in \mathbb{N}$, there is an action of $\DW{r}(S)$ on $\mathbf{F}^{(r)}(S)$ via Fourier modes of the vertex operators
\begin{align*}
\Theta^+(s)&=\exp\Big( \sum_{\substack{\gamma;k\geq 1}} \frac{p_k}{k}(\gamma)\otimes \gamma^{*}s^{-k}\Big)_{[s^{<r}]}
\exp\Big(-\sum_{\substack{\gamma;n \geq 0}}\frac{\partial}{\partial \kappa_{n}(\gamma)} \otimes \gamma s^n\Big)\\
\Theta^-(s)&=\exp\Big( -\sum_{\substack{\gamma;k\geq 1}} \frac{\tau_{c_1}{p}_k}{k}(\gamma)\otimes \gamma^{*}s^{-k}\Big)_{[s^{<-r}]}
\exp\Big(\sum_{\substack{\gamma;n \geq 0}}\frac{\partial}{\partial \kappa_{n}(\gamma)} \otimes \gamma s^n\Big)
\end{align*}
where $\{\gamma\},$ $\{\gamma^*\}$ are dual bases of $H^*(S,\Q)$ and where the elements $\{\kappa_n(\lambda)\}$ are related to the $\{p_k(\lambda)\}$ through relation \eqref{E:pkappa} involving the Todd class of $S$, and where $\tau_{c_1}$ is a certain shift automorphism of $\Lambda(S)$, see \S\ref{Sec:notations}. This representation is faithful for $r>0$, but it is neither irreducible nor highest weight.
\end{thmintrop}

\medskip

In the case of open surfaces $S$, we may replace $H^*(S,\Q)$ by either $H^*(S,\Q)$ or $H^*_c(S,\Q)$, 
the cohomology with compact supports. It turns out that both are important for applications. 
This leads us to define four versions $\wnn^{(\c)}(S),$ $ \wnc^{(\c)}(S),$ $ \wcc^{(\c)}(S)$ and $\wcn^{(\c)}(S)$ of the
deformed $W_{1+\infty}$-algebra, depending on a choice of $H^*$ or $H^*_c$ for each half $W^+(S),$ $ W^-(S)$. 
Assuming that $S$ is pure, we extend Theorems~\ref{thmA}, \ref{thmAp} and~\ref{thmApp} to open 
surfaces, see \S\ref{sec:W-open-surf}. The above results still hold in the presence of a torus $T$ acting on 
$S$. Then we consider all spaces as free modules over $H^*(BT)$. Finally,  the assignment 
$S \mapsto \wnn^{(\c)}(S)$ is functorial with respect to open immersions. 
Similar result holds for the other types of $W$-algebras, see \S\ref{sec:W-open-surf}.

\medskip

Let us now return to COHAs. Let $S$ be smooth and cohomologically pure, and let $\RCoh_{n\delta}$ denote the derived stack of length $n$ coherent sheaves on $S$. The COHAs which we are interested in are
$$\mathbf{H}_0(S)=\bigoplus_{n \geq 0} H_*(\RCoh_{n\delta},\Q)$$
and its compactly supported version 
$$\mathbf{H}^c_0(S)=\bigoplus_{n \geq 0} H_*(\RCoh_{n\delta}/\Sym^n(S),\Q),$$ 
which is defined using hyperbolic Borel-Moore homology with respect to the support map $\RCoh_{n\delta} \to \Sym^n(S)$, see Appendix~\ref{sec:appA} for definitions. Both $\mathbf{H}_0(S)$ and $\mathbf{H}^c_0(S)$ are functorial with respect to open immersions.

\medskip

\begin{thmintro}[\S\ref{sec:Tmain}]\label{thmB} There are algebra isomorphisms $\Theta_S:\mathbf{H}_0(S) \simeq \Wn^+(S)$ and $\Theta^c_S:\mathbf{H}^c_0(S) \simeq \Wc^+(S)$. In particular, the algebra $\mathbf{H}_0(S)$ is spherically generated.
\end{thmintro}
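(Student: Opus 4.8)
The plan is to construct the isomorphism $\Theta_S$ by matching the COHA generators with the $W$-algebra generators $T^+_n(\lambda)$, then verify that the defining relations of $\Wn^+(S)$ hold in $\mathbf{H}_0(S)$, and finally compare graded dimensions to conclude that the induced map is bijective. First I would identify the spherical (degree one) part: $H_*(\RCoh_\delta, \Q) = H_*(\RCoh_{1\cdot\delta},\Q)$ parametrizes length-one sheaves, i.e. skyscrapers, so this homology is naturally a module over $H^*(S,\Q) \otimes H^*(B\Gm)$. The point classes in $S$ tensored with the tautological classes on $B\Gm$ should give elements that I want to send to the generators $T^+_n(\lambda)$, with $n$ recording the power of the tautological first Chern class and $\lambda \in H^*(S,\Q)$ the class pulled back from $S$. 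I would use the explicit presentation of $\Wn^+(S)$ recalled in the introduction (the cubic/$s_2$-relation and the Serre-type relation) as the target.

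The core of the argument is to check that the images of these degree-one classes satisfy exactly the relations defining $\Wn^+(S)$. This is a computation inside the COHA, carried out on the two-step Hecke correspondence governing extensions of length-one sheaves. I would compute the product $T^+_m(\lambda) \cdot T^+_n(\mu)$ via $p_* q^!$ on the stack of short exact sequences $0 \to \mathcal{F} \to \mathcal{G} \to \mathcal{F}' \to 0$ with $\mathcal{F}, \mathcal{F}'$ of length one, reducing to an integral over the relevant (possibly singular or stacky) correspondence. The tautological classes restricted to this correspondence, together with the virtual pullback $q^!$ built from the quasi-smoothness of $q$ (which holds since $\Coh(S)$ has global dimension two), produce the Chern-class and Euler-class factors $s_2 = c_1^2 - c_2$ and $c_1 \Delta_S$ that appear in the relations. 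The key geometric input is an explicit description of the excess/obstruction bundle on the length-two Hecke stack whose Euler class yields precisely the coefficients $s_2\mu$ and $c_1\Delta_S\lambda\mu$; this is where the cubic relation and the Serre relation must fall out of a localization or direct intersection-theoretic computation.

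The main obstacle I expect is establishing \emph{surjectivity} together with the precise matching of graded dimensions, i.e. showing that the degree-one classes generate all of $\mathbf{H}_0(S)$ and that no further relations hold beyond those of $\Wn^+(S)$. For surjectivity (the ``spherically generated'' assertion) I would argue that iterated Hecke correspondences exhaust the homology of $\RCoh_{n\delta}$; concretely, the support stratification of $\RCoh_{n\delta}$ over $\Sym^n(S)$ combined with purity should let me build a spanning set from products of degree-one generators by induction on $n$, using that the generic stratum (distinct support points) is a product of the length-one stacks. For injectivity one compares the graded character of the subalgebra generated by the $T^+_n(\lambda)$ against the known character of $\mathbf{H}_0(S)$. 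Here purity is essential: it guarantees $H_*(\RCoh_{n\delta},\Q)$ is a free module of the expected rank, so that its Poincaré series equals the plethystic exponential $\Exp\bigl(P_S(z)z^{-2}w/((1-z^2)(1-w))\bigr)$ from Theorem~\ref{thmA}(b). Matching this against the character of $\Wn^+(S)$ — which is computed from its triangular/PBW structure in Theorem~\ref{thmAp} — forces the surjection to be an isomorphism. The compactly supported version $\Theta^c_S$ would follow by the same argument with $H^*(S,\Q)$ replaced by $H^*_c(S,\Q)$ throughout, using the hyperbolic Borel-Moore homology relative to the support map and the functoriality under open immersions to reduce, if convenient, from the proper to the open case.
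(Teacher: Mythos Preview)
Your proposal has a genuine gap: you plan to verify the relations of $\Wn^+(S)$ \emph{directly} inside $\Hb_0(S)$ by computing products on the length-two and length-three Hecke correspondences, and separately to prove spherical generation via the support stratification. The paper does neither of these. Instead, it produces a \emph{faithful representation} on which both algebras act and compares them there: the Hilbert scheme $\RHilb(S)$ is a regular two-sided Hecke pattern, so $\Hb_0(S)$ acts on $\mathbf{V}(S)=\bigoplus_n H_*(\RHilb_n,\Q)$ via $\Psi^-$, while $W^-(S)$ acts via $\Phi^-$ through the explicit Fock-space formulas (Proposition~\ref{prop:psi-T-rels}). The defining relations of $W^+(S)$ are checked once and for all \emph{in the Fock space}, not in the COHA. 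One then shows $\Psi^-(\lambda u^n\cap[\Coh_\delta])=\Phi^-(T_n(\lambda))$, so the images of the degree-one generators coincide; faithfulness of $\Phi^-$ (obtained by identifying the Heisenberg subalgebra with Nakajima operators, Proposition~\ref{prop:heis=nak} and Lemma~\ref{lem:Heis-faithful}) gives the chain of inequalities
\[
\dim W^+(S)[n,l]\;\leq\;\dim\Psi^-(\Hb'_0(S))[n,l]\;\leq\;\dim\Hb'_0(S)[n,l]\;\leq\;\dim\Hb_0(S)[n,l],
\]
and equality of the endpoints (Theorems~\ref{T:KV} and~\ref{Prop:defHeis}) forces every inequality to be an equality. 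Spherical generation then \emph{follows} from $\Hb'_0(S)=\Hb_0(S)$; no separate stratification argument is needed. Your direct-computation route for the cubic and Serre relations is not obviously wrong, but you have not indicated how to handle the triple commutator on the length-three correspondence, and this is precisely the step the paper's representation-theoretic detour is designed to avoid.

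Your plan for $\Hb^c_0(S)$ has a more serious problem: ``same argument with $H^*_c$'' cannot work, because $\Wc^+(S)$ is \emph{not} generated by its degree-one piece (the paper remarks this explicitly after its definition). The paper's argument is genuinely different here: one uses the algebra map $\underline{\iota}_!:\Hb^c_0(S)\to\Hb_0(\oS)\simeq W^+(\oS)$ induced by a compactification, lets both act on $\mathbf{V}(\oS)$, and shows that the image of $\Hb^c_0(S)$ contains $\Phi^+_{\oHpat}(\Wc^+(S))$ --- using that $\Wc^+(S)$ is by definition the subalgebra of $W^+(\oS)$ generated by the Heisenberg elements $D_{m,0}(\lambda)$ with $\lambda\in H^*_c(S,\Q)$ together with stability under $\Ad(\psi_l)$, and that the Nakajima operators $\mathfrak{q}_{-l}(\lambda)$ for $\lambda\in H^*_c(S,\Q)$ lie in the image (Proposition~\ref{E:NakOpsasCOHA}). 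A dimension count against Theorem~\ref{T:KVcompact} then forces equality. You would need to replace your generator-and-relations approach by this subalgebra-matching argument.
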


\medskip

These isomorphisms are compatible with open immersions. We may include the ring of universal tautological classes $\Lambda(S)$ by forming semi-direct products
$$\widetilde{\mathbf{H}}_0(S)=\Lambda(S) \ltimes \mathbf{H}_0(S), \qquad \widetilde{\mathbf{H}}^{c}_0(S)=\Lambda(S) \ltimes \mathbf{H}^{c}_0(S).$$
The isomorphisms above extend to $\widetilde{\mathbf{H}}_0(S) \simeq \Wn^\geq(S)$ and $\widetilde{\mathbf{H}}^c_0(S) \simeq \Wc^\geq(S)$. The above results hold mutatis mutandis in the presence of a torus $T$ acting on $S$.

\begin{corollary*} If $c_1\Delta_S=0$ and there exists $q \in H^2(S,\Q)$ such that $s_2=q^2$, then
$\mathbf{H}_0(S) \simeq U(\mathfrak{w}^+(S))$.
\end{corollary*}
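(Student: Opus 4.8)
The plan is to obtain the corollary as an immediate consequence of Theorem~\ref{thmB} together with Theorem~\ref{thmAp}, so that the only real content lies in checking that the hypotheses of the latter are met in the form stated. First I would invoke the canonical isomorphism $\Theta_S\colon \mathbf{H}_0(S)\simeq\Wn^+(S)$ of Theorem~\ref{thmB}; since $\mathbf{H}_0(S)$ is built from the $H^*$-version of the theory, the relevant half is the $\uparrow$-variant $\Wn^+(S)$, not one of the three other types. This reduces the statement to proving $\Wn^+(S)\simeq U(\mathfrak{w}^+(S))$ under the assumptions $c_1\Delta_S=0$ and $s_2=q^2$.

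For this second step I would return to the presentation of $W^+(S)$ by the generators $T^+_n(\lambda)$ and the $T$-$T$ relations recalled in the introduction. The essential observation is that the hypothesis $c_1\Delta_S=0$ annihilates the anticommutator term $\{T^+_m,T^+_n\}(c_1\Delta_S\lambda\mu)$, so that every defining relation among the $T^+_n(\lambda)$ becomes an identity expressing a (nested) commutator as a linear combination of generators. Concretely, the cubic relation collapses to a pure bracket relation, and the $S_3$-symmetrized relation becomes a Serre-type Lie relation; together these present $W^+(S)$ as the enveloping algebra of the Lie algebra $\mathfrak{L}$ spanned by the $T^+_n(\lambda)$.

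It then remains to identify $\mathfrak{L}$ with $\mathfrak{w}^+(S)$. I would set up the change of generators matching $T^+_n(\lambda)$, which sits in the lowest piece of the $\Z$-grading, with the degree-one elements $zD^n\lambda$ of $\mathfrak{w}^+(S)$, extended to higher $z$-degree by iterated brackets. Substituting $s_2=q^2$ is precisely what linearizes the collapsed cubic relation into the prescribed bracket
\[
[z^mD^n\lambda,z^{m'}D^{n'}\mu]=z^{m+m'}\frac{(D+m'q)^nD^{n'}-D^n(D+mq)^{n'}}{q}\lambda\mu,
\]
so that the assignment is a well-defined Lie algebra homomorphism. Surjectivity of the resulting map $U(\mathfrak{w}^+(S))\to W^+(S)$ is clear from spherical generation (Theorem~\ref{thmB}); injectivity I would deduce from the PBW theorem by computing the graded character of $U(\mathfrak{w}^+(S))$ from the dimensions of the graded pieces of $\mathfrak{w}^+(S)$ and comparing it with the character formula of Theorem~\ref{thmA}(b), which forces the two sides to coincide in each graded piece.

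The hard part will be the second step: verifying that the weaker hypothesis $c_1\Delta_S=0$ genuinely suffices in place of the condition $c_1=0$ appearing in Theorem~\ref{thmAp}, i.e.\ that the only role of the canonical class in the $T$-$T$ relations is through the anticommutator term, and that the remaining structure constants depend solely on $q$. A secondary technical point is the open-surface bookkeeping distinguishing $H^*$ from $H^*_c$; here the functoriality of both $\mathbf{H}_0(S)$ and $\Wn^+(S)$ under open immersions should reduce the verification of the relations, and hence the character comparison, to the proper case already treated in Theorem~\ref{thmAp}.
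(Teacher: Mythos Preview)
Your proposal is correct and follows the paper's approach exactly: the corollary is simply the concatenation of Theorem~\ref{thmB} with (the open-surface version of) Theorem~\ref{thmAp}, and the only point requiring comment is the relaxation from $c_1=0$ to $c_1\Delta_S=0$. The paper makes precisely this observation in the proof of Corollary~\ref{cor:nondef-bbW}: since $c_1$ enters the $T$--$T$ relations only through the anticommutator term $\{T_m,T_n\}(c_1\Delta_S\lambda\mu)$, the vanishing of $c_1\Delta_S$ in $H^*(S)\otimes H^*(S)$ is all that is needed, and your identification of this as the crux is on the mark. One minor remark: surjectivity of $U(\mathfrak{w}^+(S))\to \Wn^+(S)$ does not require spherical generation of $\mathbf{H}_0(S)$---it is immediate because $\Wn^+(S)$ is by definition generated by the $T_n(\lambda)$.
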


This corollary is in accordance with the general philosophy of Donaldson-Thomas theory for $2$ Calabi-Yau categories.
In particular, $\mathfrak{w}^+(S)$ is the Lie algebra constructed in \cite{DHSM}. As a vector space, it is isomorphic to $\mathfrak{g}^{\text{\tiny{$\mathrm{BPS}$}}}[u]$, where $\mathfrak{g}^{\text{\tiny{$\mathrm{BPS}$}}}$ is the BPS Lie algebra of $\Coh_0(S)$ determined in \textit{loc. cit}.

\medskip

Our proof of Theorem~\ref{thmB} involves the construction and comparison of suitable faithful representations of both $W^\geq(S)$ and $\mathbf{H}_0^c(S)$.
Following~\cite{KV} we introduce several notions of Hecke patterns in \S\ref{sec:Hecke-patterns}. 
We deduce from the general formalism of COHAs that a left/right $S$-strong, resp. $S$-weak, Hecke pattern $X$ gives rise to a left/right action of $\mathbf{H}_0(S)$, resp. $\mathbf{H}^c_0(S)$, on the space
$$\mathbf{V}(X)=\bigoplus_\alpha H_*(X_\alpha,\Q).$$
Denote by $\ev:\Lambda(S)\to H^*(X,\Q)$ the evaluation map.
For any class $\alpha \in \bigoplus_i H^{2i}(\oS,\Q)$, let $[X_\alpha]$ and $[X_\alpha^{cl}]$ be the virtual and classical fundamental class of $X_\alpha$. We denote by
$$\mathbf{V}^{\vtaut}(X)=\bigoplus_\alpha\ev(\Lambda(S)) \cap [X_\alpha], \qquad \mathbf{V}^{\taut}(X)=\bigoplus_\alpha\ev(\Lambda(S)) \cap [X_\alpha^{cl}],$$
the subspace of virtual and classical tautological classes in $\mathbf{V}(X)$.
Abusing notation, we denote the induced maps from the Fock space $\mathbf{F}^{(r)}$ to $\mathbf{V}^{\vtaut}(X)$, $\mathbf{V}^{\taut}(X)$ by $\ev$ as well.

\begin{thmintro}[Proposition~\ref{P:regularHP}, Corollary~\ref{cor:regularHPcompact}]\label{thmC} 
\hfill
\begin{enumerate}[label=$\mathrm{(\alph*)}$,leftmargin=8mm]
\item Let $X$ be a left $S$-strong Hecke pattern of rank $r$. The action $\Psi^+_X$ of $\mathbf{H}_0^+(S)$ on $\mathbf{V}(X)$ preserves the subspace $\mathbf{V}^{\vtaut}(X)$, and there is a commutative diagram
\begin{equation}\label{Eq:heckepatternthem}
\begin{split}
\xymatrix{\Wn^+(S) \otimes \mathbf{F}^{(r)} \ar[d]^-{\Theta_S \otimes \ev} \ar[r]^-{\Phi^+}& \mathbf{F}^{(r)}(S) \ar[d]^-{\ev}\\
\mathbf{H}_0^+(S) \otimes \mathbf{V}^{\vtaut}(X) \ar[r]^-{\Psi_X^+}& \mathbf{V}^{\vtaut}(X)
}
\end{split}
\end{equation}
Similar results hold for $S$-weak Hecke patterns, and for right Hecke patterns.
\item Let $X$ be a two-sided Hecke pattern, so that we have both an action of $\Wn^+(S)$ (or $\Wc^+(S)$) and of $\Wn^-(S)$ (or $\Wc^-(S)$) on $\mathbf{V}(X)$. Then \eqref{Eq:heckepatternthem} extends to an action of $\DW{r}(S)$ on $\mathbf{V}(X)$, fitting in a commutative diagram
\begin{equation*}
\xymatrix{\DW{r}(S) \otimes \mathbf{F}^{(r)} \ar[dr]^-{\Phi_X} \ar[r]^-\Phi& \mathbf{F}^{(r)}(S) \ar[d]^-{\ev}\\
& \mathbf{V}^{\vtaut}(X)
}
\end{equation*}
Here the appropriate version of $\DW{r}(S)$ depends on whether $X$ is (left or right) $S$-strong or $S$-weak. 
\item Assuming that $X$ satisfies the regularity condition~\eqref{E:assumption}, the results of (b-c) remain valid if we replace $\mathbf{V}^{\vtaut}(X)$ with $\mathbf{V}^{\taut}(X)$. 
\end{enumerate}
\end{thmintro}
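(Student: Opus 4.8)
The plan is to reduce the whole statement to part~(a), the commutativity of the square~\eqref{Eq:heckepatternthem}, and then to obtain (b) and (c) by a descent-of-relations argument and a comparison of fundamental classes. For (a), since $\mathbf{H}_0(S)$ is spherically generated by Theorem~\ref{thmB}, it suffices to verify the square on the algebra generators $\Theta_S^{-1}(T^+_n(\lambda))$ and for all $f\in\mathbf{F}^{(r)}$; commutativity on generators then propagates to all of $\Wn^+(S)$ by induction on word length, using associativity of both actions. These generators are realized geometrically by the Hecke correspondence $X\times\RCoh_{n\delta}(S)\xleftarrow{q}\widetilde{X}\xrightarrow{p}X$ attached to the left $S$-strong pattern, and the COHA action is $p_*q^!$. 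I would first check that this preserves $\mathbf{V}^{\vtaut}(X)$: because $\mathcal C$ has global dimension $2$ the map $q$ is quasi-smooth, so $q^!$ sends a virtual tautological class $\ev(P)\cap[X_\alpha]$ to $\ev(\widetilde P)\cap[\widetilde X]$ for a universal tautological class $\widetilde P$, after which proper pushforward along $p$ returns a virtual tautological class by the very definition of $\Lambda(S)$ as the ring of universal tautological classes stable under these operations.

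For the commutativity itself I would compare the explicit expression for $p_*q^!$ in terms of Chern classes of the tautological bundle on $\widetilde X$ with the Fourier modes of the vertex operator $\Theta^+(z)$ of Theorem~\ref{thmApp}. The essential point is that both the Fock-space action $\Phi^+$ and the geometric action $\Psi^+_X$ are specializations of the same universal Hecke correspondence on $\RCoh(S)$; the evaluation map $\ev$, being natural with respect to pullback of tautological classes and compatible with virtual pullback and proper pushforward, therefore intertwines them on generators, which is exactly~\eqref{Eq:heckepatternthem}. The $S$-weak and right-handed variants follow from the identical computation after replacing $H^*(S)$ by $H^*_c(S)$, the support conditions in the definition of a weak Hecke pattern being precisely what renders the relevant pushforwards well-defined.

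For part~(b) I would argue by descent of relations. By (a) and its left/right and strong/weak counterparts, the map $\ev\colon\mathbf{F}^{(r)}\to\mathbf{V}^{\vtaut}(X)$ is surjective and intertwines the actions of the generators $T^+_n(\lambda)$, $T^-_n(\lambda)$ and, through the semidirect factor $\Lambda(S)$, the Cartan generators $\psi_n(\lambda)$. Since $\DW{r}(S)$ already acts on $\mathbf{F}^{(r)}$ by Theorem~\ref{thmApp}, every defining relation of $\DW{r}(S)$ — in particular the double relation~\eqref{E:+-relationsdouble} expressing $[T^+_m,T^-_{m'}]$ through the $\psi_n$ — holds as an identity of operators on $\mathbf{F}^{(r)}$. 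For a relation $R$ and any $\ev(f)\in\mathbf{V}^{\vtaut}(X)$ one then has $R\cdot\ev(f)=\ev(R\cdot f)=0$ because $\ev$ intertwines each generator, hence each word and each polynomial in them; surjectivity of $\ev$ forces $R=0$ on $\mathbf{V}^{\vtaut}(X)$. This yields the action of $\DW{r}(S)$ and the second commutative triangle, with the relevant version of $\DW{r}(S)$ determined by the strong/weak type of $X$.

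Finally, for part~(c) I would compare $[X_\alpha]$ and $[X_\alpha^{cl}]$. Under the regularity condition~\eqref{E:assumption} the obstruction complex of $X_\alpha$ is a genuine tautological bundle, so that $[X_\alpha]=e(\mathrm{Ob})\cap[X_\alpha^{cl}]$ with $e(\mathrm{Ob})$ a tautological Euler class; consequently the two evaluation maps $\mathbf{F}^{(r)}\to\mathbf{V}^{\vtaut}(X)$ and $\mathbf{F}^{(r)}\to\mathbf{V}^{\taut}(X)$ differ only by precomposition with multiplication by $e(\mathrm{Ob})$ on $\Lambda(S)$, which~\eqref{E:assumption} makes injective. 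The $\DW{r}(S)$-action and the diagrams of (b) therefore transport verbatim to $\mathbf{V}^{\taut}(X)$. The step I expect to be the main obstacle is the matching computation at the heart of (a): rewriting $p_*q^!$ on tautological classes as an explicit residue/differential operator and identifying it with the Fourier modes of $\Theta^+(z)$, which requires careful bookkeeping of the Todd-class corrections relating the $p_k(\lambda)$ to the $\kappa_n(\lambda)$ via~\eqref{E:pkappa} and of the excess terms produced by the non-transversality of the Hecke correspondence.
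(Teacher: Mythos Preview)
Your approach to (a) and (b) is essentially the paper's: the Fock space operators $L^\pm_n(\lambda)$ in \S\ref{sec:fockdef} are \emph{defined} by the very formulas \eqref{E:finalformT} that Propositions~\ref{P:negut}, \ref{P:negutdual}, \ref{prop:Negutlemmavirtual} establish for the geometric Hecke operators, so the ``matching computation'' you flag as the main obstacle is in fact tautological once those propositions are in hand. The descent-of-relations argument for (b) is exactly how Proposition~\ref{P:regularHP} glues $\Phi^\pm_X$ into a $\DW{r}$-action.

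Part (c), however, rests on a misreading of the regularity condition. Assumption~\eqref{E:assumption} says that the sections $s^{cl},(s')^{cl}$ arising from local presentations of the \emph{Hecke correspondence} $\widetilde{\Coh}_{\delta;\alpha}$ are regular; it says nothing about the obstruction theory of $X_\alpha$ itself. In the paper's main examples (Hilbert schemes, Higgs bundles) the stacks $X_\alpha$ are already smooth and classical, so $[X_\alpha]=[X_\alpha^{cl}]$ trivially, yet the regularity hypothesis is still nontrivial and has to be checked separately (Propositions~\ref{prop:Hilb-is-regular}, \ref{T:HiggsregularHP}). Your proposed identity $[X_\alpha]=e(\mathrm{Ob})\cap[X_\alpha^{cl}]$ with injective $e(\mathrm{Ob})$-multiplication is neither what \eqref{E:assumption} asserts nor what is needed.

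What regularity actually buys is that the \emph{classical} maps $\kappa^{cl}_{\delta,\alpha}$, $\overline{\kappa}^{cl}_{\alpha,\delta}$ are lci of the expected dimension, so that the derived Gysin pullback applied to $[X_\alpha^{cl}]$ agrees with the classical refined Gysin pullback. This is precisely the step ``$j^*q^!([\Coh_\delta]\otimes[\Coh_\alpha])=[Z(s)]$'' in the proof of Proposition~\ref{P:negut}, which then lets the Segre-class computation go through on the nose for the classical fundamental class. In other words, (c) is obtained not by transporting the action from $\mathbf{V}^{\vtaut}$ to $\mathbf{V}^{\taut}$ via an Euler class, but by rerunning the length-one Hecke computation of (a) directly on $[X_\alpha^{cl}]$, with regularity replacing the automatic compatibility of virtual pullbacks with virtual classes.
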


\medskip

We conjecture that in the case of two-sided Hecke patterns of rank $r$, the action of $\DW{r}(S)$ on $\mathbf{V}^{\vtaut}(X)$ extends to an action on the whole of $\mathbf{V}(X)$. The approach which we take here is restricted to tautological classes. One may hope to apply to this problem the machinery developed by Toda in \cite{Toda}.

\medskip

We provide in \S\ref{sec:Hilbert} and \S\ref{sec:Higgs} some examples of regular two-sided Hecke patterns such as Hilbert schemes of points on $S$ (in which case our results complement those of Lehn~\cite{Lehn}) and stacks of Higgs bundles on a smooth projective curve. The action of the $W$-algebra on the homology of the stack of Higgs bundles is an essential ingredient in the proof of the $P=W$ conjecture which is given in \cite{HMMS}. Other examples include moduli of instantons and moduli stacks of one-dimensional sheaves on $K3$ surfaces, with possible applications to $\chi$-independence problems.

\medskip

The paper is organized as follows.
In \S\ref{sec:cohas} we define various forms of COHAs of sheaves on a smooth surface $S$.
The formulas for the action of length one Hecke correspondences on tautological classes are established in \S\ref{sec:derHecke}. 
We introduce and study deformed $W$-algebras in \S\S\ref{sec:Fock-ops}-\ref{sec:W-open-surf}. 
Theorem~\ref{thmC} is proven in \S\ref{sec:Hecke-patterns}. 
We zoom in on the action of $W$-algebras on Hilbert schemes of points in \S\ref{sec:Hilbert}, which results in a proof of Theorem~\ref{thmB}.
Further examples of Hecke patterns, such as moduli of Higgs bundles, are considered in \S\ref{sec:Higgs}. 
Finally, \S\ref{sec:conjectures} contains some natural conjectures, concerning in particular a possible extension of our results to threefolds.
Although we use the language of derived algebraic geometry, our approach throughout is 'low-tech' as we work with absolute (rather than relative) Borel-Moore homology. We believe that it should be possible to lift our results to the setting of local COHAs in~\cite{DHSM} (i.e. to adequate sheaves on the space $\Sym^\bullet(S)$).

\subsection{Notations}\label{Sec:notations} Throughout the paper, all geometric objects are defined over the base field $\mathbb{C}$.

\smallskip

\noindent
\textit{Stacks.} 
In this paper, a (derived) stack will mean a 1-Artin (derived) stack which is locally a 
quotient stack of finite type. Let $cl: X^{cl}\to X$ be the classical truncation of a derived stack $X$. 
Restriction to $X^{cl}$ will be often indicated by a superscript $(-)^{cl}$.
For instance, for any object $\mathcal{E} \in D(Coh(X))$ we set $\mathcal{E}^{cl}=cl^*\mathcal{E}$. If $\mathcal{E}$ is a perfect complex of finite amplitude on a derived stack $X$ then we define the total space of $\mathcal{E}$ to be $\mathbb{V}(\mathcal{E})=\Spec\Sym(\mathcal{E}^\vee)$. We will make use of a similar notion of projectivization $\mathbb{P}(\mathcal{E})$, studied by Q. Jiang~\cite{QJiang}. 
Note that if $V$ is a finite-dimensional vector space then $\mathbb{P}(V)$ parametrizes hyperplanes of $V$. Unless specifically mentioned, all fiber products and tensor products are derived.

\smallskip

\noindent
\textit{Borel-Moore homology.}
For a stack $X$, there is a well-defined notion of cohomology or Borel-Moore homology with $\Q$-coefficients which we will denote by $H^*(X,\Q)$ and $H_*(X,\Q)$ respectively, see e.g.~\cite{KV}. When the stack $X$ is pure dimensional we usually write $d_X$ for its dimension and 
$[X] \in H_{2d_X}(X, \Q)$ for its fundamental class. 
If $X$ is smooth then there is an isomorphism of vector spaces $H^i(X,\Q) = H_{2d_X-i}(X,\Q)$ such that $c \mapsto c \cap [X]$. 
For a derived stack $X$ there is also a well-behaved notion of cohomology $H^*(X,\Q)$ 
and of Borel-Moore homology $H_*(X,\Q)$, see \cite{Khan}, \cite{Porta-Yu}. 
The push-forward map $cl_*$ yields isomorphisms
$H^*(X,\Q)=H^*(X^{cl},\Q)$ and $H_*(X,\Q)=H_*(X^{cl},\Q)$; we will often identify the two spaces 
without mention. Note, however that some operators on cohomology or Borel-Moore homology do depend on the derived structure. We collect some results of 
that theory in Appendix~\ref{sec:appA}.

\smallskip

\noindent
\textit{Algebras.} The degree of an homogeneous element $a$ of a graded vector space will be 
denoted by $|a|$. When considering superalgebras, we apply the 
rule of sign for the multiplication of tensor products. In particular
 $[-,-]$ is the super-commutator  and $\{-,-\}$ the anti-super-commutator 
\begin {align}\label{supcom}
[a,b]=ab-(-1)^{|a|\cdot|b|}ba
,\quad
\{a,b\}=ab+(-1)^{|a|\cdot|b|}ba.
\end{align}
For each element $a$ we'll abbreviate $$\Ad_a=[a,-].$$

\smallskip

\noindent
\textit{Symmetric functions.} Let $\Lambda$ be the Macdonald ring of symmetric functions, which is given by
$$\Lambda=\Sym(t\Q[t])=\Q[e_1,e_2,\ldots].$$
We will use standard notations for the elements in $\Lambda$, as in~\cite{Macdonald}, and will sometimes denote the unit of $\Lambda$ by $e_0$ or $h_0$. It is convenient to add a formal element $p_0$ of degree $0$.
Let 
\begin{align}\label{Lp}\Lambda'=\Lambda \otimes \Q[p_0]\end{align}
denote the resulting algebra. The specialization maps $\pi_N:\Lambda \to \Q[x_1, x_2, \ldots, x_N]^{\mathfrak{S}_N}$ extend to $\Lambda'$ by setting $\pi_N(p_0)=N$.
We will occasionally use the following shift operation: for $c$ a formal (even) variable, there is an algebra map
\begin{align}\label{eq:shift-map}
    \tau_c : \Lambda' \to \Lambda'[c], \qquad \tau_c F(x_1,x_2,\ldots)=F(x_1+c,x_2+c,\ldots).
\end{align}
For instance, $$\tau_c(p_k)=\sum_{i=0}^k \binom{k}{i} p_i c^{k-i}, \qquad \tau_c(e_k)=\sum_{i = 0}^{k} \binom{p_0-i}{k-i}e_i c^{k-i}$$
for any $k \geq 0$.
For $\mathcal{E}$ a coherent sheaf on a stack $X$ and $f=F(e_1,e_2, \ldots) \in \Lambda$ we define 
\begin{align}\label{f(E)}
f(\mathcal{E})=F(c_1(\mathcal{E}),c_2(\mathcal{E}),\ldots) \in H^*(X,\Q).
\end{align}
We extend this to elements $f\in \Lambda'$ by setting $p_0(\mathcal{E})=\rk(\mathcal{E})$.

\medskip

\section{COHA of zero-dimensional sheaves on a surface}\label{sec:cohas} 

\subsection{The stack of coherent sheaves on a surface $S$}\label{sec:surf0}
Let $S$ be a smooth connected quasi-projective surface. Unless mentioned otherwise, we will make the following assumption:
$$ \framebox[1.1\width]{\text{The\ surface\ $S$\ has\ pure\ cohomology}.}$$
Let $t_1,t_2$ be the Chern roots of $S$. The Chern classes of $S$ are
$c_1=t_1+t_2$, $c_2=t_1t_2$. The Todd class is 
\begin{equation}\label{E:ToddclassS}
\Td_S=t_1t_2/(1-e^{-t_1})(1-e^{-t_2}).
\end{equation}
We may use its graded version
\[
    \Td_S(z) = z^2t_1t_2/(1-e^{-t_1z})(1-e^{-t_2z}) = \sum_{k\geq 0} \Td_S^{(k)} z^k.
\]
Set $s_2=t_1^2+t_1t_2+t_2^2$.
We will also consider the cohomology with compact support $H^*_c(S,\Q)$. 
Recall that there is an algebra morphism $H^*_c(S,\Q) \to H^*(S,\Q)$ and cup product maps $H^i_c(S,\Q) \otimes H^j(S,\Q) \to H^{i+j}_c(S,\Q)$.
We set $K^c_0({S})_\Q = \bigoplus_i H^{2i}_c(S,\Q)$ and denote by 
$$\langle \alpha,\beta \rangle=\int_S \alpha^\vee \cup \beta \cup \Td_S$$
the Mukai pairing on $K^c_0({S})_\Q$, where if $\alpha=\sum_k \alpha_k$ with $\alpha_k \in H^{2k}_c(S,\Q)$ then $\alpha^\vee=\sum_k (-1)^k \alpha_k$.

Taking the generic rank of a coherent sheaf yields a linear map $\rk : K^c_0({S})_\Q \to \Q$. The class of the structure sheaf of a point will be denoted by $\delta$. Note that for any $\alpha$,
\begin{equation}\label{E:eulerdeltaO}
\langle \alpha,\delta \rangle=\langle \delta, \alpha \rangle=\rk(\alpha).
\end{equation}

\smallskip

Let us pick some $\alpha \in K^c_0({S})_{\mathbb{Q}}$. Consider the derived stack $\RCoh_{\alpha}(S)$ parametrizing coherent sheaves $\mathcal{E} \in Coh(S)$ with proper support and Chern character $\alpha$, 
see e.g.~\cite{Porta-Sala}. Its underlying classical stack will be denoted $\Coh_\alpha(S)$. Note that the stack
$\Coh_\alpha(S)$ parametrizes coherent sheaves on $S$ with proper support and Chern character $\alpha$, while $Coh(S)$ parametrizes all coherent sheaves on $S$. When the surface $S$ is understood, we may abbreviate 
$\RCoh_{\alpha}=\RCoh_{\alpha}(S)$ and likewise for $\Coh_\alpha$. 
In addition, when $\alpha=r [\mathcal{O}_{{S}}] + \alpha'$, where $\alpha' \in \bigoplus_{i >0} H_c^{2i}(S,\Q)$, we may write $\RCoh_{r,\alpha'}$ instead of 
$\RCoh_{\alpha}$. Note that $\RCoh_{\alpha}$ is empty for $\rk(\alpha) \neq 0$ if $S$ is not complete. 
The stack $\Coh_{\alpha}$ is singular in general, but $\RCoh_{\alpha}$ is quasi-smooth and of virtual dimension $d_{\alpha}=-\langle \alpha, \alpha \rangle$. Unless $\alpha \in \N\delta$, the stack $\RCoh_{\alpha}$ is of infinite type. However, it may always be covered by open global quotient stacks which are of finite type. 
We say that a coherent sheaf $\mathcal{E}$ on $S$ is \textit{of dimension $\geq d$} if it contains no subsheaf 
with support of dimension strictly less than $d$. 
Let $\RCoh_{\alpha}^{\geq d}$ be the stack parametrizing dimension 
$\geq d$ sheaves in $\RCoh_{\alpha}$. It is open in $\RCoh_{\alpha}$. We denote by $\mathcal{E}_\alpha \in Coh(\RCoh_\alpha \times S)$ the tautological sheaf. Its restriction to $\RCoh^{\geq d} \times S$ will be denoted $\mathcal{E}_\alpha^{\geq d}$.
As all sheaves in $\RCoh_{\alpha}$ are properly supported, we can extend $\mathcal{E}_\alpha$ by zero and consider it as a sheaf on $\RCoh_\alpha \times \overline{S}$.
An analogous remark holds for $\mathcal{E}_\alpha^{\geq d}$.

\medskip

\subsection{The stack of zero-dimensional sheaves}\label{sec:stack0}
For any $d \in \N$, the stack $\RCoh_{d\delta}$ is the derived moduli stack of (zero-dimensional) sheaves on ${S}$ of length $d$. Its underlying classical stack $\Coh_{d\delta}$ is irreducible of dimension $d$ while $\RCoh_{d\delta}$ is of virtual dimension $-\langle d\delta,d\delta \rangle=0$. Let us set $\RCoh^0=\bigsqcup_d \RCoh_{d\delta}$
and
$\Coh^0=\RCoh^{0,cl}$.

\medskip

\noindent
\begin{example}
\leavevmode\nolisttopbreak
\begin{enumerate}[label=$\mathrm{(\alph*)}$,leftmargin=8mm]
\item
 If $S=\mathbb{A}^2$, then $\Coh_{d\delta}$ is the commuting stack $\mathcal{C}_{\mathfrak{gl}_d}= \{(x,y) \in \mathfrak{gl}_d^2\;:\; [x,y]=0\}/GL_d$. 
\item 
When $S=\Tot(\mathcal{L})$ is the total space of a line bundle $\mathcal{L}$ over a smooth curve $C$, $\Coh_{d\delta}$ is the classical stack of $\mathcal{L}$-twisted Higgs sheaves of length $d$, i.e. it parametrizes pairs $(\mathcal{F},\theta)$ with $\mathcal{F}$ a length $d$ torsion sheaf on $C$ and $\theta \in \Hom(\mathcal{F},\mathcal{F}\otimes \mathcal{L})$.
\end{enumerate}
\end{example}

\medskip

Let $\overline S$ be a smooth compactification of $S$.
Let $\Delta_S : S \to  S \times \overline{S}$ be the diagonal map and 
$$\Delta=\Delta_{S*}(1)\in H^*(S\times \oS,\Q)$$ be the class of the diagonal.
Let $\rho \in Coh(B\Gm)$ be the linear character. We define
\begin{align}\label{u}
u = c_1(\rho).
\end{align}
Recall that $ \mathcal{E}_{\delta}\in Coh(S \times \overline{S} \times B\Gm)$.
We have 
\begin{align}\label{Edelta}
\begin{split}
    \Coh_{\delta} & \simeq S \times B\Gm, \\
    H^*(\Coh_{\delta},\Q)&= H^*(S,\Q)[u],\\
    \mathcal{E}_{\delta}&=\Delta_{S*}(\mathcal{O}_{S})\boxtimes \rho,\\ 
 \ch(\mathcal{E}_{\delta})&=\Delta\cup e^u\cup \Td_S^{-1}.
 \end{split}
\end{align}
Since $\Coh_{\delta}$ is smooth, there is an isomorphism
 \begin{equation}
 \begin{split}
 H^i(\Coh_{\delta},\Q) = H_{2-i}(\Coh_{\delta},\Q) 
 ,\quad
   c\mapsto  c \cap [\Coh_{\delta}].
\end{split}
\end{equation}

\smallskip

We may assume that the surface $S$ is acted upon by a torus $T$. In this case there is an induced action of $T$ on the stacks $\RCoh_{d\delta}$, and all homology groups acquire module structure over the ring
$$\mathbf{R}_T=H^*(BT,\Q).$$

\medskip

\subsection{The COHA of zero-dimensional sheaves}\label{sec:defCOHA0}
We now introduce, following \cite[\S 4]{KV}, the cohomological Hall algebra of zero-dimensional sheaves on $S$.
See also~\cite{Zhao} for another construction of this COHA, \cite{Minets} for the case of the cotangent bundle of a curve, and~\cite{SVIHES} for the case of $S=\mathbb{A}^2$.
We consider the $\Z^2$-graded vector space
\[
    \mathbf{H}_0(S)= H_*(\Coh^0,\Q),\qquad \mathbf{H}_0(S)[l,n]=H_{n}(\Coh_{l\delta},\Q).
\]
Let us briefly recall the definition of the COHA product. 
Fix $\alpha=a\delta, \beta=b\delta$ and $\gamma=\alpha+\beta$. 
Let $\widetilde{\RCoh}_{\alpha;\beta}$ be the derived stack parametrizing short exact sequences $0 \to \mathcal{T}'\to \mathcal{T}\to \mathcal{T}'' \to 0$ with $\mathcal{T}, \mathcal{T}', \mathcal{T}''$ respectively in $\RCoh_\gamma, \RCoh_\beta$ and $\RCoh_\alpha$. 
There is a convolution diagram
\begin{equation}\label{E:convdiagram}
\xymatrix{\RCoh_\alpha \times \RCoh_\beta & \widetilde{\RCoh}_{\alpha;\beta} \ar[l]_-{q_{\alpha,\beta}} \ar[r]^-{p_{\alpha,\beta}} &\RCoh_\gamma
}
\end{equation}
in which the maps $p_{\alpha,\beta}$ and $q_{\alpha,\beta}$ assign to the 
sequence $0 \to \mathcal{T}' \to \mathcal{T} \to \mathcal{T}'' \to 0$ the object $\mathcal{T}$ 
and the pair of objects $(\mathcal{T}'', \mathcal{T}')$ respectively. 
The classical truncation of that diagram reads
\begin{equation}\label{truncinddiag}
\xymatrix{\Coh_\alpha \times \Coh_\beta & \widetilde{\Coh}_{\alpha;\beta} \ar[l]_-{q_{\alpha,\beta}^{cl}} \ar[r]^-{p_{\alpha,\beta}^{cl}} &\Coh_\gamma.
}
\end{equation}
The map $p_{\alpha,\beta}$ is proper and representable. The map $q_{\alpha,\beta}$ is neither representable nor smooth, but it is quasi-smooth. More precisely, consider the complex 
$$\mathcal{C}_{\alpha,\beta}=\RHom_{S}(\mathcal{E}_\alpha, \mathcal{E}_\beta)[1]= \mathbf{R}p_{12*}\mathbf{R}\mathcal{H}om(p_{13}^*\mathcal{E}_\alpha, p_{23}^*\mathcal{E}_\beta)[1]$$
of perfect amplitude $[-1,1]$.
Here $p_{ij}$ stands for the projection from $\RCoh_\alpha \times \RCoh_\beta \times S$ to the $i$-th and $j$-th components. There is a canonical isomorphism
of derived stacks over $\RCoh_\alpha \times \RCoh_\beta$
$$\mathbb{V}(\mathcal{C}_{\alpha,\beta}) \simeq \widetilde{\RCoh}_{\alpha,\beta}$$
where $\mathbb{V}(\mathcal{C}_\bullet)$ is the derived stack given by the total space of a complex $\mathcal{C}_\bullet$. We may thus define a virtual pullback morphism
$$q_{\alpha,\beta}^!: H_i(\Coh_\alpha \times\Coh_\beta,\Q) \to H_{i-2\langle \alpha,\beta\rangle}(\widetilde{\Coh}_{\alpha;\beta},\Q).$$
It is useful to rephrase this construction in classical terms. Let us fix an explicit representative of the complex
$\mathcal{C}_{\alpha,\beta}$
$$0\to \mathcal{V}_{-1} \to \mathcal{V}_0 \to \mathcal{V}_1\to 0.$$ 
Let $\mathcal{C}_{\alpha,\beta}^{cl}$ be the restriction of
$\mathcal{C}_{\alpha,\beta}$ to 
$\Coh_\alpha \times \Coh_\beta$.
Let $\tau_{\leq 0}$ and $(-)^{\leq 0}$ be the standard and stupid truncations.
By \cite{KV}, there is an isomorphism
$$\mathbb{V}(\tau_{\leq 0}(\mathcal{C}^{cl}_{\alpha,\beta}))= \widetilde{\Coh}_{\alpha,\beta}.$$
This yields a factorization
$$\xymatrix{\mathbb{V}(\mathcal{C}^{cl,\leq 0}_{\alpha,\beta}) \ar[d]_{\pi}& \mathbb{V}(\tau_{\leq 0}\mathcal{C}^{cl}_{\alpha,\beta}) \ar[dl]^{q_{\alpha,\beta}} \simeq  \widetilde{\Coh}_{\alpha;\beta}\ar[l]_-{\iota}  \\
\Coh_{\alpha}\times \Coh_{\beta}  & 
}$$
The map $\pi$ is a linear stack, in particular it is smooth.
Hence it yields an isomorphism 
$$\pi^*:H_{i}(\Coh_\alpha \times \Coh_\beta,\Q) \to H_{i+2d_0}(\mathbb{V}(\mathcal{C}^{cl,\leq 0}_{\alpha,\beta}),\Q).$$ 
Further, there is a refined Gysin pullback 
$$\iota^! : H_i(\mathbb{V}(\mathcal{C}^{cl,\leq 0}_{\alpha,\beta}),\Q) \to H_{i-2d_1}(\widetilde{\Coh}_{\alpha,\beta},\Q).$$ 
Here $d_0$ and $d_1$ are the ranks of $\mathcal{C}^{\leq 0}_{\alpha,\beta}$ and $\mathcal{V}_1$, so $d_0-d_1=-\langle \alpha,\beta\rangle$ is the virtual rank of $\mathcal{C}^{cl}_{\alpha,\beta}$. We have $q_{\alpha,\beta}^!= \iota^!\circ \pi^*$, see \cite{Khan}. In particular, the morphism $\iota^!\circ \pi^*$ thus defined is independent of the presentation of the complex $\mathcal{C}$. See \cite[\S~3]{KV} for a direct proof.
We set
\begin{equation*}
\begin{split}
    \star= (p_{\alpha,\beta})_*\circ q^!_{\alpha,\beta} : H_*(\Coh_\alpha,\Q) \otimes H_*(\Coh_\beta,\Q)&=
H_*(\Coh_\alpha \times\Coh_\beta,\Q)\\
&\to H_{* -2\langle \alpha,\beta\rangle}(\Coh_{\gamma},\Q).
\end{split}
\end{equation*}

\smallskip

\begin{theorem}[{\cite[thm.~4.4.2]{KV}}, \cite{Zhao}] The convolution product 
$\star$ defines on $\mathbf{H}_0(S)$ a structure of a graded associative algebra. 
\end{theorem}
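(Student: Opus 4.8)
The plan is to verify two things: that $\star$ respects the $\Z^2$-grading, and that it is associative. Grading compatibility is a matter of bookkeeping: since $p_{\alpha,\beta}$ preserves homological degree and $q^!_{\alpha,\beta}$ shifts it by $-2\langle\alpha,\beta\rangle$, the product sends $\mathbf{H}_0(S)[a,i]\otimes\mathbf{H}_0(S)[b,j]$ into $\mathbf{H}_0(S)[a+b,\,i+j-2\langle a\delta,b\delta\rangle]$; and $\langle a\delta,b\delta\rangle = ab\,\langle\delta,\delta\rangle = ab\,\rk(\delta)=0$ by \eqref{E:eulerdeltaO}, so $\star$ is in fact bigraded. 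The substance of the theorem is associativity, which I would prove by the standard flag/triple-stack argument: realize both $(a_1\star a_2)\star a_3$ and $a_1\star(a_2\star a_3)$ as a single push-pull through the stack of two-step filtrations.

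Concretely, fix $\alpha_i=a_i\delta$ and let $\widetilde{\RCoh}_{\alpha_1;\alpha_2;\alpha_3}$ be the derived stack of filtrations $0\subset\mathcal{F}_1\subset\mathcal{F}_2\subset\mathcal{F}_3$ whose successive subquotients $\mathcal{F}_1,\ \mathcal{F}_2/\mathcal{F}_1,\ \mathcal{F}_3/\mathcal{F}_2$ have Chern characters $\alpha_3,\alpha_2,\alpha_1$. It carries a map $Q$ recording the three subquotients to $\RCoh_{\alpha_1}\times\RCoh_{\alpha_2}\times\RCoh_{\alpha_3}$ and a proper representable map $P$ recording $\mathcal{F}_3$ to $\RCoh_{\alpha_1+\alpha_2+\alpha_3}$. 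I would then exhibit two factorizations of the correspondence $(P,Q)$: one through $\widetilde{\RCoh}_{\alpha_1;\alpha_2}$ followed by $\widetilde{\RCoh}_{\alpha_1+\alpha_2;\alpha_3}$, and one through $\widetilde{\RCoh}_{\alpha_2;\alpha_3}$ followed by $\widetilde{\RCoh}_{\alpha_1;\alpha_2+\alpha_3}$, in each case as a composition of Hecke correspondences whose connecting squares are derived-Cartesian. The aim is to show that both iterated products equal $P_*\circ Q^!$ applied to $a_1\otimes a_2\otimes a_3$.

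The three technical inputs required are: (i) functoriality of the virtual pullback under composition of quasi-smooth maps, allowing one to collapse the two successive $q$-pullbacks occurring in each iterated product into the single $Q^!$; (ii) base-change compatibility between $p_*$ and $q^!$ across a derived-Cartesian square, which lets one commute the intermediate pushforward past the second pullback so that the whole computation funnels through $\widetilde{\RCoh}_{\alpha_1;\alpha_2;\alpha_3}$; and (iii) proper base change for the representable proper maps $P$ and $p_{\alpha,\beta}$. These compatibilities are available in Khan's derived Gysin formalism, and can alternatively be checked by hand using the explicit classical presentation $q^!_{\alpha,\beta}=\iota^!\circ\pi^*$ recalled in \S\ref{sec:defCOHA0}, since there the refined Gysin map $\iota^!$ and the smooth pullback $\pi^*$ each satisfy the relevant base-change and functoriality identities.

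The main obstacle is that $q_{\alpha,\beta}$ is only quasi-smooth, not smooth, so one cannot invoke ordinary flat or lci base change; one must work with derived fiber products throughout and confirm that the squares relating $\widetilde{\RCoh}_{\alpha_1;\alpha_2;\alpha_3}$ to the iterated Hecke stacks are genuinely derived-Cartesian. Concretely, this reduces to checking that the tautological complexes $\mathcal{C}_{\alpha,\beta}=\RHom_S(\mathcal{E}_\alpha,\mathcal{E}_\beta)[1]$ controlling the fibers behave coherently under the two filtration orderings, i.e.\ that the long exact sequences of $\RHom$-complexes induced by $0\subset\mathcal{F}_1\subset\mathcal{F}_2\subset\mathcal{F}_3$ match the excess data entering $\iota^!\circ\pi^*$. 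Once this identification of derived structures is established, associativity follows formally from (i)--(iii); this is precisely the content of \cite[Thm.~4.4.2]{KV} and of \cite{Zhao}, whose verification of the base-change identities in the zero-dimensional case I would invoke.
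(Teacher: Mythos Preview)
The paper does not supply its own proof of this theorem: it is stated with attribution to \cite[thm.~4.4.2]{KV} and \cite{Zhao}, followed only by the remark that $\langle\delta,\delta\rangle=0$ so the product is degree-preserving. Your sketch is the correct standard argument---the two-step flag stack $\widetilde{\RCoh}_{\alpha_1;\alpha_2;\alpha_3}$, the two factorizations via derived-Cartesian squares, and the appeal to functoriality and base-change for quasi-smooth virtual pullbacks---and is precisely the content of the cited references; your grading check also matches the paper's remark.
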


\smallskip

\begin{remark} Note that $\langle \delta,\delta\rangle=0$, hence the product in $\Hb_0(S)$ is degree-preserving. 
\end{remark}

\smallskip

In the presence of a torus $T$, we may consider the $T$-equivariant COHA which is an algebra with underlying vector space given by
\[
    \Hb_0^T(S)=H^T_*(\Coh^0,\Q).
\]

An open inclusion $i:S \to S'$ of smooth surfaces gives an open inclusion of derived stacks $\underline{i}:\RCoh^0(S) \to \RCoh^0(S')$ and thus a map $\underline{i}^* : \Hb_0(S') \to \Hb_0(S)$.

\begin{lemma}\label{L:resS'toS}The map $\underline{i}^*$ is an algebra homomorphism.
\end{lemma}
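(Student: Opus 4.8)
The plan is to show that $\underline{i}^*$ intertwines the two convolution products, i.e. that for all $\alpha=a\delta,\beta=b\delta$ we have $\underline{i}^*(x \star_{S'} y) = \underline{i}^*(x) \star_S \underline{i}^*(y)$. Since an open immersion $i:S\to S'$ induces open immersions at every stage of the Hecke correspondence \eqref{E:convdiagram}, I would first record the fundamental diagram: the inclusions $\underline{i}:\RCoh_\alpha(S)\to\RCoh_\alpha(S')$ and the analogous open immersion $\underline{i}:\widetilde{\RCoh}_{\alpha;\beta}(S)\to\widetilde{\RCoh}_{\alpha;\beta}(S')$ fit into a diagram in which both the left square (over $q_{\alpha,\beta}$) and the right square (over $p_{\alpha,\beta}$) are Cartesian. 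The right square is Cartesian because a short exact sequence of sheaves on $S'$ with middle term set-theoretically supported inside $S$ has all three terms supported inside $S$; the left square is Cartesian for the same reason. This is the geometric heart of the argument and should be stated as the key observation.

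Given these Cartesian squares, the compatibility of $\underline{i}^*$ with each of the three operations $p_*$, $q^!$, and the restriction itself follows from standard functoriality of Borel–Moore homology. First, for the proper map $p_{\alpha,\beta}$, base change along the Cartesian square gives $\underline{i}^*\circ (p^{S'}_{\alpha,\beta})_* = (p^{S}_{\alpha,\beta})_* \circ \underline{i}^*$ (proper pushforward commutes with open restriction; here properness of $p$ is used to make the base-change clean). Second, for the virtual pullback $q^!_{\alpha,\beta}$, I would use that $q^!$ is built from a refined Gysin map $\iota^!$ and a smooth (linear-stack) pullback $\pi^*$ as recalled in \S\ref{sec:defCOHA0}; the obstruction complex $\mathcal{C}_{\alpha,\beta}$ on $S$ is simply the restriction of $\mathcal{C}_{\alpha,\beta}$ on $S'$ along $\underline{i}$, because $\mathcal{E}_\alpha(S) = \underline{i}^*\mathcal{E}_\alpha(S')$ and $\RHom$ commutes with the flat (open) base change. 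Hence the quasi-smooth structures match, and the compatibility $\underline{i}^*\circ (q^{S'}_{\alpha,\beta})^! = (q^{S}_{\alpha,\beta})^! \circ \underline{i}^*$ reduces to commutativity of refined Gysin pullback with open restriction, which is standard.

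Putting these together, $\underline{i}^*\circ\star_{S'} = \underline{i}^*\circ (p^{S'})_*\circ (q^{S'})^! = (p^{S})_*\circ\underline{i}^*\circ (q^{S'})^! = (p^{S})_*\circ (q^{S})^!\circ\underline{i}^* = \star_S\circ(\underline{i}^*\otimes\underline{i}^*)$, where in the middle step one also uses that $\underline{i}^*$ on the source $\RCoh_\alpha\times\RCoh_\beta$ is the external tensor of the two restriction maps (the product decomposition is preserved by the open immersion). That $\underline{i}^*$ respects units is clear since it sends the class of the point-stack for $S'$ to that for $S$ in degree $0$.

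I expect the main obstacle to be the careful verification that the virtual pullback $q^!$ genuinely commutes with $\underline{i}^*$, rather than merely the classical Gysin map: one must check that the identification $\mathbb{V}(\tau_{\leq 0}\mathcal{C}^{cl}_{\alpha,\beta})\simeq\widetilde{\Coh}_{\alpha;\beta}$ of \S\ref{sec:defCOHA0} is compatible with restriction to $S$, i.e. that the chosen representative $\mathcal{V}_{-1}\to\mathcal{V}_0\to\mathcal{V}_1$ of the obstruction complex and its truncations behave well under flat base change along the open immersion. This is where the \emph{derived} structure matters and where one cannot simply cite classical excision; the cleanest route is to invoke base-change functoriality of the Borel–Moore homology of total spaces of perfect complexes (Appendix~\ref{sec:appA}), applied to the flat morphism $\underline{i}$, so that the whole factorization $q^! = \iota^!\circ\pi^*$ is transported verbatim from $S'$ to $S$.
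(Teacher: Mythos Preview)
Your proposal is correct and follows exactly the same approach as the paper's proof, which condenses the argument to the single observation that $\RCoh_{d\delta}(S)$ is an open substack of $\RCoh_{d\delta}(S')$ and that the convolution diagram is compatible with open base change. What you have written is a careful unpacking of that sentence: the Cartesian squares, proper base change for $p_*$, and compatibility of the virtual pullback $q^!$ with open restriction are precisely the ingredients implicit in the paper's phrase ``compatible with open base change.''
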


\begin{proof}
The stack $\RCoh_{d\delta}(S)$ is an open substack of $\RCoh_{d\delta}(S')$ for any $d$, and the convolution diagram used to define the product is compatible with open base change. 
\end{proof}

\smallskip

We will need the following result on the Hilbert series of $\mathbf{H}_0(S)$. We define  $$h_{\Hb_0(S)}(z,w)=\sum_{l,n} \dim(\Hb_0(S)[l,n])(-z)^nw^l.$$ Let $h_S(z)=\sum_n \dim(H_n(S,\Q)) (-z)^n$ be the Borel-Moore homology Poincaré polynomial of $S$.

\begin{theorem}[{\cite[thm.~7.1.6]{KV}}]\label{T:KV} Let $q,t$ be formal variables of respective degrees $[0,-2]$ and $[1,0]$.
There is a canonical isomorphism of graded vector spaces 
$$\Hb_0(S) = \Sym\left(H_*(S \times B\Gm,\Q) \otimes qt\Q[t])\right)=\Sym\left(H_*(S,\Q) \otimes qt\Q[q,t]\right).$$
In particular, the Hilbert series of $\Hb_0(S)$ is given by 
$$h_{\Hb_0(S)}(z,w)=
\Exp\left( \frac{h_S(z)z^{-2}w}{(1-z^{-2})(1-w)}\right)$$
where $\Exp$ is the plethystic exponential.
\end{theorem}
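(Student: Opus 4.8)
The plan is to compute $H_*(\Coh_{d\delta},\Q)$ through the support morphisms to the symmetric powers of $S$ and the resulting factorization structure, following the strategy of~\cite{KV}. The first step is to establish that the Borel-Moore homology $H_*(\Coh_{d\delta},\Q)$ is pure for every $d$. Purity of $S$ should propagate to $\Coh_{d\delta}$ because the latter is locally a global quotient stack whose homology is controlled by the cohomology of $S$ together with the quasi-smoothness of $\RCoh_{d\delta}$. Purity is the linchpin: it forces the degeneration of the spectral sequences appearing below and ensures that the graded dimensions are faithfully recorded by the (weight-graded) Poincaré series, so that the sought isomorphism of graded vector spaces can be detected by an equality of Hilbert series.

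Next I would exploit the factorization structure. The support maps $\Coh_{d\delta}\to \Sym^d(S)$ assemble $\Coh^0=\bigsqcup_d\Coh_{d\delta}$ into a (commutative) factorization space over the symmetric powers of $S$: over the open locus of $\Sym^d(S)$ where the $d$ points are pairwise distinct, $\Coh_{d\delta}$ restricts to the symmetric $d$-th power of the single-point stack $\Coh_{\delta}\simeq S\times B\Gm$. The general formalism of factorization (co)homology then yields that the total homology of such a space is a free supercommutative algebra — the super-signs being handled by the convention $(-z)^n$ — whose generators are the \emph{reduced} local contribution, namely the part of the homology generically supported at a single point. This produces the $\Sym(-)$ on the right-hand side and reduces the entire computation to identifying this generating space, with the COHA/factorization multiplication realizing the isomorphism canonically.

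The local computation proceeds on $\mathbb{A}^2$, where $\Coh_{d\delta}$ is the commuting stack $\mathcal{C}_{\mathfrak{gl}_d}=\{(x,y)\in\mathfrak{gl}_d^2 : [x,y]=0\}/GL_d$ described in the Example above. Its Borel-Moore homology is pure and known, and the primitive (indecomposable) part attached to a single point has generating series $\frac{z^{-2}w}{(1-z^{-2})(1-w)}$, where $w$ records the length $l$ and $z^{-2}$ the homological twist coming from the $B\Gm$-direction (the tower generated by $u=c_1(\rho)$); tensoring with $H_*(S,\Q)$ yields the factor $h_S(z)$. Smoothness of $S$ lets me transport this model along étale charts, and assembling via the factorization structure applies the plethystic exponential, giving $h_{\Hb_0(S)}(z,w)=\Exp\!\left(\frac{h_S(z)z^{-2}w}{(1-z^{-2})(1-w)}\right)$ and, at the level of spaces, $\Hb_0(S)\simeq\Sym\!\left(H_*(S,\Q)\otimes qt\Q[q,t]\right)$ with $q,t$ in degrees $[0,-2]$ and $[1,0]$.

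The main obstacle will be the purity statement together with the attendant degeneration: once purity of each $\Coh_{d\delta}$ is secured, both the freeness and the plethystic identity follow essentially formally from the factorization formalism and the $\mathbb{A}^2$-local computation, so it is establishing purity — and thereby ruling out any extension problem in the weight filtration — that carries the real content. A secondary point requiring care is that the canonical comparison map built from the factorization/multiplication structure on the generators is well defined independently of the chosen étale charts, which again rests on purity.
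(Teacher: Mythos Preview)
The paper does not supply its own proof of this theorem: it is quoted verbatim from \cite[Thm.~7.1.6]{KV}, and the only commentary the paper adds is the sentence immediately following the statement, namely that the result ``is proved for an arbitrary smooth surface $S$ using factorization homology techniques.'' Your sketch --- support maps to $\Sym^\bullet(S)$, factorization structure, reduction to the local model $\mathbb{A}^2$, plethystic exponential --- is an accurate outline of that factorization-homology argument, so at the level of strategy you are in agreement with the cited source.

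There is, however, one point where your emphasis is misplaced. You identify purity of $\Coh_{d\delta}$ as ``the linchpin'' and ``the main obstacle,'' the thing that ``carries the real content.'' But the paper explicitly notes that the Kapranov--Vasserot proof works for an \emph{arbitrary} smooth surface, with no purity hypothesis; purity enters this paper only for other purposes (e.g.\ tautological generation, the equivariant extension via \cite{DHSM2}). The factorization-homology machinery of \cite{KV} computes $H_*(\Coh^0,\Q)$ directly, and the freeness of the resulting symmetric algebra does not hinge on a degeneration-by-purity argument. So while your overall architecture is correct, you should not frame purity as the crux; the genuine work in \cite{KV} lies in setting up the factorization algebra structure on the pushforward along the support map and identifying its factorization homology, not in a weight argument.
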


Theorem~\ref{T:KV} is proved for an arbitrary smooth surface $S$ using factorization homology techniques, which do not extend to the $T$-equivariant setting.
However, in~\cite{DHSM2} the question of equivariant formality is treated in the much greater generality of relative COHAs, which includes our COHAs by~\cite[\S 11.1]{DHSM}.
In particular, as soon as $S$ is $T$-equivariantly formal, $\Hb_0(S)$ is a free $\mathbf{R}_T$-module of (graded) rank given by $h_{\Hb_0(S)}(z,w)$ by~\cite[Theorems 11.5, 11.6]{DHSM2}.
By~\cite[Theorem 14.1]{GKM} this assumption is satisfied for $S$ cohomologically pure.

\medskip

\subsection{The compactly supported COHA of zero dimensional sheaves} 
When $S$ is not proper, we will also consider a variant of $\Hb_0(S)$ defined
using the hyperbolic Borel-Moore homology introduced in Appendix~\ref{sec:relBMhomology}. 
More precisely, set $\Sym(S)=\bigsqcup_n \Sym^n(S)$. Let 
$$\xymatrix{\RCoh^0 \ar[r]^-\supp& \Sym(S)\ar[r]^-\pi&\mathrm{pt}}$$
be the support and the projection to a point. We set 
\begin{gather*}
    \Hb_0^{c}(S)=\bigoplus_d H^c_*(\Coh_{d\delta},\Q),
\end{gather*}
where $H^c_*(\Coh_{d\delta},\Q)=H_*(\RCoh_{d\delta}/\Sym^d(S),\Q)$
and $H_*(\RCoh_{d\delta}/\Sym^d(S),\Q)$ is defined by
\begin{gather*}
H_*(\RCoh_{d\delta}/\Sym^d(S),\Q)=H^{-*}(\pi_!\supp_*\mathbb{D}_{\RCoh_{d\delta}}).
\end{gather*}
The map $H^*_c(S,\Q)[u] \to H^c_{*}(\Coh_\delta,\Q)$
given by $x \mapsto x \cap [\Coh_\delta]$ is an isomorphism.  
 We complete the induction diagram by introducing the support maps
\begin{equation}\label{E:convdiagramcompact}
\begin{split}
\xymatrix{\RCoh_{m\delta} \times \RCoh_{n\delta}\ar[d]_-{\supp} & \widetilde{\RCoh}_{m\delta;n\delta} \ar[l]_-{q_{m,n}} \ar[r]^-{p_{m,n}} &\RCoh_{(m+n)\delta}\ar[d]_-{\supp}\\
\Sym^{m}(S) \times \Sym^{n}(S) \ar[rr]^-{\oplus}&& \Sym^{m+n}(S)
}
\end{split}
\end{equation}
where $\oplus$ is the direct sum (a finite map), which allows us to view both $\RCoh_{m\delta} \times \RCoh_{n\delta}$ and $\widetilde{\RCoh}_{m\delta;n\delta}$ as derived stacks over $\Sym^{m+n}(S)$. 
Note that 
\begin{equation*}
 \begin{split}
     H_*(\Coh_{m\delta}& \times \Coh_{n\delta} / \Sym^{m+n}(S),\Q) \\
     & = H_*(\Coh_{m\delta} \times \Coh_{n\delta} / \Sym^{m}(S)\times \Sym^{n}(S),\Q)\\
     & = H_*(\Coh_{m\delta}/\Sym^m(S),\Q)\otimes H_*(\Coh_{n\delta}/\Sym^n(S),\Q);
 \end{split}   
\end{equation*}
here the first equality follows from~\eqref{eq:hyp-change-of-S} and properness of $\oplus$.
We may now define the convolution product
\begin{equation*}
 \begin{split}
\star= (p_{m,n})_!\circ q_{m,n}^*: 
H^c_*(\Coh_{m\delta},\Q) \otimes H^c_*(\Coh_{n\delta},\Q) &=
H^c_*(\Coh_{m\delta}\times\Coh_{n\delta},\Q)\\
&\to H^c_*(\Coh_{(m+n)\delta},\Q).
  \end{split}   
\end{equation*}

\smallskip

\begin{proposition} The convolution product $\star$ endows $\Hb_0^{c}(S)$ with the structure of a graded associative algebra. 
\end{proposition}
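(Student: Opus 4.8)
The plan is to run the proof in exact parallel with the argument establishing associativity of $\Hb_0(S)$ (the Theorem of~\cite{KV} recalled above), transporting every step into the hyperbolic Borel--Moore homology attached to the support maps $\supp:\RCoh^0\to\Sym(S)$. All that is really needed are three formal properties of this homology theory, which I would take from Appendix~\ref{sec:appA}: (i) a proper pushforward $p_!$ for maps that are proper over $\Sym(S)$, compatible with composition; (ii) a (virtual) pullback $q^*$ for quasi-smooth maps, compatible with composition; and (iii) a base change isomorphism $p_!\,q^*=\tilde q^*\,\tilde p_!$ for the relevant derived Cartesian squares over $\Sym(S)$, with $p$ proper and $q$ quasi-smooth. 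Given these, associativity is a purely diagrammatic matter.

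First I would introduce, for lengths $l_1,l_2,l_3$, the derived stack $\widetilde{\RCoh}_{l_1\delta;l_2\delta;l_3\delta}$ of three-step filtrations $0\subset\mathcal T_1\subset\mathcal T_2\subset\mathcal T$ whose successive subquotients lie in $\RCoh_{l_3\delta}$, $\RCoh_{l_2\delta}$, $\RCoh_{l_1\delta}$, together with its total support map to $\Sym^{l_1+l_2+l_3}(S)$. Because the support of a sheaf is additive in short exact sequences, this total support factors through the iterated direct-sum map $\oplus:\Sym^{l_1}(S)\times\Sym^{l_2}(S)\times\Sym^{l_3}(S)\to\Sym^{l_1+l_2+l_3}(S)$, exactly as in~\eqref{E:convdiagramcompact}. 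The two ways of reading the flag stack---remembering the step $\mathcal T_1\subset\mathcal T_2$ first, or the step $\mathcal T_2\subset\mathcal T$ first---produce the two factorizations of the triple correspondence computing $(x\star y)\star z$ and $x\star(y\star z)$.

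Next I would assemble the standard commutative diagram of two-step and three-step flag stacks and their support maps, and verify that each interior square is derived Cartesian over the appropriate $\Sym$. The maps remembering or forgetting a subquotient are proper and representable over $\Sym(S)$ (they are base-changed from the proper maps $p$ of~\eqref{E:convdiagramcompact}), the maps forgetting the extension datum are quasi-smooth with the correct relative structure, and the bottom rows are built from the finite maps $\oplus$. Applying (iii) to slide each $q^*$ past the relevant $p_!$, and then (i) and (ii) to collapse the resulting composites, identifies both bracketings with the single map $p_!\,q^*$ attached to the three-step flag stack, which yields associativity.

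The step I expect to be the main obstacle is (iii): checking that the pullback $q^*$---which exists only because $q$ is quasi-smooth, and which depends on the derived obstruction-theoretic structure of $\widetilde{\RCoh}$---commutes with the proper pushforward $p_!$ in hyperbolic homology, for squares lying over the finite maps $\oplus$ of symmetric products. Concretely this requires the flag-stack squares to remain Cartesian after passing to the derived enhancement, and the relative perfect obstruction theories to match under base change along $\oplus$; these compatibilities are precisely what the hyperbolic/bivariant formalism of Appendix~\ref{sec:appA} is designed to provide. Once this is in hand, the remainder is the same formal diagram chase as in the non-compactly-supported case.
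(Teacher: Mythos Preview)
Your proposal is correct and follows essentially the same approach as the paper, which simply states that the proof is ``in all points analogous to the case of $\Hb_0(S)$''; you have spelled out what that analogy entails (proper pushforward, quasi-smooth pullback, and base change in hyperbolic homology over $\Sym(S)$, all supplied by Appendix~\ref{sec:appA}). The paper also notes an alternative route: $\Hb_0^c(S)$ arises as compactly supported cohomology of the sheaf-theoretic COHA $\supp_*\mathbb{D}_{\Coh^0}$ on $\Sym(S)$ (see~\cite{DHSM}), from which associativity is inherited.
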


\begin{proof} The proof is in all points analogous to the case of the $\Hb_0(S)$. Alternatively, one can observe that $\Hb_0^c(S)$ is obtained by taking compactly supported cohomology of sheaf-theoretical COHA $\supp_{*}\mathbb{D}_{\Coh^0}$ on $\Sym(S)$, see~\cite{DHSM}. 
\end{proof}

If the surface $S$ is projective, then we have $\Hb_0^c(S)=\Hb_0(S)$. 
For each open embedding $i: S \to S'$ we have an open embedding
of derived stacks $\underline{i}:\RCoh_{n\delta}\to\RCoh_{n\delta}(S')$. 
Hence, there are pushforward maps $\underline{i}_!:H_*^c(\RCoh_{n\delta},\Q) \to H_*^c(\RCoh_{n\delta}(S'),\Q)$, which combine to $\underline{i}_!:\Hb_0^c(S) \to \Hb^c_0(S')$.

\smallskip

\begin{lemma}\label{L:embStoS'} 
The map $\underline{i}_!:\Hb_0^c(S) \to \Hb^c_0(S')$ is an algebra morphism.
\end{lemma}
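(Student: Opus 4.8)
The plan is to mirror the proof of Lemma~\ref{L:resS'toS}, but replacing the contravariant restriction $\underline{i}^*$ by the covariant pushforward $\underline{i}_!$ and the open base change by proper (shriek–star) base change, carried out relative to the $\Sym$-base. The starting observation is that the entire convolution diagram~\eqref{E:convdiagramcompact} for $S$ is the open base change of the one for $S'$. Indeed, $i\colon S\to S'$ induces open immersions $\Sym^k(S)\hookrightarrow\Sym^k(S')$ for all $k$, and the support squares relating $\RCoh_{k\delta}(S)$ and $\RCoh_{k\delta}(S')$ over $\Sym^k(S)\hookrightarrow\Sym^k(S')$ are Cartesian, since a sheaf on $S'$ has support meeting $S$ only precisely when it is set-theoretically supported on $S$. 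The same reasoning shows that $\widetilde{\RCoh}_{m\delta;n\delta}(S)$ is simultaneously the $q_{m,n}$-preimage of $\RCoh_{m\delta}(S)\times\RCoh_{n\delta}(S)$ and the $p_{m,n}$-preimage of $\RCoh_{(m+n)\delta}(S)$ inside $\widetilde{\RCoh}_{m\delta;n\delta}(S')$. Hence both the $q$-square and the $p$-square relating the $S$- and $S'$-diagrams are Cartesian and compatible with the support maps to $\Sym(S)\hookrightarrow\Sym(S')$.

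With these squares in place, I would verify $\underline{i}_!(a\star b)=\underline{i}_!(a)\star\underline{i}_!(b)$ by reducing to three standard compatibilities of hyperbolic Borel–Moore homology collected in Appendix~\ref{sec:appA}. Writing $\star=(p_{m,n})_!\circ q_{m,n}^*$, first I would use functoriality of the covariant $(-)_!$ together with the identity $p^{S'}_{m,n}\circ\underline{i}=\underline{i}\circ p^{S}_{m,n}$ to move $\underline{i}_!$ past the proper pushforward, giving $\underline{i}_!\,(p^{S}_{m,n})_!=(p^{S'}_{m,n})_!\,\underline{i}_!$. Next, applying proper base change to the Cartesian $q$-square yields $\underline{i}_!\,(q^{S}_{m,n})^*=(q^{S'}_{m,n})^*\,\underline{i}_!$, that is, the shriek-pushforward along the horizontal open immersions commutes with the star-pullback along the vertical maps $q$. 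Finally, since $\underline{i}\times\underline{i}$ is the relevant map on the product and hyperbolic Borel–Moore homology satisfies Künneth over $\Sym^{m+n}(S)$, one has $\underline{i}_!(a\boxtimes b)=\underline{i}_! a\boxtimes\underline{i}_! b$. Composing these three identities gives $\underline{i}_!\big((p^{S}_{m,n})_!(q^{S}_{m,n})^*(a\boxtimes b)\big)=(p^{S'}_{m,n})_!(q^{S'}_{m,n})^*\big(\underline{i}_! a\boxtimes\underline{i}_! b\big)$, which is exactly the claim.

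The step requiring the most care is the proper base change in the second compatibility, because $\Hb_0^c$ is defined via the mixed expression $\pi_!\supp_*\mathbb{D}$, so the base change must be checked in the hyperbolic formalism rather than for ordinary six-functor pushforwards. Concretely, one must confirm that the isomorphism $\underline{i}_!\,q^*\cong q^*\,\underline{i}_!$ descends from the Cartesian square of support diagrams and is compatible with both the outer $\supp_*$ and the $\pi_!$ to a point; this is where the Cartesianness of the support squares established in the first paragraph is essential. Granting the functoriality and base change properties of hyperbolic Borel–Moore homology along open immersions recorded in Appendix~\ref{sec:appA}, the remaining verifications are formal, so I expect the only genuine obstacle to be bookkeeping in the hyperbolic setting rather than any new geometric input.
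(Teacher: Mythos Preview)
Your proposal is correct and follows essentially the same approach as the paper: the paper's proof is a one-line citation of Proposition~\ref{P:basechange}, which records precisely the two compatibilities you spell out (quasi-smooth pullback and proper pushforward in hyperbolic homology commute with $\underline{i}_!$ along Cartesian squares over open base change). Your first paragraph establishing that the convolution diagrams for $S$ and $S'$ are related by Cartesian squares, and your third paragraph flagging that the base change must be checked in the hyperbolic (i.e.\ $\pi_!\supp_*\mathbb{D}$) formalism, are exactly the content packaged into Proposition~\ref{P:basechange}; the only quibble is terminological---what you call ``proper base change'' for the $q$-square is really the compatibility of quasi-smooth pullback with open-immersion $!$-pushforward, i.e.\ part~(a) of that proposition rather than proper base change in the usual sense.
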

\begin{proof}
Quasi-smooth pullback and proper pushforward in hyperbolic 
homology are compatible with open base change, see Proposition~\ref{P:basechange}. 
\end{proof}

If $\iota: S \to \oS$ is a smooth compactification 
of $S$, Lemmas \ref{L:resS'toS}, \ref{L:embStoS'} yield algebra homomorphisms
\begin{equation}\label{E:relCOHAvsCOHAc}
\xymatrix{\Hb_0^c(S) \ar[r]^-{\underline{\iota}_!} & \Hb^c_0(\oS) = \Hb_0(\oS) \ar[r]^-{\underline{\iota}^*} &\Hb_0(S).}
\end{equation}
The composed map 
$$\phi_S=\underline{\iota}^*\underline{\iota}_! : \Hb_0^c(S) \to \Hb_0(S)$$ is independent of the choice of $\oS$. 
Let 
$$h_S^c(z)=\sum_n \dim(H_n^c(S,\Q)) (-z)^n$$ be the homology Poincaré polynomial of $S$.
We will need the following variant of Theorem~\ref{T:KV}, which can be found in~\cite[Corollary 7.11]{DHSM2}.

\smallskip

\begin{theorem}\label{T:KVcompact}
There is a canonical isomorphism of graded vector spaces 
$$\Hb^c_0(S) = \Sym\left(H^c_*(S \times B\Gm,\Q) \otimes qt\Q[t])\right)=\Sym\left(H^c_*(S,\Q) \otimes qt\Q[q,t]\right).$$
In particular, the Hilbert series of $\Hb^c_0(S)$ is given by 
$$h_{\Hb^c_0(S)}(z,w)=\Exp\left(\frac{h^c_S(z)z^{-2}w}{(1-z^{-2})(1-w)}\right).$$
\end{theorem}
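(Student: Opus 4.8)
The plan is to prove Theorem~\ref{T:KVcompact} by reducing it to the already-established non-compact version, Theorem~\ref{T:KV}, via the functoriality encoded in \eqref{E:relCOHAvsCOHAc}. The key observation is that the entire argument producing the freeness statement for $\Hb_0(S)$ is local on $\Sym(S)$, and that hyperbolic Borel-Moore homology with respect to the support map $\supp: \RCoh^0 \to \Sym(S)$ is precisely the formalism that repackages the non-compact theory into its compactly supported counterpart. The cleanest route is to invoke the sheaf-theoretic reformulation alluded to in the proof of the preceding Proposition: $\Hb_0^c(S)$ is the compactly supported cohomology of the COHA object $\supp_*\mathbb{D}_{\Coh^0}$ living on $\Sym(S)$, so the statement is an assertion about this constructible complex that can be checked after passing to a compactification.

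First I would fix a smooth compactification $\iota: S \to \oS$ and analyze the factorization structure on $\Sym(\oS)$. Since $\oS$ is projective, Theorem~\ref{T:KV} applies and gives the freeness of $\Hb_0(\oS)$ as a symmetric algebra $\Sym(H_*(\oS,\Q) \otimes qt\Q[q,t])$; crucially, the proof of Theorem~\ref{T:KV} in \cite{KV} proceeds through factorization homology techniques, which produce a factorization algebra structure on $\supp_*\mathbb{D}_{\Coh^0(\oS)}$ over $\Sym(\oS)$. The content of \cite[Corollary 7.11]{DHSM2} is that this factorization structure, being local, restricts compatibly to the open locus $\Sym(S) \subset \Sym(\oS)$ consisting of divisors supported on $S$. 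Applying $\pi_! \supp_*$ to the restriction then yields exactly the hyperbolic homology computing $\Hb_0^c(S)$, and the freeness over the point is inherited from the freeness of the factorization sheaf.

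The decisive step is the identification of the graded vector space underlying $\Hb_0^c(S)$ with $\Sym(H^c_*(S,\Q) \otimes qt\Q[q,t])$. The appearance of $H^c_*(S,\Q)$ rather than $H_*(S,\Q)$ is the whole point: the operation $\pi_! \supp_*$ applied to the dualizing-complex-based COHA converts the ordinary Borel-Moore homology of the fibers into compactly supported homology of $S$ in each factor, since the hyperbolic formalism (Appendix~\ref{sec:relBMhomology}) is built so that pushing forward to a point with $\pi_!$ produces compact supports transverse to the support stratification. Concretely, the generators of the symmetric algebra live over $\Coh_\delta \simeq S \times B\Gm$, whose hyperbolic homology relative to $\Sym^1(S) = S$ is computed by $H^*_c(S,\Q)[u]$ as recorded just before \eqref{E:convdiagramcompact}, and this is exactly the degree-one piece $H^c_*(S \times B\Gm,\Q)$ that generates.

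The main obstacle I anticipate is verifying that the freeness (the symmetric-algebra structure, equivalently the PBW-type statement) survives the passage from $\Hb_0(\oS)$ to $\Hb_0^c(S)$, rather than merely the graded-dimension identity. Freeness for the non-compact COHA is a statement about the factorization algebra being free, and one must check that restricting the factorization sheaf to the open $\Sym(S)$ and then taking $\pi_!$ preserves this freeness; a priori $\pi_!$ could introduce relations or fail to commute with the symmetric power decomposition. The way around this is to appeal directly to \cite[Corollary 7.11]{DHSM2}, where the relative COHA formalism of \cite{DHSM} is set up precisely so that equivariant formality and freeness are inherited by the compactly supported version under cohomological purity of $S$. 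Once one grants that the factorization-homology computation is functorial for the open inclusion $\Sym(S) \hookrightarrow \Sym(\oS)$ and compatible with $\pi_!$, the Hilbert series formula follows formally by substituting $h^c_S(z)$ for $h_S(z)$ in the plethystic exponential, since each generator contributes the same $q,t$-weight as in Theorem~\ref{T:KV}.
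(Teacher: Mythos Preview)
The paper does not give its own proof of this theorem: it is stated as a citation, ``which can be found in~\cite[Corollary 7.11]{DHSM2}'', with a further remark that the $T$-equivariant version follows from~\cite[\S 11]{DHSM2} under purity. Your proposal, after the factorization-homology heuristics, ultimately appeals to the same reference, so at the level of what is actually established you and the paper agree.

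That said, your sketch conflates two distinct proof strategies. The factorization-homology argument of \cite{KV} underlying Theorem~\ref{T:KV} and the relative-COHA machinery of \cite{DHSM2} are different techniques; the paper explicitly notes (after Theorem~\ref{T:KV}) that the former does not extend to the equivariant setting, which is one reason \cite{DHSM2} is invoked. Your proposed route---restrict the factorization sheaf from $\Sym(\oS)$ to the open $\Sym(S)$ and apply $\pi_!$---is intuitively reasonable, but the step you flag as the ``main obstacle'' (that freeness survives $\pi_!$ after restriction) is exactly the nontrivial content, and you resolve it only by deferring to \cite{DHSM2}. So the factorization-homology portion of your outline is motivation rather than argument; the actual proof, in both your proposal and the paper, is the external citation.
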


As explained after Theorem~\ref{T:KV}, a $T$-equivariant version of Theorem~\ref{T:KVcompact} follows from~\cite[\S 11]{DHSM2} provided that $S$ is pure.

\medskip

\subsection{The COHA of properly supported sheaves}\label{sec:cohaprop}

\smallskip

 Following \cite[\S 4]{KV}, one can extend the construction of COHA product to the stack of properly supported sheaves on $S$. We set
 $$\Hb(S)= \bigoplus_{\alpha} H_*(\Coh_{\alpha},\Q).$$
 As in the case of zero-dimensional sheaves, 
 for any $\alpha,\beta \in K^c_0(S)_\Q$ there is an induction diagram 
 \begin{equation}\label{E:convdiagram2}
    \RCoh_\alpha \times \RCoh_\beta \xleftarrow{q_{\alpha,\beta}} \widetilde{\RCoh}_{\alpha;\beta} \xrightarrow{p_{\alpha,\beta}}\RCoh_\gamma,\quad \gamma=\alpha+\beta
 \end{equation}
with $q_{\alpha,\beta}$ quasi-smooth and $p_{\alpha,\beta}$ proper. 

\smallskip

\begin{theorem}[{\cite[thm.~4.4.2]{KV}}]\label{thm:fullCOHA-KV} Convolution with respect to the correspondences \eqref{E:convdiagram2} for all $\alpha,\beta \in K^c_0({S})_\Q$ endows the space $\Hb(S)$ 
with the structure of a graded associative algebra. The multiplication $H_*(\Coh_{\alpha},\Q) \otimes  H_*(\Coh_{\beta},\Q) \to  H_*(\Coh_{\gamma},\Q)$ is of homological degree $-2\langle \alpha, \beta \rangle$.
\end{theorem}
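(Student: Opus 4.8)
The plan is to follow the standard template for proving associativity of a convolution product built from a quasi-smooth/proper correspondence, now carried out in the derived-geometric setting.

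First I would verify that the product is well-defined and compute its degree. Since $q_{\alpha,\beta}$ is quasi-smooth, the refined Gysin pullback $q^!_{\alpha,\beta}$ constructed in \S\ref{sec:defCOHA0} (via the identification $\mathbb{V}(\mathcal{C}_{\alpha,\beta}) \simeq \widetilde{\RCoh}_{\alpha;\beta}$ and the factorization $q^!_{\alpha,\beta} = \iota^! \circ \pi^*$) applies verbatim to arbitrary $\alpha,\beta \in K^c_0(S)_\Q$; since $p_{\alpha,\beta}$ is proper and representable, the pushforward $(p_{\alpha,\beta})_*$ is defined, and their composite is $\star$. The homological degree is dictated by the virtual rank of $\mathcal{C}_{\alpha,\beta} = \RHom_S(\mathcal{E}_\alpha,\mathcal{E}_\beta)[1]$: the Euler characteristic gives $\rk \RHom_S(\mathcal{E}_\alpha,\mathcal{E}_\beta) = \langle\alpha,\beta\rangle$, so the shift by $[1]$ makes the virtual rank $-\langle\alpha,\beta\rangle$, whence $q^!_{\alpha,\beta}$ lowers homological degree by $2\langle\alpha,\beta\rangle$, while $(p_{\alpha,\beta})_*$ preserves it. This yields the asserted degree $-2\langle\alpha,\beta\rangle$.

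For associativity I would introduce the derived stack $\widetilde{\RCoh}_{\alpha;\beta;\gamma}$ of two-step filtrations $0 \subset \mathcal{F}_1 \subset \mathcal{F}_2 \subset \mathcal{F}_3$ whose associated graded pieces have classes $\gamma,\beta,\alpha$, equipped with the map $q_3$ to $\RCoh_\alpha \times \RCoh_\beta \times \RCoh_\gamma$ recording all three subquotients and the proper map $p_3$ to $\RCoh_{\alpha+\beta+\gamma}$ recording $\mathcal{F}_3$. The geometric input is that this stack realizes both iterated correspondences, in that the square
\begin{equation*}
\xymatrix{
\widetilde{\RCoh}_{\alpha;\beta;\gamma} \ar[r] \ar[d] & \widetilde{\RCoh}_{(\alpha+\beta);\gamma} \ar[d]^-{q} \\
\widetilde{\RCoh}_{\alpha;\beta} \ar[r]^-{p_{\alpha,\beta}} & \RCoh_{\alpha+\beta}
}
\end{equation*}
(where $q$ records the sub-object of class $\alpha+\beta$) and its mirror for the other bracketing are derived Cartesian. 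Forgetting $\mathcal{F}_2$, resp. $\mathcal{F}_1$, exhibits $\widetilde{\RCoh}_{\alpha;\beta;\gamma}$ as the total space of a composite of two quasi-smooth maps, whose two factorizations compute $(a\star b)\star c$ and $a\star(b\star c)$ respectively.

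I would then conclude using three formal properties: transitivity of virtual pullback along a composite of quasi-smooth maps (so the two-step pullback equals $q_3^!$), functoriality of proper pushforward, and — the crux — a base change isomorphism equating $q^!\,(p_{\alpha,\beta})_*$ with $(p')_*\,q'^!$ across the derived Cartesian square above. Granting these, both bracketings reduce to the single expression $(p_3)_* \circ q_3^!$ applied to $a \otimes b \otimes c$, proving associativity. The main obstacle is precisely this last compatibility: one must check that the filtration squares are derived (not merely classically) Cartesian, and that the refined Gysin pullback of \S\ref{sec:defCOHA0} commutes with proper pushforward under such base change. This is where the derived machinery of \cite{Khan} and the bivariant Gysin formalism enter, since the classical truncations of these squares are typically not Cartesian and the virtual classes must be tracked through the derived structure. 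Once the base change theorem for quasi-smooth pullback against proper pushforward is in hand (as in \cite{KV}, \cite{Khan}), the remaining bookkeeping of degrees and composites is routine.
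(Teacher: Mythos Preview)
Your proposal is a correct sketch of the standard associativity argument, and it is essentially the argument given in \cite[\S 4]{KV}. Note, however, that the present paper does not supply its own proof of this theorem: the statement is simply quoted from \cite[Thm.~4.4.2]{KV} and used as a black box, so there is nothing in the paper to compare your argument against beyond the citation itself.
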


\smallskip

Assume that $i: S \to S'$ is an open immersion into another smooth surface $S'$. This gives rise to an open immersion $\underline{i}:\RCoh(S) \to \RCoh(S')$ and hence to a restriction morphism $\underline{i}^* : \Hb(S') \to \Hb(S)$. 
The proof of Lemma~\ref{L:resS'toS} implies that the map 
$\underline{i}^*$ is an algebra homomorphism.

\smallskip

\begin{remark} There is no higher rank analog of $\Hb^c_0(S)$ because in general there is no useful map from $\RCoh$ to a coarse moduli space.
\end{remark}

\medskip

\subsection{Tautological classes}\label{sec:taut} Let us now study a family of 
cohomology classes $ch_i(\lambda)$ on 
each $\Coh_{\alpha}$ for $i \geq 1$ and $\lambda \in H^*(S,\Q)$.
Fix a smooth compactification $\iota: S \to \overline{S}$ of $S$.  
Since $S$ is pure, the restriction $H^*(\overline{S},\Q)\to H^*(S,\Q)$ is surjective.
Let $I(S)\subset H^*(\overline{S},\Q)$ be the kernel of this map. 
It is identified with the relative cohomology $H^*(\oS,S,\Q)$.
The map pushforward $H^*_c(S,\Q)\to H^*(\overline{S},\Q)$ is injective.
The intersection pairing on $H^*(\overline{S},\Q)$ identifies $H^*_c(S,\Q)$ with the orthogonal complement $I(S)^\perp$ in $H^*(\overline S,\Q)$.
This is a (non-unital) subalgebra under the cup product.  Dually, $H^*(S,\Q)$ and $H^*(\overline{S},\Q)$ are equipped with natural coproducts and the restriction $\iota^*$ is a coalgebra homomorphism. 

\smallskip

When $S$ is not proper, it will be convenient to formally add a class $\pt$ of degree $4$ to $H^*(S,\Q)$, satisfying $\pt \cup H^{>0}(S,\Q)=\{0\}$. Likewise, it will be convenient to formally add a unit $1$ to $H^*_c(S,\Q)$. We will modify the coproduct accordingly. Let 
$$\overline{H}^*(S,\Q),\quad\overline{H}^*_c(S,\Q)$$ be the resulting rings.  
If $\Delta'$ is the coproduct on $H^*(S,\Q)$ we define the coproduct of $\overline{H}^*(S,\Q)$ by
\begin{align*}
    \Delta : \overline{H}^*(S,\Q) \to \overline{H}^*(S,\Q) \otimes \overline{H}^*(S,\Q),
    \end{align*}
such that
\begin{align*}
    \Delta(\pt)=\pt \otimes  \pt,\qquad \Delta(\lambda)=\pt \otimes \lambda + \lambda \otimes \pt + \Delta'(\lambda),\quad \lambda \in H^*(S,\Q).
\end{align*}

\smallskip

Let us define a variant of the ring of symmetric function which is colored by $H^*(S,\Q)$. Consider 
\[
    U(S)=\Sym(H^*(S,\Q)\otimes t\Q[t]).  
\]
For each $i \geq 1$ and $\lambda \in H^*(S,\Q)$, we denote $\uch_i(\lambda) = \lambda\otimes t^i$.
Set $\text{deg}(\uch_i(\lambda))=2i + \text{deg}(\lambda)-4$. 
To keep track of the rank of coherent sheaves, we add an extra element $\r$ of degree $0$ and set 
$$U'(S) = U(S) \otimes \Q[\r].$$ 
It is the free graded-commutative algebra generated by 
$\uch_i(\lambda)$ and $\r$ subject to the relations
$$\uch_i(\lambda+\mu)=\uch_i(\lambda) + \uch_i(\mu), \quad \uch_i(a\lambda)=a\,\uch_i(\lambda),$$
for any $a \in \mathbb{Q}$ and $\lambda,\mu \in H^*(S,\Q)$.

\begin{definition}
    Let $\Lambda(S)$ be the quotient of $U'(S)$ by the ideal generated by the negative degree elements $\uch_1(\lambda)$ for $\deg(\lambda)=0,1$.
    The \textit{universal Chern character} $\uch(z)$ is defined as follows:
    \[
        \uch(z)=\r \otimes 1 +\sum_{i\geq 1} \uch_{i} z^i \in \Lambda(S)\otimes \overline{H}^*_c(S,\Q)[[z]], \qquad \uch_i=\sum_\lambda \uch_{i}(\lambda)\otimes \lambda^*,
    \]
    where $\sum \lambda \otimes\lambda^*\in H^*(S,\Q) \otimes H^*_c(S,\Q)$ is the intersection pairing tensor.
\end{definition}

\begin{remark}\label{rmk:taut-class-restr}
To avoid confusions we may write $\uch_{S}$ instead of $\uch$.
    The definitions above are compatible with restriction along $\iota:S\to \oS$.
    Namely, we have a natural quotient $\Lambda(\oS)\twoheadrightarrow \Lambda(S)$, such that the image of $\uch_{\oS}(z)$ in $\Lambda(S)\otimes H^*(\oS,\Q)[[z]]$ is precisely $\uch_S(z)$.
\end{remark}

\begin{remark}
    Note that $\uch(z)$ belongs to $\Q[\r]\otimes 1 + z\Lambda(S)\otimes H^*_c(S,\Q)[[z]]$.
\end{remark}

We define a coalgebra structure on $\Lambda(S)$ by requiring the elements $\uch_i(\lambda)$ and $\r$ to be primitive.
In other words,
$$(\Delta\otimes \Id)(\uch(z))=\uch(z)_{13} + \uch(z)_{23} \in \Lambda(S) \otimes \Lambda(S) \otimes \Hbar^*_c(S,\Q)[[z]].$$

\medskip

\begin{example}
The primitive elements of degree $0$ in $\Lambda(S)$ are linearly spanned by $\r$ and $\uch_2(1)$, $\uch_1(\lambda)$ for $\lambda \in H^2(S,\Q)$.
\end{example}

We also consider an involution
\begin{align}\label{eq:involution}
    \upsilon : \Lambda(S) \to \Lambda(S), \quad \upsilon(\r)=-\r, \quad \upsilon(\uch_i(\lambda))=-\uch_i(\lambda),
    \quad i \geq 1,\quad\lambda \in H^*(S,\Q).
\end{align}

\medskip

The elements $\uch_i$, being even, commute with each other. Recall the algebra $\Lambda'$ from \eqref{Lp}.
We may define an algebra morphism 
$$p: \Lambda' \to \Lambda(S) \otimes \overline{H}^*_c(S,\Q), \quad p_0 \mapsto \r \otimes 1, \qquad p_i/i! \mapsto \uch_i,\quad i \geq 1.$$
We will use the following notation  
\begin{align}\label{flambda}
f(\lambda)=\int_S p(f) \cup \lambda\in\Lambda(S),\quad
f \in \Lambda',\quad
\lambda \in \Hbar^*(S,\Q),
\end{align}
where the map
$$\int_S:\overline H^*_c(S,\Q)\to\Q$$ is defined in the obvious way.
We may simply write 
$f(\lambda)=\int_S f \lambda$ when there is no risk of confusion.
For instance, we have 
$p_i(\lambda)=i! \uch_i(\lambda)$. Observe that 
\begin{equation}\label{E:fgdelta}
(f \cdot g)(\lambda)=(f \otimes g)(\Delta(\lambda))=\sum f(\lambda^{(1)})g(\lambda^{(2)})
\end{equation}
for $f,g \in \Lambda'$ and $\lambda \in \Hbar^*(S,\Q)$, where $\Delta(\lambda)=\sum \lambda^{(1)} \otimes \lambda^{(2)}$ in Sweedler's notation. In particular,
$$(p_{i_1} \cdots p_{i_l})(\lambda)=i_1! \cdots i_l!\sum \uch_{i_1}(\lambda^{(1)}) \cdots \uch_{i_l}(\lambda^{(l)}).$$
We have $\text{deg}(f(\lambda))=2\text{deg}(f) + \deg(\lambda)-4$.

\medskip

\begin{remark}\label{rmk:unit-and-point}
    Note that with our conventions we have 
    $$h_0(\pt)=1(\pt) = 1 \in \Lambda(S), \qquad  p_0(\pt) =  \r \otimes 1(\pt) =\r \in \Lambda(S)$$
    while $1(\lambda)=p_0(\lambda)=0$ if $\deg \lambda <4$, regardless of whether $S$ is proper or not.
    In a similar vein, we have $f(\pt)=0$ for any $f$ in the augmentation ideal of $\Q[p_1, p_2, \ldots]$ when $S$ is not proper.
\end{remark}

\medskip

Now fix $\alpha \in K_0^c(\oS)_\Q$ a class of rank $r$ and consider a locally closed substack $\mathcal{U}_\alpha\subset\Coh_\alpha(\oS)$.
Let $\mathcal{E}_{\alpha} \in Coh(\mathcal{U}_{\alpha}\times \overline{S})$ 
denote the restriction of the tautological sheaf to 
$\Coh_{\alpha}(\overline{S}) \times \overline{S}$. 
Consider its Chern character
\[
    \ch(\mathcal{E}_\alpha,z) = r + \sum_{i\geq 1}p_i(\mathcal{E}_\alpha)z^i/i!
\]
in 
$$H^*(\mathcal{U}_{\alpha}\times \overline{S},\Q)=H^*(\mathcal{U}_{\alpha},\Q)\otimes H^*(\overline{S},\Q)$$
where $p_i(\mathcal{E}_\alpha)$ is as in \eqref{f(E)}.
We have a unique graded ring homomorphism 
\begin{align}\label{eval}
\ev_{\alpha} : \Lambda({\oS}) \to H^*(\mathcal{U}_{\alpha},\Q),
\end{align}
defined by
$$(\ev_{\alpha} \otimes \Id)(\uch(z))=\ch(\mathcal{E}_{\alpha},z) \in H^*(\mathcal{U}_{\alpha},\Q) \otimes H^*(\oS,\Q)[[z]].$$
Observe that $r = \ev_\alpha(\r)$.
The following lemma is a straightforward corollary of Remark~\ref{rmk:taut-class-restr}.

\begin{lemma}\label{lem:eval-open-restriction}
 Assume that the Chern character of $\mathcal{E}_\alpha$ belongs to 
$\Q\cdot 1+H^*(\mathcal{U}_\alpha,\Q) \otimes H^*_c(S,\Q)$.
The evaluation map $\ev_{\alpha}$ factors through $\Lambda(S)$.
The classes $\ev_\alpha(f(\lambda))$ for $f \in \Lambda'$ and $\lambda \in \overline H^*(S,\Q)$ are independent of 
the choice of compactification $\overline{S}$.\qed
\end{lemma}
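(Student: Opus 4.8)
The plan is to exhibit the evaluation $\ev_\alpha\colon\Lambda(\oS)\to H^*(\Coh_\alpha,\Q)$ as a ring homomorphism annihilating the kernel of the canonical surjection $q\colon\Lambda(\oS)\twoheadrightarrow\Lambda(S)$ from Remark~\ref{rmk:taut-class-restr}; such a map descends uniquely through $q$. The first step is to pin down $\ker q$. Since $\Lambda(\oS)$ and $\Lambda(S)$ are $\Sym$-algebras on $H^*(\oS,\Q)\otimes\Q[t]$ and $H^*(S,\Q)\otimes\Q[t]$ modulo compatible low-degree relations, and $q$ is induced by the surjection $\iota^*\colon H^*(\oS,\Q)\to H^*(S,\Q)$, right-exactness of $\Sym$ shows that $\ker q$ is the ideal generated by the classes $\uch_i(\nu)$ with $i\geq1$ and $\nu\in I(S)=\ker\iota^*$. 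As $\ev_\alpha$ is multiplicative, it then suffices to prove $\ev_\alpha(\uch_i(\nu))=0$ for all such $\nu$ and $i$.

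The second step computes $\ev_\alpha(\uch_i(\mu))$ for arbitrary $\mu\in H^*(\oS,\Q)$ directly from the defining relation $(\ev_\alpha\otimes\Id)(\uch(u))=\ch(\mathcal{E}_\alpha,u)$. Extracting the coefficient of $u^i$ and using that $\{\mu^*\}$ is the basis dual to $\{\mu\}$ for the intersection pairing $\langle\beta,\mu\rangle=\int_{\oS}\beta\cup\mu$, one finds $\ev_\alpha(\uch_i(\mu))=(\Id\otimes\langle-,\mu\rangle)(\ch_i(\mathcal{E}_\alpha))$, i.e. the slant product of the $i$-th Chern character component with $\mu$. The hypothesis says precisely that for $i\geq1$ the class $\ch_i(\mathcal{E}_\alpha)$ lies in $H^*(\mathcal{U}_\alpha,\Q)\otimes H^*_c(S,\Q)$ (the $\Q\cdot1$ summand accounting only for the rank $\ch_0$), and $H^*_c(S,\Q)$ was identified with the orthogonal complement $I(S)^\perp$. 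Hence, writing $\ch_i(\mathcal{E}_\alpha)=\sum_a c_a\otimes\beta_a$ with $\beta_a\in I(S)^\perp$, each term $\int_{\oS}\beta_a\cup\nu$ vanishes for $\nu\in I(S)$, so $\ev_\alpha(\uch_i(\nu))=0$. This gives the desired factorization $\ev_\alpha=\overline{\ev}_\alpha\circ q$.

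For the independence clause I would make $\overline{\ev}_\alpha$ manifestly intrinsic to $S$. The slant-product formula shows that, under the hypothesis, $\overline{\ev}_\alpha(\uch_i(\lambda))$ for $\lambda\in H^*(S,\Q)$ equals $\ch_i(\mathcal{E}_\alpha|_{\mathcal{U}_\alpha\times S})/\lambda$, computed through the intrinsic pairing $H^*_c(S,\Q)\otimes H^*(S,\Q)\to\Q$, an expression that no longer mentions $\oS$. Since the elements $f(\lambda)$ with $f\in\Lambda$ and $\lambda\in H^*(S,\Q)$ lie in $\Lambda(S)$ independently of the compactification, the classes $\ev_\alpha(f(\lambda))=\overline{\ev}_\alpha(f(\lambda))$ do not depend on the choice of $\oS$.

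The main obstacle is bookkeeping rather than conceptual: one must keep the several duality conventions straight — the intersection form on $H^*(\oS,\Q)$ and the dual basis defining $\uch_i$, the identification $H^*_c(S,\Q)\cong I(S)^\perp$, and the K\"unneth/slant decomposition of $\ch(\mathcal{E}_\alpha)$ — so that the orthogonality $I(S)^\perp\perp I(S)$ is applied to the correct tensor factor. A secondary point deserving a careful word is the claim that $\ker q$ is exactly, not merely contains, the ideal generated by the $\uch_i(\nu)$; this is where right-exactness of $\Sym$ applied to $\iota^*$, together with the compatibility of the low-degree relations recorded in Remark~\ref{rmk:taut-class-restr}, enters.
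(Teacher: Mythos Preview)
Your proof is correct and is precisely the argument the paper has in mind: the paper gives no proof beyond declaring the lemma ``a straightforward corollary of Remark~\ref{rmk:taut-class-restr}'' and placing a \qed, and your write-up simply unpacks that remark --- identifying $\ker(\Lambda(\oS)\twoheadrightarrow\Lambda(S))$ as the ideal generated by $\uch_i(\nu)$ with $\nu\in I(S)$, and then using the orthogonality $H^*_c(S,\Q)=I(S)^\perp$ to kill those generators under $\ev_\alpha$. There is no alternative route to compare; you have supplied the details the authors omitted.
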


The condition of Lemma~\ref{lem:eval-open-restriction} is verified if $\mathcal{U}_\alpha\subset \Coh_{\alpha}$.
It also holds if the restriction of $\mathcal{E}_\alpha$ to $\mathcal{U}_\alpha \times (\oS\setminus S)$ is a trivial 
vector bundle. Such situations occur when considering moduli stacks of sheaves on $\oS$ which are trivialized along $\oS\setminus S$.
From now on, unless specified otherwise, we will assume that $\mathcal{U}_\alpha=\Coh_\alpha$. 
Then, the map $\ev_{\alpha}$ is a ring homomorphism
$\Lambda({\oS}) \to H^*(\Coh_{\alpha},\Q)$ which factors through $\Lambda(S) \to H^*(\Coh_{\alpha},\Q).$

\medskip

\subsection{Extended COHAs}
Assume now that $\alpha \in \N \delta$.
Composing the map $\ev_{\alpha}$ above with the cap product yields an action $\bullet$ of $\Lambda({S})$ on $\mathbf{H}_0(S)$ such that
$$
x \bullet c=\ev_{\alpha}(x) \cap c
,\quad
c \in H_*(\Coh_{\alpha},\Q)
,\quad
x \in \Lambda({S}).
$$
This action preserves each $H_*(\Coh_{\alpha},\Q)$ and is compatible with the (co)homological gradings, i.e. $\text{deg}(x\bullet c)=\text{deg}(c)-\text{deg}(x)$.
The same holds for $\Hb^c_0(S)$, where the action of $H^*(\Coh_\alpha,\Q)$ on $H^c_*(\Coh_\alpha,\Q)$ is given by Lemma~\ref{lem:actioncohrelBM}.

\medskip

\begin{example}\label{ex:p1pt} Assuming that $S$ is proper, let us compute $\ev_{n\delta}(p_1(\pt))$. 
By definition, we have
$$\sum_\lambda\ev_{n\delta}(p_1(\lambda))\otimes\lambda^*=
c_1(\mathcal{E}_{n\delta}).$$
Hence $\ev_{n\delta}(p_1(\pt))=c_1(i_x^*(\mathcal{E}_{n\delta}))$ in $H^2(\Coh_{n\delta},\Q)$ for any closed point $i_x: \Spec(\C) \to S$. The support of $i_x^*(\mathcal{E}_{n\delta})$ being of codimension $2$, its first Chern class vanishes, hence $\ev_{n\delta}(p_1(\pt))=0$ for any $n>0$.
    
\end{example}
\medskip

\begin{proposition}\label{prop:modulealgebra}
The ring $\mathbf{H}_0(S)$ is a $\Lambda({S})$-module algebra, 
i.e. we have
\begin{equation}\label{E:modulealgebra}
x \bullet (c_1 \star c_2)=\sum (-1)^{|c_1|\cdot |x^{(2)}_i|}(x^{(1)}_i \bullet c_1) \star (x^{(2)}_i \bullet c_2)
,\quad
x \in \Lambda(S)
,\quad
c_1,c_2 \in \Hb_0(S),
\end{equation}
where $\Delta(x) = \sum x^{(1)}_i\otimes x^{(2)}_i$. The same holds for $\Hb^c_0(S)$. The map $\phi_S: \Hb^c_0(S) \to \Hb_0(S)$ is a 
morphism of $\Lambda(S)$-modules.
\end{proposition}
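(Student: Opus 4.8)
The plan is to trace the module-algebra identity \eqref{E:modulealgebra} back to a single geometric input: the behaviour of tautological classes along the Hecke correspondence \eqref{E:convdiagram}. Over $\widetilde{\RCoh}_{\alpha;\beta}$ there is a universal short exact sequence $0\to \mathcal{T}'\to\mathcal{T}\to\mathcal{T}''\to 0$, where $p_{\alpha,\beta}^*\mathcal{E}_\gamma$ recovers the middle term $\mathcal{T}$ and $q_{\alpha,\beta}^*(\mathcal{E}_\alpha\boxtimes\mathcal{E}_\beta)$ recovers the pair $(\mathcal{T}'',\mathcal{T}')$. Additivity of the Chern character along this sequence gives $\ch(\mathcal{T})=\ch(\mathcal{T}'')+\ch(\mathcal{T}')$ at the level of universal Chern characters. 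Since by construction the generators $\uch_i(\lambda)$ and $\r$ of $\Lambda(S)$ are \emph{primitive} for the coproduct $\Delta$, this additivity is precisely the comultiplication compatibility
\begin{equation*}
p_{\alpha,\beta}^*\,\ev_\gamma(x)=q_{\alpha,\beta}^*\big((\ev_\alpha\otimes\ev_\beta)\Delta(x)\big),\qquad x\in\Lambda(S),
\end{equation*}
where on the right $\ev_\alpha\otimes\ev_\beta$ is followed by the external product identification $H^*(\Coh_\alpha,\Q)\otimes H^*(\Coh_\beta,\Q)=H^*(\Coh_\alpha\times\Coh_\beta,\Q)$. Establishing this identity, by checking it on the generators $\uch_i(\lambda)$, $\r$ and using that $\ev_\alpha$ is a ring homomorphism, is the first and main step; everything else is formal.

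Granting it, I would compute $x\bullet(c_1\star c_2)=\ev_\gamma(x)\cap p_{\alpha,\beta*}\,q_{\alpha,\beta}^!(c_1\boxtimes c_2)$ by moving the cap product inside the correspondence. First apply the projection formula for the proper map $p_{\alpha,\beta}$ to rewrite this as $p_{\alpha,\beta*}\big(p_{\alpha,\beta}^*\ev_\gamma(x)\cap q_{\alpha,\beta}^!(c_1\boxtimes c_2)\big)$; then substitute the comultiplication identity and use that the virtual pullback $q_{\alpha,\beta}^!$ commutes with cap products of pulled-back cohomology classes, $q^*\theta\cap q^!\xi=q^!(\theta\cap\xi)$ (see \cite{Khan}), to move the class inside $q_{\alpha,\beta}^!$. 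One is left with
\begin{equation*}
x\bullet(c_1\star c_2)=p_{\alpha,\beta*}\,q_{\alpha,\beta}^!\Big(\big(\textstyle\sum_i \ev_\alpha(x_i^{(1)})\boxtimes\ev_\beta(x_i^{(2)})\big)\cap(c_1\boxtimes c_2)\Big).
\end{equation*}
Expanding the external cap product on $\Coh_\alpha\times\Coh_\beta$ produces the single Koszul sign $(-1)^{|c_1|\,|x_i^{(2)}|}$ from commuting $\ev_\beta(x_i^{(2)})$ past $c_1$, and recognizing $\ev_\alpha(x_i^{(1)})\cap c_1=x_i^{(1)}\bullet c_1$ and $\ev_\beta(x_i^{(2)})\cap c_2=x_i^{(2)}\bullet c_2$ yields exactly \eqref{E:modulealgebra}.

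The compactly supported case $\Hb_0^c(S)$ runs along identical lines using the diagram \eqref{E:convdiagramcompact}, for which $\star=p_{m,n\,!}\circ q_{m,n}^*$: one invokes instead the projection formula for $p_{m,n\,!}$, the cap action of Lemma~\ref{lem:actioncohrelBM}, and the compatibility $q^*\theta\cap q^*\xi=q^*(\theta\cap\xi)$ of pullback with cap product in hyperbolic homology (Appendix~\ref{sec:appA}), the comultiplication identity being unchanged. Finally, for $\phi_S=\underline\iota^*\underline\iota_!$ I would use that tautological classes are compatible with open restriction, $\ev_{\alpha,S}(x)=\underline\iota^*\ev_{\alpha,\oS}(x)$ (Remark~\ref{rmk:taut-class-restr}, Lemma~\ref{lem:eval-open-restriction}): the projection formula for $\underline\iota_!$ gives $\underline\iota_!(x\bullet c)=x\bullet\underline\iota_!(c)$, compatibility of $\underline\iota^*$ with cap products gives $\underline\iota^*(x\bullet d)=x\bullet\underline\iota^*(d)$, and composing the two yields $\phi_S(x\bullet c)=x\bullet\phi_S(c)$. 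The only genuine difficulty is the comultiplication compatibility of the first paragraph, together with careful bookkeeping of Koszul signs; the remaining ingredients are the standard projection formula and cap-product/pullback compatibilities recorded in \cite{Khan}, Proposition~\ref{P:basechange} and Appendix~\ref{sec:appA}.
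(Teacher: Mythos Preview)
Your proposal is correct and follows essentially the same route as the paper: both arguments hinge on the universal short exact sequence over $\widetilde{\RCoh}_{\alpha;\beta}$ to obtain $p^*\ev_\gamma(x)=q^*(\ev_\alpha\otimes\ev_\beta)\Delta(x)$ from additivity of the Chern character, then combine the projection formula for $p_*$ with the cap-product compatibility $q^*\theta\cap q^!\xi=q^!(\theta\cap\xi)$ to conclude. Your treatment of $\phi_S$ via the projection formula for $\underline\iota_!$ and compatibility of $\underline\iota^*$ with cap products is exactly what the paper's one-line remark ``results from the projection formula in hyperbolic or Borel-Moore homology'' unpacks to.
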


\begin{proof} We will deal with the case of $\Hb_0(S)$, the other one is similar. We can assume that 
$c_i \in H_*(\Coh_{\alpha_i},\Q)$ for $i=1,2$. Set $\gamma=\alpha_1+\alpha_2$. Recall the induction diagram \eqref{E:convdiagram}. We abbreviate $p=p_{\alpha_1,\alpha_2}$ and $q=q_{\alpha_1,\alpha_2}$. By the projection formula
$$x \bullet (c_1 \star c_2)=\ev_{\gamma}(x) \cap p_*q^!(c_1 \otimes c_2)=p_*(p^*(\ev_{\gamma}(x) ) \cap q^!(c_1 \otimes c_2)).$$
There is a short exact sequence of tautological sheaves
$$0 \to q^*(\mathcal{E}_{\alpha_2}) \to p^*(\mathcal{E}_{\gamma}) \to q^*(\mathcal{E}_{\alpha_1})\to 0$$
Hence $p^*(\ch(\mathcal{E}_{\gamma}))=q^*(\ch(\mathcal{E}_{\alpha_1}) + \ch(\mathcal{E}_{\alpha_2}))$.
We deduce that
\begin{equation*}
\begin{split}
p_*(p^*(\ev_{\gamma}(x) ) \cap q^!(c_1 \otimes c_2))&=p_*q^!\left( (\ev_{\alpha_1} \otimes \ev_{\alpha_2})(\Delta(x)) \cap (c_1 \otimes c_2)\right)\\
&=\sum(-1)^{|c_1|\cdot |x^{(2)}_i|} (x^{(1)}_i \bullet c_1) \star (x^{(2)}_i \bullet c_2).
\end{split}
\end{equation*}
Note that we used Proposition~\ref{Prop:appA}(c) in the first line.
The compatibility between $\phi_S$ and the action of $\Lambda(S)$ results from the projection formula in hyperbolic or Borel-Moore homology.
\end{proof}

\smallskip

We define the semi-direct product 
\begin{align}\label{tildeH}\widetilde{\Hb}_0(S)=\Hb_0(S) \rtimes \Lambda(S)
\end{align} 
to be the algebra generated by $\Hb_0(S)$ and $\Lambda(S)$ modulo the relations
\begin{equation}\label{E:defsemidirectprod}
x \cdot c=\sum (-1)^{|c|\cdot |x^{(2)}_i|} (x^{(1)}_i \bullet c) \cdot x^{(2)}_i ,\quad x \in \Lambda(S),\; c \in \Hb_0(S).
\end{equation}
The multiplication map $\Lambda(S) \otimes \Hb_0(S) \to \widetilde{\Hb}_0(S)$ is an isomorphism of graded vector spaces. 
We define similarly the semi-direct product 
\begin{align}\label{tildeHc}\widetilde{\Hb}^c_0(S)=\Hb^c_0(S) \rtimes \Lambda(S).
\end{align}

\smallskip

We finish with the following observation. The degree one piece of $\Hb_0(S)$ is 
$$\Hb_0(S)[1,-]=H_*(\Coh_\delta,\Q)=H_*(S \times B\Gm,\Q)=H_*(S,\Q)[u].$$
We consider the linear map
\begin{align}\label{omegadelta}\omega_\delta : \Lambda({S}) \to 
H_*(\Coh_\delta,\Q)
,\quad 
x \mapsto x \bullet [\Coh_\delta]
\end{align}

\begin{lemma}\label{lm:omega-surj}
     The map $\omega_\delta$ is surjective.
\end{lemma}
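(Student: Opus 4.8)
The goal is to show that the map
$$\omega_\delta : \Lambda(S) \to H_*(\Coh_\delta,\Q) = H^*(S,\Q)[u], \qquad x \mapsto \ev_\delta(x) \cap [\Coh_\delta]$$
is surjective. Since $\Coh_\delta \simeq S \times B\Gm$ is smooth of dimension $d_\delta = 1$, capping with the fundamental class $[\Coh_\delta]$ is an isomorphism $H^*(\Coh_\delta,\Q) \xrightarrow{\sim} H_{2-*}(\Coh_\delta,\Q)$. Hence surjectivity of $\omega_\delta$ is equivalent to surjectivity of the evaluation ring homomorphism $\ev_\delta : \Lambda(S) \to H^*(\Coh_\delta,\Q) = H^*(S,\Q)[u]$. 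The plan is therefore to exhibit enough tautological classes to generate the target ring $H^*(S,\Q)[u]$ over $\Q$, using the explicit description of the tautological sheaf $\mathcal E_\delta$ given in \eqref{Edelta}--\eqref{E:cherndelta}.

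First I would compute the images $\ev_\delta(\uch_i(\lambda))$ explicitly. By definition of $\ev_\delta$, these are read off from the Chern character $\ch(\mathcal E_\delta, u)$. From \eqref{E:cherndelta} we have $\ch(\mathcal E_\delta) = \Delta \cup e^u \cup \Td_S^{-1}$, where $\Delta = \Delta_{S*}(1)$ is the diagonal class. The key point is that for any class $\beta \in H^*(\oS)^*$, the pairing of the diagonal component with $\beta$ reproduces $\beta$ itself (viewed in $H^*(S,\Q)$), up to the invertible factors $e^u$ and $\Td_S^{-1}$; concretely, $(\ev_\delta \otimes \Id)(\uch(u)) = \Delta \cup e^u \cup \Td_S^{-1}$ means that after pairing against the dual basis, $\ev_\delta(\uch_i(\lambda))$ equals the coefficient of the appropriate power of $u$ in $\iota^*(\lambda) \cdot e^u \cdot \Td_S^{-1}$, where $\iota^*(\lambda) \in H^*(S,\Q)$ is the restriction. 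The upshot is a triangular (with respect to $u$-degree and cohomological degree) relationship between the $\ev_\delta(\uch_i(\lambda))$ and the monomials $\lambda \cdot u^j \in H^*(S,\Q)[u]$.

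Next I would invert this triangular system. The factor $e^u \cdot \Td_S^{-1}$ is a power series in $u$ with invertible (indeed, unit) leading coefficient $1$, so multiplication by it is an invertible operation on $H^*(S,\Q)[[u]]$, hence on $H^*(S,\Q)[u]$ in each fixed total degree. Thus from the collection $\{\ev_\delta(\uch_i(\lambda))\}$ together with $\ev_\delta(\r) = 1$ one recovers, by successive elimination in the $u$-variable, every monomial $\iota^*(\lambda) \, u^j$ for $\lambda \in H^*(\oS,\Q)$ and $j \geq 0$. Since $\iota^* : H^*(\oS,\Q) \to H^*(S,\Q)$ is surjective by purity of $S$ (as recalled in \S\ref{sec:taut}), the classes $\iota^*(\lambda)$ already span $H^*(S,\Q)$; combined with all powers of $u$, these span $H^*(S,\Q)[u]$ over $\Q$. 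This proves $\ev_\delta$ is surjective, and therefore so is $\omega_\delta$.

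The only subtlety to handle with care — and the step I expect to be the main obstacle — is the bookkeeping when $S$ is not proper. In that case $\Lambda(S)$ is a quotient of $U'(S)$ by the classes $\uch_1(\lambda)$ with $\deg(\lambda) = 0, 1$, and the universal Chern character takes values in $\Q[\r]\otimes 1 + u\,\Lambda(S)\otimes H^*_c(S,\Q)[[u]]$ (cf.\ the Remark after the definition of $\uch(x)$). I must check that the elimination above only ever requires classes $\uch_i(\lambda)$ that survive in $\Lambda(S)$, i.e.\ that I never need the killed low-degree generators to produce a given monomial $\lambda u^j$. This follows from the fact that the negative-degree generators $\uch_1(\lambda)$, $\deg(\lambda) \le 1$, correspond to monomials that do not appear as leading terms in the triangular system producing genuine classes in $H^*(S,\Q)[u]$ — precisely the degree constraint encoded in the condition of Lemma~\ref{lem:eval-open-restriction}, whose hypothesis holds here since $\mathcal E_\delta$ is supported on $\Coh_\delta \subset \Coh_\delta(\oS)$ so that $\ev_\delta$ factors through $\Lambda(S)$. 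Once this degree compatibility is verified, the surjectivity argument goes through uniformly in the proper and non-proper cases.
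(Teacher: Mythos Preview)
Your proof is correct in substance and uses the same key ingredients as the paper: the explicit formula $\ch(\mathcal{E}_\delta)=\Delta\cup e^u\cup\Td_S^{-1}$ and the observation that $e^u\,\Td_S^{-1}$ is invertible (unit constant term), so that the map from tautological generators to monomials $\lambda u^j$ is triangular and invertible. The paper's proof is essentially a two-line version of your argument.

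The one organizational difference worth noting is your handling of the non-proper case. You try to argue directly that the elimination never requires the killed generators $\uch_1(\lambda)$ with $\deg(\lambda)\le 1$, invoking a degree compatibility that you do not fully spell out. The paper sidesteps this entirely: since $S$ is pure, the restriction $H_*(\Coh_\delta(\oS),\Q)\to H_*(\Coh_\delta,\Q)$ is surjective and $\Lambda(\oS)$-equivariant, so it suffices to prove surjectivity for the compactification $\oS$; there the pairing on $H^*(\oS,\Q)$ is nondegenerate, $\Delta=\sum_\lambda \lambda\otimes\lambda^*$ with honest dual bases, and your triangular argument goes through with no bookkeeping. This reduction is cleaner than verifying your last paragraph directly.

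One small slip: you write ``together with $\ev_\delta(\r)=1$'', but $\rk(\delta)=0$, so $\ev_\delta(\r)=0$. This is harmless, since the unit $1\in\Lambda(S)$ already hits $1\in H^0(\Coh_\delta,\Q)$; just delete the remark about $\r$.
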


\begin{proof}
The restriction $H_*(\overline{S},\Q) \to H_*(S,\Q)$ is surjective.
So the restriction $H_*(\Coh_\delta(\overline{S}),\Q) \to H_*(\Coh_\delta,\Q)$ is also surjective. 
As the latter is a morphism of $\Lambda(\overline{S})$-module, it is enough to prove the statement for $S$ projective. 
By \eqref{Edelta}, we have
$$\ch(\mathcal{E}_\delta)=\Td_S^{-1}e^u\Delta_S=\Td_S^{-1}e^u \sum_{\lambda} \lambda \otimes \lambda^*$$
where $\{\lambda\},$ $\{\lambda^*\}$ are dual bases
of $H^*(S,\Q)$ and $H^*_c(S,\Q)$. Since $\Td_S$ is invertible, the result follows.
\end{proof}

When $S$ is not pure, the map $\ev_\delta$ may still be defined but 
it cannot be surjective since $\mathcal{E}_\delta$ extends to the 
compactification $\Coh_\delta(\overline{S}) \times \overline{S}$.

\begin{remark}
   The definition of the action of $\Lambda(S)$ on $\Hb_0(S)$ as well as
Proposition~\ref{prop:modulealgebra} and its proof extend \textit{mutatis mutandis} from $\Hb_0(S)$ to $\Hb(S)$. 
 \end{remark}

\medskip

\section{Derived Hecke correspondences}\label{sec:derHecke}

In this section we consider and describe the simplest type of Hecke correspondence.

\subsection{Hecke correspondences} From \eqref{E:convdiagram2}, 
we can derive the following induction diagrams
\begin{equation}\label{E:convdiagram3}
    \begin{gathered}
        \RCoh_{n\delta} \times \RCoh_\alpha\xleftarrow{q_{n\delta,\alpha}} \widetilde{\RCoh}_{n\delta;\alpha} \xrightarrow{p_{n\delta,\alpha}}\RCoh_{\alpha+n\delta},\\
        \RCoh_{\alpha} \times \RCoh_{n\delta}\xleftarrow{\overline{q}_{\alpha,n\delta}} \widetilde{\RCoh}_{n\delta;\alpha-n\delta}\xrightarrow{\overline{p}_{\alpha,n\delta}}  \RCoh_{\alpha-n\delta}.
    \end{gathered}
\end{equation}
For the compactly supported COHA it is useful to factor the maps $p_{n\delta,\alpha},$ $ \overline{p}_{\alpha,n\delta}$ as follows
\begin{equation}\label{E:convdiagram3'}
\begin{gathered}
    \widetilde{\RCoh}_{n\delta;\alpha}\xrightarrow{p'_{n\delta,\alpha}} \RCoh_{\alpha+n\delta} \times \Sym^n(S)\xrightarrow{p''_{n\delta,\alpha}} \RCoh_{\alpha+n\delta},\\
    \widetilde{\RCoh}_{n\delta;\alpha-n\delta} \xrightarrow{\overline{p}'_{\alpha,n\delta}} \RCoh_{\alpha-n\delta} \times \Sym^n(S) \xrightarrow{\overline{p}''_{\alpha,n\delta}} \RCoh_{\alpha-n\delta}.
\end{gathered}
\end{equation}
We call the first/second diagram in~\eqref{E:convdiagram3}
the positive/negative length $n$ Hecke correspondences. 
Since $\RCoh^{\geq d}$ is stable under taking subobjects, the Hecke correspondences restrict to $\RCoh^{\geq d}$,
yielding the following restricted induction diagrams
 \begin{equation}\label{E:truncconvdiagram3}
 \begin{gathered}
\xymatrix{\RCoh_{n\delta} \times \RCoh_\alpha^{\geq d} & \widetilde{\RCoh}_{n\delta;\alpha}^{\geq d} \ar[l]_-{q_{n\delta,\alpha}} \ar[r]^-{p_{n\delta,\alpha}} &\RCoh_{\alpha+n\delta}^{\geq d},
}\\
\xymatrix{\RCoh_{\alpha}^{\geq d} \times \RCoh_{n\delta} & \widetilde{\RCoh}_{n\delta;\alpha-n\delta}^{\geq d} \ar[l]_-{\overline{q}_{\alpha,n\delta}} \ar[r]^-{\overline{p}_{\alpha,n\delta}} &\RCoh_{\alpha-n\delta}^{\geq d},
}
\end{gathered}
\end{equation}
where 
$$\widetilde{\RCoh}^{\geq d}_{\alpha;\beta}=\widetilde{\RCoh}_{\alpha;\beta} \underset{\RCoh_{\alpha+\beta}}{\times} 
\RCoh^{\geq d}_{\alpha+\beta}$$
is the derived fiber product.

\subsection{Locally free resolutions} The Hecke correspondences enjoy much better properties when the tautological sheaf $\mathcal{E}$ has perfect amplitude in $[-1,0]$ and admits locally a two-step locally free resolution. This is true in our situation after we restrict to the open substack $\RCoh^{\geq 1}$.

\medskip

\begin{lemma} Let $\gamma \in K^c_0({S})_{\Q}$ and let $\mathfrak{U} \subset \RCoh^{\geq 1}_{\gamma}$ be any finite type open substack. The tautological sheaf $\mathcal{E}_{\gamma} |_{ \mathfrak{U}\times\overline S} $ admits a $2$-step resolution by locally free sheaves $0\to \mathcal{E}_{-1} \to \mathcal{E}_0 \to \mathcal{E}_{\gamma} \to 0$.
\end{lemma}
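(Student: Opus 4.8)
The plan is to build the resolution fiberwise and then spread it out over the family, the crucial input being that the defining property of $\RCoh^{\geq 1}$ forces each member to have homological dimension at most $1$ on the surface. Throughout I write $B$ for a smooth atlas of $\mathfrak{U}$ (all the assertions below — flatness, local freeness, the syzygy computation — are smooth-local, so it suffices to check them after descent from $B$), $\pi:B\times\oS\to B$ for the projection, which is smooth and proper of relative dimension $2$, and $\mathcal{F}=\mathcal{E}_\gamma|_{\mathfrak{U}\times\oS}$, which is coherent and flat over $B$ since it is the restriction of a universal family.

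First I would produce a locally free sheaf $\mathcal{E}_0$ together with a surjection $\mathcal{E}_0\twoheadrightarrow\mathcal{F}$. Fixing an ample $\mathcal{O}(1)$ on $\oS$ and setting $\mathcal{E}_0=\pi^*\pi_*(\mathcal{F}(N))\otimes\mathcal{O}(-N)$ for $N\gg 0$ does the job: local freeness of $\pi_*(\mathcal{F}(N))$ and surjectivity of the evaluation map follow from flatness of $\mathcal{F}$ over $B$ together with cohomology-and-base-change (this is where I use that $\oS$ is projective and that $B\times\oS$ has enough locally free sheaves, Thomason's resolution property for quotient stacks). Define $\mathcal{E}_{-1}:=\ker(\mathcal{E}_0\to\mathcal{F})$; it is coherent, and the short exact sequence $0\to\mathcal{E}_{-1}\to\mathcal{E}_0\to\mathcal{F}\to 0$ together with $B$-flatness of $\mathcal{E}_0$ and $\mathcal{F}$ shows that $\mathcal{E}_{-1}$ is flat over $B$ as well.

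It remains to prove that $\mathcal{E}_{-1}$ is locally free, and here the two main points enter. Fix $u\in B$ and restrict the sequence to the fiber $\{u\}\times\oS$: restriction is right exact, and since $\mathcal{F}$ is $B$-flat the first $\mathrm{Tor}$-sheaf of $\mathcal{F}$ against $\mathcal{O}_{\{u\}\times\oS}$ vanishes, so $\mathcal{E}_{-1}|_{\oS}$ is exactly the first syzygy $\ker(\mathcal{E}_0|_{\oS}\twoheadrightarrow\mathcal{F}_u)$ of $\mathcal{F}_u$ on the smooth surface $\oS$. Now $\mathcal{F}_u$ lies in $\RCoh^{\geq 1}$, i.e. it has no zero-dimensional subsheaf, hence $\mathrm{depth}(\mathcal{F}_{u,x})\geq 1$ at every closed point $x$ of its support; since $\dim\mathcal{O}_{\oS,x}=2$, the Auslander--Buchsbaum formula on the regular local rings of $\oS$ gives $\mathrm{pd}_{\oS}(\mathcal{F}_u)\leq 2-1=1$, so its first syzygy $\mathcal{E}_{-1}|_{\oS}$ is locally free. (Off the support and at points of $\oS\setminus S$ one has $\mathcal{F}_u=0$, so the syzygy is locally free there trivially.)

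Finally I would globalize: $\mathcal{E}_{-1}$ is coherent, flat over $B$, and locally free on every fiber of the flat map $\pi$, and these three properties force it to be locally free on $B\times\oS$. Concretely, at a point $x$ over $u$, lifting a basis of the free module $(\mathcal{E}_{-1}|_{\oS})_x$ yields a map $\phi:\mathcal{O}_{X,x}^{\oplus r}\to\mathcal{E}_{-1,x}$ that is an isomorphism modulo $\mathfrak{m}_u$; since source and target are both $\mathcal{O}_{B,u}$-flat and finite over $\mathcal{O}_{X,x}$, applying Nakayama first to $\mathrm{coker}\,\phi$ and then to $\ker\phi$ (using $B$-flatness to keep $\otimes\kappa(u)$ exact) shows $\phi$ is an isomorphism, so $\mathcal{E}_{-1}$ is locally free. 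I expect this last globalization step — and the bookkeeping to run flatness, cohomology-and-base-change, and the resolution property at the level of stacks rather than schemes — to be the only genuinely technical point; the fiberwise assertion is the classical fact that a sheaf of pure dimension $\geq 1$ on a smooth surface admits a length-one locally free resolution.
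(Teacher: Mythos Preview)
Your proof is correct and follows essentially the same approach as the paper: construct a global surjection from a locally free $\mathcal{E}_0$ (using the resolution property), then verify that the kernel $\mathcal{E}_{-1}$ is locally free by reducing to fibers via $\mathfrak{U}$-flatness and invoking the fact that a dimension-$\geq 1$ sheaf on a smooth surface has projective dimension $\leq 1$. You are more explicit than the paper on two points---spelling out the depth/Auslander--Buchsbaum argument for the fiberwise bound, and the ``flat plus fiberwise locally free implies locally free'' globalization---where the paper simply says the fiber has perfect amplitude in $[-1,0]$ and that local freeness can be checked locally.
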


\begin{proof} Having a $2$-step resolution by locally free sheaves is a local condition on any stack of finite type. Indeed, let $\mathcal{F} \in Coh(X)$, where $X$ is a stack of finite type, and assume that $\mathcal{F}|_{U_i}$ admits $2$-step resolutions by locally free sheaves for some open cover $\bigcup_i U_i = X$. Since $X$ is of finite type, there exists a short exact sequence $$0 \to \mathcal{E}_{-1} \to \mathcal{E}_0 \to \mathcal{F}\to 0$$ with $\mathcal{E}_0$ being a locally free sheaf (see \cite[Prop. 2.1.10]{Huybrechts-Lehn}). We claim that $\mathcal{E}_{-1}$ is locally free. This may be done locally and on any atlas $[U/R] \to X$; it is enough to check that the stalk $(\mathcal{E}_{-1})_{|U,u}$ is a flat $\mathcal{O}_{U,u}$-module for any $u \in U$; this follows from the fact that $\text{Tor}^{>1}(\mathcal{F}_{|U},-)=\{0\}$ and the standard long exact sequence. Returning to the setting of the lemma, as 
$\mathcal{E}_{\gamma} |_{ \mathfrak{U}\times\overline S}$ is $\mathfrak{U}$-flat, it is enough to check that for any $\mathbb{C}$-point $x\in\mathfrak{U}(\mathbb{C})$ the sheaf $\mathcal{E}_{\gamma}|_{\{x\}\times\overline S}$ admits a $2$-step resolution by locally free sheaves over $\overline S$, see \cite[Lemma 2.1.7]{Huybrechts-Lehn}. This in turn follows from the fact that $\mathcal{E}_{\gamma}|_{\{x\}\times\overline S}$ is of dimension $\geq 1$ and that $\overline{S}$ is smooth, so that $\mathcal{E}_{\gamma}|_{\{x\}\times\overline S}$ has perfect (equivalently, Tor) amplitude in $[-1,0]$ (see e.g. \cite[Part II., 5.]{Huybrechts-Lehn}).
\end{proof}

\medskip

In the remainder of this section, we describe the length one Hecke correspondences and compute their action on tautological classes. Until \S\ref{sec:atlashecke}, we let $S$ be an arbitrary smooth connected surface. A general framework for derived Hecke correspondences has recently been worked out by Q. Jiang in \cite[\S 8]{QJiang}, in the language of derived algebraic geometry, following the work of Negut \cite{Negut}.

\medskip

\subsection{Length one Hecke correspondences and operators}\label{sec:derived Hecke}
 Fix $\alpha$ and set $\gamma=\alpha+\delta$.  
 In this section we consider length one Hecke correspondences given by the diagrams in \eqref{E:convdiagram3} with $n=1$ restricted to $\RCoh^{\geq 1}$. 
 To unburden the notation, we will drop the indices 
 of the maps $p$, $q$, etc. 
 Let $K_{S}$ be the canonical bundle of $S$ and set $\mathcal{F}_\alpha=\mathcal{E}_{\alpha}^\vee \otimes K_{S}[1]$, a complex over $\RCoh_\alpha \times S$. 
 By~\eqref{Edelta}, the Serre duality gives an isomorphism of complexes over $\RCoh_\alpha \times \RCoh_{\delta}$
\begin{equation*}
\begin{split}
(\RHom_S(\mathcal{E}_\delta, \mathcal{E}_{\alpha})[1])^\vee &= \RHom_S(\mathcal{E}_\alpha, K_{S}\otimes\mathcal{E}_\delta)[1]\\
&=Rp_{12*}(\mathcal{E}_{\alpha}^\vee \otimes K_{S}\otimes \mathcal{O}_{\Delta_{S}} \otimes \rho)[1]\\
&=\mathcal{E}_{\alpha}^\vee\otimes K_S\otimes\rho[1]
\end{split}
\end{equation*}
where $p_{12}:\RCoh_\alpha \times \RCoh_{\delta} \times S \to \RCoh_\alpha \times \RCoh_{\delta}$ is the projection. In particular, the complex $\mathcal{F}_\alpha$ is the restriction to $\RCoh_\alpha \times S$ of $(\RHom_S(\mathcal{E}_\delta, \mathcal{E}_{\alpha})[1])^\vee$.
Let 
$$\tau : \mathbb{P}(\mathcal{E}_{\gamma}) \to \mathbb{P}(\mathcal{E}_{\gamma})  \times {S}
,\quad
\overline{\tau} : \mathbb{P}(\mathcal{F}_{\alpha}) \to \mathbb{P}(\mathcal{F}_{\alpha}) \times {S}$$ 
be the diagonal morphisms, i.e., the morphisms making the following diagrams commutative:
\begin{equation}\label{E:deftau}
    \begin{split}
        \xymatrix{\mathbb{P}(\mathcal{E}_{\gamma}) \ar[r]^-{\tau} \ar[d]_-{\pi} & \mathbb{P}(\mathcal{E}_{\gamma}) \times {S} \ar[d]^-{pr_{{S}}} &  \mathbb{P}(\mathcal{F}_{\alpha}) \ar[r]^-{\overline{\tau}} \ar[d]_-{\overline{\pi}} & \mathbb{P}(\mathcal{F}_{\alpha}) \times {S} \ar[d]^-{pr_{{S}}}\\
\RCoh_{\gamma} \times {S} \ar[r]^-{pr_{{S}}} & {S} & \RCoh_{\alpha} \times {S} \ar[r]^-{pr_{{S}}} & {S}}
    \end{split}
\end{equation}
Here, $pr_{{S}}$ is the projection to ${S}$.
We consider the following partial truncations
$$\widetilde{\Coh}_{\delta;\alpha}= \widetilde{\RCoh}_{\delta;\alpha} \underset{\RCoh_\delta}{\times} \Coh_\delta, \qquad \widetilde{\Coh}^{\geq 1}_{\delta;\alpha}= \widetilde{\RCoh}^{\geq 1}_{\delta;\alpha} \underset{\RCoh_\delta}{\times} \Coh_\delta.$$
The motivation to consider these derived stacks will become clear in~\S\ref{sec:atlashecke}. 
The notation is in conflict with~\eqref{truncinddiag}, but its usage will be clear from the context.
We have the following important result.

\smallskip

\begin{proposition}[{\cite[\S 8\,,\,prop.~4.33]{QJiang},\cite[\S 2]{Negut}}]\label{L:Negut} 
\leavevmode\nolisttopbreak
\begin{enumerate}[label=$\mathrm{(\alph*)}$,leftmargin=8mm]
\item
There is an isomorphism of derived stacks 
$\widetilde{\Coh}^{\geq 1}_{\delta;\alpha} = \mathbb{P}(\mathcal{E}_{\gamma})$ 
which identifies the tautological sheaf $(q \times \Id)^*(\mathcal{E}_{\delta})$ 
with $\tau_*(\mathcal{O}_{\mathbb{P}(\mathcal{E}_{\gamma})}(1))$.
\item
There is an isomorphism of derived stacks 
$\widetilde{\Coh}^{\geq 1}_{\delta;\alpha} = \mathbb{P}(\mathcal{F}_{\alpha})$ 
which identifies the tautological sheaf $(\overline{q} \times \Id)^*(\mathcal{E}_{\delta})$ with 
$\overline{\tau}_*(\mathcal{O}_{\mathbb{P}(\mathcal{F}_{\alpha})}(-1))$.\qed
\end{enumerate}
\end{proposition}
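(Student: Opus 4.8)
The plan is to establish the two claimed isomorphisms $\widetilde{\Coh}^{\geq 1}_{\delta;\alpha} \simeq \mathbb{P}(\mathcal{E}_{\gamma})$ and $\widetilde{\Coh}^{\geq 1}_{\delta;\alpha} \simeq \mathbb{P}(\mathcal{F}_{\alpha})$ by giving an intrinsic moduli-theoretic description of the length one Hecke stack and matching it with Jiang's universal description of the projectivization $\mathbb{P}(\mathcal{E})$ of a complex of perfect amplitude $[-1,0]$. First I would note that a point of $\widetilde{\Coh}^{\geq 1}_{\delta;\alpha}$ lying over $\mathcal{T}=\mathcal{E}_\gamma|_x \in \Coh_\gamma^{\geq 1}$ is a short exact sequence $0\to \mathcal{T}'\to \mathcal{T}\to \mathcal{T}''\to 0$ with $\mathcal{T}'\in\Coh_\delta$ a length-one skyscraper $\mathcal{O}_y$ (possibly with the $B\Gm$-automorphism factor $\rho$ recorded), equivalently a surjection $\mathcal{T}\twoheadrightarrow \mathcal{O}_y$ up to scalar, i.e.\ a line in the fibre of $\mathcal{T}$ at the point $y$, or dually a hyperplane. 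This is exactly the kind of datum that Jiang's $\mathbb{P}(\mathcal{E}_\gamma)$ parametrizes in families, since on $\RCoh^{\geq 1}$ the tautological sheaf has perfect amplitude in $[-1,0]$ by the resolution lemma above.

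For part (a), the concrete execution would be to invoke Jiang's construction (cited \cite[\S 8]{QJiang}, following \cite{Negut}): for a complex $\mathcal{E}$ of perfect amplitude $[-1,0]$, the stack $\mathbb{P}(\mathcal{E})$ represents the functor of rank-one locally free quotients of $\mathcal{E}$, or in the derived setting the appropriate universal surjection, and it comes equipped with the tautological quotient line bundle $\mathcal{O}_{\mathbb{P}(\mathcal{E}_\gamma)}(1)$. The key point is to identify the universal quotient on $\mathbb{P}(\mathcal{E}_\gamma)$ with the datum of a quotient sheaf supported at a single point of $S$; this is where the diagonal morphism $\tau$ of \eqref{E:deftau} enters. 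Namely, the universal rank-one quotient, pushed forward along $\tau$, is precisely the tautological $\delta$-sheaf $(q\times\Id)^*(\mathcal{E}_\delta)$, and it carries the line bundle $\mathcal{O}_{\mathbb{P}(\mathcal{E}_\gamma)}(1)$ as its $\rho$-weight space. I would then check that the derived structures match, not merely the classical truncations, using the identification $\mathbb{V}(\mathcal{C}_{\alpha,\delta})\simeq\widetilde{\RCoh}_{\alpha,\delta}$ from \S\ref{sec:defCOHA0} together with Jiang's derived projectivization; restricting to $\Coh_\delta$ on the $\delta$-factor is exactly what pins the support to a genuine (classical) point $y\in S$, which is the reason for introducing the partial truncation $\widetilde{\Coh}^{\geq 1}_{\delta;\alpha}$.

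For part (b), the strategy is to dualize. The Serre duality computation displayed just before the statement already identifies $\mathcal{F}_\alpha=\mathcal{E}_\alpha^\vee\otimes K_S[1]$ with the restriction of $(\RHom_S(\mathcal{E}_\delta,\mathcal{E}_\alpha)[1])^\vee$ to $\RCoh_\alpha\times S$. Under the projective duality for projectivizations of complexes of amplitude $[-1,0]$ (again a feature of Jiang's formalism, where $\mathbb{P}(\mathcal{E})$ and $\mathbb{P}(\mathcal{E}^\vee[1])$ are canonically identified, generalizing the classical isomorphism between the space of hyperplanes in $V$ and the space of lines in $V^\vee$), the quotient description from (a) translates into a sub-object description: the same Hecke stack is $\mathbb{P}(\mathcal{F}_\alpha)$, now viewed through the inclusion $\mathcal{T}'\hookrightarrow\mathcal{T}$ rather than the quotient $\mathcal{T}\twoheadrightarrow\mathcal{T}''$. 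The twist by $(-1)$ versus $(1)$ in the two tautological identifications reflects passing from the quotient line to the dual sub-line, and the shift and $K_S$-twist are bookkeeping from Serre duality.

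The main obstacle I anticipate is the careful handling of the derived and stacky refinements rather than the classical combinatorics, which are standard. Specifically, I expect the delicate step to be verifying that Jiang's derived $\mathbb{P}(\mathcal{E}_\gamma)$ agrees with $\widetilde{\Coh}^{\geq 1}_{\delta;\alpha}$ as \emph{derived} stacks (including matching the obstruction theories), and correctly tracking the $B\Gm$-gerbe structure coming from the automorphisms of $\mathcal{E}_\delta=\Delta_{S*}(\mathcal{O}_S)\boxtimes\rho$, so that the tautological sheaf is identified with $\tau_*(\mathcal{O}(1))$ on the nose, with the correct $\rho$-weight, rather than up to a twist. Since the heavy lifting of constructing derived projectivizations and their universal properties is precisely the content of \cite[\S 8]{QJiang}, the proof should reduce to quoting that construction and performing these compatibility checks, which is why the statement is asserted with a citation and a $\qed$.
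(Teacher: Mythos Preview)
Your reading is correct: the paper does not prove this proposition at all but simply cites \cite[\S 8, Prop.~4.33]{QJiang} and \cite[\S 2]{Negut} and closes with a \qed. Your sketch of the underlying argument---identifying the Hecke datum with a rank-one quotient of the tautological sheaf, invoking Jiang's universal property of the derived projectivization for complexes of amplitude $[-1,0]$, and dualizing via the Serre duality computation already displayed before the statement---is an accurate summary of what those references contain, and you have correctly located the one genuinely nontrivial point (matching the derived structures, not just the classical points) as the content of Jiang's work.
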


\smallskip

Recall that the classical truncation map $\Coh_\delta \to \RCoh_\delta$ induces an isomorphism in Borel-Moore homology (and likewise in relative Borel-Moore homology). To carry out the computation of length one Hecke operators, we may and will therefore use the partially truncated induction diagrams
\begin{gather*}
    \Coh_{\delta} \times \RCoh_\alpha^{\geq 1} \xleftarrow{q_{\delta,\alpha}} \widetilde{\Coh}_{\delta;\alpha}^{\geq 1} \xrightarrow{p_{\delta,\alpha}} \RCoh_{\gamma}^{\geq 1},\\
    \RCoh_{\gamma}^{\geq 1} \times \Coh_{\delta} \xleftarrow{\overline{q}_{\gamma,\delta}}\widetilde{\Coh}_{\delta;\alpha}^{\geq 1}  \xrightarrow{\overline{p}_{\gamma,\delta}} \RCoh_{\alpha}^{\geq 1},
\end{gather*}
obtained by base change from \eqref{E:truncconvdiagram3}.

\smallskip

\begin{corollary}\label{Cor:proppq} The restrictions of the maps $p, \overline{p}$ and  $p',$ $\overline{p}'$ to 
$\widetilde{\Coh}^{\geq 1}$ are proper and representable.
The restrictions of the maps $q,$ $\overline{q}$ to
$\widetilde{\Coh}^{\geq 1}$ are quasi-smooth.   
\end{corollary}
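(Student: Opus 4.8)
The plan is to derive all four properness and quasi-smoothness claims directly from Proposition~\ref{L:Negut}, which identifies the relevant nested stacks with projectivizations of perfect complexes of finite amplitude, together with the behavior of these properties under the factorizations of the maps. First I would treat the maps $q$ and $\overline{q}$: by Proposition~\ref{L:Negut}(a), the stack $\widetilde{\Coh}^{\geq 1}_{\delta;\alpha}$ is canonically $\mathbb{P}(\mathcal{E}_\gamma)$, a projectivization of a perfect complex of amplitude $[-1,0]$ over $\RCoh^{\geq 1}_\gamma \times S$; the map $q$ factors through the projection $\pi : \mathbb{P}(\mathcal{E}_\gamma) \to \RCoh_\gamma \times S$ followed by the identification with $\Coh_\delta \times \RCoh^{\geq 1}_\alpha$ coming from the Hecke data. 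Since the projectivization of a perfect complex of finite amplitude (in the sense of Jiang~\cite{QJiang}) is a quasi-smooth morphism onto its base, and quasi-smoothness is preserved under composition with the remaining smooth or quasi-smooth structure maps, $q$ is quasi-smooth. The analogous statement for $\overline{q}$ uses Proposition~\ref{L:Negut}(b) and the identification of $\widetilde{\Coh}^{\geq 1}_{\delta;\alpha-\delta}$ with $\mathbb{P}(\mathcal{F}_\alpha)$, where $\mathcal{F}_\alpha = \mathcal{E}_\alpha^\vee \otimes K_S[1]$ is again a perfect complex of finite amplitude; the Serre-duality computation already recorded before the statement guarantees this.

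Next I would address the properness and representability of $p$, $p'$, $\overline{p}$, $\overline{p}'$. The cleanest route is to observe that under the isomorphism of Proposition~\ref{L:Negut}(a), the map $p : \widetilde{\Coh}^{\geq 1}_{\delta;\alpha} \to \RCoh^{\geq 1}_{\alpha+\delta}$ becomes a morphism out of a projective bundle $\mathbb{P}(\mathcal{E}_\gamma)$; since projectivizations $\mathbb{P}(\mathcal{E})$ of perfect complexes of finite amplitude are proper and representable over their base (this is part of the formalism in~\cite{QJiang}, mirroring the classical case where $\mathbb{P}(V)$ is a projective-space bundle), and since the support morphism to $\Sym^n(S)$ appearing in the factorization $p = p'' \circ p'$ is finite, the map $p'$ lands representably and properly in $\RCoh_{\alpha+\delta} \times \Sym^n(S)$. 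I would then note that $p''$, being a base change of the structure map $\Sym^n(S) \to \mathrm{pt}$ composed with a projection off the proper factor $\Sym^n(S)$, is itself proper and representable, so the composite $p$ is as well. The negative Hecke maps $\overline{p}$, $\overline{p}'$ are handled identically using part (b) of the proposition and the complex $\mathcal{F}_\alpha$.

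The single genuinely structural point—and the step I expect to be the main obstacle—is verifying that the abstract projectivization $\mathbb{P}(-)$ of Jiang, applied to a perfect complex of amplitude $[-1,0]$ over a derived base, really does inherit the three properties (proper, representable, quasi-smooth onto the base) that one wants, and that these properties are stable under the non-representable convolution context in which we are operating. In the classical setting $\mathbb{P}(\mathcal{E})$ for a vector bundle is an honest projective bundle and all three properties are immediate, but here $\mathcal{E}_\gamma$ and $\mathcal{F}_\alpha$ are genuine complexes and the base stacks are derived and of infinite type. I would resolve this by restricting, as permitted, to a finite-type open substack (using that $\RCoh_\gamma$ is covered by finite-type open quotient stacks) and invoking the local two-step locally free resolution $0 \to \mathcal{E}_{-1} \to \mathcal{E}_0 \to \mathcal{E}_\gamma \to 0$ established in the preceding lemma; over such a chart $\mathbb{P}(\mathcal{E}_\gamma)$ is cut out inside the honest projective bundle $\mathbb{P}(\mathcal{E}_0)$ by the section induced by $\mathcal{E}_{-1} \to \mathcal{E}_0$, from which properness and representability are inherited from $\mathbb{P}(\mathcal{E}_0)$ and quasi-smoothness follows because the defining section realizes $\mathbb{P}(\mathcal{E}_\gamma)$ as a derived zero locus. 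Everything else is bookkeeping: properness, representability, and quasi-smoothness are all stable under composition and base change, so once the two projectivization models are in hand the four assertions follow by assembling the factorizations.
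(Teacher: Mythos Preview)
Your argument for the quasi-smoothness of $q$ and $\overline{q}$ is based on a misidentification. The projection $\pi:\mathbb{P}(\mathcal{E}_\gamma)\to \RCoh_\gamma\times S$ from Proposition~\ref{L:Negut}(a) is the map $p'$ (it remembers the middle term of the extension together with the support of the length-one quotient), not $q$; there is no ``identification of $\RCoh_\gamma\times S$ with $\Coh_\delta\times\RCoh_\alpha^{\geq 1}$ coming from the Hecke data,'' since these stacks are simply different. Likewise $\overline{\pi}$ is $\overline{p}'$, not $\overline{q}$. The projectivization model gives properness of $p',\overline{p}'$, but says nothing direct about $q,\overline{q}$. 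The paper instead realizes $q$ and $\overline{q}$ as restrictions to open substacks of the projections
\[
\mathbb{V}(\RHom_S(\mathcal{E}_\delta,\mathcal{E}_\alpha)[1])\to \Coh_\delta\times\RCoh_\alpha,\qquad
\mathbb{V}(\RHom_S(\mathcal{E}_\gamma,\mathcal{E}_\delta))\to \RCoh_\gamma\times\Coh_\delta,
\]
which are quasi-smooth because total spaces of perfect complexes are; this is the description coming from \S\ref{sec:defCOHA0}, not from the $\mathbb{P}$-model.

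Your properness argument also has a gap when $S$ is not proper. You assert that $p''$ (projection off $\Sym^n(S)$, here $S$) is proper, but this is exactly what fails for open $S$. The paper takes $p$ proper as already known from the general COHA formalism, and for $\overline{p}$ first establishes properness when $S$ is proper (then $\overline{p}=\overline{p}''\circ\overline{p}'$ with both factors proper), and then deduces the open case by exhibiting $\overline{p}$ for $S$ as a base change of $\overline{p}$ for a compactification $\oS$ along the open inclusion $\RCoh_\alpha^{\geq 1}(S)\hookrightarrow\RCoh_\alpha^{\geq 1}(\oS)$.
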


\begin{proof}
We already know that $p$ is proper. Since both $\mathcal{E}_\gamma$ and $\mathcal{F}_\alpha$ have perfect amplitude in $[-1,0]$ over $\RCoh^{\geq 1} \times \oS$, by \cite[Lem.~5.4]{QJiang} the maps $p'$ and $\overline{p}'$ are proper and quasi-smooth. Hence $\overline{p}$ is proper as well when $S$ is proper. Since $\RCoh_\alpha$ is an open substack of $\RCoh_{\alpha}(\oS)$, we may deduce the case of an arbitrary $S$ by base change. Indeed, note that there is a cartesian diagram
$$\xymatrix{\widetilde{\Coh}^{\geq 1}_{\delta;\alpha}(S) \ar[r] \ar[d]_-{\overline{p}} & \widetilde{\Coh}^{\geq 1}_{\delta;\alpha}(\overline{S}) \ar[d]_-{\overline{p}} \\ \RCoh^{\geq 1}_\alpha(S) \ar[r] & \RCoh^{\geq 1}_\alpha(\overline{S})
}
$$
since for any extension $0 \to \mathcal{F} \to \mathcal{E} \to \mathcal{O}_x\to 0$ with $\mathcal{F}, \mathcal{E}$ of dimension $\geq 1$, we have $\supp(\mathcal{E})=\supp(\mathcal{F})$.
Next, we claim that the maps $q$ and $\overline{q}$ are the restrictions to suitable open substacks of the projections
$$\mathbb{V}(\RHom_S(\mathcal{E}_\delta,\mathcal{E}_\alpha)[1]) \to \Coh_\delta \times \RCoh_\alpha
,\quad
\mathbb{V}(\RHom_S(\mathcal{E}_\gamma,\mathcal{E}_\delta)) \to \RCoh_\gamma \times \Coh_\delta.$$ 
Indeed, the condition for a sheaf to be supported on $S \subset \overline{S}$ is open, as is the condition for a morphism $\mathcal{E}_\gamma \to \mathcal{E}_\delta$ to be surjective.
As a consequence, $q,\overline{q}$ are both quasi-smooth.
\end{proof}

We will prove a more general version of Corollary~\ref{Cor:proppq} in \S\ref{sec:Heckepatterns}. Thanks to Corollary~\ref{Cor:proppq}, the truncated induction diagrams yield two types of Hecke operators
\begin{gather*}
    \begin{split}
        T_+ =p_!\circ q^*: H_*(\Coh_\delta,\Q) \otimes H_*(\Coh_{\alpha}^{\geq 1},\Q) \to H_*(\Coh_\gamma^{\geq 1},\Q),\\
        T_- =\overline{p}_!\circ\overline{q}^*: H_*(\Coh_\delta,\Q) \otimes H_*(\Coh_{\gamma}^{\geq 1},\Q) \to H_*(\Coh_\alpha^{\geq 1},\Q).
    \end{split}
\end{gather*}
Considering $S$-hyperbolic homology, we also define 
\begin{gather}\label{E:cpt-supp-Hecke}
\begin{split}
    T^c_+ =r\circ p'_!\circ q^*: 
    H^c_*(\Coh_\delta,\Q) \otimes H_*
    (\Coh_{\alpha}^{\geq 1},\Q) \to 
    H_*(\Coh_\gamma^{\geq 1},\Q),\\
    T^c_- =r\circ\overline{p}'_!\circ\overline{q}^*: 
    H^c_*(\Coh_\delta,\Q) \otimes H_*(\Coh_{\gamma}^{\geq 1},\Q) 
    \to H_*(\Coh_\alpha^{\geq 1},\Q),
\end{split}
\end{gather}
where $r: H_*^c(S,\Q) \to \Q$ is the canonical degree 0 map.
We will next use Proposition~\ref{L:Negut} to compute their action on tautological classes over suitable open substacks of $\RCoh^{\geq 1}$.

\medskip

\subsection{Atlases for length one correspondences}\label{sec:atlashecke}
We assume until~\S\ref{sec:open-Hecke} that $S=\oS$ is a projective surface. 
Let us fix a finite type open derived substack
$\mathfrak{U} \subset \RCoh^{\geq 1}_{\gamma}$ and a locally free resolution 
\begin{align}\label{C1}
0\to \mathcal{E}_{-1} \to \mathcal{E}_0 \to \mathcal{E}_{\gamma}|_{\mathfrak{U}\times\oS} \to 0.
\end{align}
We keep the notations from the previous section. Let us explicitly describe the
projectivization $\mathbb{P}(\mathcal{E}_{\gamma})$ in terms of the complex
$\mathcal{E}_{-1} \to \mathcal{E}_0$. Let $\mathbb{P}(\mathcal{E}_0)$ be the
total space of the projective bundle associated to $\mathcal{E}_0$ and 
$\pi: \mathbb{P}(\Ec_0) \to \mathfrak{U} \times \overline{S}$ be the projection.
The $\C$-points of $\mathbb{P}(\Ec_0)$ parametrize triples $(\mathcal{W},y,\lambda)$ where $\mathcal{W} \in \mathfrak{U}$ is a coherent sheaf on $\overline{S}$ of dimension 
$\geq 1$, $y \in \overline{S}$ and $\lambda : \mathcal{E}_0|_{(\mathcal{W},y)} \to \C$
is a nontrivial linear form, defined up to multiplication by a scalar. 
The morphism $\pi^*(\mathcal{E}_0^\vee) \to \pi^*(\Ec_{-1}^\vee)$
yields a map 
$\mathcal{O}_{\mathbb{P}(\Ec_0)}(-1) \to \pi^*(\mathcal{E}_{-1}^\vee)$.
Recall that, if $V$ is a finite-dimensional vector space, then $\mathbb{P}(V)$ parametrizes hyperplanes of $V$, see \S\ref{Sec:notations}.
Unless specifically mentioned, all fiber products and tensor products are derived.
This map can be viewed as a section 
\begin{align}\label{section-s}
s \in H^0(\mathbb{P}(\mathcal{E}_0),\pi^*(\mathcal{E}_{-1}^\vee)(1)).
\end{align}
The zero locus $Z(s)$ of $s$ parametrizes the triples
$(\mathcal{W},y,\lambda) \in \mathbb{P}(\mathcal{E}_0)$ 
for which the map $\lambda$ descends to $\mathcal{W}|_y$. 
By Proposition \ref{L:Negut}, we have
$$\widetilde{\Coh}_{\delta;\alpha} \underset{\RCoh_{\gamma}}{\times} \mathfrak{U}=\mathbb{P}(\mathcal{E}_{\gamma}|_{\mathfrak{U}\times\overline S})\simeq \mathbb{P}(\mathcal{E}_0) \underset{\mathbb{V}(\pi^*(\mathcal{E}_{-1}^\vee)(1))}{\times}\mathbb{P}(\mathcal{E}_0).$$
The derived fiber product is taken with respect to the sections $s$ and 
0  of $\pi^*(\mathcal{E}_{-1}^\vee)(1)$. 
Let $\mathbb{P}(\mathcal{E}_0^{cl})$ be the classical projective bundle of $\mathcal{E}^{cl}_0$ over $\mathfrak{U}^{cl}\times \oS$.
By \cite[prop.~4.21]{QJiang},
the classical truncation of the derived stack
$\mathbb{P}(\Ec_{\gamma}|_{\mathfrak{U}\times\overline S})$ 
is isomorphic to the zero locus in
$\mathbb{P}(\mathcal{E}_0^{cl})$ of the section $s^{cl}$, i.e., we have
$$\mathbb{P}(\Ec_{\gamma}|_{\mathfrak{U}\times\overline S})^{cl} = Z(s^{cl}).$$

On the other hand, over the partial truncation
$\Coh_{\delta} \times \RCoh_{\alpha}^{\geq 1}$, the complex $\RHom(\mathcal{E}_\delta, \mathcal{E}_{\alpha})[1]$ 
has perfect amplitude in $[0,1]$. 
We fix a finite type open derived substack 
$\mathfrak{U}'$ of $\RCoh_{\alpha}^{\geq 1}$ with
a presentation 
of the complex $\RHom(\mathcal{E}_\delta, \mathcal{E}_{\alpha})[1]\big|_{\Coh_{\delta} \times \mathfrak{U}'}$
\begin{align}\label{C2}
0\to\mathcal{V}_0 \to \mathcal{V}_1\to 0
\end{align}
 Let $t:\mathbb{V}(\mathcal{V}_0) \to \Coh_{\delta} \times\mathfrak{U}'$ 
 be the projection.
 The map $\mathcal{V}_0 \to \mathcal{V}_1$ yields a section
\begin{align}\label{section-s'}s' \in H^0(\mathbb{V}(\mathcal{V}_0), t^*(\mathcal{V}_1)).\end{align}
By \S \ref{sec:defCOHA0} there is an isomorphism
$$\widetilde{\Coh}_{\delta;\alpha} \underset{\RCoh_{\alpha}}{\times}\mathfrak{U}'= \mathbb{V}(\mathcal{V}_0) \underset{\mathbb{V}(t^*(\mathcal{V}_1))}{\times}\mathbb{V}(\mathcal{V}_0)$$
where the derived fiber product is taken with respect to the sections $s'$
and 0. By \cite[prop.~4.10]{QJiang}, the classical truncation is
isomorphic to the zero locus $Z((s')^{cl}) \subset \mathbb{V}(\mathcal{V}^{cl}_0)$.
We thus get the following isomorphisms of derived and classical stacks over any open set on which both presentations \eqref{C1} and \eqref{C2} exist:
\begin{align}\label{E:isomclassicalstacks}
\begin{split}
\mathbb{V}(\mathcal{V}_0) \underset{\mathbb{V}(t^*(\mathcal{V}_1))}{\times}\mathbb{V}(\mathcal{V}_0) &\simeq \widetilde{\RCoh}_{\delta;\alpha} \simeq \mathbb{P}(\Ec_0) \underset{\mathbb{V}(\pi^*(\Ec_{-1}^\vee)(1))}{\times}\mathbb{P}(\Ec_0),\\
\mathbb{V}(\mathcal{V}^{cl}_0) &\supset Z((s')^{cl}) = Z(s^{cl}) \subset \mathbb{P}(\Ec^{cl}_0).
\end{split}
\end{align}

\medskip

The case of $\mathbb{P}(\mathcal{F}_\alpha)$ is similar, let us briefly sketch it. Observe that $$\RHom_S(\mathcal{E}_\gamma,\mathcal{E}_\delta)=Rp_{12*}(\mathcal{E}_\gamma^\vee \otimes \mathcal{E}_\delta)=Rp_{12*}(\mathcal{E}_\gamma^\vee \otimes \mathcal{O}_{\Delta_{\oS}} \otimes \rho)=\mathcal{E}_\gamma^\vee\otimes\rho.$$
Hence there is a factorization
$$\xymatrix{\mathbb{V}(\mathcal{E}_\gamma^\vee\otimes \rho) \ar[d] & \widetilde{\Coh}^{\geq 1}_{\delta;\alpha} \ar[l]_-{j} \ar[ld]^-{\overline{q}}\\
\RCoh^{\geq 1}_\gamma \times\Coh_\delta
}$$
where $j$ is an open embedding. 
Let us restrict everything to an open substack of finite type $\mathfrak{U}\subset \RCoh_\gamma^{\geq 1}$. Consider locally free resolution~\eqref{C1}, and let $\overline{\pi}: \mathbb{V}(\mathcal{E}^\vee_\gamma\otimes \rho) \to \mathfrak{U} \times \Coh_\delta$ be the projection. 
We have an obvious section 
\begin{align}\label{section-bar-s}
\overline{s} \in H^0(\mathbb{V}(\mathcal{E}_0^\vee\otimes \rho),\overline{\pi}^*(\mathcal{E}^\vee_{-1}\otimes \rho)).
\end{align} 
The zero locus of $\overline{s}^{cl}$
in $\mathbb{V}(\mathcal{E}_0^\vee\otimes \rho^{cl})$ is
$$\mathbb{V}(\mathcal{E}^{\vee}_\gamma\otimes \rho|_{\mathfrak{U} \times\Coh_\delta})^{cl} =Z(\overline{s}^{cl}) $$
Likewise, fix an open substack of finite type 
$\mathfrak{U}' \subset \RCoh_\alpha^{\geq 1}$ 
as in~\eqref{C2}, so that
$\mathcal{F}_{\alpha}|_{ \mathfrak{U}'\times \oS}$
has a presentation
\[
0\to\mathcal{V}_1^\vee \to \mathcal{V}_0^\vee\to 0.
\]
We have an obvious section 
\begin{align}\label{section-bar-s'}
\overline{s}' \in H^0(\mathbb{P}(\mathcal{V}_0^\vee),\overline{p}^*(\mathcal{V}_1(1))),
\end{align} 
and the zero locus of $(\overline{s}')^{cl}$ in $\mathbb{P}(\mathcal{V}_0^{\vee,cl})$ is $\mathbb{P}(\mathcal{F}_{\alpha}|_{\mathfrak{U}\times\overline S})^{cl} =Z((\overline{s}')^{cl})$.
Therefore, over any open set over which both presentations \eqref{C1}
and \eqref{C2} exist, 
we have the following isomorphisms of derived and classical stacks
\begin{align}
\begin{split}
 \mathbb{P}(\mathcal{V}_0^\vee) \underset{\mathbb{V}(\overline{p}^*(\mathcal{V}_1\otimes\rho))}{\times}\mathbb{P}(\mathcal{V}_0^\vee) &\simeq \widetilde{\Coh}_{\delta;\alpha} \simeq \mathbb{V}(\mathcal{E}_0^\vee\otimes\rho) \underset{\mathbb{V}(\overline{\pi}^*(\mathcal{E}_{-1}^\vee\otimes\rho))}{\times}\mathbb{V}(\mathcal{E}_0^\vee\otimes\rho),\\
\mathbb{P}(\mathcal{V}_0^{\vee,cl}) &\supset Z((\overline{s}')^{cl}) \simeq Z(\overline{s}^{cl}) \subset \mathbb{V}(\mathcal{E}_0^{\vee,cl}\otimes\rho).
\end{split}
\end{align}

\medskip

For the future use, note that the image of the map
$j^{cl}$ is the complement of the zero section in $\mathbb{V}(\mathcal{E}_0^{\vee, cl}\otimes\rho)$. 
Therefore, the section $\overline{s}^{cl}$ is regular over this complement if and only if the section
$s^{cl}$ is regular. Similarly, the section 
$(\overline{s}')^{cl}$ is regular if and only if the section $(s')^{cl}$ is regular.

\medskip

\subsection{Computation of Hecke operators on fundamental classes}\label{S:Hecketaut} 
We are now in position to 
compute the action of Hecke operators on the fundamental classes 
$[\Coh^{\geq 1}_\alpha], [\Coh^{\geq 1}_\gamma]$ and on the virtual fundamental classes 
$[\RCoh^{\geq 1}_\alpha], [\RCoh_\gamma^{\geq 1}]$. 
We keep the notation of the previous sections. 
Fix finite type open substacks 
$\mathfrak{U} \subset \RCoh^{\geq 1}_{\gamma}$
and  
$\mathfrak{U}' \subset  \RCoh_{\alpha}^{\geq 1}$ such that
\begin{align}\label{eq:T-plus-condition}
    q^{-1}(\RCoh_{\delta}(\oS) \times\mathfrak{U}') \supseteq p^{-1}(\mathfrak{U}).   
\end{align}
Recall that $\gamma=\alpha+\delta$.
We will carry out the computation of the action on the non-virtual fundamental 
classes first, under the following assumption
\begin{equation}\label{E:assumption}
    \framebox[1.1\width]{\text{the sections $s^{cl}$ and $(s')^{cl}$ are regular.}}
\end{equation}
This condition implies that the sections $\overline{s}^{cl}$ and $(\overline{s}')^{cl}$ 
in \eqref{section-bar-s}, \eqref{section-bar-s'}
are regular as well. 
This means that the maps $q_{\delta,\alpha}^{cl}$ and $p_{\delta,\alpha}^{cl}$ 
in \eqref{E:convdiagram3}
are of the expected dimension over each irreducible component of 
$\mathfrak{U}^{cl}$ and $(\mathfrak{U}')^{cl}$ respectively.
Recall from \S\ref{sec:stack0} the isomorphisms
$$\Coh_\delta = \oS \times B\mathbb{G}_m, \qquad H^*(\Coh_\delta,\Q)=H^*(\oS,\Q)\otimes \Q[u],$$
where $u=c_1(\rho)$ is as in \eqref{u}. Let 
$$[\Delta_{\oS}] \in H^*(\Coh_\delta,\Q)\otimes H^*(\overline{S},\Q)$$ 
be the fundamental class of the diagonal.
Let $h_n$ be the complete symmetric function of degree $n$.
Recall the notation $h_n(\mathcal{E}_\gamma)$ in \eqref{f(E)}.

\medskip

\begin{proposition}\label{P:negut} 
Assume that \eqref{E:assumption} holds. 
Let $r=\rk(\alpha)$. 
\begin{enumerate}[label=$\mathrm{(\alph*)}$,leftmargin=8mm]
\item
For any $l \geq 0$ the following equality holds in $H_*(\mathfrak{U},\Q) \otimes H^*(\overline{S},\Q)$:
 \begin{equation}\label{E:SegreeFormula1}
 T_+\Big(\big(u^l[\Delta_{\overline S}] \cap [\Coh_\delta]\big) \otimes [\Coh_{\alpha}^{\geq 1}]\Big)\Big|_{\mathfrak{U}}= h_{l+1-r}(\mathcal{E}_{\gamma}) \cap [\mathfrak{U}]
\end{equation} 
\item For any $\mu \in H^*({\oS},\Q)$ we have
 \begin{equation}\label{E:SegreeFormula15}
T_+ \Big(\big(\mu u^l\cap [\Coh_\delta]\big) \otimes [\Coh_{\alpha}^{\geq 1}]\Big)\Big|_{\mathfrak{U}}= h_{l+1-r}(\mu) \bullet [\mathfrak{U}].
\end{equation} 
\end{enumerate}
\end{proposition}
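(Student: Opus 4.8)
The plan is to reduce the computation of $T_+$ on fundamental classes to an explicit Segre-class computation on the projective bundle $\mathbb{P}(\Ec_0)$, using the atlas presentation from \S\ref{sec:atlashecke}. First I would unwind the definition $T_+ = p_! \circ q^*$ on the truncated induction diagram, using the isomorphism $\widetilde{\Coh}^{\geq 1}_{\delta;\alpha}\simeq \mathbb{P}(\Ec_\gamma)$ of Proposition~\ref{L:Negut}(a). The input class $u^l[\Delta_{\oS}]\cap[\Coh_\delta]$ pulls back under $q^*$; since $q$ factors through the section $s$ inside $\mathbb{P}(\Ec_0)$ and the regularity assumption~\eqref{E:assumption} guarantees that $s^{cl}$ is a regular section, the derived and classical truncations agree and $Z(s^{cl}) = \mathbb{P}(\Ec_\gamma|_{\mathfrak{U}\times\oS})^{cl}$ is cut out with the expected codimension. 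This lets me replace $q^*[\Coh_\delta^{\geq 1}\times \mathfrak{U}']$ by the refined Gysin pullback against the zero section, which produces the Euler class $e(\pi^*(\Ec_{-1}^\vee)(1))$ on $\mathbb{P}(\Ec_0)$.

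Next I would identify the tautological class $u$ on $\Coh_\delta$ with the hyperplane class $c_1(\mathcal{O}_{\mathbb{P}(\Ec_\gamma)}(1))$ of the projectivization: by Proposition~\ref{L:Negut}(a) the tautological sheaf $(q\times\Id)^*\mathcal{E}_\delta$ is identified with $\tau_*\mathcal{O}_{\mathbb{P}(\Ec_\gamma)}(1)$, so that the Chern-class computation~\eqref{E:cherndelta} translates $u$ into the relative hyperplane class. The diagonal class $[\Delta_{\oS}]$ becomes, under $\tau_*$, the push-forward along the diagonal morphism of~\eqref{E:deftau}. Thus the class to be pushed forward by $p_!$ is of the form $\xi^{l+\dim \Ec_0 - 1}$ (up to a shift by $r$) capped against $[\mathbb{P}(\Ec_0)]$ and the Euler class, where $\xi$ is the relative hyperplane class. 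Pushing forward powers of $\xi$ along $\pi\colon \mathbb{P}(\Ec_0)\to\mathfrak{U}\times\oS$ yields Segre classes of $\Ec_0$, and after restoring the Euler-class factor one obtains Segre classes of the virtual bundle $\Ec_0 - \Ec_{-1} = \mathcal{E}_\gamma$; the standard identity $s(\mathcal{E}_\gamma) = c(\mathcal{E}_\gamma)^{-1}$ together with $h_n(\mathcal{E}_\gamma)=(-1)^n s_n(\mathcal{E}_\gamma^\vee)$ (matching the conventions of~\eqref{f(E)}) then produces exactly the complete symmetric function $h_{l+1-r}(\mathcal{E}_\gamma)$. The shift $l+1-r$ arises from combining the degree $l$ of $u$, the $+1$ from the relative dimension of $\mathbb{P}(\Ec_\gamma)$ over $\RCoh_\gamma$, and the $-r$ bookkeeping the rank via the virtual dimension count $d_0-d_1=-\langle\alpha,\delta\rangle = -r$ from~\eqref{E:eulerdeltaO}.

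For part~(b), I would observe that~(a) computes the ``diagonal'' matrix coefficient, i.e.~the full $H^*(\oS,\Q)$-valued output. Capping the identity of~(a) against an arbitrary class $\mu\in H^*(\oS,\Q)$ in the second tensor factor and integrating over $\oS$ converts $[\Delta_{\oS}]$ into $\mu$ on the $\Coh_\delta$ side, while on the right-hand side it replaces $h_{l+1-r}(\mathcal{E}_\gamma)\cap[\mathfrak{U}]$ by $h_{l+1-r}(\mu)\bullet[\mathfrak{U}]$, using the definition~\eqref{flambda} of $h_{l+1-r}(\mu)=\int_{\oS} p(h_{l+1-r})\cup\mu$ and the definition of the $\bullet$-action through $\ev_\gamma$. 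This is essentially a reformulation of~(a) obtained by integrating out the diagonal against $\mu$, so no new geometric input is needed.

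The main obstacle I anticipate is the careful bookkeeping of the degree shifts and sign conventions: reconciling the cohomological grading of $u$, the relative dimension of the projectivization, the virtual-dimension shift from quasi-smoothness of $q$, and the sign in the identification $h_n(\mathcal{E}_\gamma)$ versus Segre classes of the dual, so that the index $l+1-r$ comes out correctly and the formula lands in $H_*(\mathfrak{U},\Q)\otimes H^*(\oS,\Q)$ in the right homological degree. The geometry is controlled by Proposition~\ref{L:Negut} and the regularity assumption, but getting the combinatorics of the Segre/complete-symmetric-function translation exactly right—especially ensuring compatibility with the conventions in~\eqref{f(E)} and~\eqref{flambda}—is where the real care is required.
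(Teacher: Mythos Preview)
Your outline is correct and lands on the same projective–bundle Segre computation as the paper, but the route you take to get there is slightly more direct than the one actually used. The paper does not identify $q^*u$ with the hyperplane class $\xi$ individually; instead it works with the full Chern character $\ch(\mathcal{E}_\delta)=\Delta\, e^u\,\Td_{\oS}^{-1}$, pulls this back to $\ch(\tau_*\mathcal{O}(1))$ via Proposition~\ref{L:Negut}, and then applies Grothendieck--Riemann--Roch for the diagonal embedding $\tau'$. GRR introduces an extra $\Td_{\oS}^{-1}$ which exactly cancels the Todd factor already present in $\ch(\mathcal{E}_\delta)$, and only after this cancellation does one invoke the lemma $\pi_*\big(\ch(\mathcal{O}(1))\cup\eu(\pi^*\Ec_{-1}^\vee(1))\big)=\sum_n \tfrac{1}{n!}h_{n+1-r}(\Ec_0-\Ec_{-1})$ and read off~\eqref{E:SegreeFormula1} degree by degree. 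Your approach, identifying $q^*u=\xi$ directly and pushing forward $\xi^l\cdot\eu(\pi^*\Ec_{-1}^\vee(1))$, bypasses the GRR/Todd gymnastics and arrives at the degree-$l$ piece $\pi_*(\xi^l\cdot\eu)=h_{l+1-r}(\Ec_\gamma)$ in one step; this is cleaner, while the paper's Chern-character packaging has the advantage of making the link with the tautological sheaf $\mathcal{E}_\delta$ manifest. One small imprecision in your write-up: regularity of $s^{cl}$ does not literally force the derived and classical Hecke correspondences to coincide; what it gives (and what you actually need) is that $\iota_*[Z(s^{cl})]=\eu(\pi^*\Ec_{-1}^\vee(1))\cap[\mathbb{P}(\Ec_0)]$ and that the base change~\eqref{E:basechange} holds by Tor-independence, so the refined and naive pullbacks agree on fundamental classes. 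Part~(b) is handled identically in both approaches: integrate~\eqref{E:SegreeFormula1} against $\mu$ in the $H^*(\oS,\Q)$-factor.
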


Before giving the proof, it is worth spelling out the meaning of formula \eqref{E:SegreeFormula1}. On the l.h.s., 
we view $\big(u^l[\Delta_{\overline S}] \cap [\Coh_\delta]\big)\otimes [\Coh_{\alpha}^{\geq 1}]$ as an element of the triple tensor product $H_*(\Coh_\delta,\Q) \otimes H^*(\overline{S},\Q) \otimes H_*(\Coh_\alpha^{\geq 1})$, and we apply the operator $T_+$ to the first and third factors. On the r.h.s., we view $h_{l+1-r}(\mathcal{E}_\gamma)$ as an element of $H^*(\Coh_\gamma,\Q) \otimes H^*(\overline{S},\Q)$ and act by cap product on the left term in the tensor product.

\begin{proof} Let $j$ be the open immersion 
$p^{-1}(\mathfrak{U}) \subset q^{-1}(\Coh_{\delta} \times\mathfrak{U}')$.
We will work with classical stacks, 
but we will omit the superscript $cl$ in the notation. 
Under the assumption~(\ref{E:assumption}), we have
$$j^*q^!([\Coh_\delta] \otimes [\Coh_\alpha])=j^*([Z(s')])=[Z(s)].$$
We abbreviate $\mathcal{O}(1)=\mathcal{O}_{\mathbb{P}(\mathcal{E}_0)}(1)$.
By Proposition~\ref{L:Negut}, we have the following isomorphism of coherent sheaves over $p^{-1}(\mathfrak{U}) \times \overline{S}$:
$$(j \times \Id)^*(q \times \Id)^*\mathcal{E}_\delta =\tau_*\mathcal{O}(1)$$
We deduce that
\begin{equation}\label{form3}
    \begin{split}
    (j \times \Id)^*(q \times \Id)^!((\ch(\mathcal{E}_\delta) \cap [\Coh_\delta \times  \overline{S}] )\otimes [\Coh_\alpha])
    &=\ch(\tau_*\mathcal{O}(1)) \cap [Z(s) \times \overline{S}]
    \end{split}
\end{equation}
Now, we consider the commutative diagram
$$\xymatrix{Z(s)\ar[d]_-{\tau} \ar[r]^-{\iota} & \mathbb{P}(\mathcal{E}_0) \ar[dr]^-{\pi} \ar[d]_-{\tau'}& &\\
 Z(s) \times \overline{S}\ar[r]^-{\iota\times \Id} & \mathbb{P}(\mathcal{E}_0) \times \overline{S}\ar[r]^-{p'\times \Id}
& \mathfrak{U} \times \overline{S}}$$
The maps $\iota,$ $ \iota'$ are the obvious closed immersions, 
the map $\pi$ is the projection to $\mathfrak{U} \times \overline{S}$, 
the map $p'$ is the projection to $\mathfrak{U}$, 
and the maps $\tau,$ $\tau'$ are defined as in \eqref{E:deftau}. 
The bottom row of the diagram clearly composes to $p\times \Id$.

Let $i: \mathfrak{U} \to \RCoh^{\geq 1}_\gamma$ be the open immersion.
Applying $(p\times \Id)_*$ to \eqref{form3}
and restricting to the open subset
$\mathfrak{U} \times \overline{S}$, we get
\begin{equation*}
    \begin{split}
    (i \times \Id)^*&T_+\big((\ch(\mathcal{E}_\delta) \cap [\Coh_\delta \times  \overline{S}]) \otimes [\Coh_\alpha^{\geq 1}]\big)\\
    &= (p' \times \Id)_*(\iota \times \Id)_*\big(\ch(\tau_*\mathcal{O}(1)) \cap ([Z(s) \times \overline{S}])\big)
    \end{split}
\end{equation*}
We claim that 
\begin{equation}\label{E:basechange}
\tau_*\iota^*\mathcal{O}(1)=(\iota \times \Id)^*\tau'_*\mathcal{O}(1).
\end{equation}
Indeed, since $s$ is regular, the maps $\iota\times \Id$ and $\tau'$ are Tor-independent, and so we may use the proper base change theorem. Applying Chern character to \eqref{E:basechange}, we get
\begin{equation*}
    \begin{split}
    (i \times \Id)^*&T_+\big((\ch(\mathcal{E}_\delta) \cap [\Coh_\delta \times  \overline{S}]) \otimes [\Coh_\alpha^{\geq 1}])\big)\\
    &= (p' \times \Id)_*\big(\ch(\tau'_*\mathcal{O}(1)) \cap (\iota \times \Id)_*([Z(s) \times \overline{S}])\big)\\
    &=(p' \times \Id)_*\big(\ch(\tau'_*\mathcal{O}(1)) \cap \eu\left(\pi^*\mathcal{E}_{-1}^\vee(1)\right) \cap [\mathbb{P}(\mathcal{E}_0)]\big)\\
    &=(p' \times \Id)_*\big(\eu\left(\pi^*\mathcal{E}_{-1}^\vee(1)\right)\cap \tau'_*\ch(\mathcal{O}(1)) \cap \Td_{\oS}^{-1} \cap [\mathbb{P}(\mathcal{E}_0)]\big)\\
    &=\Td_{\oS}^{-1} \cap (p' \times \Id)_*\tau'_*\big(\eu\left(\pi^*\mathcal{E}_{-1}^\vee(1)\right) \cap \ch(\mathcal{O}(1)) \cap [\mathbb{P}(\mathcal{E}_0)]\big)\\
    &=\Td_{\oS}^{-1} \cap \pi_*\big(\eu\left(\pi^*\mathcal{E}_{-1}^\vee(1)\right) \cap \ch(\mathcal{O}(1)) \cap [\mathbb{P}(\mathcal{E}_0)]\big)
    \end{split}
\end{equation*}
where we have successively used the fact that $s$ is regular and the Grothendieck-Riemann-Roch formula for the proper morphism $\tau$. 
The following formula is well-known.

\medskip

\begin{lemma} Let $X$ be a stack. 
Let $\mathcal{E}_{-1},$ $ \mathcal{E}_0$ be vector bundles over $X$. Set $r=\rk(\mathcal{E}_0)-\rk(\mathcal{E}_{-1})$. 
Let $\pi: \mathbb{P}(\mathcal{E}_0) \to X$ be the projection. 
The following formula holds in $H^*(X,\Q)$:
\[
    \pushQED{\qed}
    \pi_*\big(\ch(\mathcal{O}(1)) \cup \eu(\pi^*\mathcal{E}_{-1}^\vee(1))\big)=
    \sum_{n\in\mathbb{N}}\frac{1}{n!}\,h_{n-r+1}(\mathcal{E}_0 - \mathcal{E}_{-1}).\qedhere
    \popQED
\]
\end{lemma}


Using the above lemma, we deduce that
\begin{equation}\label{E:SegreeFormula}
i^* T_+\Big(\big(\ch(\mathcal{E}_\delta) \cap [\Coh_\delta]\big) \otimes [\Coh_{\alpha}^{\geq 1}]\Big)=\Td_{\oS}^{-1} \cap \sum_n \frac{1}{n!}h_{n+1-r}(\mathcal{E}_{\gamma}) \cap [\mathfrak{U}].
\end{equation} 
The proof of \eqref{E:SegreeFormula1} now follows by multiplying throughout by the Todd class $\Td_{\oS}$ and equating the terms of fixed homological degrees.
Finally, the formula \eqref{E:SegreeFormula15} is obtained by taking the intersection pairing with $\mu$.
\end{proof}

\smallskip

A similar analysis can be made for negative Hecke correspondences. 
Once again, we fix finite type open substacks $\mathfrak{U} \subset \RCoh^{\geq 1}_{\gamma}$ and $\mathfrak{U}' \subset  \RCoh_{\alpha}^{\geq 1}$ satisfying
\begin{align}\label{eq:T-minus-condition}
    \overline{q}^{-1}(\RCoh_{\delta}(\oS) \times\mathfrak{U}) \supseteq \overline{p}^{-1}(\mathfrak{U}')\qquad \Leftrightarrow \qquad q^{-1}(\RCoh_{\delta}(\oS) \times\mathfrak{U}') \subseteq p^{-1}(\mathfrak{U}). 
\end{align}
Recall the shift operation $\tau_c : \Lambda' \to \Lambda'[c]$ 
defined by~\eqref{eq:shift-map}, and the notations in~\eqref{flambda}.
We consider the algebra automorphism $x \mapsto \widetilde{x}$ of $\Lambda(\oS)$ such that
$$\widetilde{f}(\lambda) = \int_S p(\tau_{c_1}f) \cup \lambda \in \Lambda(\oS)
,\quad
f\in \Lambda
,\quad
\lambda\in H^*(\oS,\Q).$$ 
For any $\gamma \in K_0^c(\oS)_\Q$, we have
$$\ev_{\gamma}(\widetilde{f}(\lambda))=\int_S f(\mathcal{E}_\gamma \otimes K^\vee_\oS) \cup \lambda$$
where $f(\mathcal{E}_\gamma \otimes K^\vee_\oS)$ is as in \eqref{f(E)}.

\begin{remark}
    For each $n\geqslant 0$ we have
    \[
        \widetilde{e}_n(\mu) = \sum_{i=0}^n \binom{\r-n+i}{i} e_{n-i}(\mu\cup c_1^i).
    \]
    In the absence of an equivariant parameter we have $c_1^3=0$, and so only the first three terms of the sum above do not vanish. In particular, when $S$ has trivial canonical bundle we have 
    $$\widetilde{e}_n(\mu) = e_n(\mu).$$ 
    In the situation of Proposition~\ref{P:negutdual} below we have $\r=r$.
\end{remark}

\smallskip

\begin{proposition}\label{P:negutdual}
Assume that \eqref{E:assumption} holds. 
Let $r=\rk(\alpha)$. 
\begin{enumerate}[label=$\mathrm{(\alph*)}$,leftmargin=8mm]
\item
For any $l \geq 0$ we have the following equality in $H_*(\mathfrak{U}',\Q) \otimes H^*(\overline{S},\Q)$
\begin{equation}\label{E:SegreeFormula1dual}
 T_-\Big(\big(u^l[\Delta_\oS] \cap [\Coh_\delta]\big) \otimes [\Coh_{\gamma}]\Big)\Big|_{\mathfrak{U}'}= (-1)^{l}e_{l+1+r}(\mathcal{E}_{\alpha} \otimes K_{\oS}^\vee) \cap [\mathfrak{U}']
\end{equation} 
\item
For any $\mu \in H^*({S},\Q)$ we have
 \begin{equation}\label{E:SegreeFormula15dual}
    \pushQED{\qed}
T_- \Big(\big(\mu u^l\cap [\Coh_\delta]\big) \otimes [\Coh_{\gamma}]\Big)\Big|_{\mathfrak{U}'} = (-1)^{l}\widetilde{e}_{l+1+r}(\mu)\bullet [\mathfrak{U}'].\qedhere \popQED
\end{equation} 
\end{enumerate}
\end{proposition}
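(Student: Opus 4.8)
The plan is to mirror the proof of Proposition~\ref{P:negut} for the negative Hecke correspondence, using the description of $\widetilde{\Coh}^{\geq 1}_{\delta;\alpha}$ as $\mathbb{P}(\mathcal{F}_\alpha)$ from Proposition~\ref{L:Negut}(b) in place of $\mathbb{P}(\mathcal{E}_\gamma)$. Concretely, I would fix the open substacks $\mathfrak{U}, \mathfrak{U}'$ subject to~\eqref{eq:T-minus-condition}, work entirely with classical stacks (suppressing the superscript $cl$), and use the assumption~\eqref{E:assumption} together with the last paragraph of \S\ref{sec:atlashecke} to guarantee that $\overline{s}^{cl}$ and $(\overline{s}')^{cl}$ are regular. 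The condition~\eqref{eq:T-minus-condition} is exactly the condition ensuring that, after restricting to $\overline{p}^{-1}(\mathfrak{U}')$, the refined pullback $\overline{q}^!([\Coh_\delta]\otimes[\Coh_\gamma])$ computes the classical fundamental class $[Z(\overline{s})]$ of the zero locus inside $\mathbb{V}(\mathcal{E}_0^\vee\otimes\rho)$.

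Next I would run the Grothendieck-Riemann-Roch computation parallel to~\eqref{form3}--\eqref{E:SegreeFormula}. By Proposition~\ref{L:Negut}(b), the tautological sheaf $(\overline{q}\times\Id)^*\mathcal{E}_\delta$ is identified with $\overline{\tau}_*(\mathcal{O}_{\mathbb{P}(\mathcal{F}_\alpha)}(-1))$; note the crucial sign change from $\mathcal{O}(1)$ to $\mathcal{O}(-1)$ relative to the positive case, which is the source of the factor $(-1)^l$. Using proper base change as in~\eqref{E:basechange} (valid since $\overline{s}$ is regular, hence $\overline{\iota}\times\Id$ and $\overline{\tau}'$ are Tor-independent) and then GRR for the proper morphism $\overline{\tau}$, I would push forward $\ch(\overline{\tau}'_*\mathcal{O}(-1))\cap \eu(\overline{\pi}^*\mathcal{E}_{-1}^\vee\otimes\rho^\vee)$ along the projective bundle. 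The analogue of the interpolation lemma, applied to the bundle $\mathbb{V}(\mathcal{E}_0^\vee\otimes\rho)$ with the relevant Euler class, yields elementary symmetric functions $e_{\bullet}$ of $\mathcal{F}_\alpha = \mathcal{E}_\alpha^\vee\otimes K_S[1]$ rather than complete symmetric functions of $\mathcal{E}_\gamma$; the passage from $h$ to $e$ and from $\mathcal{E}_\gamma$ to the Serre dual $\mathcal{E}_\alpha^\vee\otimes K_S$ is responsible for both the shift in index (to $l+1+r$) and the appearance of $\mathcal{E}_\alpha\otimes K_{\oS}^\vee$ after applying the duality $e_n(\mathcal{V}^\vee)=(-1)^n e_n(\mathcal{V})$.

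To finish part~(a), I would multiply through by $\Td_{\oS}$ as in~\eqref{E:SegreeFormula} and extract terms of fixed homological degree, obtaining~\eqref{E:SegreeFormula1dual}. For part~(b), exactly as in Proposition~\ref{P:negut}, I would take the intersection pairing with $\mu \in H^*(S,\Q)$; the bookkeeping is packaged by the shift automorphism $x\mapsto\widetilde{x}$, since $\ev_\gamma(\widetilde{e}_n(\mu)) = \int_S e_n(\mathcal{E}_\alpha\otimes K^\vee)\cup\mu$ by the displayed identity preceding the proposition. This identity is precisely engineered so that twisting $\mathcal{E}_\alpha$ by $K^\vee$ corresponds to the shift $\tau_{c_1}$, so~\eqref{E:SegreeFormula15dual} follows directly.

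The main obstacle I anticipate is keeping the sign and duality conventions consistent throughout: the complex $\mathcal{F}_\alpha = \mathcal{E}_\alpha^\vee\otimes K_S[1]$ carries a shift by $[1]$ (which flips the role of $\mathcal{V}_0$ and $\mathcal{V}_1$, so that one projectivizes $\mathcal{V}_0^\vee$ and works with $\mathbb{V}(\mathcal{E}_0^\vee\otimes\rho)$ as recorded at the end of \S\ref{sec:atlashecke}), and the line bundle appearing is $\mathcal{O}(-1)$ rather than $\mathcal{O}(1)$. Tracking how these interact in GRR—especially verifying that the Todd class contribution and the twist by $K_S$ combine to give exactly $\mathcal{E}_\alpha\otimes K_{\oS}^\vee$ with the correct overall sign $(-1)^l$—is where the computation is most error-prone. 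Everything else is a faithful transcription of the positive case, and the reduction from arbitrary smooth $S$ to the projective $S=\oS$ case is already handled by the base-change argument in Corollary~\ref{Cor:proppq}.
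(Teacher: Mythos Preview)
Your approach is correct and matches the paper's own treatment: the paper omits the proof entirely (note the \textsf{qed} symbol built into the statement), having simply remarked before the proposition that ``a similar analysis can be made for negative Hecke correspondences.'' You have correctly identified all the structural modifications relative to Proposition~\ref{P:negut}: the use of $\mathbb{P}(\mathcal{F}_\alpha)$ from Proposition~\ref{L:Negut}(b), the tautological line bundle $\mathcal{O}(-1)$ in place of $\mathcal{O}(1)$, the Serre-dual presentation, and the passage from complete to elementary symmetric functions via the twist by $K_{\oS}^\vee$.

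One point of care: in your second paragraph you write that you would push forward $\ch(\overline{\tau}'_*\mathcal{O}(-1))\cap \eu(\overline{\pi}^*\mathcal{E}_{-1}^\vee\otimes\rho^\vee)$, but this mixes the two atlases of \S\ref{sec:atlashecke}. The bundle $\mathcal{E}_{-1}$ lives over $\mathfrak{U}\subset\RCoh_\gamma$, whereas the proper pushforward $\overline{p}$ lands in $\mathfrak{U}'\subset\RCoh_\alpha$ and must be computed via the projective bundle $\mathbb{P}(\mathcal{V}_0^\vee)\to\mathfrak{U}'\times\oS$; the relevant Euler class is that of $\overline{\pi}^*\mathcal{V}_1(1)$, coming from the section $\overline{s}'$. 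The passage between the two descriptions goes through the identification $Z(\overline{s}^{cl})\simeq Z((\overline{s}')^{cl})$ recorded at the end of \S\ref{sec:atlashecke}. You are aware of this subtlety (you flag exactly this bookkeeping as the main obstacle), but the Euler class as written would not produce the correct answer. Also, in part~(b) the evaluation should be $\ev_\alpha$ rather than $\ev_\gamma$, since the result lives on $\mathfrak{U}'$.
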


\medskip

In this paper, we will check the assumption \eqref{E:assumption} in two situations of interest: the Hilbert schemes of points $\Hilb_n(\overline{S})$ in \S\ref{sec:HilbregularHP}, and the stacks of Higgs bundles $\Higgs_{r,d}$ over a smooth projective curve in \S\ref{sec:conditionHiggs}.

\medskip

Let us now turn our attention to the action of Hecke operators on \textit{virtual} fundamental classes. 

\smallskip

\begin{proposition}\label{prop:Negutlemmavirtual} For any $\alpha$, we have
\begin{align*}
    T_+\Big(\big(\mu u^l \cap [\Coh_\delta]\big) \otimes [\RCoh_{\alpha}]\Big)\Big|_{\mathfrak{U}} &= h_{l+1-r}(\mu) \bullet [\mathfrak{U}],\\
    T_-\Big(\big(\mu u^l \cap [\Coh_\delta]\big) \otimes [\RCoh_{\gamma}]\Big)\Big|_{\mathfrak{U}'} &= (-1)^{l}\widetilde{e}_{l+1+r}(\mu)\bullet [\mathfrak{U}'].
\end{align*}
\end{proposition}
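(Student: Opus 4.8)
The plan is to run the computations of Propositions~\ref{P:negut} and~\ref{P:negutdual} essentially verbatim, with the single modification that the classical fundamental class $[Z(s)]$ of the possibly non-transverse zero locus is replaced throughout by the virtual fundamental class of the derived Hecke correspondence. This is precisely what lets us drop the regularity hypothesis~\eqref{E:assumption}. Since Borel--Moore homology and the operations $p_*, q^!$ are compatible with restriction to opens, I first reduce to finite type open substacks $\mathfrak{U} \subset \RCoh^{\geq 1}_\gamma$ and $\mathfrak{U}' \subset \RCoh^{\geq 1}_\alpha$ as in \S\ref{S:Hecketaut}, over which the locally free resolution~\eqref{C1} and the atlas of \S\ref{sec:atlashecke} are available.

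The crucial input replacing the regular-section identity $j^* q^!([\Coh_\delta]\otimes[\Coh_\alpha]) = [Z(s)]$ is the following unconditional statement. Since $q$ is quasi-smooth and $[\Coh_\delta]\otimes[\RCoh_\alpha]$ is the virtual fundamental class of the quasi-smooth source $\RCoh_\delta \times \RCoh_\alpha$ (note $\RCoh_\delta$ is smooth, so $[\Coh_\delta]$ is at once its classical and its virtual class), the virtual pullback preserves virtual classes:
$$q^!\big([\Coh_\delta]\otimes[\RCoh_\alpha]\big) = [\widetilde{\RCoh}_{\delta;\alpha}].$$
Over $\mathfrak{U}$, the atlas of \S\ref{sec:atlashecke} realizes $\widetilde{\RCoh}_{\delta;\alpha}$ as the derived zero locus of the section $s$ of $\pi^*(\mathcal{E}_{-1}^\vee)(1)$ on the smooth bundle $\mathbb{P}(\mathcal{E}_0)$, and the virtual class of a derived zero locus satisfies the self-intersection formula
$$[\widetilde{\RCoh}_{\delta;\alpha}] = \eu\big(\pi^*(\mathcal{E}_{-1}^\vee)(1)\big) \cap [\mathbb{P}(\mathcal{E}_0)]$$
whether or not $s$ is regular; when~\eqref{E:assumption} does hold this recovers $[Z(s)]$.

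Granting this, the remainder of the $T_+$ computation is identical to the proof of Proposition~\ref{P:negut}. Using $(q\times\Id)^*\mathcal{E}_\delta = \tau_*\mathcal{O}(1)$ from Proposition~\ref{L:Negut}, the projection formula rewrites the pullback of $\mu u^l \cap [\Coh_\delta]$ in terms of $\mu$ and $\xi = c_1(\mathcal{O}(1))$; pushing forward along $p$, applying Grothendieck--Riemann--Roch to $\tau$, and invoking the Lemma computing $\pi_*(\ch(\mathcal{O}(1)) \cup \eu(\pi^*\mathcal{E}_{-1}^\vee(1)))$ then yields $h_{l+1-r}(\mu) \bullet [\RCoh^{\geq 1}_\gamma]$. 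The operator $T_-$ is handled in the same way using the derived projectivization $\mathbb{P}(\mathcal{F}_\alpha)$ and the Serre duality identification $\mathcal{F}_\alpha = \mathcal{E}_\alpha^\vee \otimes K_S[1]$ of Proposition~\ref{L:Negut}(b); passing to the dual bundle interchanges complete and elementary symmetric functions and produces the sign $(-1)^l$, while the twist by $K_S^\vee$ accounts for the passage from $e$ to $\widetilde{e}$.

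The step I expect to be the main obstacle is justifying the unconditional self-intersection formula for the virtual class of the derived zero locus, namely that the virtual Gysin pullback along the zero section computes $\eu$ capped with the ambient fundamental class even for a non-regular section. This is exactly where the derived (rather than classical) formalism is indispensable, and where regularity entered in Propositions~\ref{P:negut}--\ref{P:negutdual}; once it is in place, no genericity is needed and both formulas drop out of the same Segre class computation as in the classical case.
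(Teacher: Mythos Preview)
Your proposal is correct and follows essentially the same approach as the paper. The paper's proof is a two-line remark that the computation of Propositions~\ref{P:negut} and~\ref{P:negutdual} goes through verbatim once one uses that Gysin pullback along a quasi-smooth morphism preserves virtual fundamental classes (Proposition~\ref{Prop:appA}(b)) together with the projection formula (Proposition~\ref{Prop:appA}(d)); your write-up simply spells out these steps, including the self-intersection identity $\iota_*[\widetilde{\RCoh}_{\delta;\alpha}]=\eu(\pi^*\mathcal{E}_{-1}^\vee(1))\cap[\mathbb{P}(\mathcal{E}_0)]$ and the observation that derived base change makes the Tor-independence step~\eqref{E:basechange} automatic.
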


\begin{proof} The proof follows along exactly the same lines as the proof of Propositions~\ref{P:negut} and \ref{P:negutdual}. We use the fact that the Gysin pullback by a quasi-smooth morphism preserves virtual fundamental classes, see \S\ref{sec:relBMhomology} for more details, and we use the projection formula of Proposition \ref{Prop:appA}(d).
\end{proof}

\smallskip

\begin{remark}
   Given that Proposition~\ref{prop:Negutlemmavirtual} holds without any regularity assumption, one might wonder why one should bother considering non-virtual fundamental classes at all. The answer we give is that the virtual fundamental classes $[\mathfrak{U}]$ and $[\mathfrak{U}']$ typically lie in a homological degree lower than that of their non-virtual cousins, and hence generate a different (and in many cases strictly smaller) space of tautological classes.  
 
\end{remark}

\medskip

\subsection{Hecke operators on tautological classes}\label{sec:explicitcomputHecke} 
Recall that $S=\oS$ is proper and that $u=c_1(\rho)$ is as in \eqref{u}.
In particular, the map $\omega_\delta : \Lambda(S) \to H_*(\Coh_\delta,\Q)$ 
in \eqref{omegadelta} is surjective by Lemma~\ref{lm:omega-surj}.
Hence Proposition~\ref{P:negut} allows us to describe the action of the space $\mathbf{H}_0(S)[1,-]$ on 
the fundamental class $[\Coh_\alpha]$ after restriction to suitable open substacks 
$i:\mathfrak{U}_\alpha\to\mathfrak{Coh}_\alpha$. 
Using the $\Lambda({S})$-module algebra structure of $\mathbf{H}(S)$, we will now deduce a formula for the action of $\mathbf{H}_0(S)[1,-]$ on the subspace 
$$\Lambda({S}) \bullet [\mathfrak{U}_\alpha]\subset H^*(\mathfrak{U}_\alpha,\Q)$$
spanned by the tautological classes by length one Hecke operators.
To do so, observe that there is a unique linear map
$$\mathcal{L}^+_r : H^*(\Coh_\delta,\Q) \otimes \Lambda({S}) \to \Lambda({S})
,\quad
r\geqslant 0$$
such that
\begin{enumerate}[label=$\mathrm{(\alph*)}$,leftmargin=6mm]
\setlength\itemsep{0.5em}

\item
for any $\lambda \in H^*({S},\Q)$ and $l \in \N$, we have
$$\mathcal{L}^+_r(\lambda u^l \otimes 1)=h_{l+1-r}(\lambda),$$

\item 
for any $x,y \in \Lambda({S})$ and $\xi \in H^*(\Coh_\delta,\Q)$, we have

$$x \cdot \mathcal{L}^+_r(\xi \otimes y)=
\sum \mathcal{L}^+_r( (\ev_{\delta}(x^{(1)}) \cup \xi )\otimes( x^{(2)} \cdot y))$$

\end{enumerate}
where
$\Delta(x)=\sum x^{(1)} \otimes x^{(2)}$.
By \eqref{E:SegreeFormula15} and Proposition~\ref{prop:modulealgebra}, we deduce that for $r=\rk(\alpha)$ we have
\begin{equation}\label{E:defT}
i^*\big((\xi \cap[\Coh_\delta]) \star (
x \bullet [\Coh_\alpha])\big)=i^*\big(\mathcal{L}^+_r(\xi \otimes x) \bullet [\Coh_\gamma]\big)
\end{equation}
We will abbreviate
$$\mathcal{L}^+_r(\xi) : \Lambda({S}) \to \Lambda({S})
,\quad
x\mapsto\mathcal{L}^+_r(\xi\otimes x).$$
Then (b) translates into the following relation
\begin{align}\label{eq:pT-comm}
    [p_n(\lambda), \mathcal{L}^+_r(\xi)]=\mathcal{L}^+_r(\ev_\delta(p_n(\lambda)) \cup \xi).  
\end{align}
A direct computation using \eqref{E:ToddclassS} and \eqref{Edelta} gives
\begin{align}\label{E:evpn}
    \ev_\delta(p_n(\lambda))=f_n(u)\cup\lambda
\end{align}
where $u=c_1(\rho)$ is as in \eqref{u} and
$$f_n(u)=\frac{u^n - (u-t_1)^n - (u-t_2)^n + (u-t_1-t_2)^n}{t_1t_2}.$$
We define an algebra homomorphism $R^+$ and a $\Lambda({S})$-linear map $Q^+$ such that
\begin{align}\label{RQ}
\begin{split}
R^+&: \Lambda({S}) \to  \Lambda({S}) \otimes H^*(\Coh_\delta,\Q)
,\quad
p_n(\lambda)\mapsto p_n(\lambda)\otimes 1  - 1 \otimes \ev_\delta(p_n(\lambda)),\\ 
Q^+&: \Lambda({S}) \otimes H^*(\Coh_\delta, \Q)  \to  \Lambda({S})
,\quad
x \otimes \lambda u^l\mapsto x\cdot h_{l}(\lambda).
\end{split}
\end{align}
We will extend $Q^+$ to a linear map
$$Q^+: \Lambda(S) \otimes H^*(S,\Q)[u,u^{-1}] \to \Lambda(S)$$ 
such that $Q^+(x \otimes  u^l)=0$ for $l <0$. Then, the following formula holds
\begin{equation}\label{E:finalformT}
\mathcal{L}^+_r(\lambda u^l)=L_{l+1-r}^+(\lambda)
\end{equation}
where the right hand side is given by
\begin{equation}\label{eq:T-fla+}
    L^+_n(\lambda): \Lambda(S)\to \Lambda(S), \quad f \mapsto Q^+(\lambda u^n R^+(f))
    ,\quad n\in\Z.
\end{equation}
Again, it will be convenient to formally extend this definition to any $r \in \Z$.

\smallskip

Using Proposition~\ref{P:negutdual}, we write similar formulas for negative correspondences.
The analog of the relation (b) is 
$$x \cdot \mathcal{L}^-_r(\xi \otimes y)=
\sum \mathcal{L}^-_r\big( (\ev_{\delta}(\upsilon(x^{(1)})) \cup \xi )\otimes (x^{(2)} \cdot y)\big)
,\quad
x,y \in \Lambda({S}),$$
where $\upsilon$ is the involution~\eqref{eq:involution}.
This translates to the relation 
\[
    [\widetilde{p}_n(\lambda),\mathcal{L}_r^-(\xi)] = - \mathcal{L}_r^-(\ev_\delta(\widetilde{p}_n(\lambda))\cup \xi).
\]
Note that
\begin{align*}
    \ev_\delta(\widetilde{p}_n(\lambda)) 
    &= \int_S p_n(\mathcal{E}_\delta\otimes K^\vee_\oS)\cup \lambda \\
    &= f_n(u+c_1)\cup\lambda \\
    &= (-1)^n f_n(-u)\cup \lambda.
\end{align*}
Thus we obtain the following formula
$$ \mathcal{L}^-_r(\lambda u^l)=(-1)^{r+1}\,L^-_{l+1+r}(\lambda)$$
where
\begin{equation}\label{eq:T-fla-}
    L^-_n(\lambda): \Lambda(S)\to \Lambda(S), \quad f \mapsto Q^-(\lambda u^n R^-(f))
    ,\quad n\in\Z.
\end{equation}
and $R^-$ and $Q^-$ are the algebra homomorphism and the 
$\Lambda(S)$-linear map such that
\begin{align*}
R^- &: \Lambda(S) \to \Lambda(S) \otimes H^*(\Coh_\delta,\Q)
,\quad
\widetilde{p}_n(\lambda)\mapsto \widetilde{p}_n(\lambda) \otimes 1 +  (-1)^n \otimes f_n(-u)\cup \lambda,\\
Q^- &: \Lambda({S}) \otimes H^*(\Coh_\delta)  \to  \Lambda({S}) 
,\quad
x \otimes \lambda u^l\mapsto (-1)^lx\cdot \widetilde{e}_{l}(\lambda).
\end{align*}

\smallskip

Propositions~\ref{P:negut}, \ref{P:negutdual} yield the following.

\begin{lemma} Let $r=\rk(\alpha)$.
We have
\[
    \pushQED{\qed}
    T_\pm((\xi\cap[\Coh_\delta])\otimes(f\bullet[\mathfrak{U}_\alpha]))=
    \mathcal{L}^\pm_r(\xi)(f)\bullet[\mathfrak{U}_\alpha]
    ,\quad
    \xi \in H^*(\Coh_\delta)
    ,\quad 
    f\in\Lambda(S).\qedhere
    \popQED
\]
\end{lemma}

\medskip

For future use, we record here the following easily deduced formulas, valid for any $n$ and $\lambda$:
\begin{align}
    \begin{split}
R^+(\widetilde{p}_n(\lambda))&=\widetilde{p}_n(\lambda)\otimes 1 - (-1)^n \otimes f_n(-u)\cup \lambda\label{E:R+R-onpn}\\
R^-(p_n(\lambda))&=p_n(\lambda) \otimes 1 +1 \otimes f_n(u)\cup \lambda.
    \end{split}
\end{align}

\begin{remark}\label{E:vertexoperators}
The operators $\mathcal{L}_r^\pm$ above are example of vertex operators. We have
\begin{equation*}
\begin{split}
\sum_{\substack{n \geq 0\\ \gamma}} &\mathcal{L}_r^+(u^n\gamma)\otimes \gamma^{*}s^{-n+r-1}\\
&=\Bigg\{\exp\Big( \sum_{\substack{n>0\\\gamma}} \frac{p_n}{n}(\gamma)\otimes \gamma^{*}s^{-n}\Big)_{[s^{<r}]}
\exp\Big(-\sum_{\substack{n \geq 0\\\gamma}}\frac{\partial}{\partial \kappa_{n}(\gamma)} \otimes \gamma s^n\Big)\Bigg\}_{[s^{<r}]}
\end{split}
\end{equation*}
\begin{equation*}
\begin{split}
\sum_{\substack{n \geq 0\\ \gamma}}& \mathcal{L}_r^-(u^n\gamma)\otimes \gamma^{*}s^{-n-r-1}\\
&=(-1)^{r+1}
\Bigg\{\exp\Big(- \sum_{\substack{n>0\\ \gamma}} \frac{\tau_{c_1}p_n}{n}(\gamma)\otimes \gamma^{*}s^{-n}\Big)_{[s^{<-r}]}
\exp\Big(\sum_{\substack{n \geq 0\\ \gamma}}\frac{\partial}{\partial \kappa_{n}(\gamma)} \otimes \gamma s^n\Big)\Bigg\}_{[s^{<-r}]}
\end{split}
\end{equation*}
where $\{\gamma\},$ $\{\gamma^*\}$ are dual bases of $H^*(S,\Q)$ and the elements $\{\kappa_n(\lambda)\}$ are related to the $\{p_n(\lambda)\}$ through the relation
\begin{equation}\label{E:pkappa}
    \sum_{\substack{n \geq 0\\ \gamma}} \frac{x^{n+2}}{n!}\kappa_n(\gamma)\otimes \gamma^*=\Big(\Td_S(x) \cdot 
    \sum_{\substack{n>0\\ \gamma}} \frac{x^n}{n!}p_n(\gamma) \otimes \gamma^*\Big)_{[x^{>1}]}.
\end{equation}
\end{remark}

\smallskip

\subsection{Hecke operators on open surfaces} \label{sec:open-Hecke}
Let us return to the situation when the surface $S$ is cohomologically pure but not necessarily proper.
Pick a smooth compactification $\iota: S \to \oS$, and fix open substacks $\mathfrak{U} \subset \RCoh^{\geq 1}_{\gamma}(\oS)$ and $\mathfrak{U}' \subset  \RCoh_{\alpha}^{\geq 1}(\oS)$ such that
\footnote{note the difference between these conditions and (\ref{eq:T-plus-condition},~\ref{eq:T-minus-condition})}
\begin{align}\label{eq:T-open-conditions}
\begin{split}
    q^{-1}(\RCoh_{\delta}(S) \times\mathfrak{U}') \supseteq p^{-1}(\mathfrak{U}), \\ 
    \overline{q}^{-1}(\RCoh_{\delta}(S) \times\mathfrak{U}) \supseteq \overline{p}^{-1}(\mathfrak{U}').
\end{split}
\end{align}
These conditions imply that the correspondences~\eqref{E:convdiagram3} restrict to Hecke operators
\begin{align*}
    T_+: H_*(\Coh_\delta(S),\Q)\otimes H_*(\mathfrak{U}',\Q)\to H_*(\mathfrak{U},\Q),\\
    T_-: H_*(\Coh_\delta(S),\Q)\otimes H_*(\mathfrak{U},\Q)\to H_*(\mathfrak{U}',\Q)
\end{align*}
The following simple lemma, which follows from open base change, relates these operators to the analogous operators for $\oS$. Let $j$ denote both inclusions $\mathfrak{U} \to \RCoh^{\geq 1}_{\gamma}$ and $\mathfrak{U}' \to  \RCoh_{\alpha}^{\geq 1}$.
\begin{lemma}\label{lm:Hecke-base-change}
    For $x \in H_*(\Coh_\delta(\oS),\Q)$ and $c \in H_*(\mathfrak{U},\Q)$, $c' \in H_*(\mathfrak{U}',\Q)$ we have
    \begin{align*}
        j^*T_+(x \otimes c')=T_+(\iota^*x \otimes j^*c') \in H_*(\mathfrak{U},\Q),\\
        j^*T_-(x \otimes c)=T_-(\iota^*x \otimes j^*c) \in H_*(\mathfrak{U}',\Q).
    \end{align*}\qedhere
\end{lemma}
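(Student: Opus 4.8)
The plan is to deduce both identities from the compatibility of proper pushforward and quasi-smooth (Gysin) pullback with open base change recorded in Proposition~\ref{P:basechange}, in the same spirit as the proofs of Lemmas~\ref{L:resS'toS} and~\ref{L:embStoS'}. I would treat the identity for $T_+ = p_! \circ q^*$ in detail; the one for $T_- = \overline{p}_! \circ \overline{q}^*$ is entirely parallel. On the left-hand side $p, q$ denote the maps of the positive length-one Hecke correspondence over $\oS$, and on the right-hand side the corresponding maps over $S$; the open immersion $\iota\colon S \to \oS$ induces open immersions of all the stacks involved, namely of $\Coh_\delta(S)$, $\RCoh^{\geq 1}_\alpha(S)$, $\RCoh^{\geq 1}_\gamma(S)$ and $\widetilde{\Coh}^{\geq 1}_{\delta;\alpha}(S)$ into their $\oS$-counterparts.

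The geometric heart of the matter is the observation, already used in the proof of Corollary~\ref{Cor:proppq}, that a short exact sequence $0 \to \mathcal{T}' \to \mathcal{T} \to \mathcal{O}_x \to 0$ of sheaves of dimension $\geq 1$ satisfies $\supp(\mathcal{T}) = \supp(\mathcal{T}') \cup \{x\}$. It follows that the preimage of $\mathfrak{U} \subset \RCoh^{\geq 1}_\gamma(S)$ under the $\oS$-map $p$ is contained in $\widetilde{\Coh}^{\geq 1}_{\delta;\alpha}(S)$ and coincides with its preimage under the $S$-map $p$; I denote this common open substack by $\widetilde{\mathfrak{U}}$, with open immersion $\tilde\jmath\colon \widetilde{\mathfrak{U}} \to \widetilde{\Coh}^{\geq 1}_{\delta;\alpha}(\oS)$. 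Moreover, condition~\eqref{eq:T-open-conditions} guarantees that $q(\widetilde{\mathfrak{U}}) \subseteq \Coh_\delta(S) \times \mathfrak{U}'$, so that $q$ corestricts to a quasi-smooth map $q'\colon \widetilde{\mathfrak{U}} \to \Coh_\delta(S) \times \mathfrak{U}'$ fitting into $q \circ \tilde\jmath = k \circ q'$, where $k\colon \Coh_\delta(S)\times\mathfrak{U}' \to \Coh_\delta(\oS) \times \RCoh^{\geq 1}_\alpha(\oS)$ is the open immersion induced by $\iota$.

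With this in place the diagram chase is routine. The square with vertical maps $p$ and horizontal maps $j\colon \mathfrak{U}\to\RCoh^{\geq 1}_\gamma(\oS)$ and $\tilde\jmath$ is Cartesian, being a preimage square, and $p$ is proper by Corollary~\ref{Cor:proppq}; hence proper base change (Proposition~\ref{P:basechange}) yields $j^* \circ p_! = p_! \circ \tilde\jmath^*$. Functoriality of Gysin pullbacks then gives $\tilde\jmath^* \circ q^* = (q\tilde\jmath)^* = (q')^* \circ k^*$, and $k^*$ is the restriction along an open immersion, sending $x \otimes c'$ to $\iota^* x \otimes j^* c'$. Finally, since $\widetilde{\mathfrak{U}}$ is simultaneously the restriction of the $S$-correspondence, the composite $p_! \circ (q')^*$ computed over $\widetilde{\mathfrak{U}}$ is by definition the restricted $S$-operator $T_+$. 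Assembling these, $j^* T_+(x \otimes c') = p_! (q')^*(\iota^* x \otimes j^* c') = T_+(\iota^* x \otimes j^* c')$, as claimed. The case of $T_-$ uses instead the Cartesian square for $\overline{p}$ displayed in the proof of Corollary~\ref{Cor:proppq}, which rests on the identity $\supp(\mathcal{E}) = \supp(\mathcal{F})$ for $0 \to \mathcal{F} \to \mathcal{E} \to \mathcal{O}_x \to 0$, together with the second inclusion of~\eqref{eq:T-open-conditions}.

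The only point requiring genuine care is Tor-independence of the base-change squares, needed for Proposition~\ref{P:basechange} to apply and for the derived fibre products to agree with the naive ones; this holds automatically here because every comparison map is an open immersion, hence flat, which reduces the whole matter to the set-theoretic support computation above. In particular no transversality or regularity hypothesis beyond~\eqref{eq:T-open-conditions} is required, and the argument is insensitive to the choice of compactification $\oS$.
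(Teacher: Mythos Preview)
Your argument is correct and is exactly the open base change that the paper invokes without further detail; you have simply spelled out the Cartesian-diagram chase the paper leaves implicit. One minor point: the proper base change you need is Proposition~\ref{Prop:appA}(e) for ordinary Borel--Moore homology, not Proposition~\ref{P:basechange}, which treats the hyperbolic version relative to a base.
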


Next, let us consider the compactly supported Hecke correspondences. 
Assume that the open substacks $\mathfrak{U} \subset \RCoh^{\geq 1}_{\gamma}$ and $\mathfrak{U}' \subset  \RCoh_{\alpha}^{\geq 1}$ satisfy
\begin{align}\label{eq:T-cpt-conditions}
\begin{split}
    q^{-1}(\RCoh_{\delta}(S) \times\mathfrak{U}') \supseteq (p')^{-1}(\mathfrak{U}\times S), \\ 
    \overline{q}^{-1}(\RCoh_{\delta}(S) \times\mathfrak{U}) \supseteq (\overline{p}')^{-1}(\mathfrak{U}'\times S).
\end{split}
\end{align}
In this case we can define restrictions of Hecke operators~\eqref{E:cpt-supp-Hecke}:
\begin{align*}
    T_+^c: H^c_*(\Coh_\delta(S),\Q)\otimes H_*(\mathfrak{U}',\Q)\to H_*(\mathfrak{U},\Q),\\
    T_-^c: H^c_*(\Coh_\delta(S),\Q)\otimes H_*(\mathfrak{U},\Q)\to H_*(\mathfrak{U}',\Q).
\end{align*}

\smallskip

\begin{proposition}\label{P:heckecompact} 
For $x \in H_*^c(\Coh_\delta(S),\Q)$ and $c \in H_*(\mathfrak{U},\Q)$, $c' \in H_*(\mathfrak{U}',\Q)$ we have
\[
    j^*T_+(\iota_!(x)\otimes c') = T^c_+(x\otimes j^*c'),\qquad j^*T_-(\iota_!(x)\otimes c) = T^c_-(x\otimes j^*c).
\]
\end{proposition}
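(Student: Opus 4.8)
The plan is to reduce the statement to open base change (Proposition~\ref{P:basechange}), in the same spirit as Lemma~\ref{lm:Hecke-base-change}, combined with the factorization $p=p''\circ p'$ of the proper map and a compatibility of the trace $r$ with pushforward along the support maps. I would treat $T_+$ in detail, the case of $T_-$ being identical after replacing $(p,p',q)$ by $(\overline{p},\overline{p}',\overline{q})$.

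First I would record two preliminary facts. Since $\oS$ is proper, the map $\iota$ is the proper pushforward $\underline{\iota}_!\colon H^c_*(\Coh_\delta(S),\Q)\to H^c_*(\Coh_\delta(\oS),\Q)=H_*(\Coh_\delta(\oS),\Q)$ along the open embedding of stacks $\underline{\iota}\colon\RCoh_\delta(S)\hookrightarrow\RCoh_\delta(\oS)$, compatible with the support maps $\Sym^1(S)=S\hookrightarrow\oS=\Sym^1(\oS)$. Moreover, over $\oS$ the operator factors as $T_+=p_!\circ q^*=p''_!\circ p'_!\circ q^*$, where $p''\colon\RCoh^{\geq 1}_\gamma(\oS)\times\oS\to\RCoh^{\geq 1}_\gamma(\oS)$ is the projection, proper precisely because $\oS$ is compact; thus $p''_!$ is exactly the integration over $\oS$ that is absent from $T^c_+$.

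Next I would restrict along $j^*$ and push it through the correspondence using open base change. The point is that the class $q^*(\underline{\iota}_!(x)\otimes c')$ is supported, in the $\mathcal{T}''$-direction, over $S\subseteq\oS$, since $x$ is pushed forward from $\RCoh_\delta(S)$. Combined with condition~\eqref{eq:T-cpt-conditions}, namely $(p')^{-1}(\mathfrak{U}\times S)\subseteq q^{-1}(\RCoh_\delta(S)\times\mathfrak{U}')$, and with Proposition~\ref{P:basechange}, this should allow me to commute $(j\times\Id)^*$ past $p'_!$ and $q^*$ and to replace every $\oS$-stack appearing in $p'_!\circ q^*$ by its $S$-counterpart. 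Writing $p'_S,q_S$ for the corresponding maps over $S$, I expect to obtain $(p'_S)_!\,q_S^*(x\otimes j^*c')$, viewed as a class relative to $\Sym^1(S)=S$, once pushed forward along $\Id\times\iota$ to the $\oS$-direction.

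Finally I would collapse the $\Sym^1$-direction. Applying the surviving proper pushforward $p''_!$, i.e.\ integration over $\oS$, to a class pushed forward from $S$ should equal, by functoriality of pushforward and the factorization $\pi^S=\pi^{\oS}\circ\iota$ of the support maps $\pi^S\colon S\to\mathrm{pt}$ and $\pi^{\oS}\colon\oS\to\mathrm{pt}$, the trace over $S$; concretely $\pi^{\oS}_!\circ\iota_!=\pi^S_!=r$, with $r$ the degree-$0$ map of~\eqref{E:cpt-supp-Hecke}. Chaining the three steps then yields $j^*T_+(\iota(x)\otimes c')=r\circ(p'_S)_!\circ q_S^*(x\otimes j^*c')=T^c_+(x\otimes j^*c')$, and symmetrically for $T_-$. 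The main obstacle I anticipate is this last step: verifying, within hyperbolic Borel--Moore homology, that the proper pushforward over the compact $\oS$ really does restrict, on classes in the image of $\underline{\iota}_!$, to the degree-$0$ trace $r$ on $H^c_*(S,\Q)$. This rests on the compatibility of $!$-pushforward and $*$-pullback with open base change recorded in Proposition~\ref{P:basechange} and on the factorization of the support maps; condition~\eqref{eq:T-cpt-conditions} is precisely what guarantees that, after restriction to $\mathfrak{U}$, the part of the correspondence supported over $S$ is all that contributes.
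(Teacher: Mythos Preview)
Your proposal is correct and follows essentially the same approach as the paper: factor $p=p''\circ p'$, use Proposition~\ref{P:basechange} to commute $\underline{\iota}_!$ through $q^!$ and then through $p'_*$, identify $p''_*\circ(\Id\times\iota_!)$ with the trace $r$ (this is exactly the paper's Example~\ref{ex:relbmhomology}, which resolves the ``main obstacle'' you anticipate), and finally use open base change to pass $j^*$ through the correspondence. The only cosmetic difference is that the paper organizes these steps via an explicit three-row diagram (top row over $\oS$, middle row over $S$ via $-\times_{\oS}S$, bottom row restricted to $\mathfrak{U}$), separating the $\iota_!$-commutation from the $j^*$-restriction, whereas you describe the same manipulations more verbally and in a slightly different order.
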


\begin{proof} The two cases being identical, we will prove the statement for $T^c_+$. Consider the following diagram in which we omit the obvious indices
$$\xymatrix{\Coh_\delta(\oS) \times \RCoh_\alpha(\oS) & \widetilde{\Coh}_{\delta,\alpha}(\oS) \ar[r]^-{p'} \ar[l]_-{q} & \RCoh_\gamma(\oS)\times \oS \ar[r]^-{p''}& \RCoh_\gamma(\oS)\\
\Coh_\delta \times \RCoh_\alpha(\oS) \ar[u]^-{\iota \times \Id}& \widetilde{\Coh}_{\delta,\alpha}(\oS)\times_{\oS}S \ar[u]^-{\tilde{\iota}}\ar[r]^-{s'} \ar[l]_-{t} & \RCoh_\gamma(\oS)\times S \ar[u]^-{\Id \times \iota}\ar[r]^-{s''}& \RCoh_\gamma(\oS)\ar@{=}[u]\\
\Coh_\delta \times \mathfrak{U}' \ar[u]^-{\Id \times j}& \widetilde{\Coh}_{\delta,\alpha}(\oS)\times_{\RCoh_\gamma(\oS)\times \oS}(\mathfrak{U}\times S)\ar[u]^-{\widetilde{j}} \ar[r]^-{(p')^\circ} \ar[l]_-{q^\circ} & \mathfrak{U} \times S \ar[u]^-{j \times \Id} \ar[r]^-{(p'')^\circ} & \RCoh_\gamma \ar[u]^-{j}
}$$
Apart from the rightmost column, the second row is obtained from the top row by base change $-\times_{\oS}S$. 
In addition, all of the vertical arrows are open embeddings and the middle square in the bottom is cartesian. 
By Proposition~\ref{P:basechange} (a), we have $q^!(\iota_!  \otimes \Id)=\tilde{\iota}_!t^!$. 
Hence, using Proposition~\ref{P:basechange} (b), we get
$$p''_*p'_*q^!(\iota_!(x) \otimes c')=p''_*p'_*\tilde{\iota}_!t^!(x \otimes c')=p''_*(\Id \otimes \iota_!)s'_*t^!(x \otimes c').$$ 
Note further that 
$$p''_*(\Id \otimes \iota_!)=r : H_*^c(S,\Q) \to \Q,$$ see Example~\ref{ex:relbmhomology}. 
It follows that 
$$T_+(\iota_!(x) \otimes c')=rs'_*t^!(x \otimes c').$$
On the other hand, by base change and functoriality of Gysin pullbacks, we have $$j^*s'_*t^!=(p')^\circ_*\widetilde{j}^*t^!=(p')^\circ_*(q^\circ)^!(\Id \otimes j^*).$$
Thus, we have
\[
    rs'_*t^!(x \otimes c')=r(p')^\circ_*(q^\circ)^!(x \otimes j^*(c'))=T^c_+(x\otimes j^*(c')).\qedhere
\]
\end{proof}

\smallskip

One particular instance when both restrictions of Hecke operators are well defined is when 
$$\mathfrak{U} = \Coh_\gamma^{\geq 1}
,\quad
\mathfrak{U}' = \Coh_\alpha^{\geq 1}.$$
If we further restrict to an appropriate open substack of $\Coh^{\geq 1}$ where the regularity conditions~\eqref{E:assumption} hold, Proposition~\ref{P:heckecompact} and Lemma~\ref{lm:Hecke-base-change} imply that the formulas of \S\ref{sec:explicitcomputHecke} apply verbatim to the restricted operators $T_\pm$, $T_\pm^c$ over $\mathfrak{U}'$, $\mathfrak{U}$.
Note that proper support implies $r=0$.
Since the evaluation map $\ev$ factors through $\Lambda(S)$ by Lemma~\ref{lem:eval-open-restriction}, in this case we can interpret the operators $\mathcal{L}^\pm_0$ as acting on $\Lambda(S)$.

\begin{remark}
Proposition~\ref{P:heckecompact} and Lemma~\ref{lm:Hecke-base-change} continue to hold if we replace the embedding $S \subset \oS$ by any open immersion.
\end{remark}

\medskip

\section{Deformed $W$-algebras of projective surfaces}\label{sec:Fock-ops}
In this section we introduce and study a class of associative algebras which are associated to the cohomology ring
of $S$. As these bear a resemblance to the deformed $W_{1+\infty}$-algebra studied
in \cite{SVIHES}, which corresponds to the case $S=\mathbb{A}^2$ 
with an action of the torus $(\C^\times)^2$, we will refer to these
as deformed $W$-algebras. In this section, we assume that $S$ is proper, in which case there is only one such type of $W$-algebra. The case of open surfaces will be addressed in \S\ref{sec:W-open-surf}.

\smallskip

We fix a smooth projective surface $S$. Recall that $c_1$, $c_2$ are the Chern classes of $S$ and that $s_2=c_1^2-c_2$.  The Poincaré polynomial of $S$ is
$$P_S(z)=\sum_n \dim(H^n(S,\Q))(-z)^n=h_S(z^{-1})z^{4}.$$

\medskip

\subsection{Positive halves of deformed $W$-algebras}\label{sec:W-alg-defs}
In what follows, we consistently write $\lambda\mu$ for the cup product $\lambda\cup\mu$, in order to simplify the notation.
\begin{definition}\label{def:W} 
Let $W^\geq(S)$ be the $\N\times\Z$-graded associative algebra generated by 
$$\psi_n(\lambda)
,\quad
T_n(\lambda)
,\quad
 n \geq 0
,\quad\lambda \in H^*(S,\Q)$$
and a central element $\c$
modulo the following relations for $a,b \in \Q$, $n,m \in\N$, $\lambda,\mu \in H^*(S,\Q)$
\begin{align*}
    \psi_m(a\lambda+b\mu) &= a\psi_m(\lambda)+b\psi_m(\mu), \tag{\rellab{W:a}{a}}\\
    T_n(a\lambda+b\mu) &= aT_n(\lambda)+bT_n(\mu), \tag{\rellab{W:b}{b}}\\
    [\psi_m(\lambda), \psi_n(\mu)] &= 0, \tag{\rellab{W:c}{c}}\\
    [\psi_m(\lambda), T_n(\mu)] &= m T_{m+n-1}(\lambda\mu), \tag{\rellab{W:d}{d}}\\
    [T_m(\lambda\mu), T_n(\nu)] &= [T_m(\lambda), T_n(\mu\nu)],\tag{\rellab{W:e}{e}}
\end{align*}
\[ \begin{split}[T_{m}(\lambda), T_{n+3}(\mu)] 
    &-3 [T_{m+1}(\lambda), T_{n+2}(\mu)] + 3 [T_{m+2}(\lambda), T_{n+1}(\mu)]  - [T_{m+3}(\lambda), T_{n}(\mu)]\\
     &- [T_{m}(\lambda), T_{n+1}(s_2 \mu)] + 
     [T_{m+1}(\lambda), T_{n}(s_2 \mu)] + \{T_m, T_n\}(c_1\Delta_S\lambda \mu)=0,\end{split} \tag{\rellab{W:f}{f}}\]
\begin{align*}
    \sum_{w\in S_3} w \cdot [T_{m_3}(\lambda_3),& [T_{m_2}(\lambda_2), T_{m_1+1}(\lambda_1)]] = 0, \tag{\rellab{W:g}{g}}\\
    \psi_n(\lambda)=0\text{ if }&2n-2+\deg(\lambda)<0.\tag{\rellab{W:h}{h}}
\end{align*} 
Recall that $[-,-]$ and $\{-,-\}$ are as in  \eqref{supcom}.
The expression 
$\{T_m, T_n\}(c_1\Delta_S\lambda \mu)$ above
is the anti-super-commutator of $T_m$ and $T_n$, whose arguments are taken from the symmetric 2-tensor 
$$c_1\Delta_S\lambda \mu\in H^*(S,\Q)\otimes H^*(S,\Q).$$
The generators have the following degrees
\begin{equation}\label{E:W-bigrading}
  \deg(T_n(\lambda))=(1,2n-2+\deg(\lambda)), \quad \deg(\psi_n(\lambda))=(0, 2n-2+\deg(\lambda))
  \end{equation}
and $\deg(\c)=(0,0)$.  

Let $W^\geq(S)[m,n]$ be the subspace spanned by all bidegree $(m,n)$ elements.
We define $W^+(S)$ and $W^0(S)$ to be the graded subalgebras generated by 
$$\{T_n(\lambda)\,:\,n \geq 0\,,\,\lambda \in H^*(S,\Q)\}
,\quad
\{\c, \psi_n(\lambda)\,:\,n \geq 0\,,\,\lambda \in H^*(S,\Q)\}.$$
\end{definition}
Note that $W^0(S)$ is a supercommutative algebra generated by elements indexed by pairs $(n,\lambda)$. 
It will be convenient to identify\footnote{This identification is why we included the central element $\c$ in $W^\geq(S)$ rather than just in $W(S)$.} $\Lambda(S)$ and $W^0(S)$ as follows. Set 
\begin{align*}
    \psi_\lambda(z)&=\sum_{n \geq 0} \psi_n(\lambda) \frac{z^n}{n!},\\
    \underline{\psi}(z)&=z^{-1}\c \otimes 1 +\sum_\lambda \psi_\lambda(z) \otimes \lambda^* \in W^0(S) \otimes H^*(S,\Q)[[z]].
\end{align*}
There is a unique graded algebra isomorphism 
$$i: W^0(S) \simeq \Lambda(S)
,\quad\underline{\psi}(z) \mapsto z^{-1}\mathbf{r} \otimes 1 +z^{-1}(\uch(z)-\mathbf{r} \otimes 1)(1\otimes \Td_S(z)).$$
Through this identification, we may consider the elements 
$$p(\lambda) \in W^0(S)
,\quad
p\in\Lambda'
,\quad
\lambda \in H^*(S,\Q).$$

\smallskip

\begin{theorem}\label{Prop:defHeis}
\hfill
\begin{enumerate}[label=$\mathrm{(\alph*)}$,leftmargin=8mm]
\item The graded character of $W^+(S)$ is given by
\begin{equation}\label{eq:Wplus-char}
    P_{W^+(S)}(z,w)=\sum_{l,n} \dim(W^+(S)[l,n])(-z)^nw^l=\Exp\left(\frac{P_S(z)z^{-2}w}{(1-z^2)(1-w)} \right).    
\end{equation}    
\item 
The elements $D_{m,0}(\lambda)$ for $m \geq 1$ and  $\lambda \in H^*(S,\Q)$ generate a free graded commutative polynomial algebra $\mathfrak{h}^+_S$.
\end{enumerate}
\end{theorem}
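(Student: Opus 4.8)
The plan is to prove (a) by the usual two-sided estimate---a PBW-type spanning argument for the upper bound and a faithful action on a Fock space for the lower bound---and to extract (b) as the lowest slice of the resulting PBW basis. The key numerical input is the expansion
\[
\frac{P_S(z)z^{-2}w}{(1-z^2)(1-w)}=\sum_{i,j\geq 0}P_S(z)\,z^{2i-2}w^{j+1},
\]
which tells me that a PBW basis should be indexed by finite multisets of ``root vectors'' $D_{m,n}(\lambda)$, one family for each pair $(i,j)=(n,m-1)$ and each basis class $\lambda\in H^*(S,\Q)$, where $D_{m,n}(\lambda)$ has bidegree $(m,\,2n-2+\deg\lambda)$ and $D_{1,n}(\lambda)$ is a normalization of $T_n(\lambda)$.

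For the upper bound I would introduce the higher root vectors $D_{m,n}(\lambda)$ by iterated commutators of the $T_k(\lambda)$ and show that ordered monomials in the $D_{m,n}(\lambda)$ span $W^+(S)$. The three $T$-relations do exactly the three jobs needed: relation \eqref{W:e} transfers cohomology classes between the two entries of a bracket; relation \eqref{W:f}, which is a third finite-difference identity, rewrites $[T_m(\lambda),T_{n+3}(\mu)]$ in terms of more balanced brackets plus strictly lower-order corrections (the $s_2$- and $c_1\Delta_S$-terms); and relation \eqref{W:g} is the Serre-type identity that lets me reorder triple brackets. A double induction---on the total bidegree and then on the spread of the second indices---using these relations reduces an arbitrary product of generators first to nested brackets, then to the chosen root vectors, and finally to ordered monomials, giving $\dim W^+(S)[l,n]\leq\big[\Exp(\cdots)\big]_{l,n}$.

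For the matching lower bound I would use the explicit representation of $W^+(S)$ on the Fock space $\Lambda(S)$ furnished by the operators $\mathcal{L}^+_r$ of Remark~\ref{E:vertexoperators}. Applying PBW monomials to the vacuum $1\in\Lambda(S)$ and comparing leading terms under the natural filtration of $\Lambda(S)$, I would check that distinct ordered monomials produce linearly independent vectors; this shows the spanning set is a basis and proves (a). (As a cross-check, the resulting character agrees with that of $\mathbf{H}_0(S)$ from Theorem~\ref{T:KV}, in accordance with Theorem~\ref{thmB}.) Part (b) then falls out: the $D_{m,0}(\lambda)$ are precisely the root vectors with vanishing second index, they graded-commute---which one sees either directly from \eqref{W:e}--\eqref{W:g} or because they act on $\Lambda(S)$ as mutually commuting Nakajima-type creation operators---and the independence just established shows that ordered monomials in them are linearly independent, so $\mathfrak{h}^+_S$ is the free graded-commutative polynomial algebra they generate, with character $\Exp\big(P_S(z)z^{-2}w/(1-w)\big)$.

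The main obstacle is the upper-bound reduction. Relation \eqref{W:f} produces the auxiliary $s_2$- and $c_1\Delta_S$-corrections at every reduction step, and one must track these carefully, show that the induction on bidegree and on the second-index spread really terminates, and verify that no normal-form monomials beyond those counted by the plethystic exponential survive. This bookkeeping---rather than the representation-theoretic lower bound, which is a fairly mechanical leading-term computation---is where essentially all the content of the theorem lies.
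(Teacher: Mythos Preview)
Your upper-bound strategy is essentially the paper's: introduce the order filtration $F_\bullet$ (Lie words weighted by sum of indices minus number of brackets), observe that relations \eqref{W:e}--\eqref{W:g} reduce in the associated graded to the undeformed relations \eqref{W-rel6nondef}, and conclude via Theorem~\ref{thm:W-undef} that $\Gr_\bullet W^+(S)$ is a quotient of $\Sym(\mathfrak{w}^+_0(S))$. So the spanning step is the same.

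The lower bound is where you and the paper diverge. You propose to act on the Fock space $\Lambda(S)$ and verify directly that distinct ordered monomials in the $D_{m,n}(\lambda)$ applied to the vacuum are linearly independent by a leading-term analysis. The paper instead argues purely algebraically and never touches the Fock space for this step. It introduces a second, descending filtration $G^\bullet$ on the \emph{doubled} algebra $\DW{1}(S)$, graded by $\sum\deg(\lambda_i)$, and observes that in $\Gr^\bullet$ the $s_2$- and $c_1\Delta_S$-terms of \eqref{W:f} disappear, so there is a surjection $\zeta:\DW{1}_0(S)\to\Gr^\bullet\DW{1}(S)$. Injectivity of $\zeta$ comes from Lemma~\ref{lem:Heis-faithful}: any nonzero two-sided ideal of $\DW{1}_\red$ (or of $W^+$) must meet $U(\mathfrak{h}_S)$ nontrivially, and $\zeta$ restricted to $U(\mathfrak{h}_S)$ is injective because the central charge is $1$. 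Since the Hilbert series of $U(\mathfrak{w}^+_0(S))$ is the desired plethystic exponential, (a) follows. Part (b) is then handled by Proposition~\ref{prop:heis-in-double}, which proves the Heisenberg relations (in particular the commutativity $[D_{m,0}(\lambda),D_{m',0}(\mu)]=0$) by an explicit induction with $L_\pm=D_{\pm1,1}(1)$ and the Jacobi identity; freeness follows from (a).

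Your route would work, but the leading-term computation in $\Lambda(S)$ is not as mechanical as you suggest: the vertex-operator formulas mix $p_k$- and $\kappa_n$-bases through the Todd class, and extracting a clean triangular structure on general PBW monomials takes real effort. The paper's approach trades this for the single structural Lemma~\ref{lem:Heis-faithful}, proved by an $\Ad(D_{l,0}(1))$-degree-lowering argument that reduces any putative ideal element to the Heisenberg floor. This lemma is then reused verbatim to prove faithfulness of the Fock representation (Proposition~\ref{cor:big-W-faithful}) and of the Hilbert-scheme action (Proposition~\ref{P:faithfullactionWHilb}), so isolating it pays off several times. Your assessment that ``essentially all the content lies in the upper-bound bookkeeping'' is therefore somewhat misplaced: the upper bound is routine once the order filtration is set up, whereas the key idea is that the Heisenberg subalgebra detects all ideals.
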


\smallskip

\begin{definition}\label{def:W-semidef}
Assume that $c_1=0$ and $s_2=q^2$ for some $q \in H^2(S,\Q)$.
We define the Lie algebra $\mathfrak{w}^\geq(S)$ to be spanned by $z^mD^n\lambda$ with $m,n\in\N$ and $\lambda\in H^*(S,\Q)$, and a central element $\c$, with the Lie bracket given by
\begin{equation}\label{E:Lie-rels-semidef}
    [z^mD^n\lambda\,,\,z^aD^b\mu] = \frac{ (D+aq)^nD^b - D^n(D+mq)^b }{q}\,z^{m+a}\lambda\mu.  
\end{equation}
Let $\mathfrak{w}^+(S)$ be the subalgebra spanned by $z^mD^n\lambda$ for all $m \geq 1,$ $ n \geq 0$ and $\lambda \in H^*(S,\Q)$.
\end{definition}

\begin{remark}\label{rmk:semidef-lin-param}
    The right hand side of~\eqref{E:Lie-rels-semidef} is 
    \[
        \sum_{i=1}^n \binom{n}{i}n^iz^{m+a}D^{n+b-i}q^{i-1}\lambda\mu - \sum_{j=1}^b \binom{b}{j}m^jz^{m+a}D^{n+b-j}q^{j-1}\lambda\mu.
    \]
The $q$-term of this sum only depends on $(m+a)$ and $(mb-na)$ in accordance with the last claim of Proposition~\ref{prop:W-PBW} below.
\end{remark}

When $c_1=s_2=0$, we write $D_{m,n}(\lambda)=z^mD^n\lambda$. Then, the Lie bracket degenerates as follows:
\begin{align}\label{E:Lie-rels}
    [D_{m,n}(\lambda), D_{m',n'}(\mu)] = (n m' - m n') D_{m+m',n+n'-1}(\lambda\mu).
\end{align}
We will denote this Lie algebra by $\mathfrak{w}^\geq_0(S)$. It is well-defined for any surface $S$.

\smallskip

\begin{theorem}\label{T:W(S)} If $c_1=0$ and $s_2=q^2$, then the assignment $T_n(\lambda) \mapsto zD^n\lambda$ extends to isomorphisms 
$$\Phi:W^\geq(S) \to U(\mathfrak{w}^\geq(S))
,\quad
\Phi:W^+(S) \to U(\mathfrak{w}^+(S)).$$
\end{theorem}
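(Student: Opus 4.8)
The plan is to construct an explicit graded algebra homomorphism $\Phi\colon W^\geq(S)\to U(\mathfrak{w}^\geq(S))$, to show it is surjective, and then to conclude that it is an isomorphism by a graded dimension count resting on Theorem~\ref{Prop:defHeis}(a). On generators I set $\Phi(T_n(\lambda))=zD^n\lambda$, $\Phi(\c)=\c$, and $\Phi(\psi_n(\lambda))=P_n(D)\lambda$, where $P_n$ is the polynomial singled out by the difference equation $\tfrac{P_n(D+q)-P_n(D)}{q}=nD^{n-1}$; its generating series is the Bernoulli-type series governed by $\Td_S$, so that $\Phi|_{W^0(S)}$ is exactly the identification $i\colon W^0(S)\simeq\Lambda(S)$ followed by the tautological $\Lambda(S)\simeq\Sym(\mathfrak{w}^0(S))$ (recall that $\mathfrak{w}^0(S)$, the $m=0$ part, is abelian by \eqref{E:Lie-rels-semidef}).

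First I would check that $\Phi$ respects the defining relations of Definition~\ref{def:W}, i.e.\ that \eqref{W:a}--\eqref{W:h} map to identities in $U(\mathfrak{w}^\geq(S))$. Relations \eqref{W:a}, \eqref{W:b} are linearity; \eqref{W:c} holds since $\mathfrak{w}^0(S)$ is abelian; \eqref{W:e} is immediate from the commutativity of $H^*(S,\Q)$ and the symmetry of the structure constants. Relation \eqref{W:d} is exactly the difference equation defining $P_n$, via $[D^a\lambda,zD^n\mu]=z\,\tfrac{(D+q)^a-D^a}{q}D^n\lambda\mu$, and \eqref{W:h} is the degree truncation already built into $i$ and into $\Lambda(S)$. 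The substantive point is \eqref{W:f}: expanding the four finite-difference terms $[T_{m}(\lambda),T_{n+3}(\mu)]-3[T_{m+1},T_{n+2}]+3[T_{m+2},T_{n+1}]-[T_{m+3},T_{n}]$ through \eqref{E:Lie-rels-semidef} and using $\sum_{k=0}^3(-1)^k\binom{3}{k}(D+q)^kD^{3-k}=(D-(D+q))^3=-q^3$, the numerator of this alternating sum is divisible by $q^3$, so after the division built into \eqref{E:Lie-rels-semidef} it equals $-q^2\big((D+q)^mD^n+D^m(D+q)^n\big)$ times $z^2\lambda\mu$. This cancels the two $s_2\mu$-terms \emph{precisely because} $s_2=q^2$, while the anticommutator term drops out \emph{because} $c_1=0$. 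Relation \eqref{W:g} is verified in the same spirit, as the vanishing of an $S_3$-symmetrized double bracket, using the Jacobi identity in $\mathfrak{w}^\geq(S)$ together with the explicit constants of Remark~\ref{rmk:semidef-lin-param}.

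Once $\Phi$ is defined, surjectivity is easy: iterated brackets of the $m=1$ generators produce all higher $z^mD^n\lambda$ (for instance $[zD\lambda,z\mu]=z^2\lambda\mu$, and induction raises $m$), while the triangular family $\{P_n(D)\}$ produces every $D^n\lambda$; hence the image contains $\mathfrak{w}^\geq(S)$ and so is all of $U(\mathfrak{w}^\geq(S))$. For injectivity I compare graded dimensions. The basis $\{z^mD^n\lambda:m\geq 1\}$ of $\mathfrak{w}^+(S)$ places $z^mD^n\lambda$ in bidegree $(m,2n-2+\deg\lambda)$, so its graded character is $\big(\sum_{m\geq1}w^m\big)\big(\sum_{n\geq0}z^{2n-2}\big)P_S(z)=\tfrac{P_S(z)z^{-2}w}{(1-z^2)(1-w)}$; by the Poincar\'e--Birkhoff--Witt theorem $\dim_{\mathrm{gr}}U(\mathfrak{w}^+(S))=\Exp\!\big(\tfrac{P_S(z)z^{-2}w}{(1-z^2)(1-w)}\big)$, which is exactly $\dim_{\mathrm{gr}}W^+(S)$ by Theorem~\ref{Prop:defHeis}(a). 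Since $\Phi$ maps $W^+(S)$ onto $U(\mathfrak{w}^+(S))$ and the two spaces have equal finite graded dimensions, $\Phi|_{W^+(S)}$ is an isomorphism. The full statement then follows by combining this with $\Phi|_{W^0(S)}\colon W^0(S)\simeq\Lambda(S)\simeq U(\mathfrak{w}^0(S))$ through the triangular decompositions $W^\geq(S)\simeq W^0(S)\otimes W^+(S)$ (the positive part of the decomposition in Theorem~\ref{thmA}(a)) and $U(\mathfrak{w}^\geq(S))\simeq U(\mathfrak{w}^0(S))\otimes U(\mathfrak{w}^+(S))$ (by PBW, as $\mathfrak{w}^+(S)$ is an ideal in $\mathfrak{w}^\geq(S)$).

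The main obstacle is the relation-checking of the second step, and in particular \eqref{W:g}. Whereas \eqref{W:f} collapses transparently once one recognizes that the alternating finite-difference has a numerator divisible by $q^3$ and invokes the two hypotheses $s_2=q^2$ and $c_1=0$, the cubic relation forces one to fully expand the doubly nested brackets via \eqref{E:Lie-rels-semidef} and to check that the alternating $S_3$-sum vanishes identically, which requires careful bookkeeping of the binomial coefficients and of the $q^1$- and $q^2$-corrections. A subsidiary subtlety that must be handled with care is pinning down $P_n$ and its compatibility with the Todd-class normalization in $i$, so that \eqref{W:d} and \eqref{W:h} hold on the nose rather than merely to leading order in $q$.
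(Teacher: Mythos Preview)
Your proposal is correct and follows essentially the same strategy as the paper's proof in \S\ref{sec:semidef}: define $\Phi$ explicitly on generators, verify the defining relations of Definition~\ref{def:W}, observe surjectivity, and conclude by the graded-dimension count of Theorem~\ref{Prop:defHeis}(a). The only cosmetic difference is that the paper specifies $\Phi(\psi_n(\lambda))$ by inverting an explicit lower-triangular matrix rather than via your Bernoulli-type difference equation $\tfrac{P_n(D+q)-P_n(D)}{q}=nD^{n-1}$, but these are equivalent descriptions of the same triangular change of basis in $\mathfrak{w}^0(S)$.
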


\smallskip

We will prove Theorem~\ref{Prop:defHeis} in \S\S\ref{sec:str-def},~\ref{sec:Heis-alg}, and Theorem~\ref{T:W(S)} in \S\ref{sec:semidef}.

\medskip

\subsection{Structure of $W^\geq (S)$ in the non-deformed case}
In this section we assume that $c_1=s_2=0$.
So, the relation~\eqref{W:f} simplifies to
\begin{align}\label{W-rel6nondef}
    [T_{m}(\lambda), T_{n+3}(\mu)] &- 3 [T_{m+1}(\lambda), T_{n+2}(\mu)] + 3 [T_{m+2}(\lambda), T_{n+1}(\mu)] - [T_{m+3}(\lambda), T_{n}(\mu)] = 0.
\end{align}

\begin{theorem}\label{thm:W-undef}
If $c_1=s_2=0$, then the assignment 
\begin{align}\label{E:iso-undef}
\psi_n(\lambda)\mapsto D_{0,n}(\lambda)
,\quad
T_n(\lambda)\mapsto D_{1,n}(\lambda)
,\quad
\c \mapsto \c
\end{align}
extends to an algebra isomorphism 
$$\Phi: W^\geq(S)\to U(\mathfrak{w}_0^\geq(S)).$$
\end{theorem}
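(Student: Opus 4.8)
The first step is to verify that the assignment \eqref{E:iso-undef} respects each defining relation \eqref{W:a}--\eqref{W:h}, i.e. that their images vanish in $U(\mathfrak{w}^\geq_0(S))$. Relations \eqref{W:a}, \eqref{W:b} are the linearity of $D_{m,n}(\lambda)$ in $\lambda$; relations \eqref{W:c}, \eqref{W:d}, \eqref{W:e} follow by a one-line substitution into the degenerate bracket \eqref{E:Lie-rels}, e.g. $[D_{0,m}(\lambda),D_{1,n}(\mu)]=(m\cdot 1-0\cdot n)D_{1,m+n-1}(\lambda\mu)=mD_{1,m+n-1}(\lambda\mu)$, which is the image of \eqref{W:d}; and \eqref{W:h} holds because the corresponding $D_{0,n}(\lambda)$ is a negative-degree element, hence $0$ in $\mathfrak{w}^\geq_0(S)$. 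For the two remaining relations one computes the brackets in $\mathfrak{w}^\geq_0(S)$: since $s_2=c_1=0$, relation \eqref{W:f} becomes \eqref{W-rel6nondef}, and inserting $[D_{1,a}(\lambda),D_{1,b}(\mu)]=(a-b)D_{2,a+b-1}(\lambda\mu)$ turns its left-hand side into $\big[(m{-}n{-}3)-3(m{-}n{-}1)+3(m{-}n{+}1)-(m{-}n{+}3)\big]D_{2,m+n+2}(\lambda\mu)=0$. Relation \eqref{W:g} is the image of the Jacobi identity: the triple bracket equals $(m_2-m_1-1)(2m_3-m_1-m_2)\,D_{3,m_1+m_2+m_3-1}(\lambda_1\lambda_2\lambda_3)$, and since the target is (super)symmetric in the three labels, \eqref{W:g} reduces to the vanishing of the $S_3$-symmetrisation of the scalar $(m_2-m_1-1)(2m_3-m_1-m_2)$, a direct check. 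Thus $\Phi$ is a well-defined algebra homomorphism.

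\textbf{Surjectivity.} Using the unit $1\in H^0(S)$ one gets $D_{2,k}(\nu)=\tfrac{1}{k+1}[D_{1,k+1}(1),D_{1,0}(\nu)]$, and more generally $D_{m,k}(\nu)=\tfrac{1}{(k+1)(m-1)}[D_{1,k+1}(1),D_{m-1,0}(\nu)]$ for $m\geq 2$. Hence the elements $D_{0,n}(\lambda)$, $D_{1,n}(\lambda)$ generate $\mathfrak{w}^\geq_0(S)$ as a Lie algebra, so their images generate $U(\mathfrak{w}^\geq_0(S))$ as an algebra and $\Phi$ is surjective.

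\textbf{Injectivity.} Since $\Phi$ is a graded surjection, it suffices to prove the reverse inequality $\dim W^\geq(S)[l,n]\leq\dim U(\mathfrak{w}^\geq_0(S))[l,n]$ in each bidegree. As vector spaces $\mathfrak{w}^\geq_0(S)=\mathfrak{w}^0_0(S)\oplus\mathfrak{w}^+_0(S)$, the abelian part $\mathfrak{w}^0_0(S)$ spanned by $\{D_{0,n}(\lambda)\}$ and $\c$, and the ideal $\mathfrak{w}^+_0(S)=\{D_{m,n}(\lambda):m\geq 1\}$; by the PBW theorem the right-hand dimension equals that of the super-symmetric algebra $\Sym(\mathfrak{w}^0_0(S))\otimes\Sym(\mathfrak{w}^+_0(S))$. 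Counting the explicit basis $z^mD^n\lambda$ (of bidegree $(m,2n-2+\deg\lambda)$) gives $\mathrm{char}\,\mathfrak{w}^+_0(S)=\frac{P_S(z)z^{-2}w}{(1-z^2)(1-w)}$, so $\mathrm{char}\,\Sym(\mathfrak{w}^+_0(S))$ is the plethystic exponential of Theorem~\ref{Prop:defHeis}(a), while $\Sym(\mathfrak{w}^0_0(S))$ matches $W^0(S)\cong\Lambda(S)$ generator for generator. On the $W^\geq(S)$ side I would produce a spanning set of the same size: relation \eqref{W:d} rewrites $T_n(\mu)\psi_m(\lambda)$ as $\pm\psi_m(\lambda)T_n(\mu)$ plus a term with one fewer $\psi$, giving by induction the triangular spanning $W^\geq(S)=W^0(S)\cdot W^+(S)$; inside $W^+(S)$, relation \eqref{W:e} together with \eqref{W-rel6nondef} and the super-antisymmetry of the bracket force $[T_m(\lambda),T_n(\mu)]=(m-n)\,\widetilde{D}_{2,m+n-1}(\lambda\mu)$ for well-defined elements $\widetilde{D}_{2,k}$, and one sets $\widetilde{D}_{m,k}$ inductively by iterated commutators as in the surjectivity step.

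\textbf{Main obstacle.} The technical heart is the final spanning claim: that ordered monomials in the $\widetilde{D}_{m,k}$ span $W^+(S)$. Reordering a product of two $\widetilde{D}$'s introduces a commutator which is again a combination of $\widetilde{D}$'s of strictly larger $z$-degree, and one must check this straightening terminates and is unambiguous — a PBW/diamond-lemma argument for the presented algebra $W^+(S)$, in which the overlap ambiguity among triple products is resolved precisely by the cubic relation \eqref{W:g}. Establishing confluence (equivalently, that \eqref{W:e}, \eqref{W-rel6nondef} and \eqref{W:g} suffice to reach normal form) is where the real work lies. Once this upper bound is in place it matches the PBW dimension of $U(\mathfrak{w}^\geq_0(S))$ and, combined with surjectivity, forces $\Phi$ to be an isomorphism; restricting to the subalgebra generated by the $T_n(\lambda)$ then yields $W^+(S)\cong U(\mathfrak{w}^+_0(S))$.
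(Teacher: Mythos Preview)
Your overall outline is sound and the well-definedness and surjectivity paragraphs are fine, but the paper takes a noticeably different route to finish, and your injectivity strategy has a framing problem worth flagging.

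\textbf{What the paper does.} Rather than a dimension count, the paper constructs an explicit inverse $\Psi$ by setting
\[
\tilD_{m,n}(\lambda):=\frac{n!}{(m+n)!}\,(-\Ad_{T_0})^m\,\psi_{m+n}(\lambda)\in W^{\geq}(S)
\]
and then verifying directly that these elements satisfy the Lie bracket $[\tilD_{m,n}(\lambda),\tilD_{m',n'}(\mu)]=(nm'-mn')\tilD_{m+m',n+n'-1}(\lambda\mu)$. This is done by an inductive bootstrap: first show $[T_0(\lambda),\tilD_{m,n}(\mu)]=-n\tilD_{m+1,n-1}(\lambda\mu)$ and $[\psi_1(\lambda),\tilD_{m,n}(\mu)]=m\tilD_{m,n}(\lambda\mu)$, then (using \eqref{W-rel6nondef} exactly as you do) that $[T_k,T_{n-k}(\mu)]=(2k-n)\tilD_{2,n-1}(\mu)$, then $[\tilD_{2,0},\tilD_{m,n}(\mu)]=-2n\tilD_{m+2,n-1}(\mu)$, then $\Ad_{T_1}$ and $\Ad_{\psi_2}$ ladder operators; finally an induction on $m+m'$ handles the residual case $n=n'=0$. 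Once $\Psi$ is a Lie map, $\Phi\Psi$ and $\Psi\Phi$ are checked on generators and the isomorphism follows. No PBW or dimension comparison is used here; in fact this theorem is logically \emph{upstream} of the character formula~\eqref{eq:Wplus-char} in the paper.

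\textbf{Where your approach needs adjustment.} Your spanning argument ultimately needs exactly the same computation: that $[\widetilde{D}_{m,n},\widetilde{D}_{m',n'}]$ is again a single $\widetilde{D}$ in $W^+(S)$. Phrasing this as a diamond-lemma/confluence problem is misleading, because relations \eqref{W:e}, \eqref{W-rel6nondef}, \eqref{W:g} are not rewriting rules with overlap ambiguities in any natural way; the only way to ``resolve'' the ambiguity is to prove the Lie relations for the $\widetilde{D}$'s, at which point ordinary PBW already gives spanning and the diamond lemma is superfluous. Also, your sentence ``a commutator which is again a combination of $\widetilde{D}$'s of strictly larger $z$-degree'' is wrong: the $z$-degree is additive under commutator and does not increase. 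The quantity that drops is the \emph{order} $n+n'\mapsto n+n'-1$, and that is the correct filtration for termination. Once you fix this, your route works, but it is strictly more circuitous than building $\Psi$ directly.
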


\begin{proof} We abbreviate $T_n=T_n(1)$ and $\psi_n=\psi_n(1)$.
The elements $D_{0,n}(\lambda)$, $D_{1,n}(\lambda)$ 
satisfy the relations \eqref{W:a}-\eqref{W:g}.
Hence the assignment above yields an algebra homomorphism.
Set
$$\tilD_{m,n}(\lambda) = \frac{n!}{(m+n)!} (-\Ad_{T_0})^m \psi_{m+n}(\lambda).$$
We claim that the following defines an inverse homomorphism $\Psi$ to the map $\Phi$
\begin{align}\label{E:inv-undef}
        \Psi: D_{m,n}(\lambda) \mapsto \tilD_{m,n}(\lambda) .
\end{align}
To prove the theorem, we need to check that the elements $\tilD_{m,n}(\lambda)$ satisfy the defining relations~\eqref{E:Lie-rels}. They are satisfied for $m=n=0$. We begin by computing a few commutators of low rank.
Since $T_n$ is even, from \eqref{W:e} we deduce that
    $$[T_n,T_n(\lambda)]=[T_n(\lambda),T_n]=0$$
Unraveling the definition of $\tilD_{m,n}(\mu)$ we obtain
    \begin{align}\label{E:T0-comm}
        [T_0(\lambda),\tilD_{m,n}(\mu)] = -n\tilD_{m+1,n-1}(\lambda\mu), \quad n\geq 1.
    \end{align}
In the same way, the relations \eqref{W:d}, \eqref{W:c} imply that 
$$[\psi_1(\lambda), T_0(\mu)] = T_0(\lambda\mu)
,\quad
[\psi_1(\lambda),\psi_n(\mu)] = 0.$$ So we get
    \begin{align}\label{E:psi1}
        [\psi_1(\lambda),\tilD_{m,n}(\mu)] = m\tilD_{m,n}(\lambda\mu).
    \end{align}
Let us now consider the commutators $[T_m(\lambda),T_n(\mu)]$. By \eqref{W:e}, it's enough to assume that $\lambda=1$. 
Using \eqref{W-rel6nondef} first for $m \in \{n,n \pm 1\}$ and then successively for $m>n$ and $m<n$ we deduce that for any $n$, the commutators $[T_m,T_{n-m}(\mu)]$ for $m=0, 1, \ldots, n$ are all proportional to one another, hence, say to $[T_n,T_0(\mu)]=n\widetilde{D}_{2,n-1}(\mu)$. But since $\Phi(\widetilde{D}_{2,m}(\mu))=D_{2,m}(\mu)$, we deduce that
$$[T_m(\lambda),T_{n-m}(\mu)]=[T_m,T_{n-m}(\lambda\mu)]=(2m-n)\widetilde{D}_{2,n-1}(\lambda\mu)$$
as expected. Further taking commutators with $\psi_m(\mu) = \tilD_{0,m}(\mu)$, we get
    \begin{align}\label{E:psi-D2}
        [\tilD_{2,n}(\lambda),\tilD_{0,m}(\mu)] = -2m\tilD_{2,n+m-1}(\lambda\mu).
    \end{align}    
Next, we consider commutators with $\tilD_{2,0}$. Relation \eqref{W:g} yields
$$[\tilD_{2,0},T_0] = [[T_1,T_0],T_0] = 0,$$ and more generally 
$$[\tilD_{2,0}(\lambda),T_0(\mu)] = 0.$$ 
Using~\eqref{E:T0-comm}, we deduce from~\eqref{E:psi-D2} by induction on $m$ that
    \begin{align}\label{E:D20}
        [\tilD_{2,0}(\lambda),\tilD_{m,n}(\mu)] &= -\frac{1}{n+1}[T_0,[\tilD_{2,0}(\lambda),\tilD_{m-1,n+1}(\mu)]] \\\nonumber
        &= 2[T_0,\tilD_{m+1,n}(\lambda\mu)] \\\nonumber
        & = -2n \tilD_{m+2,n-1}(\lambda\mu)
    \end{align}
for any $n \geq 1$. 
Finally, from relations \eqref{E:D20}, \eqref{E:T0-comm}, \eqref{E:psi1} and the definition of $\tilD_{m,n}$ we deduce in turn the following equalities 
    \begin{align}\label{E:Lie-rels-red-T1}
        [T_1(\lambda),\tilD_{m,n}(\mu)] &= (m-n)\tilD_{m+1,n}(\lambda\mu),\\\label{E:Lie-rels-red-psi2} [\psi_2(\lambda),\tilD_{m,n}(\mu)] &= 2m \tilD_{m,n+1}(\lambda\mu)
    \end{align}
    for all $m,n\geq 0$. 
Assume now that the relations \eqref{E:Lie-rels} hold (with $\widetilde{D}$ in place of $D$) for a given pair $(m,n)$ and all pair $(m',n')$ with $n'\geq 1$. Then, by applying $\Ad_{T_1}$ or $\Ad_{T_0}$, resp. $\Ad_{\psi_2}$ and using \eqref{E:Lie-rels-red-T1} or \eqref{E:T0-comm}, resp.~\eqref{E:Lie-rels-red-psi2}, we deduce that the same is true for the pairs $(m+1,n)$, resp. $(m,n+1)$. Starting from $m=1,n=0$, we deduce that \eqref{E:Lie-rels} holds for any pairs $(m,n)$, $(m',n')$ with $m \geq 1$ and $n'\geq 1$, and thus, by symmetry, whenever $m'\geq 1$ and $n\geq 1$. The only remaining cases occur when $m=m'=0$ (for which \eqref{E:Lie-rels} trivially holds) and when $n=n'=0$, which we now deal with. We will prove by induction on $m+m'$ that 
    \begin{equation}\label{E:dm0-comm}
        [\tilD_{m,0}(\lambda),\tilD_{m',0}(\mu)]=0.
    \end{equation}
    Fix $s$ and assume that \eqref{E:dm0-comm} holds whenever $m+m'\leq s$. Note that the above calculations show that this is indeed the case for $s=3$. If $m,m'>0$ satisfy $m+m'=s+1$ then by the induction hypothesis and 
what we've already established we have
\begin{align*}
[T_0,\tilD_{m-1,1}(\lambda)]&=-\tilD_{m,0}(\lambda), \\
[T_0,\tilD_{m'-1,1}(\mu)]&=-\tilD_{m',0}(\mu),\\
[T_0,\tilD_{m,0}(\lambda)]&=0,\\ 
[T_0,\tilD_{m',0}(\mu)]&=0,\\
\frac{1}{m'}[\tilD_{m-1,1}(\lambda),\tilD_{m',0}(\mu)]&=\tilD_{m+m',0}(\lambda\mu)\\
&=-\frac{1}{m}[\tilD_{m,0}(\lambda),\tilD_{m'-1,1}(\mu)].
\end{align*}
    Applying $\Ad_{T_0}$ to this last equation we deduce $[\tilD_{m,0}(\lambda),\tilD_{m',0}(\mu)]=0$. 
    We are done.
    \end{proof}

\medskip

\subsection{Order filtration}\label{sec:order-filt}
Let us return to the deformed case, with $S$ a projective surface.
Let $F_n$, $n\in\Z$, be the smallest increasing filtration of $W^\geq(S)$
such that $\c \in F_0$ and
\begin{itemize}[leftmargin=6mm]
\item[-]
$\psi_n(\lambda), T_n(\lambda)\in F_n$ for all $n\in\N$, $\lambda\in H^*(S,\Q)$,
\item[-] 
$F_nF_{n'} \subset F_{n+n'}$,
\item[-]
$[F_n,F_{n'}]\subset F_{n+n'-1}$.
\end{itemize}
We call $F_\bullet$ the order filtration. 
The algebra $W^\geq(S)$ is bigraded by~\eqref{E:W-bigrading}. Set 
$$F[m]_n = F_n\cap W^\geq(S)[m,-].$$
The filtration $F_\bullet$ can be given more explicitly as follows. 
By a Lie word we mean a combination of Lie brackets of the
generators of $W^\geq(S)$. By a monomial we mean a product
of Lie words. The order of a Lie word is the sum
of the indices of the generators minus the number of
brackets. The order of a monomial is the sum of the orders
of the Lie words. 
Then $F_n$ is the span of expressions of order $\leq n$.
The relations of $W^\geq(S)$ are not homogeneous for the order.
We define
\begin{align}\label{orderfilt}\Gr_\bullet W^\geq(S) = \sum_{n\in\Z} F_n/F_{n-1}.
\end{align}
The graded vector space $\Gr_\bullet W^\geq(S)$
has two operations: a graded commutative multiplication and a Lie bracket of degree $-1$.

\begin{lemma}\label{lem:filtr-bound}
We have $F[0]_{-1} = 0$ and $F[m]_{-m} = 0$ for any $m>0$.
\end{lemma}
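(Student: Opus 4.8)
The plan is to work with the explicit description of the order filtration recalled just above the lemma: $F_n$ is spanned by the monomials (products of Lie words in the generators) of weight $\leq n$, where the weight of a monomial is the total sum of the indices of the generators occurring in it minus the total number of brackets. Since the bigrading is additive and only the generators $T_n(\lambda)$ carry first-degree $1$ (while $\psi_n(\lambda)$ and $\c$ carry first-degree $0$), a monomial lies in $W^\geq(S)[m,-]$ exactly when it contains precisely $m$ generators of type $T$. Hence $F_{m,n}$ is spanned by the first-degree-$m$ monomials of weight $\leq n$, and it suffices to establish the lower bounds $\mathrm{wt}(M)\geq 0$ when $m=0$ and $\mathrm{wt}(M)\geq 1-m$ when $m\geq 1$, for every nonzero monomial $M$ of first-degree $m$. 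Granting these, no nonzero first-degree-$0$ monomial has weight $\leq -1$ and no nonzero first-degree-$m$ monomial ($m>0$) has weight $\leq -m$, so the spanning sets of $F_{0,-1}$ and $F_{m,-m}$ are empty.

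The crucial input is that $\psi_0(\lambda)$ is central in $W^\geq(S)$: it commutes with every $\psi_n(\mu)$ by \eqref{W:c}, with every $T_n(\mu)$ by the $m=0$ instance of \eqref{W:d} (whose right-hand side carries a factor $m$), and with $\c$ by definition. A standard induction on the number of generators, using the Jacobi identity, then shows that any Lie word involving at least two generators in which a central generator occurs must vanish. Combining this with the fact that the $\psi$'s commute among themselves (again \eqref{W:c}) and $\c$ is central, we conclude that a \emph{nonzero} Lie word with at least two generators necessarily contains at least one $T$-generator and, moreover, that all of its $\psi$-generators have index $\geq 1$.

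The weight estimate is then a counting argument carried out one Lie word at a time. A single-generator Lie word has weight equal to its index, which is $\geq 0$; in particular $T_n(\lambda)$ (first-degree $1$) has weight $\geq 0 = 1-1$. For a nonzero Lie word $w$ with $k\geq 2$ generators, say $a$ of type $T$ and $b$ of type $\psi$, there are $k-1=a+b-1$ brackets; since all indices are $\geq 0$ and, by the previous paragraph, the $b$ many $\psi$-indices are now $\geq 1$, the total index sum is $\geq b$, so $\mathrm{wt}(w)\geq b-(a+b-1)=1-a$, where $a$ is the first-degree of $w$. Writing a monomial as a product $M=w_1\cdots w_P$ of nonzero Lie words, weights and first-degrees add; the factors of first-degree $0$ (necessarily single $\psi$'s or $\c$'s) contribute weight $\geq 0$, while each factor of positive first-degree contributes $\geq 1-m_{w_s}$. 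Summing gives $\mathrm{wt}(M)\geq P_+-m$, where $P_+$ is the number of factors of positive first-degree. If $m=0$ then $P_+=0$ and $\mathrm{wt}(M)\geq 0$; if $m>0$ then $P_+\geq 1$ and $\mathrm{wt}(M)\geq 1-m$, as required.

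The only genuinely delicate point — and the step I would be most careful to write out — is the propagation of centrality through nested brackets, which is precisely what rules out a low-index $\psi_0$ from sitting inside a Lie word and dragging the weight down; everything else is elementary bookkeeping of indices and brackets. The $m=0$ case is in fact immediate in hindsight, since the first-degree-$0$ part is the supercommutative algebra $W^0(S)$, whose monomials carry no brackets and therefore have weight equal to a sum of nonnegative indices.
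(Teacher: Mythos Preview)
Your proof is correct and follows essentially the same approach as the paper: both reduce to bounding the weight of a nonzero Lie word, both use that $\psi_0(\lambda)$ and $\c$ are central and that the first-degree-zero part is supercommutative, and both conclude by additivity over products. The only cosmetic difference is that the paper bounds the Lie-word weight by a short induction on the bracket structure, whereas you extract the structural consequence (all $\psi$-indices $\geq 1$, at least one $T$) first and then do a direct count.
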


\begin{proof}
We must show that the order $n$ of any non-zero monomial of degree $m$ is 
$n>-m$ if $m>0$ and $n>-1$ if $m=0$.
Since degrees and weights are additive for products, it is enough to prove the statement for generators and Lie words.
The generators have order $\geqslant 0$, so they satisfy the claim.
Next, take a non zero Lie word of the form $[f,g]$ for two Lie words $f$, $g$ of degrees $m$, $m'$ 
and order $n$, $n'$ respectively.
We prove the claim by induction.
The order of $[f,g]$ is $n+n'-1$, its degree is $m+m'$.
If $m,m'>0$, then $n>-m$ and $n'>-m'$,
hence $n+n'-1 > -m-m'$.
Now suppose $m=0$.
We have $W^\geq(S)[0,-]= \Lambda(S)$. 
Hence, the algebra $W^\geq(S)[0,-]$ is super-commutative.
Thus $f$ cannot be a Lie bracket. So we have $f=\psi_n(\lambda)$ for some $\lambda\in H^*(S,\Q)$.
Since $\psi_0(\lambda)$ is central, we have $n>0$.
Hence $n+n'-1\,\geq\, n'> -m' = -m-m'$.
\end{proof}

\begin{proposition}\label{prop:W-PBW}
There are $D_{m,n}(\lambda)\in W^\geq(S)$ for each $m,n\in\N$ and
$\lambda\in H^*(S,\Q)$, such that
\begin{enumerate}[label=$\mathrm{(\alph*)}$,leftmargin=8mm]
\item
$D_{0,n}(\lambda) = \psi_n(\lambda)$, $D_{1,n}(\lambda) = T_n(\lambda)$,
\item 
$F_{-1} = 0$ and $F_n$ is spanned by all products $D_{m_1,n_1}(\lambda_1)\cdots D_{m_k,n_k}(\lambda_k)$ with $\sum_i n_i\leq n$,
\item 
the relation~\eqref{E:Lie-rels} holds modulo $F_{n+n'-3}$. \end{enumerate}
\end{proposition}

\begin{proof}
Consider the Lie algebra $\mathfrak{g}$ generated by the elements $\psi_n(\lambda), $ $T_n(\lambda)$ for $n \geq 0, \lambda \in H^*(S,\Q)$.
The relations \eqref{W:a}-\eqref{W:e}, \eqref{W:g} hold in $\mathfrak{g}$, as well as the simplified version~\eqref{W-rel6nondef} of the relation \eqref{W:f}.
By Theorem~\ref{thm:W-undef} the Lie algebra $\mathfrak{g}$ is spanned by the elements $D_{m,n}(\lambda)$, and therefore $\Gr_\bullet W^\geq(S)$ is generated by these elements as an algebra.
In particular, we have $F_n/F_{n-1} = 0$ for $n<0$, hence Lemma~\ref{lem:filtr-bound} implies that $F_{-1} = 0$.
Further $F_n/F_{n-1}$ is spanned by all products $D_{m_1,n_1}(\lambda_1)\cdots D_{m_k,n_k}(\lambda_k)$ with $\sum_i n_i = n$.
Therefore the first two claims of the theorem are satisfied with any lift of the elements $D_{m,n}(\lambda)$ in $\mathfrak{g}$ to $W^\geq(S)$.
We also get a weak version of last claim, i.e., 
the identity~\eqref{E:Lie-rels} holds modulo $F_{n+n'-2}$.
To prove this identity modulo $F_{n+n'-3}$, we repeat the argument above with the algebra $\bigoplus_{n\in\Z} F_n/F_{n-2}$.
\end{proof}

\begin{definition}\label{def:W-graded}
Let $W^\geq_0(S)$ be the algebra generated by $\psi_n(\lambda)$, $T_n(\lambda)$ for all $n, \lambda$ subject to the relations \eqref{W:a}-\eqref{W:e}, ~\eqref{W-rel6nondef} and \eqref{W:g}.
\end{definition}

Recall the Lie algebra $\mathfrak{w}_0^{\geq}(S)$ in \eqref{E:Lie-rels}. 
The proof of Theorem~\ref{thm:W-undef} yields 
$$U(\mathfrak{w}^\geq_0(S))=W^\geq_0(S)$$ 
Thus Proposition~\ref{prop:W-PBW} yields a surjective algebra homomorphism
\begin{equation}\label{E:w0toGrW}
\rho:\Sym(\mathfrak{w}^\geq_0(S)) \twoheadrightarrow \text{Gr}_\bullet W^\geq(S)
\end{equation}
and a Lie algebra homomorphism 
\begin{equation}\label{E:w0toGrWLie}
\rho': \mathfrak{w}^\geq_0(S) \rightarrow \text{Gr}_\bullet W^\geq(S).
\end{equation}
The Lie bracket on $\text{Gr}_\bullet W^\geq(S)$ has order $-1$.

\medskip

\subsection{Deformed $W$-algebras}\label{sec:defoWalg}
We now define a doubled version $W^{(\c)}(S)$ of $W^\geq(S)$. 
Let 
\begin{align}\label{theta}
\begin{split}
\delta(z)=\sum_{n \in \Z} z^n,\quad
\theta(z)=\sum_{n \geq 0}h_nz^n,
\quad
\widetilde{\theta}(x)=\tau_{c_1}(\theta(x)).
\end{split}
\end{align}

\medskip

\begin{definition}\label{def:Wdoublenoncompact} The algebra $W^{(\c)}(S)$ is called 
the deformed $W$-algebra of $S$. It is generated by elements $\psi_n(\lambda), T^\pm_n(\lambda)$ for $n\geq 0$, $\lambda \in H^*(S,\Q)$ and central elements $\mathbf{c}, e^{\pm i\pi \mathbf{c}}$ with the following relations:
\begin{itemize}[leftmargin=8mm]
\setlength\itemsep{.3em}
\item[-] relations \eqref{W:a}, \eqref{W:b}, \eqref{W:c} and \eqref{W:e} of \S 3.1 with $T^\pm_n(\lambda)$ in place of $T_n(\lambda)$,
\item[-] $[\psi_m(\lambda),T^\pm_n(\mu)]=\pm m T^\pm_{n+m-1}(\lambda \mu)$,
\item[-] the assignments $T_n(\lambda) \mapsto T^+_n(\lambda)$ and $T_n(\lambda) \mapsto T^-_n(\lambda)$
extend to a morphism and an anti-homomorphism $W^{+}(S) \to W^{(\c)}(S)$,
\item[-] the double relation, which is best expressed in terms of generating series
\begin{equation}\label{E:+-relationsdouble}
        \big[T^+_{\lambda}(z), T^-_{\mu}(w)\big]=-\exp(i\pi\c)\left[\frac{1}{c_1z} \left( 1-\frac{\theta(z)}{\widetilde{\theta}(z)}\right) \delta\Big(\frac{w}{z}\Big)(\lambda\mu)\right]_{++}
    \end{equation}
    where $T^{\pm}_\lambda(z)=\sum_{n\geq 0} T_n^\pm(\lambda)z^n$ and  $A(x,y)_{++}$ is the truncation of a power series to its terms $x^ay^b$ with $a,b \geq 0$.
We may rewrite this relation as follows:
\begin{equation}\label{E:doublerelation}
    [T_m^+(\lambda),T_n^-(\mu)] = -\exp(i\pi\c)\sum_{0\leq j\leq l\leq m+n} (-1)^l\binom{\c-l+j}{j+1}h_{m+n-l}e_{l-j}(c_1^{j}\lambda\mu).
\end{equation}
\end{itemize}
\end{definition}

\smallskip

Let $W^+(S),$  $W^-(S)$, $W^0(S)$ be the subalgebras generated by $\c$ and $T^+_n(\lambda)$,  $T^-_n(\lambda)$, $\psi_n(\lambda)$ respectively, for $n \geq 0$ and $\lambda \in H^*(S,\Q)$. 
We likewise define $W^\geq(S)$ and $W^\leq(S)$. The algebra
 $W^{(\c)}(S)$ is $\Z^2$-graded with
$$\deg(T^\pm_n(\lambda))=(\pm 1, 2n-2+\deg(\lambda)), \qquad \deg(\psi_n(\lambda))=(0,2n-2+\deg(\lambda))$$ and
$\deg(\c)=(0,0)$. Let $W^{(\kappa)}(S)$ be the specialization of $W^{(\c)}(S)$ to $\c=\kappa$ and $e^{i\pi\mathbf{c}}=e^{i\pi\kappa}$.

\begin{remark}\leavevmode\nolisttopbreak
   \begin{enumerate}[label=(\roman*),leftmargin=8mm]
        \item[(a)] The elements $T^-_{n}(\lambda)$ satisfy the following sign-corrected version of \eqref{W:f}
\[ \begin{split}[T_{m}(\lambda), T_{n+3}(\mu)] 
&-3 [T_{m+1}(\lambda), T_{n+2}(\nu)] + 3 [T_{m+2}(\lambda), T_{n+1}(\mu)]  - [T_{m+3}(\lambda), T_{n}(\mu)]\\
 &- [T_{m}(\lambda), T_{n+1}(s_2 \mu)] + 
 [T_{m+1}(\lambda), T_{n}(s_2 \mu)] -\{T_m, T_n\}(c_1\Delta_S\lambda \mu)=0.\end{split} \tag{f'}\]
        \item[(b)] If $c_1=0$ then, up to the factor $-\exp(i\pi\c)$, the right hand side of \eqref{E:doublerelation} reduces to
\begin{equation*}
\begin{split}
    \sum_{0\leq k\leq n+m}& (\c-k)(-1)^kh_{n+m-k}e_{k}(\lambda\mu) \\&=\c\delta_{n+m,0}(\lambda\mu) -\sum_{0\leq k\leq n+m} k(-1)^k h_{n+m-k}e_{k}(\lambda\mu) =p_{n+m}(\lambda\mu).
\end{split}
\end{equation*}
If in addition we have $s_2=0$ then we also have $\Td_S(z)=1$ and $p_{l}(\lambda)=l\psi_{l-1}(\lambda)$.\smallskip

    \item[(c)]
    In all of our cases of interest $\c$ will be specialized to a non-negative integer $r$, so that $\exp(i\pi\c) = (-1)^{r}$ and $W^{(r)}(S)$ is defined over $\Q$.
    \end{enumerate}
\end{remark}

\begin{proposition}\label{prop:big-W-triang}
    The obvious maps $W^\geq(S)^\op\to W^{(\c)}(S)$ and $W^\geq(S)\to W^{(\c)}(S)$ are embeddings of algebras.
    The multiplication $W^-(S)\otimes W^0(S)\otimes W^+(S)\to W^{(\c)}(S)$ is a linear isomorphism.
\end{proposition}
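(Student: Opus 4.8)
The plan is to prove all three assertions by the standard combination of a straightening argument for surjectivity with a faithful representation for injectivity, using the order filtration of \S\ref{sec:order-filt} to organize the bookkeeping. First I would establish surjectivity of the multiplication map $m\colon W^-(S)\otimes W^0(S)\otimes W^+(S)\to \DW{\c}(S)$. The defining relations of Definition~\ref{def:Wdoublenoncompact} supply exactly the normal-ordering moves one needs: the relation $[\psi_m(\lambda),T^\pm_n(\mu)]=\pm m\,T^\pm_{m+n-1}(\lambda\mu)$ lets one commute a $\psi$ past a $T^\pm$ at the cost of a single $T^\pm$, and the double relation~\eqref{E:doublerelation} rewrites $T^+_m(\lambda)\,T^-_n(\mu)$ as $T^-_n(\mu)\,T^+_m(\lambda)$ plus an element of $W^0(S)$. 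Since $W^\pm(S)$ are generated by the $T^\pm$ and $W^0(S)$ is supercommutative, an induction on the number of ``inversions'' in a word (occurrences of a $T^+$ to the left of a $T^-$, or of a $\psi$ on the wrong side of a $T^\pm$) shows that every monomial is a combination of ordered monomials $w^-w^0w^+$. Thus $m$ is surjective, and in each bidegree this already gives $\dim \DW{\c}(S)[k,n]\le \dim\bigl(W^-(S)\otimes W^0(S)\otimes W^+(S)\bigr)[k,n]$, which is finite by Theorem~\ref{Prop:defHeis}(a).

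Next I would treat the two subalgebra maps. The relations of $\DW{\c}(S)$ are arranged precisely so that $T_n(\lambda)\mapsto T^+_n(\lambda)$, $\psi_n(\lambda)\mapsto\psi_n(\lambda)$, $\c\mapsto\c$ is an algebra homomorphism $W^\geq(S)\to\DW{\c}(S)$, and the variant $T_n(\lambda)\mapsto T^-_n(\lambda)$ is an anti-homomorphism $W^\geq(S)^{\op}\to\DW{\c}(S)$. To see that these are injective I would restrict the Fock representation $\mathbf{F}^{(r)}$ of $\DW{r}(S)$, which is faithful for $r>0$ by Theorem~\ref{thmApp}. The Fourier modes of $\Theta^+(z)$ applied to the vacuum realize a free copy of $W^+(S)$, while $W^0(S)$ acts faithfully by tautological/multiplication operators on $\Lambda(S)$; together they make $W^\geq(S)\to\End(\mathbf{F}^{(r)})$ injective for every $r>0$. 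Since the structure constants of $\DW{\c}(S)$ are polynomial in $\c$, injectivity for infinitely many $r$ forces injectivity of $W^\geq(S)\to\DW{\c}(S)$ itself; the opposite embedding is handled symmetrically via the annihilation modes of $\Theta^-(z)$.

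Finally, the full decomposition reduces to the injectivity of $m$, i.e.\ to the linear independence of the ordered monomials $w^-w^0w^+$, and this is where the real difficulty lies. I would extend the order filtration to $\DW{\c}(S)$ by placing $T^\pm_n,\psi_n\in F_n$ and $\c\in F_0$, and check with~\eqref{E:doublerelation} that $[F_k,F_n]\subseteq F_{k+n-1}$: the top-order part of $[T^+_m(\lambda),T^-_n(\mu)]$ is a multiple of $p_{m+n}(\lambda\mu)$, of order $m+n-1$. Then $\Gr_\bullet\DW{\c}(S)$ is a Poisson algebra generated by the symbols of $T^\pm_n,\psi_n$, and the argument of Proposition~\ref{thm:W-PBW} identifies it as a quotient of $\Sym(\mathfrak{w}_0^-\oplus\mathfrak{w}_0^0\oplus\mathfrak{w}_0^+)$, which reproduces the dimension upper bound from the first paragraph. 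The matching lower bound again comes from $\mathbf{F}^{(r)}$: one checks that the ordered monomials act as linearly independent operators for infinitely many $r$, using that the $W^+$-part strictly raises, and the $W^-$-part strictly lowers, the $\Z$-grading of $\mathbf{F}^{(r)}$, so that the extremal $s$-components of distinct ordered monomials cannot coincide; one then passes to generic $\c$ by polynomiality. The main obstacle throughout is controlling the complicated double relation~\eqref{E:doublerelation} precisely enough — both to extract the leading term needed for the filtration estimate and to exclude any hidden dependence among ordered monomials — which is exactly why a genuinely faithful, non-highest-weight module such as $\mathbf{F}^{(r)}$ is required, rather than a single Verma module, whose highest-weight vector cannot detect the $W^+$-content.
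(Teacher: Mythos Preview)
Your straightening argument for surjectivity of the multiplication map is fine and is essentially the first half of any such proof. The problem is the second half: in this paper both inputs you invoke for injectivity are themselves consequences of the proposition you are trying to prove. The graded character of $W^+(S)$ in Theorem~\ref{Prop:defHeis}(a) is established in \S\ref{sec:str-def} via Lemma~\ref{lm:W-bound-below}, whose proof appeals to Lemma~\ref{lem:Heis-faithful}, whose proof in turn uses Theorem~\ref{thm:big-W-undef}, which explicitly concludes ``by Proposition~\ref{prop:big-W-triang}''. Likewise the faithfulness of $\mathbf{F}^{(r)}$ (Proposition~\ref{cor:big-W-faithful}, packaged into Theorem~\ref{thmApp}) is proved through the same Lemma~\ref{lem:Heis-faithful}. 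So your argument, as written, is circular in the logical structure of the paper, and it is not clear how to break the circle: showing directly that the Fock operators give a faithful $W^{\geq}(S)$-representation without already knowing the size of $W^{\geq}(S)$ is exactly the hard point.

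The paper sidesteps all of this by citing the purely algebraic method of \cite[Appendix~A]{Tsymbaliuk}: one puts an associative product on the tensor product $W^-(S)\otimes W^0(S)\otimes W^+(S)$ directly from the cross-relations (the $[\psi,T^\pm]$ relation and \eqref{E:doublerelation}), checks the few compatibilities needed, and then the universal property of the presentation of $\DW{\c}(S)$ gives an inverse to the multiplication map. No representation and no knowledge of the size of $W^\pm(S)$ is required; this is precisely why the proposition can sit upstream of everything else in \S\ref{sec:Fock-ops}. A smaller point: your passage ``injectivity for infinitely many $r$ forces injectivity over $\Q[\c]$'' also needs justification, since exactness of $0\to K\to W^{\geq}(S)\to \DW{\c}(S)$ need not survive specialization without a flatness argument for the cokernel.
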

\begin{proof}
    Analogous to~\cite[Appendix A]{Tsymbaliuk}. 
\end{proof}

\begin{remark} The same proof as in~\cite[Appendix A]{Tsymbaliuk} shows that $W^+(S)$ is isomorphic to the algebra generated by the elements $T_n(\lambda)$ subject to the relations \eqref{W:b}, \eqref{W:e}, \eqref{W:f} and \eqref{W:g}. 
\end{remark}

\medskip

Set $D_{i,-1}= \delta_{i,0}\c$. Let 
$\mathfrak{w}(S)$ be the Lie algebra  generated by the elements $D_{m,n}(\lambda)$ with $m\in \Z$, $n\in\N$, $\lambda\in H^*(S,\Q)$, whose Lie bracket is given by~\eqref{E:Lie-rels}.

\smallskip

\begin{theorem}\label{thm:big-W-undef}
If $c_1=s_2=0$, then
there is an algebra isomorphism $$\Phi: \DW{\c}(S)\to U(\mathfrak{w}(S))$$ which sends $\psi_n(\lambda)$ to $D_{0,n}(\lambda)$, $T^+_n(\lambda)$ to $D_{1,n}(\lambda)$, and $T^-_n(\lambda)$ to $\exp(i\pi\c)D_{-1,n}(\lambda)$.
\end{theorem}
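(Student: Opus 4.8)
The plan is to prove Theorem~\ref{thm:big-W-undef} by leveraging the already-established undeformed result for the positive half, Theorem~\ref{thm:W-undef}, together with the triangular decompositions on both sides. Under the hypotheses $s_2=0$ and $c_1=0$, the Lie algebra $\mathfrak{w}(S)$ is the full (non-degenerate in the $m$-direction, degenerate in the $q$-direction) algebra with bracket~\eqref{E:Lie-rels} on generators $D_{m,n}(\lambda)$ for $m\in\Z$, $n\in\N$, where the convention $D_{i,-1}:=\delta_{i,0}\c$ encodes the central extension. I would first record that $\mathfrak{w}(S)$ carries a triangular decomposition as a vector space,
\[
    \mathfrak{w}(S) = \mathfrak{w}^-(S) \oplus \mathfrak{w}^0(S) \oplus \mathfrak{w}^+(S),
\]
where $\mathfrak{w}^{\pm}(S)$ is spanned by the $D_{m,n}(\lambda)$ with $\pm m>0$ and $\mathfrak{w}^0(S)$ by the $D_{0,n}(\lambda)$ together with $\c$; this is immediate from~\eqref{E:Lie-rels}, since the bracket is additive in the first index. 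By the PBW theorem for $U(\mathfrak{w}(S))$, multiplication therefore gives a vector space isomorphism $U(\mathfrak{w}^-(S))\otimes U(\mathfrak{w}^0(S))\otimes U(\mathfrak{w}^+(S))\to U(\mathfrak{w}(S))$.

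Next I would construct the homomorphism $\Phi$. The three families of relations in Definition~\ref{def:Wdoublenoncompact} are designed precisely so that the assignment on generators is consistent. Concretely, the restriction of $\Phi$ to the subalgebra $W^{\geq}(S)$ is the isomorphism of Theorem~\ref{thm:W-undef} (sending $\psi_n(\lambda)\mapsto D_{0,n}(\lambda)$, $T^+_n(\lambda)\mapsto D_{1,n}(\lambda)$); that $T_n(\lambda)\mapsto T^+_n(\lambda)$ extends to a homomorphism $W^+(S)\to W^{(\c)}(S)$ is built into the definition, so composing with Theorem~\ref{thm:W-undef} gives the positive and Cartan halves. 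For the negative generators, I would check that the elements $\exp(i\pi\c)D_{-1,n}(\lambda)\in U(\mathfrak{w}(S))$ satisfy the defining relations of the \emph{anti}-homomorphism $W^-(S)\to W^{(\c)}(S)$: this is again Theorem~\ref{thm:W-undef} applied to the opposite algebra, using that the bracket~\eqref{E:Lie-rels} is antisymmetric under $m\leftrightarrow -m$, $n\leftrightarrow n'$, together with the sign-corrected relation (f$'$) noted in the Remark. The commutator $[\psi_m(\lambda),T^-_n(\mu)]=-mT^-_{m+n-1}(\lambda\mu)$ matches $[D_{0,m}(\lambda),D_{-1,n}(\mu)]$ directly from~\eqref{E:Lie-rels}. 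Thus $\Phi$ is a well-defined algebra homomorphism.

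The crux is the double relation~\eqref{E:doublerelation}, which I expect to be the main obstacle. I must verify that $\Phi$ carries it to the bracket $[D_{1,m}(\lambda),D_{-1,n}(\mu)]$ computed in $\mathfrak{w}(S)$. Specializing Remark~(ii) to $c_1=0$, the right-hand side of~\eqref{E:doublerelation} collapses (up to $-\exp(i\pi\c)$, which is cancelled by the $\exp(i\pi\c)$ in $\Phi(T^-_n)$) to $p_{m+n}(\lambda\mu)$, and since additionally $s_2=0$ forces $\Td_S(x)=1$, the identification $i:W^0(S)\simeq\Lambda(S)$ degenerates so that $p_{\ell}(\lambda)=\ell\,\psi_{\ell-1}(\lambda)$ for $\ell\geq 1$ (with the constant term contributing $\c\,\delta_{m+n,0}$). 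On the Lie side, the bracket~\eqref{E:Lie-rels} gives
\[
    [D_{1,m}(\lambda),D_{-1,n}(\mu)] = (m+n)\,D_{0,m+n-1}(\lambda\mu),
\]
and the convention $D_{0,-1}=\c$ supplies exactly the $m+n=0$ term. So both sides equal $(m+n)\psi_{m+n-1}(\lambda\mu)=p_{m+n}(\lambda\mu)$, confirming compatibility. Having shown $\Phi$ is a homomorphism, I would conclude bijectivity by a dimension/triangularity argument: by Proposition~\ref{prop:big-W-triang} the multiplication map $W^-(S)\otimes W^0(S)\otimes W^+(S)\to W^{(\c)}(S)$ is a vector space isomorphism, $\Phi$ respects this decomposition (mapping each tensor factor to the corresponding factor $U(\mathfrak{w}^{\mp,0,\pm})$), and on each factor $\Phi$ is an isomorphism by Theorem~\ref{thm:W-undef} (and its opposite). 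Matching the two triangular decompositions factor-by-factor then yields that $\Phi$ is a linear isomorphism, hence an algebra isomorphism.
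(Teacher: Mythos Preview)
Your argument is correct and follows the same strategy as the paper: use Theorem~\ref{thm:W-undef} on each half, verify the cross relation, and conclude via the triangular decomposition (Proposition~\ref{prop:big-W-triang}) together with PBW for $U(\mathfrak{w}(S))$. One cosmetic slip: from~\eqref{E:Lie-rels} one has $[D_{1,m}(\lambda),D_{-1,n}(\mu)]=-(m+n)\,D_{0,m+n-1}(\lambda\mu)$, and this minus sign is exactly absorbed by the $-\exp(i\pi\c)$ on the right of~\eqref{E:doublerelation} together with the $\exp(i\pi\c)$ in $\Phi(T^-_n)$, so your conclusion stands.
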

\begin{proof}
    Let $\mathfrak{w}^+(S)$ and  $\mathfrak{w}^-(S)$ be the Lie subalgebras of $\mathfrak{w}(S)$ spanned by $D_{m,n}(\lambda)$ with $m>0$ and $m<0$ respectively.
    The restriction of $\Phi$ to $W^\pm$ yields algebra isomorphisms $W^\pm(S)\simeq U(\mathfrak{w}^\pm(S))$ by Theorem~\ref{thm:W-undef}. 
    An inductive argument analogous to the proof of Proposition~\ref{prop:heis-in-double} below shows that the commutation relations between $\mathfrak{w}^+(S)$ and $\mathfrak{w}^-(S)$ hold in $\DW{\c}(S)$ as well. 
    We conclude by Proposition~\ref{prop:big-W-triang} and the PBW theorem.
\end{proof}

\medskip

\subsection{Heisenberg subalgebra}\label{sec:Heis-alg}
The Heisenberg algebra  of $S$ is the Lie algebra $\mathfrak{h}_S$ generated by
\[
    \left\{ \mathfrak{q}_{n}(\lambda), C \,:\, n\neq 0, \lambda\in H^*(S,\Q) \right\}
\]
with the relation $\mathfrak{q}_n(\lambda+\mu)=\mathfrak{q}_n(\lambda) + \mathfrak{q}_n(\mu)$ for any $n$ and $\lambda,\mu$ and the Lie bracket 
\begin{align}\label{E:Heis-bracket}
    [\mathfrak{q}_{m}(\lambda),\mathfrak{q}_{l}(\mu)] = m\delta_{-m,l}C\int_S\lambda\mu
    ,\quad 
    \text{$C$ is central}.
\end{align}
Recall the elements $D_{m,n}(\lambda)\in W^\geq(S)$ from Proposition~\ref{prop:W-PBW}. 
Let us consider the homomorphism 
$$\Theta:W^+(S)\to W^-(S),\quad T_n^+(\lambda)\mapsto (-1)^nT_n^-(\lambda).$$ 
We set $D_{-m,n}(\lambda)=\Theta(D_{m,n}(\lambda))$.
Proposition \ref{prop:W-PBW} and Relation~\eqref{E:Lie-rels} imply that elements $D_{m,0}(\lambda)$ with $m\geq 1$ and $\lambda \in H^*(S,\Q)$ super-commute with each other, and are uniquely determined by the following formula
\begin{equation}\label{E:relinWforHeis}
    D_{m+1,0}(\lambda) = \frac{1}{m}[D_{1,1}(1),D_{m,0}(\lambda)].    
\end{equation}

\begin{proposition}\label{prop:heis-in-double}
    The assignment 
    $$\mathfrak{q}_{m}(\lambda)\mapsto D_{m,0}(\lambda), \quad\mathfrak{q}_{-m}(\lambda)\mapsto -\exp(-i\pi\c)D_{-m,0}(\lambda),\quad C\mapsto \c,\quad m >0$$ 
    defines an algebra  homomorphism $U(\mathfrak{h}_S)\to \DW{\c}(S)$.
    In particular, Theorem~$\ref{Prop:defHeis}\operatorname{(b)}$ holds.
\end{proposition}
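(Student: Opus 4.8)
The plan is to check that the assignment respects the defining relations of $\mathfrak{h}_S$, so that it extends to $U(\mathfrak{h}_S)$. Two of these are immediate: the element $\c$ is central in $\DW{\c}(S)$, and $D_{m,0}(\lambda)$ depends linearly on $\lambda$ (the base $D_{1,0}(\lambda)=T_0^+(\lambda)$ is linear by \eqref{W:b}, and the recursion \eqref{E:relinWforHeis} together with the homomorphism $\Theta$ preserves linearity), so both $\mathfrak{q}_n(\lambda)\mapsto D_{n,0}(\lambda)$ and $\mathfrak{q}_{-n}(\lambda)\mapsto -\exp(-i\pi\c)D_{-n,0}(\lambda)$ are additive in $\lambda$. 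It remains to verify the bracket \eqref{E:Heis-bracket}, which I organize according to the signs of the two indices; by (super)antisymmetry it suffices to treat equal signs and the case of one index positive and one negative.

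For two indices of the same sign the target bracket is $0$, and the images indeed super-commute. For the positive generators this follows from Proposition~\ref{thm:W-PBW}: relation \eqref{E:Lie-rels} holds modulo $F_{n+n'-3}$, and applied to $D_{m,0}(\lambda),D_{m',0}(\mu)$ it asserts $[D_{m,0}(\lambda),D_{m',0}(\mu)]\in F_{-3}=0$, so the bracket vanishes exactly. Applying the algebra homomorphism $\Theta$ transports this vanishing to the negative generators $D_{-m,0}(\lambda)=\Theta(D_{m,0}(\lambda))$.

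The heart of the proof is the opposite-sign relation
\begin{equation*}
[D_{m,0}(\lambda),D_{-n,0}(\mu)]=-\exp(i\pi\c)\,\delta_{m,n}\,m\,\c\int_S\lambda\mu,\qquad m,n\geq 1 .
\end{equation*}
I would prove this by induction, using the closed forms $D_{m,0}(\lambda)=\tfrac{1}{(m-1)!}(\Ad_{T_1^+(1)})^{m-1}T_0^+(\lambda)$ and $D_{-n,0}(\mu)=\tfrac{(-1)^{n-1}}{(n-1)!}(\Ad_{T_1^-(1)})^{n-1}T_0^-(\mu)$, which follow from \eqref{E:relinWforHeis} and from $\Theta$ being a homomorphism with $\Theta(T_1^+(1))=-T_1^-(1)$. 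The base case $m=n=1$ is the single surviving term ($i=j=0$) of the double relation \eqref{E:doublerelation}, namely $[T_0^+(\lambda),T_0^-(\mu)]=-\exp(i\pi\c)\,\c\int_S\lambda\mu$. In the inductive step I peel off one factor $\Ad_{T_1^+(1)}$, apply the Jacobi identity, and reduce the bracket to terms of strictly lower total order together with the three seed commutators $[T_1^+(1),T_1^-(1)]$, $[T_1^+(1),T_0^-(\mu)]$ and $[T_0^+(\lambda),T_1^-(1)]$, each evaluated from \eqref{E:doublerelation}; the $W^0$-valued outputs are then commuted past the remaining $\Ad_{T^\pm}$ factors by means of $[\psi_k(\nu),T_l^\pm(\rho)]=\pm k\,T_{k+l-1}^\pm(\nu\rho)$, exactly as in the chain of identities \eqref{E:T0-comm}--\eqref{E:Lie-rels-red-psi2} used to prove Theorem~\ref{thm:W-undef}.

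The main obstacle is the bookkeeping in this last step. Whereas the base case collapses to a scalar, for $m+n\geq 1$ the double relation \eqref{E:doublerelation} produces genuinely $W^0$-valued terms (carrying the binomials $\binom{\c-j+i}{i+1}$, the products $h_ae_b$, and the Chern-class insertions $c_1^{\,i}$, the latter reflecting the failure of triviality of $K_S$); one must show that after being propagated through the induction all of these non-scalar and off-diagonal contributions cancel, leaving only $-\exp(i\pi\c)\,m\,\c\int_S\lambda\mu$ when $m=n$ and $0$ otherwise. Once the opposite-sign relation is established, the assignment is a homomorphism $U(\mathfrak{h}_S)\to\DW{\c}(S)$. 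Finally, Theorem~\ref{Prop:defHeis}(b) follows: the super-commuting elements $D_{m,0}(\lambda)$ ($m\geq 1$) generate a commutative subalgebra, and it is free (polynomial) because the restriction of the homomorphism to the free polynomial algebra $U(\mathfrak{h}_S^{>0})$ is injective — any nonzero element is detected by iterated brackets with suitable $D_{-n,0}(\mu)$, which by the opposite-sign relation and the nondegeneracy of the Poincaré pairing $\int_S\lambda\mu$ yield a nonzero element of $\Q[\c]\hookrightarrow\DW{\c}(S)$, the embedding being part of the triangular decomposition of Proposition~\ref{prop:big-W-triang}.
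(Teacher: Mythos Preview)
Your approach is correct and essentially the same as the paper's: both use the ladder operators $L_\pm=T^\pm_1(1)=D_{\pm 1,1}(1)$ together with the seed commutators from \eqref{E:doublerelation} and Jacobi to run an induction on $m+n$. The paper organizes the induction slightly differently, first proving the auxiliary relations $[L_0,\mathfrak q_{\pm n}]=\pm n\mathfrak q_{\pm n}$ and $[L_\pm,\mathfrak q_{\mp n}]=\mp n\,\mathfrak q_{\mp(n-1)}$ (for $n>1$); once these are in hand the ``bookkeeping'' you worry about disappears, because the non-scalar contributions from $[L_+,L_-]$ and $[L_\pm,\mathfrak q_{\mp 1}]$ all lie in the central subalgebra generated by $\psi_0(\nu)$ and $\c$, hence are killed by any further $\Ad$.
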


\begin{proof}
Let $L_{\pm} = D_{\pm 1,1}(1)$ and $L_0 = \psi_1(1)$.
    By definition, we have 
$$[L_\pm,\mathfrak{q}_{\pm m}(\lambda)] = m\mathfrak{q}_{\pm(m+1)}(\lambda).$$
The following equalities follow from the relation \eqref{E:doublerelation}
    \begin{align}\label{E:heisrel}
\begin{split}
        [\mathfrak{q}_1(\lambda),\mathfrak{q}_{-1}(\mu)] &= \c\int_S \lambda\mu, \\
        [L_{\pm},\mathfrak{q}_{\mp 1}(\lambda)] &= \pm \psi_0(\lambda) \mp \binom{\c}{2}\int_S\lambda c_1,\\
        [L_0,L_\pm] &= \pm L_\pm,\\ 
[L_{+},L_-] &= 2L_0 - \c\psi_0(c_1) + \binom{\c}{3}\int_S c_1^2.
\end{split}
    \end{align}
In view of Proposition~\ref{prop:W-PBW}, it suffices to check the relation
$$[\mathfrak{q}_{-m}(\lambda),\mathfrak{q}_{n}(\mu)]=n\c\delta_{m,n}(\lambda,\mu)
,\quad
m,n>0.$$
    We proceed by induction.
    First, we prove that $[L_0, \mathfrak{q}_{\pm n}] = \pm n\mathfrak{q}_{\pm n}$. Indeed,
    \begin{align*}
        [L_0, \mathfrak{q}_{n+1}] 
        &= \frac{1}{n}[L_0,[L_+,\mathfrak{q}_n]]\\
        &= - \frac{1}{n}\left( [\mathfrak{q}_n,[L_0,L_+]] + [L_+,[\mathfrak{q}_n,L_0]] \right)\\
        &= \mathfrak{q}_{n+1} + [L_+,\mathfrak{q}_n] \\
        & = (n+1)\mathfrak{q}_{n+1}.
    \end{align*}
    Next, we prove that $[L_{\pm},\mathfrak{q}_{\mp n}(\lambda)] = \mp n\mathfrak{q}_{\mp (n-1)}$ for $n>1$.
    Indeed,
    \begin{align*}
        [L_+,\mathfrak{q}_{-(n+1)}(\lambda)]
        &= \frac{1}{n}[L_+,[L_-,\mathfrak{q}_{-n}(\lambda)]]\\
        &= - \frac{1}{n}\left( [\mathfrak{q}_{-n},[L_+,L_-]] + [L_-,[\mathfrak{q}_{-n},L_+]] \right)\\
        &= -2\mathfrak{q}_{-n} - [L_-,\mathfrak{q}_{-(n-1)}]\\
        &= -(n+1)\mathfrak{q}_{-n}.
    \end{align*}
    It is easy to see that $[\mathfrak{q}_{\pm 1}(\lambda), \mathfrak{q}_{\mp n}(\mu)] = 0$ for $n>1$. Indeed,
    \begin{align*}
        [\mathfrak{q}_{-1}(\lambda), \mathfrak{q}_{n+1}(\mu)]
        &= \frac{1}{n}[\mathfrak{q}_{-1}(\lambda),[L_+,\mathfrak{q}_{n}(\mu)]]\\
        &= - \frac{1}{n}\left( [L_+,[\mathfrak{q}_{n}(\mu),\mathfrak{q}_{-1}(\lambda)]] + [\mathfrak{q}_{n}(\mu),[\mathfrak{q}_{-1}(\lambda),L_+]] \right) \\
        &= 0.
    \end{align*}
    Finally, for any positive $m$, $n$ we have by induction
    \begin{align*}
        [\mathfrak{q}_{-(m+1)}(\lambda), \mathfrak{q}_{n+1}(\mu)]
        &= \frac{1}{n}[\mathfrak{q}_{-(m+1)}(\lambda),[L_+,\mathfrak{q}_{n}(\mu)]]\\
        &= \frac{1}{n}[L_+,[\mathfrak{q}_{-(m+1)}(\lambda),\mathfrak{q}_{n}(\mu)]] + \frac{m+1}{n}[\mathfrak{q}_{-m}(\lambda),\mathfrak{q}_{n}(\mu)]\\
        &= \c(m+1)\delta_{m,n}\int_S \lambda\mu,
    \end{align*}
    which proves the desired statement.
\end{proof}

\begin{remark}
    Note that \eqref{E:heisrel} contains central terms which do not appear in~\cite[Theorem 3.3]{Lehn}. 
    Indeed, these terms vanish for the Hilbert schemes, since in this case $\c=1$ and $\psi_0(\lambda)=0$ for all $\lambda\in H^{\geq 2}(S)$, see \S\ref{sec:Hilbert}.
\end{remark}

\medskip

The relations (c), (d) imply that $D_{0,0}(\lambda) = \psi_0(\lambda)$ is central in $\DW{\c}(S)$ for any $\lambda\in H^{\geq 2}(S)$.
Let $Z(S)$ be the super-commutative subalgebra generated by $D_{0,0}(\lambda)$'s.
Let $\DW{\c}_\red(S)$ be the quotient of $\DW{\c}(S)$ by the two-sided ideal generated by $Z(S)$.

\begin{lemma}\label{lem:Heis-faithful}\hfill
\begin{enumerate}[label=$\mathrm{(\alph*)}$,leftmargin=8mm,itemsep=1.2mm]
\item 
If $I\subset \DW{\c}_\red(S)$ is a two-sided ideal such that $I\cap U(\mathfrak{h}_S)= \{0\}$, then $I= \{0\}$. 
\item 
If $I^+\subset W^+(S)$ is a two-sided ideal with $I^+\cap U(\mathfrak{h}^+_S )= \{0\}$, then $I^+=\{0\}$.
\end{enumerate}
\end{lemma}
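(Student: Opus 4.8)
The plan is to reduce both assertions to one structural statement: \emph{any nonzero two-sided ideal meets the order-zero part $F_0$ of the order filtration} of \S\ref{sec:order-filt}, and then to identify $F_0$ with the relevant Heisenberg subalgebra. First I would check that $F_0\cap W^+(S)=U(\mathfrak{h}^+_S)$ and $F_0\cap\DW{\c}_\red(S)=U(\mathfrak{h}_S)$. In both cases $F_0$ is spanned by products of the order-zero generators $D_{m,0}(\lambda)$ (Proposition~\ref{thm:W-PBW}). In $W^+(S)$ these are exactly the $D_{m,0}(\lambda)$ with $m\geq 1$; in the doubled algebra one also meets the $m=0$ generators $\psi_0(\lambda)$ and the $m<0$ generators, but $\psi_0(\lambda)$ vanishes by~\eqref{W:h} for $\deg\lambda<4$ and is killed by the passage to $\DW{\c}_\red(S)$ for $\deg\lambda\geq 4$, leaving the $D_{m,0}(\lambda)$ with $m\neq 0$. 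By Proposition~\ref{prop:heis-in-double} these are the images of the Heisenberg generators, and by Theorem~\ref{Prop:defHeis}(b) they generate a free (super)commutative algebra, so the two intersections are $U(\mathfrak{h}^+_S)$ and $U(\mathfrak{h}_S)$ respectively, and the lemma becomes the italicized statement.

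The descent is driven by the adjoint action of $F_0$. For $a\in F_0$ the operator $[a,-]$ lowers the filtration by one and, on the associated graded, induces the degree $-1$ bracket $\{\sigma(a),-\}$; for the Heisenberg generators this is the explicit lowering derivation $\{D_{m,0}(\lambda),D_{m',n'}(\mu)\}=-mn'\,D_{m+m',\,n'-1}(\lambda\mu)$ read off from~\eqref{E:Lie-rels}. Given a nonzero element $x$ of the ideal, its order $N$ is well defined in $W^+(S)$ because $F_{-1}=0$ (Proposition~\ref{thm:W-PBW}). If $N>0$ I would produce a Heisenberg generator $D_{m,0}(\lambda)$ with $[D_{m,0}(\lambda),x]$ of order exactly $N-1$; since this bracket again lies in the ideal, iterating forces the order down to $0$ and lands a nonzero element in $F_0$, contradicting the hypothesis.

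The heart of the matter — and the step I expect to be the main obstacle — is this non-vanishing, namely that the common kernel of the commuting family $\{\,[D_{m,0}(\lambda),-]\,\}$ on $\Gr_\bullet$ is trivial in positive order. Through the identification $\Gr_\bullet W^+(S)\cong\Sym(\mathfrak{w}_0^+(S))$ of Proposition~\ref{thm:W-PBW}, this becomes a question about a commuting family of locally nilpotent derivations on a polynomial ring, each acting on generators by the shift $D_{m',n'}(\mu)\mapsto D_{m'+m,\,n'-1}(\lambda\mu)$ whose stable terms are the Heisenberg generators $D_{k,0}(\mu)$. A single derivation has a large kernel, so the point is to use the whole family: varying $m$ moves the rank grading and varying $\lambda$ moves the cohomological label, and a leading-term argument with respect to a suitable monomial order on the PBW basis should show that no nonzero element of positive order survives all of them. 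This is precisely the monomial-level counterpart of the iterated-bracket computation already carried out in the proof of Theorem~\ref{thm:W-undef}, where repeated application of $\Ad_{T_0}$ recovers the Heisenberg generators.

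For the first (doubled) statement the same strategy applies inside $\DW{\c}_\red(S)$, but one must also account for the interaction between the two halves. Using the two-sided triangular decomposition of Proposition~\ref{prop:big-W-triang}, I would again use the Heisenberg lowering operators $[D_{\pm m,0}(\lambda),-]$ of both signs to push an ideal element toward order zero; the new feature is that the commutators between operators of opposite sign no longer vanish but are computed by the pairing $[\mathfrak{q}_m(\lambda),\mathfrak{q}_{-m}(\mu)]=m\c\int_S\lambda\mu$ of Proposition~\ref{prop:heis-in-double}. Here the reduction modulo $Z(S)$ is crucial: it annihilates the anomalous central terms $\psi_0(\lambda)$ occurring in~\eqref{E:heisrel} and leaves only this clean pairing. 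Its nondegeneracy — Poincaré duality on $H^*(S,\Q)$, together with invertibility of $\c$ — is what guarantees that the iterated lowering cannot annihilate a nonzero ideal element before reaching $U(\mathfrak{h}_S)=F_0\cap\DW{\c}_\red(S)$, which completes the argument.
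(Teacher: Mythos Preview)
Your overall strategy is the paper's: push an ideal element down the order filtration using the adjoint action of Heisenberg generators, and identify $F_0$ with the Heisenberg subalgebra. The paper executes the non-vanishing step you flag as the main obstacle by a clean trick you should adopt: work on the associated graded (via the surjection $\Sym(\mathfrak{w}_0^{+}(S))\twoheadrightarrow\Gr_\bullet$), take a nonzero $x$ of \emph{minimal} order, write it in PBW monomials $\prod_j D_{m_{ij},n_{ij}}(\lambda_{ij})$, pick the monomial whose tuple $(n_{i1}\geq n_{i2}\geq\cdots)$ is lexicographically maximal, and apply $\sigma_l=\Ad(D_{l,0}(1))$ with $l>\max_{i,j}m_{ij}$. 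The choice $\lambda=1$ avoids any need to ``vary the cohomological label,'' and the largeness of $l$ guarantees that only one monomial can contribute to a specific target monomial, giving $\sigma_l(x)\neq 0$ of strictly smaller order --- contradiction. So your leading-term heuristic is right, but the concrete mechanism (large $l$, $\lambda=1$, lex order on the $n$-tuples) is what makes it work.

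Your treatment of the doubled case is where you go astray. You do not need operators of both signs, Poincar\'e duality, or invertibility of $\c$ (which is false: $\c$ is a formal central element, not a unit). The paper's argument is \emph{identical} for $\DW{\c}_\red(S)$: one uses Theorem~\ref{thm:big-W-undef} in place of Theorem~\ref{thm:W-undef} to get $\Gr_\bullet\DW{\c}_\red(S)$ as a quotient of $\Q[D_{m,n}(\lambda)]$ with $m\in\Z$, and then the same derivations $\sigma_l$ (still with $\lambda=1$, still with $l$ large) and the same lex argument apply verbatim. No crossing commutators enter. Also, a small slip: relation~\eqref{W:h} kills $\psi_0(\lambda)$ only for $\deg\lambda<2$, not $<4$; the remaining $\psi_0(\lambda)$ with $\deg\lambda\geq 2$ are exactly what the passage to $\DW{\c}_\red(S)$ removes.
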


\begin{proof}
The proofs of the claims being analogous, we will concentrate on the first one. We follow the proof of~\cite[Lemma F.7]{SVIHES}.
The algebra $\DW{\c}_\red(S)$ admits the order filtration as in \S\ref{sec:order-filt}. 
Recall that $\Gr_\bullet \DW{\c}_\red(S)$ is a graded super-commutative algebra, equipped with a Lie bracket of degree $-1$.
Let $\Gr_\bullet I\subset \Gr_\bullet \DW{\c}_\red(S)$ be the associated graded 
of $I$ with respect to the induced filtration.
Using Theorem~\ref{thm:big-W-undef} instead of Theorem~\ref{thm:W-undef}, we can repeat the proof of Proposition~\ref{prop:W-PBW} to get an algebra surjection 
$$\nu:\mathbb{Q}[D_{m,n}(\lambda)]\to \Gr_\bullet \DW{\c}_\red(S).$$
Set $J =\nu^{-1}(\Gr_\bullet I)$. 
It is enough to show that $J=\{ 0\}$.     
The ideal $J$ is graded by the weight. Let $x\in J$ be a non-zero element of minimal weight $n$. 
Since $U(\mathfrak{h}_S)\subset \Gr_0 \DW{r}_\red(S)$, we have $J\cap U(\mathfrak{h}_S)=\{0\}$.
Hence $n>0$. We write
\begin{equation*}
x = \sum_{i} c_i\prod_{j}D_{m_{ij},n_{ij}}(\lambda_{ij})
\quad, \quad 
\sum_j n_{ij} = n.
\end{equation*}
Assume that for each $i$, $j$ we have either $n_{ij}>n_{i,j+1}$, or $n_{ij}=n_{i,j+1}$ and $m_{ij}\geq m_{i,j+1}$.
Let $n_i = \{n_{i1}\geq n_{i2}\geq\ldots\}$. 
Let $\overline{\iota}$ be the index of the maximal tuple among all $n_i$'s,
with respect to the lexicographic order. 
We write 
$$n_{\overline{\iota}} = \{\overline{n}_1\geq \ldots \geq \overline{n}_s\}
,\quad
m_{\overline{\iota}} = \{\overline{m}_1\geq \ldots \geq \overline{m}_s\}.$$
The space $\mathbb{Q}[D_{m,n}(\lambda)]$ is equipped with the Lie bracket 
given by~\eqref{W-rel6nondef} and Leibniz rule. Consider the operators $\sigma_l = \Ad(D_{l,0}(1))$ of degree $-1$.
We have
\begin{equation}\label{eq:sigma-explicit}
\begin{split}
\sigma_l(D_{m_1,n_1}&(\lambda_1)\cdots D_{m_k,n_k}(\lambda_k))\\
&= 
-l\sum_{i=1}^k n_iD_{m_1,n_1}(\lambda_1)\cdots D_{m_i+l,n_i-1}(\lambda_i)\cdots D_{m_k,n_k}(\lambda_k).
\end{split}
\end{equation}
The ideal $J$ is preserved by the action of the operators $\sigma_l$. 
Fix $l>\max \{m_{ij}\}$. Let us compute the coefficient in $\sigma_l(x)$
of the monomial $$D_{\overline{m}_1+l,\overline{n}_1-1}(\lambda_1)D_{\overline{m}_2,\overline{n}_2}(\lambda_2)\cdots D_{\overline{m}_s,\overline{n}_s}(\lambda_s)$$ 
The condition on $l$ implies that the only monomial in $x$ which can contribute to this coefficient is the monomial corresponding to $\overline{\iota}$. 
Using the formula \eqref{eq:sigma-explicit}, we obtain that this coefficient is $-l\,c_{\overline{\iota}}\,\overline{n}_1\,t$, where $t$ is the maximal number with $\overline{n}_t=\overline{n}_1$ and $\overline{m}_t=\overline{m}_1$. 
This coefficient is non-zero. Hence, we have $\sigma_l(x)\neq 0$. However, we have $\sigma_l(x)\in J$ and the weight of $\sigma_l(x)$ is strictly smaller than the weight of $x$.
This contradicts the minimality of $n$.
\end{proof}

\smallskip

\subsection{Virasoro subalgebra}
Let us introduce another Lie subalgebra of $\DW{\c}(S)$. The results of this section will not be used anywhere, but seem to be of independent interest.
\begin{definition}
    Let $\eta: H^*(S,\Q)^{\otimes 2}\to Z(S)[\c]$ be a bilinear map. 
    The Virasoro algebra $\Vir_S(\eta)$ of $S$ of central charge $\eta$ is the Lie algebra generated by 
    \[
        \{ \mathfrak{L}_n(\lambda), \gamma, \c \,:\, n\in\Z, \lambda\in H^*(S,\Q),\gamma\in Z(S)\},
    \]
    where $\gamma\in Z(S)$ and $\c$ are central, and the Lie bracket is given by
    \begin{equation}\label{eq:Vir-comm}
        [\mathfrak{L}_m(\lambda),\mathfrak{L}_n(\mu)] = (n-m)\mathfrak{L}_{m+n}(\lambda\mu) - \frac{n^3-n}{12}\delta_{-m,n}\eta(\lambda,\mu).
    \end{equation}
\end{definition}

\begin{remark}
    Our definition differs from the standard conventions by a sign; in other words, we are considering the opposite Lie algebra of the usual definition.
\end{remark}

Let us fix the following elements in $\DW{\c}(S)$
\[
    D_{\pm 2,1} = \pm \frac{1}{2} [D_{\pm 1,2},D_{\pm 1,0}], \quad D_{\pm (n+1),1} = \frac{\pm 1}{n-1}[D_{\pm 1,1}, D_{\pm n,1}],\quad n\geq 2.
\]
We also define the following elements
\begin{align*}
    \mathfrak{L}_n(\lambda) &= D_{n,1}(\lambda) + \frac{(n-1)\c}{2}D_{n,0}(c_1\lambda) + \frac{1}{2}\delta_{n,0}\binom{\c}{3}\int_S c_1^2\lambda,\qquad n\geq 0,\\
    \mathfrak{L}_n(\lambda) &= \exp(i\pi\c)\left( D_{n,1}(\lambda) - \frac{(n+1)\c}{2}D_{n,0}(c_1\lambda) \right),\qquad n<0.
\end{align*}
\begin{proposition}\label{prop:Vir-in-double}
    The assignment $\mathfrak{L}_n(\lambda)\mapsto \mathfrak{L}_n(\lambda)$, $\gamma\mapsto\gamma$, $\c\mapsto\c$ defines a morphism of algebras $U(\Vir_S(\eta))\to \DW{\c}(S)$, where the central charge $\eta$ is given by
    \[
        \eta(\lambda,\mu) = \c\left(\int_S c_2\lambda\mu - (1-\c^2)\int_S c_1^2\lambda\mu+2\psi_0(c_1\lambda\mu)\right).
    \]
\end{proposition}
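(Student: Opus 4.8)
The statement asserts a morphism out of an enveloping algebra, so by the universal property of $U(\Vir_S(\eta))$ it suffices to check that the images of the generators satisfy the Lie relations~\eqref{eq:Vir-comm}, together with the centrality of the images of $\gamma$ and $\c$. The latter is immediate: $\c$ is central by definition, and each $\gamma=D_{0,0}(\mu)=\psi_0(\mu)$ with $\mu\in H^{\geq 2}(S)$ is central by the relations of $\DW{\c}(S)$, as noted before Lemma~\ref{lem:Heis-faithful}; since $\eta(\lambda,\mu)$ is manifestly a polynomial in $\c$ with coefficients in $Z(S)$ (the term $\psi_0(c_1\lambda\mu)$ lying in $Z(S)$ because $c_1\lambda\mu\in H^{\geq 2}$), the cocycle takes values in the intended center. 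The guiding observation is that, modulo the order filtration of \S~\ref{sec:order-filt}, $[D_{m,1}(\lambda),D_{n,1}(\mu)]=(n-m)D_{m+n,1}(\lambda\mu)$ by~\eqref{E:Lie-rels}; thus the principal term $(n-m)\mathfrak{L}_{m+n}(\lambda\mu)$ of the Virasoro bracket is forced, and the entire content of the proposition lies in the first-order $c_1$-corrections built into $\mathfrak{L}_n(\lambda)$ and in the central charge, both of which are ``quantum'' effects of the deformation. Note that the cocycle is supported on $m+n=0$, and as $n^3-n$ vanishes for $n\in\{-1,0,1\}$, it is first detected in $[\mathfrak{L}_2(\lambda),\mathfrak{L}_{-2}(\mu)]$.

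I would organise the verification as a bootstrap from the $\mathfrak{sl}_2$-triple $\{\mathfrak{L}_{-1},\mathfrak{L}_0,\mathfrak{L}_{+1}\}$, exactly as in the proof of Proposition~\ref{prop:heis-in-double}. First, the sign-homogeneous relations. For $m,n\geq 0$ the computation takes place inside $W^\geq(S)$, which embeds into $\DW{\c}(S)$ by Proposition~\ref{prop:big-W-triang}; here no central term occurs (as $m+n=0$ forces $m=n=0$, where $n^3-n=0$). Using the bound $F_{-1}=0$ of Proposition~\ref{thm:W-PBW} one obtains the Witt bracket $[D_{m,1},D_{n,1}]=(n-m)D_{m+n,1}$ and the auxiliary identities $[D_{m,1}(\lambda),D_{n,0}(c_1\mu)]=nD_{m+n,0}(c_1\lambda\mu)$ and $[D_{m,0}(c_1\lambda),D_{n,0}(c_1\mu)]=0$; expanding $[\mathfrak{L}_m(\lambda),\mathfrak{L}_n(\mu)]$ and collecting terms, the quadratic-in-$(m,n)$ coefficients combine through the telescoping identity $(n-1)n-(m-1)m=(n-m)(n+m-1)$ into precisely $(n-m)\mathfrak{L}_{m+n}(\lambda\mu)$, showing that the $c_1$-shifts are calibrated to make the bracket close on $\mathfrak{L}_{m+n}$. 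The case $m,n<0$ is identical after transporting through the anti-homomorphism defining $W^\leq(S)$ and keeping track of the $\exp(i\pi\c)$ prefactors in $\mathfrak{L}_{n<0}(\lambda)$.

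The heart of the matter is the mixed case $m>0>n$, which produces the central charge via the double relation~\eqref{E:doublerelation}. The cleanest route is to establish the ``weight'' and ``raising/lowering'' relations $[\mathfrak{L}_0(\lambda),\mathfrak{L}_n(\mu)]=n\mathfrak{L}_n(\lambda\mu)$ and $[\mathfrak{L}_{\pm1}(\lambda),\mathfrak{L}_n(\mu)]$ first---using the base commutators~\eqref{E:heisrel} and the recursion $D_{\pm(n+1),1}=\tfrac{\pm1}{n-1}[D_{\pm1,1},D_{\pm n,1}]$ together with the Jacobi identity---and then to compute the seed central term from $[\mathfrak{L}_2(\lambda),\mathfrak{L}_{-2}(\mu)]$, unwinding $\mathfrak{L}_{\pm2}$ via $D_{\pm2,1}=\pm\tfrac12[D_{\pm1,2},D_{\pm1,0}]$ and applying~\eqref{E:doublerelation} repeatedly. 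Matching the purely central part of the result against $-\tfrac{2^3-2}{12}\eta=-\tfrac12\eta$ then pins down $\eta$; the remaining cases $[\mathfrak{L}_n(\lambda),\mathfrak{L}_{-n}(\mu)]$ follow by induction on $n$, the combinatorial factor $\tfrac{n^3-n}{12}$ emerging from the standard Virasoro cocycle recursion applied through $\Ad_{\mathfrak{L}_{\pm1}}$.

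I expect the principal obstacle to be the exact accounting of the central charge. Three independent sources of lower-order terms must conspire to yield the stated $\eta$: the $c_1$-shifts $\tfrac{(n\mp1)\c}{2}D_{n,0}(c_1\lambda)$ and the constant $\tfrac12\binom{\c}{3}\int_S c_1^2\lambda$ hard-wired into the $\mathfrak{L}_n$, the deformation terms ($s_2$ and $c_1\Delta_S$) in relation~\eqref{W:f}, and the polynomial-in-$\c$ terms of~\eqref{E:doublerelation}. Extracting the genuinely central contribution requires controlling the binomial sums $\binom{\c-j+i}{i+1}$, converting the resulting symmetric-function evaluations $h_k e_l(c_1^i\lambda\mu)$ into $\psi$- and integral-valued expressions through the identification $W^0(S)\simeq\Lambda(S)$ and the Todd class~\eqref{E:ToddclassS}, and verifying that all non-central pieces cancel---this is what ultimately produces the delicate $(1-\c^2)$ coefficient and the $\binom{\c}{3}$ normalization. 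A parallel subtlety is the consistent bookkeeping of the $\exp(i\pi\c)$ prefactors and the signs $(-1)^n$ relating $D_{-m,n}$ to $T^-_n$, which must be reconciled throughout (cleanly so on the integer specializations $\c=r$ relevant to the applications, where $\exp(i\pi\c)=(-1)^r$).
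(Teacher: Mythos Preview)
Your proposal is correct and follows essentially the same strategy as the paper's proof: both invoke Proposition~\ref{thm:W-PBW} for the same-sign relations, reduce the mixed-sign case to the generators $\mathfrak{L}_n$ with $|n|\leq 2$, and then propagate by the inductive Jacobi argument of Proposition~\ref{prop:heis-in-double}, leaving the explicit (laborious) verification among those five generators and the extraction of the central charge from~\eqref{E:doublerelation} as the computational core. Your write-up is considerably more detailed than the paper's---in particular your explicit telescoping check $(n-1)n-(m-1)m=(n-m)(m+n-1)$ for the same-sign $c_1$-shift and your identification of $[\mathfrak{L}_2,\mathfrak{L}_{-2}]$ as the seed for $\eta$---but the architecture is identical.
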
\label{prop:Virasoro}
\begin{proof}
    Note that the relation~\eqref{eq:Vir-comm} for $m$, $n$ of the same sign follows from Proposition~\ref{prop:W-PBW}.
    For other commutators, note that $\Vir_S(\eta)$ is generated over $Z(S)[\c]$ by the elements $\mathfrak{L}_n(\lambda)$, $|n|\leq 2$.
    Once we check the commutation relations between these elements, the rest of the relations can be deduced by an inductive argument as in Proposition~\ref{prop:heis-in-double}.
    The computation for the five elements above is straightforward, albeit laborious.
    We leave it to the reader.
    It is performed using the definitions of elements $\mathfrak{L}_n(\lambda)$ and the defining relations of $\DW{\c}(S)$.
    Let us briefly comment on the appearance of $c_2$ in the formula $\eta$.
    While~\eqref{E:doublerelation} does not manifestly depend on $c_2$, after writing out its r.h.s. in terms of $\psi_i$'s for $m+n=4$ we obtain 
    \begin{equation*}
    \begin{split}
        -\exp(i\pi\c)\big( 4\psi_3(\lambda\mu) - 3\c\psi_2&(c_1\lambda\mu) - 2(\psi_0\psi_1)(c_1\lambda\mu)\\ &+ (\c^2-\c+2)\psi_1(c_1^2\lambda\mu)
         - \underline{2\psi_1(c_2\lambda\mu)} + \cdots \big)
    \end{split}
    \end{equation*}
    where the omitted summands belong to the center of $\DW{\c}(S)$.
    The underlined term is precisely the one which gives rise to $\int_S c_2\lambda\mu$.
\end{proof}

\smallskip 

\subsection{Proof of Theorem~\ref{Prop:defHeis}(a)}\label{sec:str-def}
We have obtained an upper bound on the graded dimension of $W^\geq(S)$ in \S\ref{sec:order-filt}.
To get a lower bound, we consider the descending algebra filtration $G^N$, $N\in\N$, of $\DW{\c}(S)$ such that 
$T^\pm_n(\lambda),$  $\psi_l(\lambda)$ are in degree $\deg(\lambda)$. From the defining relations and Proposition~\ref{prop:big-W-triang} it follows that $G^N$ is spanned by the monomials 
\[
T^-_{i_1}(\mu^-_1)\cdots T^-_{i_{l_-}}(\mu^-_{l_-})\psi_{j_1}(\lambda_1)\cdots \psi_{j_{l_0}}(\lambda_{l_0})T^+_{k_1}(\mu^+_1)\cdots T^+_{k_{l_+}}(\mu^+_{l_+})
\]
for which
$\sum \deg(\lambda_i) + \sum\deg(\mu^\pm_j) \geq N$.
The restriction of this filtration to $W^\geq(S)$ is given by the same definition without $T^-$'s.
Each $G^N$ is a two-sided ideal in $\DW{\c}(S)$. We define
\begin{align}\label{descfilt}\Gr^\bullet W^{\c}(S) = \sum_{N\in\N} G^N/G^{N-1}.
\end{align}
We define a double $$\DW{\c}_0(S)=U(\mathfrak{w}^{(\c)}_0(S))$$ of $W^\geq_0(S)$ in an obvious way, 
using relations as in Theorem~\ref{thm:big-W-undef}.
The images of $\psi_n(\lambda)$ and $T^\pm_n(\lambda)$ in $\Gr^\bullet W^\geq(S)$ satisfy the relations \eqref{W:a}-\eqref{W:e}, \eqref{W:g}. Since
the last three summands in \eqref{W:f} have higher $G$-degree than the first four, the relation~\eqref{E:Lie-rels} holds in $\Gr^\bullet W^\geq(S)$ as well.
For the same reason, relation~\eqref{E:doublerelation} with $c_1=s_2=0$ holds. 
Thus there is an obvious algebra homomorphism 
$$\zeta:\DW{\c}_0(S)\to \Gr^\bullet \DW{\c}(S)$$ 

\begin{lemma}\label{lm:W-bound-below}
If $e\neq 0$ then the morphism $\zeta:\DW{e}_0(S)\to \Gr^\bullet \DW{e}(S)$ is an isomorphism.
\end{lemma}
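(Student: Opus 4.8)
The plan is to deduce the lemma from surjectivity of $\zeta$ (already established before the statement) together with the faithfulness mechanism developed for the Heisenberg subalgebra. Since $\zeta$ is a surjective homomorphism of $\Z^2$-graded algebras and passage to $\Gr^\bullet$ preserves dimensions in each bidegree, the claim is equivalent to injectivity of $\zeta$, i.e.\ to the assertion that no relations are created in the $G$-associated graded beyond the undeformed ones. First I would record the bookkeeping. Running the order-filtration argument of Proposition~\ref{thm:W-PBW} on the whole double (using Theorem~\ref{thm:big-W-undef} in place of Theorem~\ref{thm:W-undef}) shows that $\Gr_\bullet \DW{1}(S)$ is a quotient of $\Sym(\mathfrak{w}^{(1)}_0(S))$, so $\dim \DW{1}(S) \leq \dim \DW{1}_0(S)$ in every bidegree; this is exactly the inequality that surjectivity of $\zeta$ already yields. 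Hence the genuine content is the \emph{reverse} inequality, which is why the weight of the proof falls on injectivity and not on surjectivity.

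Next I would localize the problem using the triangular decompositions. The filtration $G^\bullet$ is multiplicative and defined through the cohomological degrees of the arguments, so it is compatible with the decomposition $W^-(S)\otimes W^0(S)\otimes W^+(S)\simeq \DW{1}(S)$ of Proposition~\ref{prop:big-W-triang} and with the PBW decomposition of $\DW{1}_0(S)=U(\mathfrak{w}^{(1)}_0(S))$; moreover, the $G$-leading term of the double relation~\eqref{E:doublerelation} is precisely its $c_1=s_2=0$ specialization, matching the cross-relations in $\mathfrak{w}^{(1)}_0(S)$. Therefore $\zeta$ factors, as a map of graded vector spaces, as a tensor product $\zeta^-\otimes\zeta^0\otimes\zeta^+$. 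Here $\zeta^0$ is an isomorphism, since the Cartan part $W^0(S)$ carries no deformation (its generators $\psi_n(\lambda)$ commute), and $\zeta^-$ is identified with $\zeta^+$ through the flip $\Theta$. Because a tensor product of linear maps is injective iff each factor is, it suffices to prove that $\zeta^+\colon U(\mathfrak{w}^+_0(S))\to \Gr^\bullet W^+(S)$ is injective.

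The main obstacle is exactly this injectivity of $\zeta^+$, namely ruling out that the deforming terms in relations~\eqref{W:f} and~\eqref{E:doublerelation} force additional collapse in the associated graded. I would observe that $\ker\zeta^+$ is a two-sided ideal of $U(\mathfrak{w}^+_0(S))$ and then run the minimal-weight argument of Lemma~\ref{lem:Heis-faithful} verbatim: that proof only manipulates the undeformed structure, via the operators $\Ad(D_{l,0}(1))$ and the abelian Heisenberg $\mathfrak{h}^+_S$, and so applies equally to the undeformed half, reducing injectivity of $\zeta^+$ to the statement $\ker\zeta^+\cap U(\mathfrak{h}^+_S)=0$. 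This last point is where the deformation genuinely enters and where I expect the real difficulty to lie: one must show that the free commutative subalgebra generated by the $D_{m,0}(\lambda)$ (free by Theorem~\ref{Prop:defHeis}(b)) does not shrink under $\Gr^\bullet$, equivalently that a nonzero Heisenberg monomial of argument-degree $N$ cannot lie in $G^{N+1}$. I would establish this by combining the faithful Heisenberg embedding $U(\mathfrak{h}_S)\hookrightarrow\DW{1}(S)$ (Proposition~\ref{prop:heis-in-double} together with Lemma~\ref{lem:Heis-faithful}) with the finiteness of the $G$-filtration in each fixed bidegree, which forces the $G$-symbols of the generators $D_{m,0}(\lambda)$ to remain algebraically independent. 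Granting injectivity of $\zeta^+$, the triangular decomposition yields $\dim W^+(S)=\dim U(\mathfrak{w}^+_0(S))$, which is precisely the plethystic character asserted in Theorem~\ref{Prop:defHeis}(a).
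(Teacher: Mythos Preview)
Your reduction to the positive half via the triangular decomposition is correct but unnecessary, and it leads you into a genuine gap at the crucial step. The problem is your justification that $\ker\zeta^+\cap U(\mathfrak{h}^+_S)=0$: you claim that the faithful embedding $U(\mathfrak{h}_S)\hookrightarrow\DW{1}(S)$ combined with finiteness of $G^\bullet$ in each bidegree ``forces the $G$-symbols of the generators $D_{m,0}(\lambda)$ to remain algebraically independent.'' This deduction is not valid. Knowing that the Heisenberg monomials are linearly independent in $\DW{1}(S)$ does not prevent a nontrivial polynomial $P(D_{m_i,0}(\lambda_i))$ of naive $G$-degree $N$ from lying in $G^{N+1}$, because the Heisenberg elements with $m_i>1$ do \emph{not} sit at the maximal possible $G$-degree in their bidegree (a monomial $\prod D_{m_i,0}(\lambda_i)$ has $G$-degree $\sum\deg\lambda_i$, while the maximum in that bidegree is $\sum\deg\lambda_i+2\sum(m_i-1)$). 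Finiteness of the filtration gives you nothing here.

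The paper's proof avoids this entirely by working with the \emph{full} Heisenberg subalgebra $U(\mathfrak{h}_S)\subset\DW{1}_0(S)$ rather than just the positive half. The restriction $\zeta|_{U(\mathfrak{h}_S)}$ is an algebra homomorphism sending the central element to $1\in\Gr^0$; since the Heisenberg algebra specialized at central charge $1$ is a (super) Weyl algebra and hence simple, any nonzero homomorphism out of it is injective. This immediately gives $\ker\zeta\cap U(\mathfrak{h}_S)=0$, and Lemma~\ref{lem:Heis-faithful} (whose proof applies verbatim to $\DW{1}_0(S)$) finishes the argument. Note that your step would be rescued by exactly this observation, since $\ker\zeta^+\cap U(\mathfrak{h}^+_S)\subset\ker\zeta\cap U(\mathfrak{h}_S)$; but once you invoke simplicity of the full Heisenberg, the detour through the triangular decomposition becomes superfluous.
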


\begin{proof}
The morphism is surjective. Its restriction to $U(\Heis(S))$ is non-zero. 
Its kernel is a two-sided ideal of $U(\Heis(S))$, which is 0 because $e\neq 0$. 
We conclude by Lemma~\ref{lem:Heis-faithful}.
\end{proof}

\begin{proof}[Proof of Theorem~\ref{Prop:defHeis}(a)]
The map $\zeta$ restricts to an isomorphism $W_0^\geq(S) \to \Gr^\bullet W^\geq(S)$ of graded algebras.
It suffices to observe that the Hilbert series of $W_0^\geq(S)= U(\mathfrak{w}^\geq_{0}(S))$ is given by~\eqref{eq:Wplus-char}.
\end{proof}

\subsection{Structure of $W^\geq(S)$ in the semi-deformed case}\label{sec:semidef}
Assume that $c_1 = 0$ and $s_2=q^2$.
Recall the Lie algebra $\mathfrak{w}^\geq(S)$ from Definition~\ref{def:W-semidef}.
The following proposition is a refinement of Theorem~\ref{T:W(S)}.

\begin{proposition}
    If $c_1=0$ and $s_2=q^2$, then
    there is an algebra isomorphism 
    $$\Phi: W^\geq(S)\to U(\mathfrak{w}^\geq(S))$$ such that
    \[
        T^+_n(\lambda)\mapsto zD^n\lambda, \quad \psi_n(\lambda)\mapsto q^nB_n(Dq^{-1})\lambda,
    \]
    where $B_n(x) = \sum_{i=0}^n \binom{n}{i}B_{n-i}x^i $ is the Bernoulli polynomial.
\end{proposition}

\begin{proof}
    Relations \eqref{W:b}, \eqref{W:e}-\eqref{W:g} hold between $\Phi(T^+_n(\lambda))$.
    Relations \eqref{W:a} and \eqref{W:c} between $\Phi(\psi_n(\lambda))$ also hold for obvious reasons. Moreover,
    \[
        [B_m(Dq^{-1})\lambda,zD^n\mu] = zq^{-1}\left( B_m(Dq^{-1}+1) - B_m(Dq^{-1}) \right)D^n\lambda\mu = mzq^{-m}D^{m+n-1}\lambda\mu,
    \]
    where we used the ``umbral'' relation $B_n(x+1)-B_n(x) = nx^{n-1}$.
    Therefore the relation \eqref{W:d} holds as well.
    We have thus obtained a well-defined homomorphism 
    $$\Phi: W^\geq(S)\to U(\mathfrak{w}^\geq(S)).$$
   Observe that the Lie algebra $\mathfrak{w}^\geq(S)$ is generated by the elements $D^n\lambda$ and $zD^n\lambda$.
    In particular, $\Phi$ is surjective.
    Finally, the graded dimension of $W^\geq(S)$ is equal to the graded dimension of $U(\mathfrak{w}^\geq(S))$ by Theorem~\ref{T:W(S)}(a), so we may conclude.
\end{proof}

\begin{remark}
In~\cite{LQW}, an action of a Lie algebra $\mathcal{W}_S$ on the cohomology of the Hilbert schemes of points on $S$ was constructed via vertex algebra methods.
A basis is given by elements 
$$\mathfrak{J}_m^p(\lambda)
,\quad
m\in\Z
,\quad
p\in \N
,\quad
\lambda\in H^*(S,\Q)$$ and the Lie bracket is, up to central charge,
\[
    [\mathfrak{J}_m^p(\lambda),\mathfrak{J}_n^q(\mu)] = (qm-pn)\mathfrak{J}_{m+n}^{p+q-1}(\lambda\mu) - \frac{\Omega_{m,n}^{p,q}}{12}\mathfrak{J}_{m+n}^{p+q-3}(c_2\lambda\mu),
\]  
where $\Omega_{m,n}^{p,q}$ is given by formula (5.2) in~\emph{loc. cit.}
This Lie algebra lives in a certain completion of $\DW{1}(S)$.
If $c_1=0$, it is a subalgebra of $\DW{1}(S)$ by Lemma~5.2 in \emph{loc. cit.}
Let us set 
\[
    T^\pm_n(\lambda) = \mathfrak{J}_{\pm 1}^n(\lambda), \qquad \psi_n(\lambda) = \mathfrak{J}_0^n(\lambda).
\]
A direct computation implies that the relations of $\DW{1}(S)$ with $c_1=0$ hold.
Therefore we obtain a homomorphism of algebras $\DW{1}(S)\to U(\mathcal{W}_S)$, which is an isomorphism by a dimension check.
The existence of this homomorphism relies on the fact that $c_2^2=0$, which is true in $H^*(S,\Q)$ for degree reasons. In the presence of a torus action this vanishing fails, hence the results of~\cite{LQW} do not apply, and one obtains instead the semi-deformed algebra $U(\mathfrak{w}^\geq(S))$. Our presentation does not involve the factor $\Omega_{m,n}^{p,q}$.
\end{remark}

\medskip

\section{Fock space representations of $\DW{r}(S)$}\label{sec:Fock4}

In this section, we construct a Fock space representation of $\DW{r}(S)$ for any $r \geq 0$ by considering the action of Hecke correspondences on tautological cohomology rings. We still assume that $S$ is projective.

\medskip

\subsection{The algebra of universal Hecke operators}\label{sec:fockdef} 
Recall that $R^\pm$ is the algebra homomorphism and $Q^\pm$ the $\Lambda(S)$-linear map in 
\eqref{E:finalformT}, and that $u=c_1(\rho)$ is as in \eqref{u}.
We define the elements $\phi_n(\lambda),c(\lambda)\in\Lambda(S)$ by the following generating series
in $\Lambda(S)((z))$
\begin{align*}
\phi_\lambda(z)&=z^{-1}c(\lambda) + \sum_{n\geq 0} \frac{z^n}{n!}\phi_n(\lambda) \\
&= \sum_{n\geq 0} \frac{z^{n-1}}{n!} p_n\big(\Td_S(z)\cup\lambda\big) .
\end{align*}
Note that $c(\lambda)=0$ if $\lambda \not\in \C\pt$ while $c(\pt)=p_0(\pt)=\mathbf{r}$.
Let $\phi_n(\lambda),$ $c(\lambda)$ denote also the operators of left multiplication in $\Lambda(S)$. 
Recall the operators $L^\pm_n(\lambda)$
introduced in \eqref{eq:T-fla+} and \eqref{eq:T-fla-}.
We set
\begin{align}\label{fsL}L^{\pm}_\lambda(z)=\sum_{n \in \Z} L^{\pm}_n(\lambda)z^n.\end{align}
We recall $\theta(z)=\sum_{m \geq 0} h_mz^m$ from \S\ref{sec:defoWalg}.

\smallskip

\begin{proposition}\label{prop:psi-T-rels}
The assignment 
$$\psi_n(\lambda) \mapsto \phi_n(\lambda)
,\quad
\c \mapsto \mathbf{r}
,\quad
T^\pm_n(\lambda) \mapsto L^\pm_n(\lambda)
,\quad
n \geq 0
,\quad
\lambda \in H^*(S,\Q)$$ extends to actions of $W^{\geq}(S)$ and $W^{\leq}(S)$ on $\Lambda(S)$. 
\end{proposition}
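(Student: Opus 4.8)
The goal is to verify that the operators $\phi_n(\lambda)$, $\mathbf{r}$ and $L^+_n(\lambda)$ (resp.\ $L^-_n(\lambda)$) on $\Lambda(S)$ satisfy the defining relations \eqref{W:a}--\eqref{W:h} of $W^\geq(S)$ (resp.\ their counterparts for $W^\leq(S)$, in which $[\psi_m(\lambda),T^-_n(\mu)]=-mT^-_{m+n-1}(\lambda\mu)$ and the diagonal term of \eqref{W:f} carries the opposite sign); by the universal property of $W^\geq(S)$ this is exactly the assertion that the assignment extends to an action. The $+$ and $-$ cases are formally parallel, the passage between them being the substitution $f_n(u)\rightsquigarrow(-1)^nf_n(-u)$ recorded in \eqref{E:R+R-onpn}, so the plan is to treat the $+$ case in full and then read off the sign changes. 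The relations split into three layers of increasing difficulty: the formal relations, the mixed relation \eqref{W:d}, and the quadratic and cubic relations \eqref{W:e}--\eqref{W:g} among the $T$'s.

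First I would dispose of the formal relations. Linearity \eqref{W:a}--\eqref{W:b} is immediate, and \eqref{W:c} holds because each $\phi_n(\lambda)$ acts by multiplication by an element of the super-commutative ring $\Lambda(S)$, so these operators super-commute. Relation \eqref{W:h} follows from the generating-series definition $\phi_\lambda(x)=\sum_n\tfrac{x^{n-1}}{n!}p_n(\Td_S(x)\cup\lambda)$ together with the vanishing in $\Lambda(S)$ of the negative-degree generators $\uch_1(\lambda)$, $\deg\lambda\in\{0,1\}$. For the mixed relation \eqref{W:d} I would invoke the commutation rule \eqref{eq:pT-comm}, namely $[p_k(\nu),L^+(\xi)]=L^+(\ev_\delta(p_k(\nu))\cup\xi)$, together with $\ev_\delta(p_k(\nu))=f_k(u)\cup\nu$ from \eqref{E:evpn}. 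Expanding $\phi_m(\lambda)$ as a Todd-dressed combination of the multiplication operators $p_k(\lambda)$ via its generating series turns $[\phi_m(\lambda),L^+_n(\mu)]$ into $L^+$ applied to a Todd-dressed combination of $f_k(u)\,\lambda\mu\,u^n$; the claim $[\phi_m(\lambda),L^+_n(\mu)]=mL^+_{m+n-1}(\lambda\mu)$ then reduces to a clean identity of formal series in $u$ and $x$ stating that this combination realizes $m$ times the index shift $n\mapsto m+n-1$.

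The core of the proof is a closed-form computation of the commutator $[L^+_\lambda(x),L^+_\mu(y)]$ of generating series, for which I would use the vertex-operator presentation of Remark~\ref{E:vertexoperators}: $L^+_\lambda(x)$ is a product of a creation exponential in the $p_k$ and an annihilation exponential in the $\partial/\partial\kappa_n$, the two families of variables being intertwined by the Todd class through \eqref{E:pkappa}. Commuting the annihilation part of one factor past the creation part of the other produces a \emph{single} contraction, whose kernel is assembled from the $f_k(u)$, together with a \emph{double} contraction that is a multiplication (rather than a difference) operator. Expanding the single contraction in the indices should yield precisely the third-difference combination with coefficients $1,-3,3,-1$ and the $s_2$-terms of \eqref{W:f}; these encode the polynomial identity satisfied by $f_k(u)=\bigl(u^k-(u-t_1)^k-(u-t_2)^k+(u-t_1-t_2)^k\bigr)/t_1t_2$ once one substitutes $t_1+t_2=c_1$ and $t_1t_2=c_2$. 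With the closed form in hand, \eqref{W:e} is visible because the commutator depends on $\lambda,\mu$ only through the product $\lambda\mu$ (and through the symmetric diagonal class $c_1\Delta_S\lambda\mu$), while \eqref{W:g} follows by antisymmetrizing the single-contraction part over $S_3$, a Jacobi-type cancellation.

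The main obstacle is matching the diagonal anomaly $\{T_m,T_n\}(c_1\Delta_S\lambda\mu)$ in \eqref{W:f}. It arises from the double contraction, and pinning down both its coefficient and the appearance of the diagonal class $\Delta_S=\sum_\gamma\gamma\otimes\gamma^*$ requires careful bookkeeping with the intersection-pairing tensor built into $\uch(x)$ and with the change of variables \eqref{E:pkappa} relating the $p_k$ to the $\kappa_n$; this is where $\Td_S$ and $c_1$ enter in an essential, non-Calabi--Yau way, and it is the step most likely to hide sign and coefficient subtleties. Once \eqref{W:e}--\eqref{W:g} are verified for the $+$ operators, the relations of $W^\leq(S)$ follow from the substitution $f_n(u)\rightsquigarrow(-1)^nf_n(-u)$ of \eqref{E:R+R-onpn}, which flips the sign in \eqref{W:d} and in the diagonal term of \eqref{W:f}, producing exactly the sign-corrected relations satisfied by the $L^-_n(\lambda)$.
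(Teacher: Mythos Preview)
Your treatment of the formal relations \eqref{W:a}--\eqref{W:c}, \eqref{W:h} and of the mixed relation \eqref{W:d} is essentially the same as the paper's: both arguments feed \eqref{eq:pT-comm} and \eqref{E:evpn} into the generating series for $\phi_\lambda(x)$ and extract the coefficient identity $[\phi_m(\lambda),L^+_n(\mu)]=mL^+_{m+n-1}(\lambda\mu)$.

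For \eqref{W:e}--\eqref{W:g}, however, the paper proceeds quite differently from your vertex-operator/Wick-contraction plan. The crucial observation you omit is that, once \eqref{W:d} is established, it suffices to verify \eqref{W:e}--\eqref{W:g} on the element $1\in\Lambda(S)$: if a relation holds on $1$ then conjugating by the $\phi_m(\lambda)$'s (which act by multiplication and generate $\Lambda(S)$) propagates it to all of $\Lambda(S)$. This reduces an operator identity to a computation of special values. Concretely, the paper computes $L_\lambda(x)L_\mu(y)(1)$ by applying $R^+$ to $\theta(y)(\mu)=\exp(\sum_k y^kp_k/k)(\mu)$ and then $Q^+$; using $\Delta^2=t_1t_2\Delta$ the exponential of the $f_k(u)$'s collapses to a single rational kernel
\[
\Omega(x,y)=\frac{y^2}{(1-y(x^{-1}-t_1))(1-y(x^{-1}-t_2))},
\]
yielding the closed form $L_\lambda(x)L_\mu(y)(1)=m(\theta(x)\otimes\theta(y))(1-\Delta\Omega(x,y))(\lambda\otimes\mu)$. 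Relation \eqref{W:e} is then immediate; \eqref{W:f} becomes a single polynomial identity for $\Omega$ in the variable $z=y^{-1}-x^{-1}$, and the anomaly $\{T_m,T_n\}(c_1\Delta_S\lambda\mu)$ appears not as a ``double contraction'' but as the $c_1\Delta$-term needed to symmetrize that identity in $x\leftrightarrow y$. Relation \eqref{W:g} is handled by the analogous triple-product formula and an explicit symmetrization showing that a certain Laurent series $K'(x_1,x_2,x_3)$ vanishes.

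Your vertex-operator route is plausible in principle, but it forgoes the reduction to $1$ and therefore commits to proving operator identities rather than scalar ones; moreover, the contraction $[\partial/\partial\kappa_n(\gamma),p_k(\gamma')]$ you need is mediated by the Todd-twisted change of variables \eqref{E:pkappa}, which makes the Wick calculus considerably messier than the rational-function manipulation with $\Omega$. Your attribution of the $c_1\Delta_S$ anomaly to a ``double contraction that is a multiplication operator'' does not match how it actually arises. The paper's approach buys a clean, self-contained computation with explicit kernels; yours, if carried through, would situate the result within standard vertex-algebra OPE machinery, at the cost of tracking the Todd dressing through every contraction.
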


\begin{proof}
We will deal with the positive operators only, the second case being identical. To unburden the notation, we suppress $+$ from the notation.
We have to show that the operators $\phi_n(\lambda), L_n(\lambda)$ satisfy the defining relations \eqref{W:a}-\eqref{W:g} of $W^\geq(S)$. 
The relations \eqref{W:a}, \eqref{W:b}, \eqref{W:c} being immediate, we concentrate on the remaining ones.
We deduce from~\eqref{E:evpn} that
$$\sum_{m\geq 0}\frac{z^m}{m!}\ev_\delta(p_m(\lambda)) = z^2e^{uz}\Td^{-1}_S(z)\lambda.$$ 
In particular, from \eqref{RQ}, \eqref{eq:T-fla+}, we deduce that
\begin{align*}
[\phi_\lambda(z),L_n(\mu)] 
& = \sum_{m\geq 0} \frac{z^{m-1}}{m!}[p_m(\Td_S(z)\lambda),L_n(\mu)]\\
&= \sum_{m\geq 0} \frac{z^{m-1}}{m!}
Q^+\Big(\mu u^n(p_m-R^+(p_m))(\Td_S(z)\lambda)R^+(-)\Big)\\
& = z Q^+\Big( \mu u^n \sum_{m\geq 0}\frac{z^{m-2}}{m!}\ev_\delta(p_m)(\Td_S(z)\lambda)R^+(-) \Big)\\
&= zQ^+\Big(u^ne^{uz}\lambda\mu R^+(-)\Big),\\
&= \sum_{m\geq 0} \frac{z^{m+1}}{m!}Q^+\Big(\lambda\mu u^{m+n}R^+(-)\Big)\\
&= \sum_{m\geq 1} \frac{z^m}{(m-1)!}L_{m+n-1}(\lambda\mu)
\end{align*}
which proves the relation \eqref{W:d}. 
Thanks to this, it suffices to check relations \eqref{W:e}, \eqref{W:f} and \eqref{W:g} when evaluated at $1$.
Let us write
\[
        \Omega(x,y) = \frac{y^2}{(1-y(x^{-1}-t_1))(1-y(x^{-1}-t_2))} \in H^*(S,\Q)[x,x^{-1}][[y]].
\]

\begin{lemma}\label{lem:T-product}
We have the following identity
\[L_\lambda(x)L_\mu(y)(1) = m(\theta(x)\otimes\theta(y))(1-\Delta\Omega(x,y))(\lambda\otimes \mu)\]
where $m : \Lambda(S) \otimes \Lambda(S) \to \Lambda(S)$ is the multiplication.
\end{lemma}

\begin{proof}
Let us first note that
\[L_\mu(y)(1) = \sum_{n\geq 0} y^nh_n(\mu) = 
\exp\Big( \sum_{k\geq 1} \frac{y^k}{k}p_k \Big)(\mu).\]
Using formula~\eqref{E:evpn}, we get after applying $R$
\begin{align*}
R^+(L_\mu(y)(1)) &= R^+ \left( \int_S \exp\Big( \sum_k \frac{y^k}{k} \uch_k\Big) \mu_2\right),\\
&=\int_S \exp\Big( \sum_{k\geq 1} \frac{y^k}{k}((\uch_k)_{12} - (\Delta f_k(u))_{23}) \Big)\mu_2\\
&=\int_S \exp\Big( \sum_{k\geq 1} \frac{y^k}{k}(\uch_k)\Big)_{12} \exp\Big(-\sum_k \frac{y^k}{k} \Delta f_k(u)\Big)_{23}\; \mu_2\\
&=\int_S p(\theta(y))_{12} \;\exp\Big( -\Delta\sum_{k\geq 1} \frac{y^k}{k}f_k(u) \Big)_{23}\;\mu_2.
\end{align*}
Here, we used indices to specify the position in the tensor product, i.e., $\mu_2=1 \otimes \mu \otimes 1$, etc. Since $\Delta^2 = t_1t_2\Delta$, we can compute the exponential term
\begin{align*}
    \exp & \Big( -\Delta\sum_{k\geq 1} y^k\frac{u^k - (u-t_1)^k - (u-t_2)^k + (u-t_1-t_2)^k}{kt_1t_2} \Big)\\
    & = 1+\frac{\Delta}{t_1t_2}\bigg(\exp\Big( -\sum_{k\geq 1} y^k\frac{u^k - (u-t_1)^k - (u-t_2)^k + (u-t_1-t_2)^k}{k} \Big)-1\bigg)\\
    & = 1 - \frac{\Delta}{t_1t_2}\left( \frac{(1-yu)(1-yu+yt_1+yt_2)}{(1-yu+yt_1)(1-yu+yt_2)}-1 \right)\\
    & = 1 - \frac{y^2\Delta}{(1-yu+yt_1)(1-yu+yt_2)}.
\end{align*}
Recall the operator Q
$Q^+: \Lambda(S) \otimes H^*(\Coh_\delta,\Q) \to \Lambda(S)$ from \eqref{RQ}. We have 
\[\sum_{m \in \Z} Q^+(z^mu^{m+i}\lambda) = z^{-i}\sum_{m\geq 0} z^mh_m(\lambda).\]
Putting everything together, and extending $Q^+$ linearly over $\Q[[x^{\pm 1}, y^{\pm 1}]]$ and using the relation 
\begin{align}\label{FS}
\sum_{n\in \Z} (ux)^n F(u)=\sum_{n\in \Z} (ux)^n F(x^{-1})
,\quad
F(u)\in A[[u^{-1},u]],
\end{align}
where $A$ is any ring,
we conclude that
\begin{align*}
    L_\lambda(x)L_\mu(y)(1) 
    &= Q^+\Big( \lambda\sum_{n\in \Z} (xu)^nR^+L_\mu(y)(1) \Big)\\
    &=\int_S Q^+\Big(\lambda_2 \sum_{n \in \Z} (xu)^n p(\theta(y))_{13} \big(1-\frac{y^2 \Delta_{23}}{(1-yu+yt_1)(1-yu+yt_2)}\big)\mu_3\Big)\\
    &=\int_S Q^+\Big(\lambda_2 \sum_{n \in \Z} (xu)^n p(\theta(y))_{13} \big(1-\frac{y^2 \Delta_{23}}{(1-yx^{-1}+yt_1)(1-yx^{-1}+yt_2)}\big)\mu_3\Big)\\
    & = \int_{S \times S}p(\theta(x))_{12}\;p(\theta(y))_{13}\;\Big( 1 - \frac{y^2\Delta}{(1-yx^{-1}+yt_1)(1-yx^{-1}+yt_2)} \Big)\;\lambda_2\;\mu_3\\
    & = m(\theta(x)\otimes\theta(y))(1-\Delta\Omega(x,y))(\lambda\otimes \mu)).
\end{align*}
\end{proof}

Now, by Lemma~\ref{lem:T-product}, we have
\begin{align*}
    [L_\lambda(x),L_\mu(y)](1) 
    &= -\int_{S\times S} p(\theta(x))_{12}\; p(\theta(y)))_{13}\;(\Delta \Omega(x,y)-\Delta \Omega(y,x))_{23}\; \lambda_2\;\mu_3\\
    &= -\theta(x)\theta(y)(\Omega(x,y)-\Omega(y,x))(\lambda\mu)
\end{align*}
which implies relation \eqref{W:e}. 
    
    Next, let us put $z=y^{-1}-x^{-1}$. A direct computation yields
    \begin{align*}
        (z^3-z(s_2\otimes 1 + 1\otimes s_2)/2-s_1\Delta)&(1-\Delta \Omega(x,y)) \\
        =&(z^3-z(s_2\otimes 1 + 1\otimes s_2)/2+s_1\Delta)(1-\Delta \Omega(y,x)).
    \end{align*}
    Unpacking the generating series and using Lemma~\ref{lem:T-product}, we see that the relation \eqref{W:f} holds when evaluated at $1$. Hence it holds in general.

    Let's finally turn to \eqref{W:g}. Similarly to Lemma~\ref{lem:T-product}, we have
    \begin{align*}
        L_{\lambda_1}(x_1)L_{\lambda_2}(x_2)L_{\lambda_3}(x_3)(1) =
        m(\theta(x_1)\otimes\theta(x_2)\otimes \theta(x_3))\prod_{i<j}(1-\Delta \Omega(x_i,x_j))_{ij}(\lambda_1\otimes\lambda_2\otimes\lambda_3).
    \end{align*}
    The product of $\Omega$ functions is well-defined as a Laurent series.
    Using this formula and expanding the products, one can show that
    \[
        [L_{\lambda_1}(x_1),[L_{\lambda_2}(x_2),L_{\lambda_3}(x_3)]](1) = m(\theta(x_1)\otimes\theta(x_2)\otimes \theta(x_3)) \Delta_{123}K(x_1,x_2,x_3)(\lambda_1\otimes\lambda_2\otimes\lambda_3),
    \]
    where 
    \begin{align*}
\Delta_{123} &= \Delta_{12}\Delta_{23}= \Delta_{12}\Delta_{13}= \Delta_{13}\Delta_{23},\\
    K(x_1,x_2,x_3) &= (1-\sigma_{23})(1+\sigma_{13})\Gamma(x_1,x_2,x_3),\\   
\Gamma(x_1,x_2,x_3)&=\Omega(x_1,x_2)\Omega(x_2,x_3)+\Omega(x_1,x_2)\Omega(x_1,x_3)+\Omega(x_1,x_3)\Omega(x_2,x_3)\\
&\quad-t_1t_2\Omega(x_1,x_2)\Omega(x_2,x_3)\Omega(x_1,x_3)
    \end{align*}
    and $\sigma_{ij}$ stands for the transposition of the indices $(i,j)$.
    In order to prove the relation \eqref{W:g}, we must show that the following Laurent series vanishes
    \begin{equation*}
    \begin{split}
        \sum_{w\in S_3}& w[L_{\lambda_1}(x_1),[L_{\lambda_2}(x_2),x_3^{-1}L_{\lambda_3}(x_3)]](1)\\ 
        &=(\theta(x_1)\otimes\theta(x_2)\otimes \theta(x_3))\Delta_{123}K'(x_1,x_2,x_3)(\lambda_1\otimes\lambda_2\otimes\lambda_3),
    \end{split}
    \end{equation*}
    where $K'(x_1,x_2,x_3) = \sum_{w\in S_3} w(x_3^{-1}K(x_1,x_2,x_3))$. We will show that $K'$ vanishes.
    Since
    \begin{align*}
    \sum_{w \in S_3} w x_3^{-1}(1-\sigma_{23})(1+\sigma_{13})&=\sum_w w (x_3^{-1}-\sigma_{23}x_2^{-1} + \sigma_{13}x_1^{-1}-\sigma_{23}\sigma_{13}x_2^{-1})\\ 
    &=\sum_w w (x_1^{-1} -2 x_2^{-1} + x_3^{-1})
    \end{align*}
    we obtain
    \begin{align*}
        K'&(x_1,x_2,x_3) = \sum_{w\in S_3} w\Big( (x_1^{-1}-2x_2^{-1}+x_3^{-1}) \Gamma(x_1,x_2,x_3)\Big).
    \end{align*}
    A direct computation yields
    \begin{align*}
        (x_1^{-1}-2x_2^{-1}+x_3^{-1})&(\Omega(x_1,x_2)\Omega(x_2,x_3)-t_1t_2\Omega(x_1,x_2)\Omega(x_2,x_3)\Omega(x_1,x_3))\\
        & = (x_3^{-1}-x_1^{-1})(\Omega(x_1,x_2)\Omega(x_1,x_3)-\Omega(x_1,x_3)\Omega(x_2,x_3)).
    \end{align*}
    Therefore, we can express $K'$ as follows
    \begin{equation*}
    \begin{split}
        K'(&x_1,x_2,x_3)\\
        &= 2\sum_{w\in S_3} w\left( (x_3^{-1}-x_2^{-1})\Omega(x_1,x_2)\Omega(x_1,x_3)+(x_1^{-1}-x_2^{-1})\Omega(x_1,x_3)\Omega(x_2,x_3) \right).
    \end{split}
    \end{equation*}
    The first term in parentheses is antisymmetric is $x_2$ and $x_3$, and the second one is antisymmetric in $x_1$ and $x_2$.
    Therefore their symmetrizations vanish. Hence $K'=0$.
    Thus the relation \eqref{W:g} holds, and the proof is complete. Above, we considered elements $L_n(\lambda)$ with $n<0$, but the only relations which we are interested in are those involving only $n\geq 0$. Proposition~\ref{prop:psi-T-rels} is proved.
\end{proof}

\medskip

\subsection{Level $r$ Fock space representation of $\DW{\c}$} Fix an integer $r \geq 0$. We set
$$\Lambda(S)_r=\Lambda(S)|_{\mathbf{r}=r}
, \quad 
{}_rL^\pm_n(\lambda)=L_{n+1 \mp r}^\pm(\lambda)
, \quad n \in \Z
, \quad
\lambda \in H^*(S,\Q)$$
Consider the formal series
$${}_rL_\lambda^\pm(z)=z^{-1\pm r}L_\lambda^\pm(z) \in \text{End}(\Lambda(S)|_{r})[z,z^{-1}].$$
The formal series ${}_rL_\lambda^+(z)$ specializes to the formal series $L^+_\lambda(z)$ 
in \eqref{fsL} for $r=1$.

\smallskip

\begin{proposition}\label{Prop:+-relationsfockspace} The following relation holds in 
    \begin{equation}\label{E:+-relationsFock}
        \big[{}_rL^+_{\lambda}(x), {}_rL^-_{\mu}(y)\big]_{++}=\left[\frac{1}{c_1x} \left( 1-\frac{\theta(x)}{\widetilde{\theta}(x)}\right) \delta\Big(\frac{y}{x}\Big)(\lambda\mu)\right]_{++} \in \End(\Lambda(S)|_{r})[[x,y]].
    \end{equation}
\end{proposition}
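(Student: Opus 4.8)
The plan is to establish~\eqref{E:+-relationsFock} by a direct computation with the explicit vertex-operator realization of the currents ${}_rL^\pm_\sigma(z)$ recorded in Remark~\ref{E:vertexoperators}. Writing each current as an ordered product
$${}_rL^+_\lambda(x) = E^+_\lambda(x)\,F^+(x),\qquad {}_rL^-_\mu(y) = E^-_\mu(y)\,F^-(y),$$
where $E^+_\lambda(x) = \exp\big(\sum_{\gamma,k\geq1}\tfrac{p_k}{k}(\gamma)\gamma^*x^{-k}\big)$ and $E^-_\mu(y)$ are the (suitably truncated) creation factors built from $p_k$ and $\tau_{c_1}p_k$ respectively, while $F^+(z) = \exp\big(-\sum_{\gamma,n\geq0}\tfrac{\partial}{\partial\kappa_n(\gamma)}\gamma z^n\big)$ and $F^-(z)$ is its inverse-sign analogue, I would first record three elementary facts: creation operators commute among themselves, annihilation operators commute among themselves, and a commutator of an annihilation and a creation factor is a scalar. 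The last point holds because, by relation~\eqref{E:pkappa}, each $\kappa_n(\gamma)$ is a linear combination of the $p_m$ with coefficients built from the graded Todd class, so $\partial/\partial\kappa_n$ is a constant-coefficient derivation and $[\tfrac{\partial}{\partial\kappa_n(\gamma)},\tau_{c_1}p_k(\gamma')]$ lies in the coefficient ring.

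Granting this, normal-ordering the two products in the two possible orders produces the \emph{same} normal-ordered operator $N(x,y) = E^+_\lambda(x)E^-_\mu(y)F^+(x)F^-(y)$, multiplied by two scalar contraction factors, so that the commutator takes the shape $(\Gamma_+ - \Gamma_-)\,N(x,y)$. The core computation is the evaluation of $\Gamma_\pm$: $\log\Gamma_+$ is obtained by pairing the $\kappa$-derivations of $F^+(x)$ against the power sums $\tau_{c_1}p_k$ of $E^-_\mu(y)$ through~\eqref{E:pkappa}, while $\log\Gamma_-$ pairs the $\kappa$-derivations of $F^-(y)$ against the $p_k$ of $E^+_\lambda(x)$; the asymmetry between $\theta$ and $\widetilde{\theta} = \tau_{c_1}\theta$ enters precisely here. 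After summation, $\Gamma_\pm$ are the expansions in the two regions of rational functions whose combination is governed by the difference quotient $\tfrac{\tau_{c_1}\theta - \theta}{c_1}$; this is the source of the prefactor $\tfrac{1}{c_1x}\big(1 - \theta(x)/\widetilde{\theta}(x)\big) = \tfrac{\widetilde{\theta}(x)-\theta(x)}{c_1x\,\widetilde{\theta}(x)}$, the division by $c_1$ reflecting the $c_1$-shift built into the Todd class (compare the $1/q$ in~\eqref{E:Lie-rels-semidef}).

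I would then assemble the pieces: the difference of the two expansions yields the formal delta function $\delta(y/x)$, against which $N(x,y)$ may be evaluated at $y=x$, and the $(\lambda\mu)$-pairing of the statement is exactly the diagonal pairing of the labels $\lambda,\mu$ produced by the contraction, as in Lemma~\ref{lem:T-product}. Finally I would apply the $[\,\cdot\,]_{++}$ truncation to both sides and verify the resulting identity mode by mode, i.e.\ that $[{}_rL^+_m(\lambda),{}_rL^-_n(\mu)]$ for $m,n\geq0$ reproduces the coefficients of~\eqref{E:doublerelation} with $\c = r$.

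The main obstacle is the interaction between the level-$r$ truncations $[\,\cdot\,]_{[s^{<\pm r}]}$ present in the creation factors and the outer $[\,\cdot\,]_{++}$ truncation. These truncations are what make the Fock space neither highest weight nor irreducible, and they prevent the naive normal-ordering identity from holding without correction: it is precisely their interplay that converts the contraction output into the clean operator $\tfrac{1}{c_1x}(1-\theta/\widetilde{\theta})\delta(y/x)$ on the right, so that both a scalar ("$1$") and an operator ("$\theta/\widetilde{\theta}$") contribution survive. Controlling these truncations carefully---so that no genuine-generator mode ($n\geq0$) is lost or spuriously created---while simultaneously carrying the Todd-twisted change of variables~\eqref{E:pkappa} through the contraction is where the computation requires the most care; the remainder is bookkeeping with generating series of the same type already carried out in the proof of Proposition~\ref{prop:psi-T-rels}.
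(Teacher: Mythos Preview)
Your approach is genuinely different from the paper's and, as written, has a real gap. The paper does \emph{not} attempt a global normal-ordering computation. Instead it proceeds in two clean steps: first, Lemma~\ref{L:evaluationon1} evaluates the commutator on the vacuum $1\in\Lambda(S)_r$ by a direct computation with $R^\pm,Q^\pm$ (producing the functions $\overline\Omega(a,b)$ and then extracting the $\delta$-function from the difference of two expansions); second, the short calculation~\eqref{eq:psi-TT-comm} shows that $[\psi_k(\nu),C_{ij}(\lambda,\mu)]\cdot 1=0$, so by induction the identity propagates from $1$ to all of $\Lambda(S)_r$. This two-step strategy never needs to confront the truncations you flag as the main obstacle.

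Your proposal, by contrast, rests on the claim that each current factors as $E^\pm F^\pm$ with $E$ creation and $F$ annihilation, so that normal ordering yields $[{}_rL^+_\lambda(x),{}_rL^-_\mu(y)]=(\Gamma_+-\Gamma_-)\,N(x,y)$ with \emph{scalar} $\Gamma_\pm$. But the formulas in Remark~\ref{E:vertexoperators} are not of this form: the outer and inner truncations $[\,\cdot\,]_{[s^{<\pm r}]}$ destroy the clean exponential factorization, so the standard Wick/normal-ordering identity simply does not apply. You recognize this (``the main obstacle is the interaction between the level-$r$ truncations\ldots'') but offer no mechanism to repair it; saying it ``requires the most care'' is not a strategy. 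There is also an internal inconsistency: if the commutator were truly (scalar)$\times N(x,y)$, the right-hand side could not contain the genuinely operator-valued term $\theta(x)/\widetilde\theta(x)$. Your later remark that the truncations are ``precisely'' what produce both a scalar and an operator contribution contradicts the normal-ordering picture you set up and is not substantiated. Until you either (a) remove the truncations by an explicit resummation and track the boundary terms, or (b) abandon the global VOA computation in favor of the paper's vacuum-plus-propagation argument, the proof is incomplete.
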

 \begin{proof} We begin by evaluating the left hand side of \eqref{E:+-relationsFock} at $1$.

 \smallskip

 \begin{lemma}\label{L:evaluationon1} We have
$$ \big[{}_rL^+_{\lambda}(x), {}_rL^-_{\mu}(y)\big]_{++} \cdot 1 =\left[\frac{1}{c_1x} \left( 1-\frac{\theta(x)}{\widetilde{\theta}(x)}\right) \delta\Big(\frac{y}{x}\Big)(\lambda\mu)\right]_{++}.$$
 \end{lemma}
 
 \begin{proof} The proof bears some resemblance to that of Lemma~\ref{lem:T-product}. We have
 $$L^-_\mu(y)\cdot 1=\sum_l y^lQ^-(\mu u^l)=\sum_l (-y)^l \tilde{e}_l(\mu)=\widetilde{\theta}(y)^{-1}(\mu)=\exp\Big( -\sum_{k \geq 1} \frac{y^k}{k} \tilde{p}_k\Big)(\mu).$$
A computation using \eqref{E:R+R-onpn} and \eqref{FS} yields
\begin{equation*}
    \begin{split}
        L^+_\lambda(x)L^-_\mu(y)\cdot 1&=\int_{S \times S} Q^+\left( \sum_{n \in \Z} x^nu^n \widetilde{\theta}(y)^{-1}_{13} \exp\Big(\sum_k \frac{(-y)^k}{k} f_k(-u)\Delta\Big)_{23}\lambda_2\mu_3\right)\\
        &=\int_{S \times S} Q^+\left( \sum_{n \in \Z} x^nu^n \widetilde{\theta}(y)^{-1}_{13} \exp\Big(\sum_k \frac{(-y)^k}{k} f_k(-x^{-1})\Delta\Big)_{23}\lambda_2\mu_3\right).
    \end{split}
\end{equation*}
Set
$$\overline{\Omega}(a,b)=\exp\Big( \sum_{k\geq 1} \frac{a^k}{k} \Delta f_k(b)\Big)=1 + \frac{\Delta a^2}{(1-ab)(1-a(b-c_1))}.$$
We get
\begin{equation}\label{E:proofdoublerel1}
    L^+_\lambda(x)L^-_\mu(y)\cdot 1=m(\theta(x) \otimes \widetilde{\theta}(y)^{-1})\overline{\Omega}(-y,-x^{-1})(\lambda \otimes \mu)
\end{equation}
and likewise
\begin{equation}\label{E:proofdoublerel2}
    L^-_\mu(y)L^+_\lambda(x)\cdot 1=m(\theta(x) \otimes \widetilde{\theta}(y)^{-1})\overline{\Omega}(x,y^{-1})(\lambda \otimes \mu).
\end{equation}
Observe that $\overline{\Omega}(-y,-x^{-1})=\overline{\Omega}(x,y^{-1})$ as rational functions.
However \eqref{E:proofdoublerel1} and \eqref{E:proofdoublerel2} should be expanded out in 
$\Lambda(S)((x))[[y]]$ and $\Lambda(S)((y))[[x]]$ respectively,
since by construction $L^\pm_n(\sigma)\cdot 1=0$ for $n <0$.
In other words, we have
$$\big[{}_rL^+_\lambda(x), {}_rL^-_\mu(y)\big]\cdot 1=x^{r-1}y^{-1-r}m(\theta(x) \otimes \widetilde{\theta}(y)^{-1}) \left(\overline{\Omega}(x,y^{-1})_+-\overline{\Omega}(x,y^{-1})_-\right)(\lambda \otimes \mu)$$
where $+$ and $-$ subscript indicate expansion in $\Lambda(S)((x))[[y]]$ and $\Lambda(S)((y))[[x]]$ respectively. 
Set $\delta(z)=\sum_{n\in\Z}z^n$. From the equality
$$\overline{\Omega}(x,y^{-1})=1 + \frac{\Delta}{c_1}\left( \frac{1}{x^{-1}-y^{-1}} - \frac{1}{x^{-1}-(y^{-1}-c_1)}\right)$$
we deduce that
\begin{equation*}
    \begin{split}
    (xy)^{-1}\left(\overline{\Omega}(x,y^{-1})_+-\overline{\Omega}(x,y^{-1})_-\right)=
\frac{\Delta}{c_1x}\Big( \delta\left( \frac{x(1-c_1y)}{y}\right) - \delta\left(\frac{y}{x}\right)\Big).
    \end{split}
\end{equation*}
 Thus we get
\begin{equation}\label{E:proofdoublerel3}
\big[{}_rL^+_\lambda(x), {}_rL^-_\mu(y)\big]\cdot 1=\int_{S\times S} \frac{\Delta_{23}}{c_1x} \big(A (x,y) + B(x,y)\big) \lambda_2\mu_3
\end{equation}
where
\begin{equation*}
    \begin{split}
        A(x,y)&=\Big(\frac{x}{y}\Big)^rm(\theta(x) \otimes \widetilde{\theta}(y)^{-1}) \;\delta\Big( \frac{x(1-c_1y)}{y}\Big),\\
        &=\Big(\frac{x(1-c_1y)}{y}\Big)^r m\Big(\theta(x) \otimes \theta\Big( \frac{y}{1-c_1y}\Big)^{-1}\Big)\;\delta\Big( \frac{x(1-c_1y)}{y}\Big),\\
        &=m(\theta(x) \otimes \theta(x)^{-1})\;\delta\Big( \frac{x(1-c_1y)}{y}\Big),\\
        B(x,y)&=-\Big(\frac{x}{y}\Big)^rm(\theta(x) \otimes \widetilde{\theta}(y)^{-1}) \;\delta\Big( \frac{y}{x}\Big),\\
        &=- m\Big(\theta(x) \otimes \widetilde{\theta}(x)^{-1}\Big)\;\delta\Big( \frac{y}{x}\Big).
    \end{split}
\end{equation*}
In simplifying $A(x,y)$ we used the following calculation
\begin{equation*}
  \begin{split}
      \widetilde{\theta}(y)(\lambda)&=\int_S \tau_{c_1} \exp\left( \sum_{k \geq 1} \frac{y^k}{k}p_k\right)\lambda\\
      &=\int_S \exp\left( \sum_{k \geq 1} \frac{y^k}{k} \sum_{i=0}^k \binom{k}{i}p_i c_1^{k-i}\right)\lambda\\
      &=\int_S \exp\left(r\sum_{k \geq 1}\frac{(c_1y)^k}{k}\right)\exp\left( \sum_{k \geq i \geq 1}  \binom{k-1}{i-1}\frac{y^k}{i}p_i c_1^{k-i}\right)\lambda\\
      &=(1-c_1y)^{-r}\int_S \exp\left( \sum_{i \geq 1} \frac{y^i}{i(1-c_1y)^i}p_i\right)\lambda\\
      &=(1-c_1y)^{-r} \theta\left( \frac{y}{1-c_1y}\right)(\lambda).
  \end{split}  
\end{equation*}
Substituting in \eqref{E:proofdoublerel3} and observing that
\begin{align*}
\left[\int_{S \times S} \frac{\Delta}{c_1x} A(x,y) \lambda_2\mu_3\right]_{++}
&=\left[\int_{S \times S} \frac{\Delta}{c_1x} \delta\Big( \frac{x(1-c_1y)}{y}\Big) \lambda_2\mu_3\right]_{++}=0,\\
\left[\int_{S \times S} \frac{\Delta}{c_1x} \delta\Big( \frac{y}{x}\Big)  \lambda_2\mu_3\right]_{++}&=0
\end{align*}
we easily deduce Lemma~\ref{L:evaluationon1}.
 \end{proof}

 \medskip

 In order to extend Lemma~\ref{L:evaluationon1} to $\Lambda(S)_r$, 
 we consider the commutation relation of $\psi_k(\nu)$ with
 $$C_{ij}(\lambda,\mu)=[{}_rL^+_i(\lambda), {}_rL^-_j(\mu)].$$
 Below, we drop the index $r$ for simplicity.
\begin{align*}
    \psi_k(\nu)C_{ij}(\lambda,\mu) & 
      = \psi_k(\nu) L_i^+(\lambda) L_j^-(\mu) - \psi_k(\nu) L_j^-(\mu) L_i^+(\lambda) \\
    & =  L_i^+(\lambda) L_j^-(\mu)\psi_k(\nu) - kL_i^+(\lambda) L_{j+k-1}^-(\nu\mu) + kL_{i+k-1}^+(\lambda\nu) L_j^-(\mu)  \\
    & \quad -  L_j^-(\mu)L_i^+(\lambda)\psi_k(\nu) - kL_j^-(\mu)L_{i+k-1}^+(\lambda\nu) + k L_{j+k-1}^-(\nu\mu) L_i^+(\lambda) \\
    & = C_{ij}(\lambda,\mu)\psi_k(\nu) + k C_{i+k-1,j}(\lambda\nu,\mu) - k C_{i,j+k-1}(\lambda,\nu\mu) 
\end{align*}
Thus
\begin{equation}\label{eq:psi-TT-comm}
    [\psi_k(\nu)/k,C_{ij}(\lambda,\mu)] = C_{i+k-1,j}(\lambda\nu,\mu) - C_{i,j+k-1}(\lambda,\nu\mu).
\end{equation}
Applying \eqref{eq:psi-TT-comm} to $1$ and using the fact that, thanks to Lemma~\ref{L:evaluationon1}, $C_{ij}(\lambda,\mu)\cdot 1$ only depends on $i+j$ and $\lambda\mu$  we see that
\begin{equation}\label{E:common1}
[\psi_k(\nu)/k,C_{ij}(\lambda,\mu)]\cdot 1=0.
\end{equation}
Using \eqref{eq:psi-TT-comm} and \eqref{E:common1} recursively, one gets that
$$[C_{ij}(\lambda,\mu),\psi_{k_1}(\nu_1)\cdots \psi_{k_l}(\nu_l)]\cdot 1=0$$
for any $(k_1,\nu_1), \ldots, (k_l,\nu_l)$. This proves Proposition~\ref{Prop:+-relationsfockspace}.
\end{proof}

\smallskip

Unraveling formula~\eqref{E:+-relationsFock}, we obtain, for $i\geq 0$ and $n-i\geq 0$,

    \begin{align}\label{eq:comm-pm}
        \Big[{}_r{L}_i^+(\lambda),{}_r{L}_{n-i}^-(\mu)\Big] = \sum_{0\leq j\leq k\leq n} (-1)^k\binom{r-k+j}{j+1}h_{n-k}e_{k-j}(c_1^{j}\lambda\mu),
    \end{align}
which highlights the dependence on $r$. 

\begin{remark}
    Note that the commutators $[L^+_i(\lambda),L^+_j(\mu)]$ are independent of $i-j$ only in the non-deformed case, see Remark~\ref{rmk:semidef-lin-param}.
\end{remark}

\medskip

\subsection{Fock space representations of $\DW{r}(S)$}\label{sec:Fockspacedefinition}

\begin{definition}\label{def:Fock}
The level $r$ Fock space of $S$ is
the graded vector space
\[\mathbf{F}^{(r)}(S)=\Lambda(S)_r\otimes \mathbb{C}[s,s^{-1}].
\]
\end{definition}

\smallskip

We may restate Propositions~\ref{prop:psi-T-rels} and \ref{Prop:+-relationsfockspace} as follows

\begin{corollary}\label{cor:Fock-is-module}
The assignment 
$$\psi_n(\lambda) \mapsto \text{mult.\;by\;}\psi_n(\lambda), \qquad 
T^+_n(\lambda)\mapsto{}_rL^+_n(\lambda) s, \qquad 
T^-_n(\lambda)\mapsto(-1)^{r+1}{}_rL_n(\lambda)s^{-1}$$
defines a graded $\DW{r}(S)$-module structure on $\mathbf{F}^{(r)}(S)$.\qed
\end{corollary}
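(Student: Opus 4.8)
The plan is to deduce the statement directly from Propositions~\ref{prop:psi-T-rels} and~\ref{Prop:+-relationsfockspace}; the only genuine work is to keep track of the variable $s$, of the sign $(-1)^{r+1}$, and of the renormalization of the currents. Recall that the defining relations of $\DW{r}(S)$ in Definition~\ref{def:Wdoublenoncompact} split into three families: the relations of $W^\geq(S)$ (those among the $\psi_n(\lambda)$ and $T^+_n(\lambda)$), the relations of $W^\leq(S)$ (those among the $\psi_n(\lambda)$ and $T^-_n(\lambda)$), and the mixed double relation~\eqref{E:doublerelation} specialized at $\c=r$. I would verify each family separately for the proposed operators on $\mathbf{F}^{(r)}(S)=\Lambda(S)_r\otimes\mathbb{C}[s,s^{-1}]$, the $\psi_n(\lambda)$ acting by multiplication by $\phi_n(\lambda)\in\Lambda(S)_r$ (the image of $\psi_n(\lambda)$ under the identification $W^0(S)\simeq\Lambda(S)$ of \S\ref{sec:W-alg-defs}) on the first tensor factor.

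The crucial point for the first two families is that multiplication by $s^{\pm1}$ affects only the factor $\mathbb{C}[s,s^{-1}]$, hence commutes with every operator $\phi_n(\lambda),{}_rL^\pm_n(\lambda)$ on $\Lambda(S)_r$ while recording the $\mathbb{Z}$-grading (the charge). Since each of~\eqref{W:a}--\eqref{W:g} is homogeneous in the charge, substituting $T^+_n(\lambda)\mapsto {}_rL^+_n(\lambda)\,s$ multiplies both sides of such a relation by a common power of $s$, reducing it to the corresponding identity among the $s$-free operators. As Proposition~\ref{prop:psi-T-rels} is proved as an identity of the full currents $L^+_\lambda(x)$, it holds for all integer Fourier modes, and the uniform index shift ${}_rL^+_n=L_{n+1-r}$ therefore transports each relation to the renormalized currents (for instance~\eqref{W:d} becomes $[\phi_m(\lambda),{}_rL^+_n(\mu)]=m\,{}_rL^+_{m+n-1}(\lambda\mu)$). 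The first family thus reduces to the $W^\geq(S)$-action of Proposition~\ref{prop:psi-T-rels}, and the second follows identically using $T^-_n(\lambda)\mapsto(-1)^{r+1}{}_rL^-_n(\lambda)\,s^{-1}$ and the $W^\leq(S)$-action; here every relation of $W^\leq(S)$ is homogeneous in the number of $T^-$-factors, so the scalar $(-1)^{r+1}$ occurs to the same power on both sides and cancels.

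For the mixed relation I would use that $s$ is even and central to write
\[
\big[\,{}_rL^+_m(\lambda)\,s,\ (-1)^{r+1}\,{}_rL^-_n(\mu)\,s^{-1}\,\big]=(-1)^{r+1}\big[{}_rL^+_m(\lambda),{}_rL^-_n(\mu)\big],
\]
and then invoke the Fourier-mode form~\eqref{eq:comm-pm} of Proposition~\ref{Prop:+-relationsfockspace}, which holds precisely for the non-negative modes $m,n\geq0$ occurring among the generators (this is what the $_{++}$ truncation encodes). Comparing with~\eqref{E:doublerelation} at $\c=r$ then amounts to the scalar identity $(-1)^{r+1}=-\exp(i\pi r)$ together with a relabeling of the summation indices, after which the two sums coincide termwise. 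Finally, assigning $s$ the bidegree $(1,d_s)$ with the appropriate $d_s$ makes all three assignments homogeneous of the bidegrees prescribed in \S\ref{sec:defoWalg}, so the resulting module is graded. I do not anticipate any real obstacle: all the substantive computation is already carried out in Propositions~\ref{prop:psi-T-rels} and~\ref{Prop:+-relationsfockspace}, and what remains is the bookkeeping of the $s$-powers, of the constant $(-1)^{r+1}$, and of the translation between the generating-series and Fourier-mode forms of the double relation.
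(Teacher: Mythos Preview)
Your proposal is correct and matches the paper's approach exactly: the paper presents this corollary as an immediate restatement of Propositions~\ref{prop:psi-T-rels} and~\ref{Prop:+-relationsfockspace} (with a bare \texttt{\textbackslash qed}), and you have simply spelled out the routine bookkeeping of the $s$-powers, the sign $(-1)^{r+1}=-\exp(i\pi r)$, and the index relabeling between~\eqref{eq:comm-pm} and~\eqref{E:doublerelation} that the paper leaves implicit.
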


\begin{proposition}\label{cor:big-W-faithful}
If $r>0$, then the representation of $\DW{r}(S)$ on $\mathbf{F}^{(r)}(S)$ is faithful.
\end{proposition}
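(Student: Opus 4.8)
The plan is to show that the kernel $K=\ker\big(\rho\colon \DW{r}(S)\to\End(\mathbf{F}^{(r)}(S))\big)$, a two-sided ideal, vanishes, by adapting the weight-lowering argument of Lemma~\ref{lem:Heis-faithful}. The crucial new input, special to $r>0$, is that the Heisenberg subalgebra acts faithfully. By Proposition~\ref{prop:heis-in-double} we have $U(\mathfrak h_S)\to\DW{r}(S)$ with $C\mapsto\c=r$, and the bracket $[\mathfrak q_m(\lambda),\mathfrak q_{-m}(\mu)]=mr\int_S\lambda\cup\mu$ is governed by the intersection pairing, which is perfect since $S$ is proper. Hence the image $U(\mathfrak h_S)/(C-r)$ is an infinite Weyl algebra, which is a \emph{simple} algebra for $r\neq 0$; since $\mathbf{F}^{(r)}(S)\neq 0$ and $\rho(\mathfrak q_1(\lambda))\neq 0$, the restriction $\rho|_{U(\mathfrak h_S)}$ is injective. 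In particular $K\cap U(\mathfrak h_S)=0$.

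First I would try to run the argument of Lemma~\ref{lem:Heis-faithful} with $\DW{r}(S)$ in place of $\DW{r}_\red(S)$: equip $K$ with the order filtration of \S\ref{sec:order-filt}, form $\Gr_\bullet K\subset\Gr_\bullet\DW{r}(S)$, and (using Theorem~\ref{thm:big-W-undef} in place of Theorem~\ref{thm:W-undef}, exactly as in the proof of Proposition~\ref{thm:W-PBW}) pull it back to a weight-graded ideal $J\subset\Q[D_{m,n}(\lambda)]$. The operators $\sigma_l=\Ad(D_{l,0}(1))$ preserve $J$ and strictly lower the weight, and the explicit leading-coefficient computation~\eqref{eq:sigma-explicit} shows that a nonzero minimal-weight element can always be lowered, a contradiction — \emph{provided} the minimal weight is $>0$, i.e. provided $J$ meets the weight-zero part trivially.

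The weight-zero part of $\Gr_\bullet\DW{r}(S)$ is not just $U(\mathfrak h_S)$ but $U(\mathfrak h_S)\otimes Z(S)$, where $Z(S)$ is the central subalgebra generated by the $\psi_0(\lambda)$ for $\lambda\in H^{\geq 2}(S)$. Unlike in Lemma~\ref{lem:Heis-faithful}, this center is not killed by $\rho$: the operator $\rho(\psi_0(\lambda))$ is multiplication by the nonzero class $\phi_0(\lambda)\in\Lambda(S)_r$, so $\rho$ does not descend to $\DW{r}_\red(S)$. I would resolve this by inverting the center. The triangular decomposition (Proposition~\ref{prop:big-W-triang}) exhibits $\DW{r}(S)$ as a free module over the central polynomial ring $Z(S)$, so $K=0$ if and only if $K\otimes_{Z(S)}\Frac Z(S)=0$. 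Over the field $\Frac Z(S)$ the central generators $\psi_0(\lambda)$ become invertible scalars, so by flat base change the weight-zero part of $\Gr_\bullet\big(\DW{r}(S)\otimes_{Z(S)}\Frac Z(S)\big)$ is precisely the Weyl algebra $U(\mathfrak h_S)\otimes_\C\Frac Z(S)$, still simple and acting faithfully. Thus $\big(K\otimes\Frac Z(S)\big)$ meets the weight-zero part trivially, and the weight-lowering argument of the previous paragraph — whose combinatorics are insensitive to the ground field — applies verbatim to give $K\otimes\Frac Z(S)=0$, hence $K=0$.

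I expect the main obstacle to be exactly this interaction with the center $Z(S)$: because it acts by genuine (non-scalar) multiplication operators, the reduced algebra to which Lemma~\ref{lem:Heis-faithful} literally applies is unavailable, and one must either pass to $\Frac Z(S)$ as above or re-run the weight argument while carefully bookkeeping the central monomials $D_{0,0}(\lambda)$ (and checking they do not obstruct the choice of a lowerable leading term). Everything rests on the nonvanishing of the central charge $r$ — equivalently the simplicity of the Weyl algebra attached to $\mathfrak h_S$ — which is precisely what degenerates at $r=0$ and accounts for the hypothesis $r>0$.
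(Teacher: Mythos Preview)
Your overall architecture is the same as the paper's --- reduce to Lemma~\ref{lem:Heis-faithful} after neutralising the center $Z(S)$, and feed in the faithfulness of $U(\mathfrak h_S)$ at nonzero central charge --- but the specific move you propose for handling $Z(S)$ has a gap. You call $Z(S)$ a ``central polynomial ring'' and pass to $\Frac Z(S)$; however $Z(S)=\Sym(H^{\geq 2}(S,\Q))$ is a free \emph{super}commutative algebra, and when $H^3(S,\Q)\neq 0$ (e.g.\ abelian surfaces, products of curves) the odd generators $\psi_0(\lambda)$ square to zero, so $Z(S)$ is not a domain and no field of fractions exists. Your equivalence ``$K=0\Leftrightarrow K\otimes_{Z(S)}\Frac Z(S)=0$'' and the claim that the weight-zero part becomes a simple Weyl algebra over a field then both break down.

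The paper resolves the same obstruction by the dual manoeuvre: instead of \emph{localising} on the algebra side, it \emph{quotients} on the module side. Since $Z(S)$ is central, $Z(S)\cdot\mathbf F^{(r)}(S)$ is a submodule, and the action on the quotient $\mathbf F^{(r)}_{\red}(S)$ factors through $\DW{r}_{\red}(S)$, to which Lemma~\ref{lem:Heis-faithful} applies verbatim. To lift faithfulness back up one uses the tensor splitting $\mathbf F^{(r)}(S)\simeq\Lambda^1(S)\otimes\mathbf F^{(r)}_{\red}(S)$ with $\Lambda^1(S)$ the \emph{regular} $Z(S)$-module, together with freeness of $\DW{r}(S)$ over $Z(S)$ from the triangular decomposition. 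This argument is insensitive to whether $Z(S)$ has nilpotents. Your fallback suggestion (``re-run the weight argument while bookkeeping the central monomials $D_{0,0}(\lambda)$'') is essentially a hand-rolled version of this and would work, but as stated the localisation route does not.
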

\begin{proof} Recall that $Z(S)$ is the subalgebra of $\DW{r}(S)$ generated by $\psi_0(\lambda)$ for $\lambda \in H^*(S,\Q)$. Set 
$$Z(S)\cdot\mathbf{F}^{(r)}(S)\subset \mathbf{F}^{(r)}(S).$$
Since $Z(S)$ lies in the center of $\DW{r}(S)$, this is a $\DW{r}(S)$-submodule.
The action of $\DW{r}(S)$ on 
$$\mathbf{F}^{(r)}_\red(S)=\mathbf{F}^{(r)}(S)/Z(S)\mathbf{F}^{(r)}(S)$$ 
factors through $\DW{r}_\red(S)$. 
We have vector space isomorphisms
    \begin{align*}
        \mathbf{F}^{(r)}_\red(S)&= \Sym(H^*(S,\Q)\otimes t^2\Q[t]),\\
        \mathbf{F}^{(r)}(S) &= \Lambda^1(S)\otimes \mathbf{F}^{(r)}_\red(S),\\
    \Lambda^1(S) &= \Sym(H^{\geq 2}(S,\Q)\otimes t).
    \end{align*}
    Since $\Lambda^1(S)$ is the regular $Z(S)$-module, it suffices to prove that the action of $\DW{r}_\red(S)$ on $\mathbf{F}^{(r)}_\red(S)$ is faithful.
    Using Lemma~\ref{lem:Heis-faithful}, we only need to prove the faithfulness of its restriction to the Heisenberg subalgebra
    $U(\mathfrak{h}_S)$. The central charge being non-zero, this algebra has no non-trivial two-sided ideals by a standard argument, see e.g.~\cite[Thm.~2.1]{Coutinho}.
    Hence we may conclude.
\end{proof}

\medskip

\section{Deformed $W$-algebras of open surfaces}\label{sec:W-open-surf}

In this section we do not assume that $S$ is proper anymore, and define several versions of $W$-algebras, modeled on the cohomology and the cohomology with compact support of $S$. Throughout, we fix a smooth compactification $\iota: S \to \oS$. The $W$-algebras which we consider will end up being independent of this choice of compactification.

\medskip

\subsection{Positive halves} We begin with a general construction. Consider a graded ideal $I\subset H^*(\overline{S},\Q)$.
We have $\Delta(I)\subset I\otimes I$, because
\begin{align*}
(x\otimes y,\Delta(z))=(xy,z)=(x,yz)=0=(y\otimes x,\Delta(z)),
\end{align*}
for
$x\in I^\perp, y\in H^*(\oS,\Q)$ and $z\in I$.
Hence $I^\perp$ is also an ideal. Let $J$ be the quotient of $H^*(\oS,\Q)$ by $I$.

\smallskip

\begin{definition}\hfill
\begin{enumerate}[label=$\mathrm{(\alph*)}$,leftmargin=8mm,itemsep=1.2mm]
\item    Let $\Wc^+(I)$ be the smallest graded subalgebra of $W^+(\overline{S})$ containing $D_{m,0}(\lambda)$ for all $m\geq 0$, $\lambda\in I$, and stable under operators $\Ad(\psi_{l}(\mu))$ for all $l>0$, $\mu \in H^*(\oS,\Q)$.  We define $\Wc^-(I)\subset W^-(\overline{S})$ in the same way. 
\item     Let $\Wn^+(J)$ be the quotient of $W^+(\oS)$ by the two-sided ideal $\mathcal{I}^+$ generated by $\Wc^+(I)$. We define $\Wn^-(J)$ in the same way. 
\item Let $W^0(J)$ be the quotient of $W^0(\oS)$ by the ideal generated by elements $\psi_l(\lambda)$ 
for all $l > 0$, $\lambda \in I$. Thus $\c$ descends to a non zero element of $W^0(J)$.  
\end{enumerate}
\end{definition}

\begin{remark}\leavevmode\nolisttopbreak
\begin{enumerate}[label=$\mathrm{(\alph*)}$,leftmargin=8mm,itemsep=1.2mm]
    \item $\Wc^+(I)$ may differ from the subalgebra of $W^+(\oS)$ generated by $T^+_n(\lambda)$ with $n \geq 0$ and $\lambda\in I$. For instance, if $S=\mathbb{P}^2$ and $I=\Q [pt]$, we will show in Corollary~\ref{cor:nondef-bbW} that $\Wc^+(I)$ is a commutative algebra with basis given by monomials in $D_{m,n}(\pt)$, which is not generated by $\{D_{1,n}(\pt)\}_n$.
    \item The ideal $\mathcal{I}^+$ is generated by $T^+_n(\lambda)$ with $n \geq 0$ and $\lambda\in I$.
   \end{enumerate}
\end{remark}

\smallskip

Recall the elements $D_{m,n}(\lambda)\in W^+(\oS)$ in Proposition~\ref{prop:W-PBW}. They are not canonically defined unless $m\leq 1$ or $n\leq 1$.
In this subsection, we fix them to be 
\[
D_{m,n}(\lambda)=\frac{1}{m(n+1)}[\psi_{n+1}(1),D_{m,0}(\lambda)], \quad \lambda \in H^*(\oS,\Q),\quad m \geq 1
,\quad n \geq 1.
\]
The element $D_{m,n}(\lambda)$ belong to $\Wc^+(I)$ if $\lambda \in I$.

\begin{proposition}\label{prop:W-cpt-open} 
The Hilbert series of $\Wc^+(I)$ and $\Wn^+(J)$ are respectively equal to
\[
    P_{\Wc^+(I)}(z,w) = \Exp\left(\frac{P_I(z)z^{-2}w}{(1-z^2)(1-w)} \right),\qquad P_{\Wn^+(J)}(z,w) = \Exp\left(\frac{P_J(z)z^{-2}w}{(1-z^2)(1-w)} \right),
\]
where $P_I(z)=\sum_d \dim(I \cap H^d(\oS,\Q)) z^d$ and $P_J(z)=P_\oS(z)-P_I(z)$. 
\end{proposition}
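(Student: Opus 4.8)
The plan is to reduce both statements to the undeformed Lie algebra $\mathfrak{w}_0^+(\oS)$ via the descending algebra filtration $G^\bullet$ of \S~\ref{sec:str-def}, for which we already know — from the proof of Theorem~\ref{Prop:defHeis}(a), via Lemma~\ref{lm:W-bound-below} — that $\Gr^\bullet W^+(\oS)\simeq U(\mathfrak{w}_0^+(\oS))$. Since $I\subset H^*(\oS,\Q)$ is a ring ideal, the span $\mathfrak{n}_0^+(I)$ of the $D_{m,n}(\lambda)$ with $\lambda\in I$ is a graded Lie ideal of $\mathfrak{w}_0^+(\oS)$, whose quotient is canonically $\mathfrak{w}_0^+(J)$. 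Fixing a graded splitting $H^*(\oS,\Q)=I\oplus V$ with $V\simeq J$ and an ordered PBW basis of $U(\mathfrak{w}_0^+(\oS))$ in which all $I$-colored generators precede the $V$-colored ones, the isomorphism above lets me lift this basis to a homogeneous basis of $W^+(\oS)$. I will call a basis monomial \emph{pure-$I$} if all its factors are $I$-colored and \emph{pure-$V$} if all its factors are $V$-colored, so that every basis monomial factors uniquely as (pure-$I$)$\cdot$(pure-$V$).

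For $\Wc^+(I)$ this already suffices. The lifted pure-$I$ monomials lie in $\Wc^+(I)$ — they are products of the generators $D_{m,0}(\lambda)$, $\lambda\in I$, and of their images under the $\Ad(\psi_l(\mu))$, which by \eqref{W:d} raise the second index — and they are linearly independent, giving the lower bound. Conversely, passing to $\Gr^\bullet$ and using that $\Gr^\bullet$ of $\Ad(\psi_l(\mu))$ is the induced derivation $\operatorname{ad}(\psi_l(\mu))$ of $U(\mathfrak{w}_0^+(\oS))$, one sees that $\Gr^\bullet\Wc^+(I)$ lands in the subalgebra of $U(\mathfrak{w}_0^+(\oS))$ generated by $\mathfrak{n}_0^+(I)$, namely $U(\mathfrak{n}_0^+(I))$; this is the matching upper bound. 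Hence $\Wc^+(I)$ is spanned by the pure-$I$ monomials and $P_{\Wc^+(I)}=P_{U(\mathfrak{n}_0^+(I))}$, which equals $\Exp\bigl(P_I(z)z^{-2}w/((1-z^2)(1-w))\bigr)$ by the same character computation as in Theorem~\ref{Prop:defHeis}(a), now summing only over a basis of $I$.

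For $\Wn^+(J)=W^+(\oS)/\mathcal{I}^+$ the crucial point — and the step I expect to be the main obstacle — is to show that $\Wc^+(I)$ is \emph{normal}, i.e. $[W^+(\oS),\Wc^+(I)]\subseteq\Wc^+(I)$. By the Leibniz rule, together with the Jacobi identity applied to the defining relation $D_{m,n}(\lambda)=\tfrac{1}{m(n+1)}[\psi_{n+1}(1),D_{m,0}(\lambda)]$ and \eqref{W:d}, this reduces to proving $[T_m^+(\mu),D_{a,0}(\lambda)]\in\Wc^+(I)$ for every $\lambda\in I$, which I would establish by induction on $a$ and on the order filtration using \eqref{W:e}--\eqref{W:g}. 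The leading term is $\propto D_{a+1,\ast}(\mu\lambda)$ with $\mu\lambda\in I$, and the whole point is that every correction stays pure-$I$: the $s_2$-terms of \eqref{W:f} carry the argument $s_2\lambda\mu\in I$, while the dangerous quadratic term $\{T_m,T_n\}(c_1\Delta_S\lambda\mu)$ has \emph{both} tensor factors in $I$ precisely because $I$ is a coideal, $\Delta(I)\subseteq I\otimes I$ (this is where the hypothesis that $\iota^*$ is a map of coalgebras enters).

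Granting normality, $\mathcal{I}^+$ coincides with the left ideal $\Wc^+(I)^+\cdot W^+(\oS)$, and $W^+(\oS)$ becomes a free left $\Wc^+(I)$-module: indeed $\Gr^\bullet W^+(\oS)=U(\mathfrak{w}_0^+(\oS))$ is free over $\Gr^\bullet\Wc^+(I)=U(\mathfrak{n}_0^+(I))$ with basis the pure-$V$ monomials (PBW for the ideal $\mathfrak{n}_0^+(I)$), and a standard filtered lifting promotes this to freeness of $W^+(\oS)$ over $\Wc^+(I)$ with basis the lifted pure-$V$ monomials. Consequently $\Wn^+(J)=\Q\otimes_{\Wc^+(I)}W^+(\oS)$ has the images of the pure-$V$ monomials as a basis, so $P_{\Wn^+(J)}=P_{U(\mathfrak{w}_0^+(J))}=\Exp\bigl(P_J(z)z^{-2}w/((1-z^2)(1-w))\bigr)$, the character of $U(\mathfrak{w}_0^+(J))$ being computed exactly as before with $J$ in place of $H^*(\oS,\Q)$. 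The statements for $\Wc^-(I)$ and $\Wn^-(J)$ follow by the identical argument applied to $W^-(\oS)$.
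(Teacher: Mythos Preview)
Your route differs from the paper's: you use the $G$-filtration of \S\ref{sec:str-def} rather than the order filtration $F_\bullet$ of \S\ref{sec:order-filt}, and you reach $\Wn^+(J)$ via normality of $\Wc^+(I)$ and freeness of $W^+(\oS)$ over $\Wc^+(I)$, whereas the paper produces a surjection $\Wn^+(J)\twoheadrightarrow W^+(J)$ onto the algebra with the same presentation over $J$ and closes by a dimension count. Both strategies are sound in outline, and you correctly pinpoint the decisive technical input --- that whenever $\lambda\mu\in I$ the correction terms of relation~\eqref{W:f} carry arguments $s_2\lambda\mu\in I$ and $c_1\Delta_S\lambda\mu\in I\otimes I$ (the latter because $\Delta_S\nu\in(I\otimes H^*)\cap(H^*\otimes I)=I\otimes I$ for $\nu\in I$).

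The gap is in your upper bound for $\Wc^+(I)$. From ``$\Gr_G^\bullet$ of $\Ad(\psi_l(\mu))$ is the induced $\operatorname{ad}$'' you cannot conclude $\Gr_G^\bullet\Wc^+(I)\subseteq U(\mathfrak{n}_0^+(I))$: the associated graded of a filtered subalgebra may be strictly larger than the subalgebra generated by the symbols of a generating set, since leading terms can cancel and push an element into a higher $G$-degree where you have no a priori control. What your argument actually needs is that the span $C$ of lifted pure-$I$ monomials is itself closed under multiplication and under $\Ad(\psi)$ --- equivalently, that when you straighten a product of pure-$I$ monomials in $W^+(\oS)$, \emph{all} the correction terms, not just the $G$-leading ones, remain pure-$I$. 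This is exactly the relation-level analysis you defer to the normality step, and it has to be performed here; without it your freeness argument for $\Wn^+(J)$, which relies on $\Gr_G^\bullet\Wc^+(I)=U(\mathfrak{n}_0^+(I))$, is also unsupported. The paper carries this out via the order filtration and the free algebra $\widetilde W^+(\oS)$: since only relation~\eqref{W:f} is $F$-inhomogeneous and its lower-order terms already lie in the $I$-world by your own observation, one obtains the stronger characterization $\Wc^+(I)=A$, the subalgebra generated by all Lie words with argument-product in $I$ (Corollary~\ref{Cor:wccharacterization}). Once that is in hand, the Hilbert series, normality, and your freeness route to $\Wn^+(J)$ are all immediate.
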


\begin{proof} 
Let $A \subset W^+(\oS)$ be the subalgebra generated by all Lie words in 
$T_{n_1}(\lambda_1), \ldots, T_{n_s}(\lambda_s)$ for which $\prod_i \lambda_i \in I$. 
The definition of $D_{m,0}(\lambda)$ in \eqref{E:relinWforHeis}
implies that $\Wc^+(I) \subset A$. 
We'll prove that this is an equality. 

Let $\widetilde{W}^+(\oS)$ 
be the free algebra generated by the subset 
$$\{\tilde{T}_{n}(\lambda)\;:\; \lambda \in H^*(\oS,\Q), n\geq 0\},$$
modulo the relations $\tilde{T}_n(a\lambda + b\mu)=a\tilde{T}_n(\lambda) + b \tilde{T}_n(\mu)$. Thus $\widetilde{W}^+(\oS)$ is a free algebra in generators $\{\tilde{T}_{n}(\lambda)\}$, for $\lambda$ running through a basis of $H^*(\oS,\Q)$ and $n\geq 0$.
There is an obvious morphism 
$$\pi:\widetilde{W}^+(\oS) \to W^+(\oS)$$ whose kernel $R$ is generated by the 
relations \eqref{W:e}-\eqref{W:g} of \S\ref{sec:Fock-ops}. 
The order filtration $F_\bullet$ on $W^+(\oS)$ lifts to 
a filtration $\widetilde{F}_\bullet$ on $\widetilde{W}^+(\oS)$.
A Lie word $L(\tilde{T}_{n_1}(\lambda_1), \ldots, \tilde{T}_{n_s}(\lambda_s))$ belongs to 
$\widetilde{F}_n$ where $n=1-s+\sum_i n_i$ is the order of the word. 
Let $\widetilde{A} \subset \widetilde{W}^+(\oS)$ be the subalgebra generated by Lie words as above for which 
$\prod_i \lambda_i \in I$. Thus $\pi(\widetilde{A})=A$. We claim that 
\begin{equation}\label{E:proofW(I)}
\pi(\widetilde{F}_n \cap \widetilde{A})=F_n \cap A. 
\end{equation}
This is equivalent to the equality
\begin{equation}\label{E:proofW(I)2}
(\widetilde{F}_n \cap \widetilde{A}) + R=(\widetilde{A} + R) \cap \widetilde{F}_n + R.
\end{equation}
The only relations which do not preserve the order are those of type \eqref{W:f}. By the symbol of an element $x \in \widetilde{W}^+(\oS)$ we mean the class of $x$ in $\widetilde{F}_n/\widetilde{F}_{n-1}$, for $n$ the smallest integer for which $x\in \widetilde{F}_n$.
Now, if the symbol of a relation of type $(f)$ belongs to the associated graded $\text{Gr}_\bullet\widetilde{A}$ of $\widetilde{A}$ (with respect to the order filtration)
then the relation itself belongs to $\widetilde{A}$, because if $\lambda\mu \in I$ then $s_2\lambda\mu \in I$ and 
$c_1\Delta \lambda \mu \in I \otimes I$. Equations \eqref{E:proofW(I)2} and \eqref{E:proofW(I)} follow.

\smallskip

By \eqref{E:w0toGrW} and \eqref{E:w0toGrWLie} 
we have an isomorphism
$\Sym(\mathfrak{w}_0^+(\oS))\to \text{Gr}_\bullet W^+(\oS)$ and
there is a Lie algebra morphism $\mathfrak{w}^+_0(\oS) \to \text{Gr}_\bullet W^+(\oS)$. 
Since the symbol of a Lie word $L(T_{n_1}(\lambda_1), \ldots, T_{n_s}(\lambda_s))$ of order $n$
is a multiple of $D_{s,n}(\prod_i \lambda_i)$, we deduce from \eqref{E:proofW(I)} that
\begin{equation}\label{E:proofW(I)3}
\text{Gr}_n(A)=\Span\Big\{D_{m_1,n_1}(\lambda_1) \cdots D_{m_s,n_s}(\lambda_s)\;:\;\sum_i n_i = n, \lambda_1, \ldots, \lambda_s \in I\Big\}.
\end{equation}
Since $D_{m,n}(\lambda)\in\Wc^+(I)$ if $\lambda \in I$, we deduce that 
$\text{Gr}_\bullet (A) \subset \text{Gr}_\bullet (\Wc^+(I))$.
Hence $\text{Gr}_\bullet (A) = \text{Gr}_\bullet (\Wc^+(I))$.
Thus $A=\Wc^+(I)$. The formula for the Hilbert series of $\Wc^+(I)$ follows from \eqref{E:proofW(I)3}.

\smallskip

Let us now turn to the second equality. Let $W^+(J)$ denote the graded algebra defined by generators and relations as in \S\ref{sec:Fock-ops}, but with $J$ in place of $H^*(\oS,\Q)$. The results of \S\ref{sec:Fock-ops} may be repeated mutatis mutandis for $W^+(J)$. In particular, we have
$$P_{W^+(J)}(z,w) = \Exp\left(\frac{P_J(z)z^{-2}w}{(1-z^2)(1-w)}\right).$$
There is a canonical surjective morphism $W^+(\oS) \to W^+(J)$, which factors to a surjection 
$\Wn^+(J) \to W^+(J)$. 
Thus $P_{W^+(J)}(z,w) \leq P_{\Wn^+}(z,w)$. We will show that this map is an 
isomorphism by proving the reverse inequality. 
Let $\mathcal{I} \subset W^+(\oS)$ be the two-sided ideal generated by $\Wc^+(I)$. 
Recall that $D_{m,n}(\mu) \in \Gr_\bullet \Wc^+(I)$ for $\mu \in I$ and any $m\geq 1,$  $n \geq 0$.
Hence $\Sym(\mathfrak{w}_0^+(I)) \subset \Gr_\bullet \mathcal{I}$, where
$$\mathfrak{w}^+_0(I)=\Span\{D_{m,n}(\mu)\;:\; m \geq 1, n \geq 0,\; \mu \in I\}.$$
We deduce that
$$P_{\Wn^+(J)}(z,w) \leq P_{U(\mathfrak{w}^+_0(\oS))}(z,w) / P_{U(\mathfrak{w}^+_0(I))}(z,w)=P_{U(\mathfrak{w}^+_0(J))}(z,w)=P_{W^+(J)}(z,w)$$
which gives the desired reverse inequality.
\end{proof}

\medskip

In the course of the proof, we obtained the following characterization of $\Wc^\pm(I)$.

\begin{corollary}\label{Cor:wccharacterization} The subalgebra $\Wc^\pm(I)$ is generated by all Lie words
$$L(T^\pm_{n_1}(\lambda_1), \ldots, T^\pm_{n_s}(\lambda_s))$$
for which $\prod_i \lambda_i \in I$.\qed
\end{corollary}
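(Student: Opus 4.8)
The plan is to observe that this is precisely the equality $A=\Wc^+(I)$ already obtained inside the proof of Proposition~\ref{prop:W-cpt-open}, where $A$ denotes by construction the subalgebra of $W^+(\oS)$ generated by all Lie words $L(T_{n_1}(\lambda_1),\ldots,T_{n_s}(\lambda_s))$ with $\prod_i\lambda_i\in I$. Thus it suffices to reassemble the two inclusions and to record the analogous argument for the sign $-$.

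For $\Wc^+(I)\subseteq A$: the generators $D_{n,0}(\lambda)$ with $\lambda\in I$ are themselves Lie words whose single argument lies in $I$, hence they lie in $A$. Moreover $A$ is stable under each $\Ad(\psi_l(\mu))$: by relation \eqref{W:d} the operator $\Ad(\psi_l(\mu))$ sends $T_n(\nu)$ to a multiple of $T_{l+n-1}(\mu\nu)$, and combining this with the Leibniz rule for $\Ad$ one checks that it carries a Lie word with argument-product in $I$ to a sum of Lie words with argument-product in $\mu\cdot I\subseteq I$, using that $I$ is an ideal. Since $\Wc^+(I)$ is by definition the smallest subalgebra containing the $D_{n,0}(\lambda)$, $\lambda\in I$, and stable under all $\Ad(\psi_l(\mu))$, this gives the inclusion.

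For the reverse inclusion $A\subseteq\Wc^+(I)$, which is the substantive point, the plan is to pass to the associated graded for the order filtration $F_\bullet$. By Lemma~\ref{lm:W-bound-below} one has $\Gr_\bullet W^+(\oS)\simeq\Sym(\mathfrak{w}_0^+(\oS))$, and the symbol of a Lie word $L(T_{n_1}(\lambda_1),\ldots,T_{n_s}(\lambda_s))$ is a scalar multiple of $D_{s,o(L)}(\prod_i\lambda_i)$. The key input, established as \eqref{E:proofW(I)}, is that the order filtration is compatible with $A$ in the sense that $\pi(\widetilde{F}_n\cap\widetilde{A})=F_n\cap A$; this relies on the only inhomogeneous relation \eqref{W:f} being compatible with $A$, which in turn uses that $I$ is an ideal (so that $\lambda\mu\in I$ forces $s_2\lambda\mu\in I$ and $c_1\Delta_S\lambda\mu\in I\otimes I$). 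One then reads off from \eqref{E:proofW(I)3} that $\Gr_\bullet A$ is spanned by the monomials $\prod_i D_{m_i,n_i}(\lambda_i)$ with all $\lambda_i\in I$; since each such $D_{m,n}(\lambda)$ with $\lambda\in I$ belongs to $\Wc^+(I)$, we obtain $\Gr_\bullet A\subseteq\Gr_\bullet\Wc^+(I)$. Together with $\Wc^+(I)\subseteq A$ this yields $\Gr_\bullet A=\Gr_\bullet\Wc^+(I)$ and hence $A=\Wc^+(I)$.

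The case of $\Wc^-(I)$ is identical after transporting everything through the anti-homomorphism $W^+(S)\to W^{(\c)}(S)$ defining $W^-$; no new phenomena arise. The main (and only) obstacle is the filtration-compatibility identity \eqref{E:proofW(I)}, which is exactly the place where the ideal hypothesis on $I$ enters, all remaining steps being formal bookkeeping with the order filtration.
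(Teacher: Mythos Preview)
Your proposal is correct and follows precisely the paper's own argument: the corollary is exactly the equality $A=\Wc^+(I)$ extracted from the proof of Proposition~\ref{prop:W-cpt-open}, and you have faithfully reassembled both inclusions, including the key filtration-compatibility step \eqref{E:proofW(I)} and the passage through \eqref{E:proofW(I)3}. One small imprecision: the $D_{m,0}(\lambda)$ for $m\geq 2$ are not Lie words with a \emph{single} argument in $I$, but rather iterated brackets involving several $T_1(1)$'s and one $T_0(\lambda)$; the product of arguments is still $\lambda\in I$, so your conclusion that they lie in $A$ stands.
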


\medskip

The commutator with $\psi_n(\lambda)$ preserves $\Wc^\pm(I)$ for any integer $n$ and any cohomology class
$\lambda\in H^*(\oS,\Q)$. 
This induces an action of $W^0(\oS)$ on both $\Wc^\pm(I)$ and $\Wn^\pm(J)$. 

\begin{lemma}\label{L:W0onWnWc} \hfill
\begin{enumerate}[label=$\mathrm{(\alph*)}$,leftmargin=8mm,itemsep=1.2mm]
\item
The action of $W^0(\oS)$ on $\Wn^\pm(J)$ factors through $W^0(J)$. 
\item
The action of $W^0(\oS)$ on $\Wc^\pm(I)$ factors through $W^0(H^*(\oS,\Q)/I^\perp)$. 
\end{enumerate}
\end{lemma}

\begin{proof}
The first statement is a consequence of the inclusion $[\psi_l(\mu)),W^\pm(\oS)] \subset \Wc^\pm(I)$ if $\mu \in I$, which may be checked on the generators $T_n(\lambda)$. The second statement is a consequence of the following claim: for any $l \geq 0$ and $\mu \in I^\perp$ we have
\begin{equation}\label{E:prooflemmaW0WcWn}
[\psi_l(\mu),\Wc^\pm(I)]=\{0\}.
\end{equation}
We sketch the proof of this claim, leaving the details to the reader. Let $\widetilde{W}^+(\oS), \widetilde{F}_\bullet$ 
and $\pi: \widetilde{W}^+(\oS) \to W^+(\oS)$ be as in the proof of Proposition~\ref{prop:W-cpt-open}, and let $R \subset \widetilde{W}^+(\oS)$ be the ideal of relations, $R_n=R \cap \widetilde{F}_n$. For a Lie word 
$L(\tilde{T}_{n_1}(\lambda_1), \ldots, \tilde{T}_{n_s}(\lambda_s))$ we put $c(L)=\prod_i \lambda_i \in H^*(\oS,\Q)$. 
From relations \eqref{W:e}, \eqref{W:f} and \eqref{W:g} one checks the following: for any $r \in R_n/R_{n-1}$ which is a linear combination of products of Lie words $L_1, \ldots, L_t$ such that $c(L_1)= \cdots = c(L_t)=\alpha$ there 
exists a lift $r' \in R_n$ of $r$ which is a linear combination of products of Lie words $L'_1, \ldots, L'_s$ satisfying 
$c(L'_i) \in \alpha H^*(\oS,\Q)$ for all $i$. In particular, for any two Lie words $L_1,L_2$ for which $c(L_1)=c(L_2)=0$ and $L_1-L_2 \in R_n/R_{n-1} \subset \widetilde{F}_n/\widetilde{F}_{n-1}$ we have 
\begin{equation}\label{E:proofLemmaWO2}
\pi(L_1-L_2) \in \Span\Big\{ \pi(L'_1 \cdots L'_s)\,:\, \forall\;i,\;c(L'_i)=0, \sum_i o(L_i)<n\Big\}.
\end{equation}
For any Lie word
$L\in \widetilde{W}^+(\oS)$, the symbol of $\pi(L)$ with respect to $F_\bullet$ is equal to a multiple of 
$D_{m,n}(\lambda)$ for some $m,n$ and $\lambda$. As $D_{m,n}(0)=0$, 
we deduce from \eqref{E:proofLemmaWO2} by induction on the order that for any Lie word $L$ with $c(L)=0$ 
we have $\pi(L)=0$. This yields \eqref{E:prooflemmaW0WcWn} as a particular case.
\end{proof}

\medskip

Now, we consider the ideals 
$$I_S=I(S)^\perp=H^*_c(S,\Q)
,\quad
I_S^\perp = I(S)=H^*(\oS,S,\Q)$$ and the quotient 
$$J_S=H^*(S,\Q) = H^*(\oS,\Q)\,/\,I_S^\perp.$$ 
Since $S$ is pure, the maps 
$\iota_!: H^*_c(S,\Q) \to H^*(\oS,\Q)$ and $\iota^* : H^*(\oS,\Q) \to H^*(S,\Q)$ are respectively injective and surjective.

\smallskip

\begin{definition}
We abbreviate
    \begin{gather*}
        \Wc^{\pm}(S)=\Wc^\pm(I_S), \quad \Wn^{\pm}(S)=\Wn^\pm(J_S), \quad W^0(S)=W^0(J_S) \simeq \Lambda(S).
    \end{gather*}
Set $\Wn^{\geq 0}(S)=W^0(S) \ltimes \Wn^+(S)$ and $  \Wc^{\geq 0}(S)=W^0(S) \ltimes \Wc^+(S).$
We define $\Wn^{\leq 0}(S)$ and $ \Wc^{\leq 0}(S)$ in the same way.
\end{definition}

\smallskip

Composing the inclusion $\Wc^\pm(S) \subset W^\pm(\oS)$ with the projection $W^\pm(\oS) \to \Wn^\pm(S)$ yields an algebra homomorphism 
 $$\varphi_S: \Wc^\pm(S) \to \Wn^\pm(S).$$
 
\smallskip

\begin{corollary}
    The Hilbert series of $\Wc^+(S), \Wn^+(S)$ are given by
\begin{align}
    P_{\Wc^+(S)}(z,w) &= \Exp\left(\frac{P_S^c(z)z^{-2}w}{(1-z^2)(1-w)} \right), \\
    P_{\Wn^+(S)}(z,w) &= \Exp\left(\frac{P_S(z)z^{-2}w}{(1-z^2)(1-w)} \right)   
\end{align} 
where $P^c_S(z)=\sum_n \dim(H_c^n(S,\Q))(-u)^n$ and $P_S(z)=\sum_n \dim(H^n(S,\Q))(-u)^n$.
\end{corollary}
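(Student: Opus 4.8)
The plan is to derive both identities directly from Proposition~\ref{prop:W-cpt-open} by specializing the abstract ideal $I$ and quotient $J$ to the geometric ones $I_S$ and $J_S$. By definition $\Wc^+(S) = \Wc^+(I_S)$ and $\Wn^+(S) = \Wn^+(J_S)$, where $I_S = H^*_c(S,\Q)$ is realized as the ideal $I(S)^\perp \subset H^*(\oS,\Q)$ and $J_S = H^*(S,\Q) = H^*(\oS,\Q)/I(S)$. Hence the two Hilbert series are given by Proposition~\ref{prop:W-cpt-open} as $\Exp$ of the stated expressions with $P_{I_S}(z)$ and $P_{J_S}(z)$ in the numerators, and the only remaining task is to identify these two polynomials with $P^c_S(z)$ and $P_S(z)$ respectively.

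First I would verify that the identification $\iota_!: H^*_c(S,\Q) \xrightarrow{\sim} I(S)^\perp = I_S$ is grading-preserving. Since $\iota$ is an open immersion, $\iota_!$ is extension by zero at the level of (co)sheaves and so preserves cohomological degree, while purity of $S$ guarantees it is injective (as recalled in \S\ref{sec:taut}). Consequently $\dim(I_S \cap H^d(\oS,\Q)) = \dim H^d_c(S,\Q)$ for every $d$, whence $P_{I_S}(z) = P^c_S(z)$. This gives the first formula.

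For the second formula I would identify $P_{J_S}(z) = P_S(z)$. Again by purity, $\iota^*: H^*(\oS,\Q) \to H^*(S,\Q)$ is surjective with kernel $I(S)$, so the induced map $J_S = H^*(\oS,\Q)/I(S) \to H^*(S,\Q)$ is a grading-preserving isomorphism; thus the degree-$d$ piece of $J_S$ has dimension $\dim H^d(S,\Q)$, giving $P_{J_S}(z) = P_S(z)$. Equivalently, one may invoke the relation $P_{J_S}(z) = P_{\oS}(z) - P_{I_S}(z)$ from Proposition~\ref{prop:W-cpt-open} together with the degreewise splitting $\dim H^d(\oS,\Q) = \dim(I(S)\cap H^d(\oS,\Q)) + \dim H^d(S,\Q)$. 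Substituting the two identifications into Proposition~\ref{prop:W-cpt-open} yields the claimed formulas.

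The computation is essentially formal, so there is no serious obstacle; the only point requiring care is bookkeeping of conventions, namely checking that the grading used to define the Hilbert series of $\Wc^+(I)$ and $\Wn^+(J)$ in Proposition~\ref{prop:W-cpt-open} matches the signed Poincar\'e-polynomial convention $\sum_n \dim(\,\cdot\,)(-z)^n$ appearing in $P^c_S$ and $P_S$, so that the two identifications above are sign-compatible.
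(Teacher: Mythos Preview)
Your proposal is correct and follows exactly the approach the paper intends: the corollary is stated without proof because it is the immediate specialization of Proposition~\ref{prop:W-cpt-open} to $I=I_S=\iota_!H^*_c(S,\Q)$ and $J=J_S=H^*(\oS,\Q)/I_S^\perp\simeq H^*(S,\Q)$, and your identification of $P_{I_S}(z)=P^c_S(z)$ and $P_{J_S}(z)=P_S(z)$ via the grading-preserving maps $\iota_!$ and $\iota^*$ (using purity) is precisely what is required. Your closing remark about the sign convention is well taken---the paper writes $P_I(z)=\sum_d\dim(I\cap H^d)z^d$ in Proposition~\ref{prop:W-cpt-open} but uses the signed convention $(-z)^n$ for $P_S$ and $P_S^c$ elsewhere, so a silent identification is being made.
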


\medskip

\subsection{Full $W$-algebras} There are four  $W$-algebras associated with $S$, namely
$$\wnn^{(\c)}(S), \quad \wnc^{(\c)}(S), \quad \wcn^{(\c)}(S), \quad \wcc^{(\c)}(S). $$
These depend on the choice of compactly supported or ordinary homology for each half.
They are defined as subquotients of $W^{(\c)}(\oS)$. 
For instance, $\wnn^{(\c)}(S)$ is the quotient of $W^{(\c)}(\oS)$ by the two-sided ideal generated by 
$\Wc^\pm(I_S^\perp)$ and $W^0(I_S^\perp)$, while $\wnc^{(\c)}(S)$ is the quotient of the subalgebra of 
$W^{(\c)}(\oS)$ generated by $W^\leq(\oS),$ $ \Wc^+(I_S)$ by its two-sided ideal generated by $W^0(I_S^\perp)$ 
and $\Wc^-(I_S^\perp)$. It is easy to prove that $\wnn^{(\c)}(S)$ may be presented just as $W^{(\c)}(\oS)$, 
where we replace $H^*(\oS,\Q)$ by $H^*(S,\Q)$ everywhere.

For any $\kappa \in \Q$, let $\wnn^{(\kappa)}(S)$ be the specialization of $\wnn^{(\c)}(S)$ at $\c=\kappa, e^{i\pi\mathbf{c}}=e^{i\pi\kappa}$, etc. Observe that $[T^-_0(\lambda),T_0^+(\mu)]=-e^{i\pi \c}\c$ when $\lambda\mu=[pt]$ so that $\c$ belongs to the two-sided ideal of $W^{(\c)}(\oS)$ generated by $\Wc^\pm(I_S^\perp)$. In other words, any specialization $\wnn^{(\kappa)}(S)$ at any $\c=\kappa \neq 0$ is zero.

\begin{proposition}\label{prop:triang-open}
For any $a,b \in \{\uparrow, \downarrow\}$, the multiplication yields an isomorphism
$$W^{(\c)}_{ab}(S) \simeq \begin{cases} W^-_a(S) \otimes W^0(S) \otimes W^+_b(S) \qquad &(a,b) \neq (\uparrow,\uparrow),\\ \Wn^-(S) \otimes W^0(S)|_{\c=0} \otimes \Wn^+(S) \qquad &(a,b) = (\uparrow,\uparrow)\end{cases}.$$
\end{proposition}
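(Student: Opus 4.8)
The plan is to run, for the subquotient, the same triangular-decomposition argument as in the proper case (Proposition~\ref{prop:big-W-triang}), the genuinely new ingredient being the compatibility of the four sub/quotient operations with the triangular structure of the ambient algebra $W^{(\c)}(\oS)$. Write $W^{(\c)}_{ab}(S)=A_{ab}/N_{ab}$, where $A_{ab}\subseteq W^{(\c)}(\oS)$ is the subalgebra generated by $W^0(\oS)$ together with the chosen halves—$W^\pm(\oS)$ for an $\uparrow$-half and $\Wc^\pm(I_S)$ for a $\downarrow$-half—and $N_{ab}$ is the two-sided ideal of $A_{ab}$ generated by $\Wc^\pm(I_S^\perp)$ on each $\uparrow$-side and by $\{\psi_l(\lambda)\,;\,l>0,\ \lambda\in I_S^\perp\}$. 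Surjectivity of the multiplication map is immediate from this description, so all the content lies in injectivity.

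Denote by $W^+{}'$, resp.\ $W^-{}'$, the chosen positive, resp.\ negative half, so that $W^\pm{}'$ is $W^\pm(\oS)$ or $\Wc^\pm(I_S)$. I would first show that multiplication induces a vector-space isomorphism $A_{ab}\simeq W^-{}'\otimes W^0(\oS)\otimes W^+{}'$. Injectivity here is free: $W^\pm{}'\subseteq W^\pm(\oS)$, and the ambient multiplication is an isomorphism by Proposition~\ref{prop:big-W-triang}, so the map is the restriction of an isomorphism to a subspace. That the span of such ordered products is already a subalgebra, hence equals $A_{ab}$, follows because the relations close up: $\Ad(\psi_l(\mu))$ preserves $\Wc^\pm(I_S)$ by construction and by Corollary~\ref{Cor:wccharacterization}, while the double relation \eqref{E:doublerelation} sends $[T^+(\lambda),T^-(\mu)]$ into $W^0(\oS)$—with argument $\lambda\mu\in I_S$ when both halves are $\downarrow$, since $I_S$ is an ideal.

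The crux, and the step I expect to be the main obstacle, is to prove that $N_{ab}$ is triangular, i.e.\ that it coincides with the subspace
\[
\widetilde N := \mathcal{I}^-\otimes W^0(\oS)\otimes W^+{}' + W^-{}'\otimes K^0\otimes W^+{}' + W^-{}'\otimes W^0(\oS)\otimes\mathcal{I}^+
\]
of $A_{ab}$, where $\mathcal{I}^\pm$ is the half-ideal generated by $\Wc^\pm(I_S^\perp)$ on an $\uparrow$-side and is zero on a $\downarrow$-side, and $K^0$ is generated by $\psi_l(I_S^\perp)$. The inclusion $N_{ab}\subseteq\widetilde N$ will follow once $\widetilde N$ is shown to be a two-sided ideal of $A_{ab}$, and this is exactly where the support conditions are essential: multiplying a generator of $\widetilde N$ by $T^\pm_n(\nu)$ or $\psi_m(\nu)$ and reordering via $[\psi_m(\lambda),T^\pm_n(\mu)]=\pm m T^\pm_{m+n-1}(\lambda\mu)$ and via \eqref{E:doublerelation} produces only terms whose cohomological arguments stay in the ideal $I_S^\perp$—using that $I_S^\perp$ is an ideal, closed under multiplication by $s_2$ and under $c_1\Delta_S$ as demanded by \eqref{W:f}—so the result remains in $\widetilde N$. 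The reverse inclusion is clear since the listed generators lie in $\widetilde N$ and generate it as an ideal. Granting $N_{ab}=\widetilde N$, the identification of the previous paragraph descends to the sought isomorphism $W^{(\c)}_{ab}(S)\simeq W^-_a(S)\otimes W^0(S)\otimes W^+_b(S)$, and the Hilbert series of the Corollary following Proposition~\ref{prop:W-cpt-open}, together with that of $\Lambda(S)$, provides an independent consistency check.

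Finally, the $(\uparrow,\uparrow)$ case needs one extra observation explaining its special middle factor. With both halves modelled on $H^*(S,\Q)$, expanding \eqref{E:doublerelation} in low degree produces, through \eqref{E:heisrel}, the Heisenberg central charge $\c\int_S\lambda\mu$ pairing two ordinary classes $\lambda,\mu\in H^*(S,\Q)$; since an open $S$ admits no nondegenerate pairing $H^*\otimes H^*\to\Q$—only $H^*_c\otimes H^*\to\Q$—this term is forced to vanish, the central extension splits, and $\c$ acts as $0$, whence the middle factor $W^0(S)_{|\c=0}$. In the other three cases at least one half carries compact supports, the pairing survives, and $\c$ remains a free parameter. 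An induction on the order filtration as in the proof of Proposition~\ref{prop:heis-in-double}, combined with Lemma~\ref{lem:Heis-faithful}, then rules out any residual collapse and confirms that no further cross relations between the two halves are created.
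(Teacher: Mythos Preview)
Your overall strategy—descend the ambient triangular decomposition of $W^{(\c)}(\oS)$ through the chosen sub/quotient operations—is the same as the paper's, and your analysis of the three cases $(a,b)\neq(\uparrow,\uparrow)$ is essentially correct (in those cases the key simplification is that $I_S^\perp\cdot I_S=0$ in $H^*(\oS,\Q)$, so the dangerous cross-terms vanish outright).

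The gap is in the $(\uparrow,\uparrow)$ case, and it occurs in two related places. First, your claim that $\widetilde N$ is a two-sided ideal of $A_{\uparrow\uparrow}$ is not quite right: commuting $T^+_n(\lambda)$ with $\lambda\in I_S^\perp$ past $T^-_m(\mu)$ with $\mu\in H^*(\oS)$ via \eqref{E:doublerelation} can produce a nonzero scalar multiple of $\c$ (take $m=n=0$, $\lambda=[\mathrm{pt}]$, $\mu=1$), and $\c$ does not lie in your $\widetilde N$ as defined. The paper handles this by writing the reordering inclusions only modulo $\c\, W^{(\c)}(\oS)$, e.g.\ $\mathcal{I}^\geq\cdot W^-(\oS)\subseteq W^-(\oS)\cdot\mathcal{I}^\geq+\c\, W^{(\c)}(\oS)$, and then dealing with $\c$ separately.

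Second, your argument that $\c$ acts as $0$ does not establish what you need. From the vanishing of $\c\int_S\lambda\mu$ for $\lambda,\mu\in H^*(S,\Q)$ you correctly conclude that the Heisenberg commutators vanish in the quotient, but this only says the central extension is trivial; it does not force the central element $\c$ itself to be zero. The actual mechanism is algebraic and takes place in the ambient algebra: since $[\mathrm{pt}]\in I_S^\perp$, the generator $T^+_0([\mathrm{pt}])$ lies in $\Wc^+(I_S^\perp)\subset N_{\uparrow\uparrow}$, and the relation $[T^-_0(1),T^+_0([\mathrm{pt}])]=(\text{unit})\cdot\c$ from \eqref{E:doublerelation} then shows $\c\in N_{\uparrow\uparrow}$. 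Once you know this, you may enlarge $K^0$ by $\c$, after which your $\widetilde N$ genuinely is an ideal and your argument goes through, yielding the middle factor $W^0(S)_{|\c=0}$.
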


\begin{proof} Let $\mathcal{I}^{\geq}, $ $\mathcal{I}^{\leq}$ be the two-sided ideals of $W^\geq(\oS)$ and $W^\leq(\oS)$ generated by $\Wc^\pm(I_S^\perp)$ and $\{\psi_l(\lambda)\,:\, l >0, \lambda \in I_S^\perp\}$. 
Arguing as in the proof of Lemma~\ref{L:W0onWnWc} and using Proposition~\ref{prop:big-W-triang}, it is enough to check the following inclusions
\begin{align*}
\mathcal{I}^\geq \cdot W^-(\oS) &\,\subseteq\, W^-(\oS) \cdot \mathcal{I}^\geq + \c W^{(\c)}(\oS), \\
W^+(\oS) \cdot \mathcal{I}^\leq &\,\subseteq\, \mathcal{I}^\leq \cdot W^+(\oS) + \c W^{(\c)}(\oS), \\
\Wc^+(S) \cdot \Wc^-(S) &\,\subseteq\, \Wc^-(S) \cdot W^0(\oS)\cdot \Wc^+(S), \\
\Wc^+(S) \cdot \mathcal{I}^\leq &\,\subseteq\, \mathcal{I}^\leq \cdot \Wc^+(S) 
\end{align*}
These are in turn easily verified using \eqref{E:doublerelation}, Corollary~\ref{Cor:wccharacterization} together with the fact that $\mathcal{I}^\pm$ is generated by $T^\pm_n(\lambda)$ for $\lambda \in I_S^\perp$.

\end{proof}

\medskip

To finish, we mention the following analogue of Theorem~\ref{thm:big-W-undef}. For a pair $(a,b) \in \{\uparrow,\downarrow\}^2$, we define a Lie algebra $\mathfrak{w}^{(\c)}_{ab}(S)$ as follows. It has a basis given by a central element $\c$ and elements $D_{m,n}(\lambda)$ where $m\in\Z$, $n\geq 0$, and $\lambda$ belongs to $H^*(S,\Q)$ or $H^*_c(S,\Q)$ according to the following rule: 
\begin{itemize}
    \item[-] For $m=0$, $\lambda \in H^*(S,\Q)$;
    \item[-] For $m<0$, $\lambda \in H^*(S,\Q)$ if $a=\uparrow$ and $\lambda \in H^*_c(S,\Q)$ otherwise;
    \item[-] For $m>0$, $\lambda \in H^*(S,\Q)$ if $b=\uparrow$ and $\lambda \in H^*_c(S,\Q)$ otherwise.
\end{itemize}
The Lie bracket is given by
\[
    [D_{m,n}(\lambda),D_{m',n'}(\mu)] = (nm'-m'n) D_{m+m',n+n'-1}(\lambda\mu),
\]
where we have set $D_{m,-1}=\delta_{m,0}\c$, and where the product $\lambda \mu$ is induced either by the cup product on $H^*(S,\Q)$ or $H^*_{c}(S,\Q)$, or by the cap product $H^*(S,\Q) \otimes H^*_c(S,\Q) \to H_c^*(S,\Q)$ (or its composition with the natural map $H^*_c(S,\Q) \to H^*(S,\Q)$).

\smallskip

\begin{corollary}\label{cor:nondef-bbW}
    Assume that $s_2=0$ and $c_1\Delta=0$. There is an algebra isomorphism 
    $$\Phi:W_{ab}^{(\c)}(S)\simeq \begin{cases} U(\mathfrak{w}^{(\c)}_{ab}(S)) & (a,b) \neq (\uparrow,\uparrow)\\
    U(\mathfrak{w}^{(\c)}_{\uparrow\mathrel{\mspace{-2mu}}\uparrow}(S))|_{\c=0} & (a,b) = (\uparrow,\uparrow)
    \end{cases}$$
    which sends $\psi_n(\lambda)$ to $D_{0,n}(\lambda)$ and $T^\pm_n(\lambda)$ to $D_{\pm 1,n}(\lambda)$.
\end{corollary}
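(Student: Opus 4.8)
The plan is to reduce Corollary~\ref{cor:nondef-bbW} to the already-established non-deformed isomorphism Theorem~\ref{thm:big-W-undef} together with the triangular decomposition Proposition~\ref{prop:triang-open}, so that essentially no new computation is needed beyond bookkeeping of which cohomology space (ordinary vs.\ compactly supported) is attached to each half. First I would recall that, under the hypotheses $s_2=0$ and $c_1\Delta=0$, the surface $\oS$ satisfies the conditions of Theorem~\ref{thm:big-W-undef}, so $W^{(\c)}(\oS)\simeq U(\mathfrak{w}(\oS))$ with $\psi_n(\lambda)\mapsto D_{0,n}(\lambda)$, $T^+_n(\lambda)\mapsto D_{1,n}(\lambda)$, $T^-_n(\lambda)\mapsto \exp(i\pi\c)D_{-1,n}(\lambda)$. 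Since each $W^{(\c)}_{ab}(S)$ is by definition a subquotient of $W^{(\c)}(\oS)$, the strategy is to transport this isomorphism along the relevant sub- and quotient operations and identify the image as $U(\mathfrak{w}^{(\c)}_{ab}(S))$.

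The key steps, in order, are as follows. I would treat the generic case $(a,b)\neq(\uparrow,\uparrow)$ first. By Proposition~\ref{prop:triang-open} the multiplication map gives $W^{(\c)}_{ab}(S)\simeq W^-_a(S)\otimes W^0(S)\otimes W^+_b(S)$ as a vector space, so it suffices to (i) identify each tensor factor with the enveloping algebra of the appropriate graded piece of $\mathfrak{w}^{(\c)}_{ab}(S)$, and (ii) verify that the cross commutation relations between the $+$ and $-$ halves match. For (i): by Theorem~\ref{thm:W-undef} and its open analogue via Corollary~\ref{Cor:wccharacterization}, $\Wn^+(S)\simeq U(\mathfrak{w}^+_0(J_S))$ where $\mathfrak{w}^+_0(J_S)$ has basis $D_{m,n}(\lambda)$ with $m\geq 1$ and $\lambda\in H^*(S,\Q)$, while $\Wc^+(S)\simeq U(\mathfrak{w}^+_0(I_S))$ with $\lambda\in H^*_c(S,\Q)$; the analogous statements hold for the negative halves, and $W^0(S)\simeq\Lambda(S)$ supplies the $m=0$ strand $D_{0,n}(\lambda)$ with $\lambda\in H^*(S,\Q)$ together with the central element $D_{0,-1}=\c$. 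Under $s_2=c_1\Delta=0$ the Lie bracket~\eqref{E:Lie-rels} holds \emph{on the nose} (not merely modulo lower order) inside each half, by the same argument that proves Theorem~\ref{thm:W-undef}, because the correction terms in~\eqref{W:f} and~\eqref{E:doublerelation} all vanish. For (ii), the $[D_{1,n},D_{-1,n'}]$ relations come from the double relation~\eqref{E:doublerelation}, which under these hypotheses collapses (by Remark~(ii) following Definition~\ref{def:Wdoublenoncompact}) to $p_{n+n'}(\lambda\mu)$, matching $D_{0,n+n'-1}(\lambda\mu)$ and the central $\c$ exactly as prescribed by setting $D_{m,-1}=\delta_{m,0}\c$; the inductive bootstrapping of higher cross brackets is identical to the argument in the proof of Theorem~\ref{thm:big-W-undef}, which itself refers to Proposition~\ref{prop:heis-in-double}. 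The only genuinely new point is that the cup product appearing on the right-hand side of these brackets must land in the correct space, i.e.\ $H^*(S,\Q)\otimes H^*_c(S,\Q)\to H^*_c(S,\Q)$ and its composite with $H^*_c(S,\Q)\to H^*(S,\Q)$; this is precisely the product rule stipulated in the definition of $\mathfrak{w}^{(\c)}_{ab}(S)$, and one checks it is compatible with the ideal structures $\mathcal{I}^\pm$ generated by $T^\pm_n(\lambda)$ for $\lambda\in I_S^\perp$ using that $I_S^\perp$ and $I_S$ are both ideals (so products stay in the prescribed subspaces). Finally, for the special case $(a,b)=(\uparrow,\uparrow)$ the same analysis applies, except that the bracket $[T^-_0(\lambda),T^+_0(\mu)]=\c$ for $\lambda\mu=[\mathrm{pt}]$ forces $\c$ into the ideal generated by $\Wc^\pm(S)$ (as noted at the end of the proof of Proposition~\ref{prop:triang-open}), so that $\c$ is killed in the quotient; this is exactly reflected in the specialization $U(\mathfrak{w}^{(\c)}_{\uparrow\mathrel{\mspace{-2mu}}\uparrow}(S))_{|\c=0}$.

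The main obstacle I anticipate is not any single relation but the careful verification that the subquotient operations defining $W^{(\c)}_{ab}(S)$ interact correctly with the PBW-type basis, i.e.\ that passing to the subalgebra generated by one half and then quotienting by an ideter generated by the ``wrong-cohomology'' elements yields an algebra whose underlying vector space is freely spanned by ordered monomials in the surviving $D_{m,n}(\lambda)$. Concretely, one must ensure no unexpected relations are introduced in the quotient beyond those recorded in $\mathfrak{w}^{(\c)}_{ab}(S)$, and conversely that the ideal $\mathcal{I}^\pm$ does not accidentally kill elements with $\lambda\in I_S$ (resp.\ $J_S$). This is handled by a Hilbert series count: combining Proposition~\ref{prop:W-cpt-open} for each half with the triangular decomposition, the graded dimension of $W^{(\c)}_{ab}(S)$ equals that of $U(\mathfrak{w}^{(\c)}_{ab}(S))$, so the surjection $\Phi$ constructed above is forced to be an isomorphism. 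I would therefore present the proof by first exhibiting $\Phi$ as a well-defined surjective homomorphism (checking the defining Lie relations of $\mathfrak{w}^{(\c)}_{ab}(S)$ hold among the images, using the collapse of~\eqref{W:f} and~\eqref{E:doublerelation}), and then closing the argument with the dimension comparison, exactly parallel to the final sentence of the proof of Theorem~\ref{thm:big-W-undef}.
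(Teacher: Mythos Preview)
Your approach is essentially the paper's own: transport the non-deformed isomorphism along the subquotient operations defining $W^{(\c)}_{ab}(S)$, using the explicit identification of $\Wc^\pm(I)$ and the ideals $\mathcal{I}^\pm$ as spans of $D_{m,n}(\lambda)$ with $\lambda$ in the appropriate subspace, then close with a PBW/dimension count. The paper compresses all of this into one sentence.

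There is, however, one point where your writeup is not quite right. You open by asserting that under $s_2=0$ and $c_1\Delta=0$ the compactification $\oS$ satisfies the hypotheses of Theorem~\ref{thm:big-W-undef}, so that $W^{(\c)}(\oS)\simeq U(\mathfrak{w}(\oS))$ and you can then take subquotients. But Theorem~\ref{thm:big-W-undef} requires $c_1(\oS)=0$, which is strictly stronger than $c_1(S)\Delta_S=0$ (the latter only says $c_1(\oS)\in I_S^\perp$). So you cannot invoke the isomorphism for $\oS$ as a black box. The paper handles this explicitly: it observes that after passing to the subquotient, the only correction terms that survive in relations \eqref{W:f} and \eqref{E:doublerelation} involve $c_1\lambda\mu$ with $\lambda,\mu$ ranging over $H^*(S,\Q)$ or $H^*_c(S,\Q)$ according to the case, and for these the hypothesis $c_1\Delta_S=0$ is exactly what makes them vanish. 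Your later sentence (``the correction terms in \eqref{W:f} and \eqref{E:doublerelation} all vanish'') is the correct mechanism, but it should \emph{replace} the appeal to Theorem~\ref{thm:big-W-undef} for $\oS$, not sit alongside it: you are arguing directly that the defining relations of $W^{(\c)}_{ab}(S)$ degenerate to those of $U(\mathfrak{w}^{(\c)}_{ab}(S))$, not transporting a pre-existing isomorphism on $\oS$.
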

\begin{proof}
For the $\uparrow$-halves, this follows from the explicit identification $\mathcal{I}^\perp_S=\Span\{D_{m,n}(\lambda)\; : \pm m >0, n \geq 0, \lambda \in I^\perp_S\}$.
For the $\downarrow$-halves, we know that $\Wc^\pm(I_S)$ is preserved inside $W^\pm(\overline{S})$ under $\Ad_{T_0(1)}$ by Corollary~\ref{Cor:wccharacterization}.
In particular, we can apply the proof of Theorem~\ref{thm:W-undef} word for word to conclude.
Finally, the commutation relations between two halves are checked recurrently as in Proposition~\ref{prop:heis-in-double}.
\end{proof}

\smallskip

\begin{example}
    Let $S=\mathbb{A}^2$ together with the obvious action of a two-dimensional torus $T$.
    The Chern roots $t_1$, $t_2$ are the linear characters of $T$, and we have $\Delta = t_1t_2$.
    We identify $H^*_{T}(\mathbb{A}^2)=\mathbb{Q}[t_1,t_2]$
    and $H^*_{c,T}(\mathbb{A}^2) = t_1t_2 \mathbb{Q}[t_1,t_2]$.
    The difference between $\Wn^{+,T}(\mathbb{A}^2)$ and $\Wc^{+,T}(\mathbb{A}^2)$ is 
    multiplication of generators $T_i^+=T^+_i(1)$ by $t_1t_2$. We will abuse the notation and omit the subscripts.

Let us compare the output of our constructions with the affine Yangian $\ddot{Y}_{t_1,t_2}(\mathfrak{gl}_1)$ in~\cite{Tsymbaliuk}, see also \cite{AS}. 
 It is generated by elements $e_i,f_i,\widetilde{\psi}_i$ with $i\geq 0$, modulo the relations (Y0-Y6) in~\cite{Tsymbaliuk}. We define a morphism of algebras $\ddot{Y}_{t_1,t_2}(\mathfrak{gl}_1)\to \DW{\mathbf{c}}_T(\mathbb{A}^2)$ by sending
    \[
        e_i\mapsto T_i^+,\quad f_i\mapsto T_i^-.
    \]
Then $\widetilde{\psi}_n$ maps to the expression on the right hand side of~\eqref{eq:comm-pm}.
In particular $\widetilde{\psi}_i$ is different from the element $\psi_i\in \DW{\mathbf{c}}_T(\mathbb{A}^2)$.
    The only relations that are not obvious are (Y4-Y5).
    Since they are almost identical, let us concentrate on (Y4), which is
    \begin{align*}
        [\widetilde{\psi}_{i+3},e_j] - &3[\widetilde{\psi}_{i+2},e_{j+1}] + 3[\widetilde{\psi}_{i+1},e_{j+2}] - [\widetilde{\psi}_{i},e_{j+3}]\\
        &+ (t_1^2+t_1t_2+t_2^2)([\widetilde{\psi}_{i+1},e_{j}]-[\widetilde{\psi}_{i},e_{j+1}])+ t_1t_2(t_1+t_2)\{\widetilde{\psi}_{i},e_j\} = 0
    \end{align*}
    The relation (Y1) is the relation \eqref{W:f} in our Definition~\ref{def:W}.
    Using the relation $[e_i,f_j]=\widetilde{\psi}_{i+j}$ and applying the commutator with $f_i$ to (Y1), 
    proving (Y4) for $(i,j)$ is equivalent to proving it for $(i-1,j+1)$.
    Thus it suffices to check (Y4) for $i=0$.
    Using (Y4'), the formula we need to check is
    \[
        [[T^+_3,T^-_0],T^+_i] = 6T^+_{i+1}+2\mathbf{c}t_1t_2(t_1+t_2)T^+_i.
    \]
    Unpacking the right hand side of formula~\eqref{eq:comm-pm} for $n=3$, we can get an explicit formula for $[T^+_3,T^-_0]$ and check the relation above applying~\eqref{eq:pT-comm}.

    Using the triangular decomposition of both sides, it is easy to see that the map above is actually an isomorphism $\ddot{Y}_{t_1,t_2}(\mathfrak{gl}_1)\simeq \DW{\mathbf{c}}_T(\mathbb{A}^2)$. 
\end{example}

\medskip

\subsection{Fock space representations} In \S\ref{sec:Fockspacedefinition}, we defined a representation $\mathbf{F}^{(r)}(\oS)$ of $W^{(\c)}(\oS)$. We now consider variants of $\mathbf{F}^{(r)}(\oS)$ in the case of open surfaces.

The restriction $H^*(\overline{S},\Q)\to H^*(S,\Q)$ is surjective.
It yields a surjective map $\iota^*:\mathbf{F}^{(r)}(\overline{S})\to \mathbf{F}^{(r)}(S)$.
The kernel of $\iota^*$ is generated by the tautological classes $\psi_n(\lambda)$ with $\lambda \in I_S^\perp$.
The formulas~\eqref{eq:T-fla+}, \eqref{eq:T-fla-} and the definition of the operators $R^\pm, Q^\pm$ imply that $\ker(\iota^*)$ is a $\DW{r}(\overline{S})$-submodule of $\mathbf{F}^{(r)}(S)$ and that dually $T^\pm_n(\lambda)\mathbf{F}^{(r)}(\overline{S}) \subset\ker(\iota^*)$ for any $\lambda \in I_S^\perp$. From these observations, one deduces the following:

\smallskip

\begin{proposition}\label{Prop:FockspaceopenW} The action of $\DW{r}(\oS)$ on the Fock space $\mathbf{F}^{(r)}(\oS)$ descends/restricts to an action of $\wnc^{(r)}(S)$, $\wcn^{(r)}(S)$, $\wcc^{(r)}(S)$ on $\mathbf{F}^{(r)}(S)$ for any $r$, and to an action of $\wnn^{(0)}(S)$ on $\mathbf{F}^{(0)}(S)$. These representations are faithful when $r \neq 0$.
\end{proposition}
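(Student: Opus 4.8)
The plan is to push the $\DW{r}(\oS)$-action on $\mathbf{F}^{(r)}(\oS)$ of Corollary~\ref{cor:Fock-is-module} down to $\mathbf{F}^{(r)}(S)=\mathbf{F}^{(r)}(\oS)/K$, where $K=\ker(\iota^*)$, and then to check that the descended action kills (resp. restricts to) the defining ideals (resp. subalgebras) of each subquotient $W^{(r)}_{ab}(S)\subseteq W^{(r)}(\oS)$. By the first observation preceding the statement, $K$ is a $\DW{r}(\oS)$-submodule, so we already obtain an action of $\DW{r}(\oS)$ on $\mathbf{F}^{(r)}(S)$; it remains to identify its annihilator.

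Each $W^{(r)}_{ab}(S)$ is the quotient of a subalgebra $B_{ab}\subseteq W^{(r)}(\oS)$ by a two-sided ideal $N_{ab}$, and for a module action it suffices that the \emph{generators} of $N_{ab}$ act by zero, since a generator acting as zero forces every element of the two-sided ideal it generates to act as zero. So I only verify vanishing on the three families of generators: $\psi_l(\lambda)$ with $l>0$, $\lambda\in I_S^\perp$, and $\Wc^{+}(I_S^\perp)$, $\Wc^{-}(I_S^\perp)$. For the first family, $\psi_l(\lambda)$ acts by multiplication by $\phi_l(\lambda)\in\Lambda(S)$; unwinding $\phi_\lambda(x)=\sum_n x^{n-1}p_n(\Td_S(x)\lambda)/n!$ and using that $p_n(\mu)=0$ in $\Lambda(S)$ for $\mu\in I_S^\perp$, $n\ge1$, only the $n=0$ term survives, giving $\phi_l(\lambda)=l!\,\r\int_{\oS}\Td_S^{(l+1)}\lambda$, which vanishes for $l\ge1$ by degree reasons. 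For the $\Wc^{\pm}$-families I use the second observation: $T^{\pm}_n(\lambda)$ annihilates $\mathbf{F}^{(r)}(S)$ for $\lambda\in I_S^\perp$, in particular $D_{1,0}(\lambda)=T^{\pm}_0(\lambda)$ does. Since~\eqref{E:relinWforHeis} builds $D_{m+1,0}(\lambda)$ as a commutator with $D_{1,1}(1)$, and since the commutator, product, or $\Ad\psi_l$ of any operator with one annihilating $\mathbf{F}^{(r)}(S)$ again annihilates it, an immediate induction shows every $D_{m,0}(\lambda)$ with $\lambda\in I_S^\perp$ acts by zero; as $\Wc^{\pm}(I_S^\perp)$ is the smallest subalgebra containing these and stable under $\Ad\psi_l$, all of $\Wc^{\pm}(I_S^\perp)$ annihilates $\mathbf{F}^{(r)}(S)$. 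Restricting to $B_{ab}$ and passing to the quotient by $N_{ab}$ then yields the four actions. The only case requiring $r=0$ is $\wnn^{(r)}(S)$: there both $\Wc^{+}(I_S^\perp)$ and $\Wc^{-}(I_S^\perp)$ are killed, and since $[T^-_0(1),T^+_0(\pt)]$ is proportional to $\c$ with $\pt\in I_S^\perp$ (as $H^4(S)=0$ for open $S$), the element $\c=r$ lies in the ideal defining $\wnn^{(r)}(S)$ and must act by $0$, forcing $r=0$; for the other three types only one $\Wc$-half is killed and, by orthogonality of $I_S$ and $I_S^\perp$ under $\int_{\oS}$, no such relation forces $\c$ to vanish, so any $r$ is allowed.

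For faithfulness when $r\ne0$ I would mimic the proof of Proposition~\ref{cor:big-W-faithful}. Writing $W^{(r)}_{ab,\red}(S)$ and $\mathbf{F}^{(r)}_\red(S)$ for the reductions modulo the central subalgebra $Z(S)$, and using that $\Lambda^1(S)$ is free over $Z(S)$, faithfulness reduces to faithfulness of $W^{(r)}_{ab,\red}(S)$ on $\mathbf{F}^{(r)}_\red(S)$. I would then establish the analogue of Lemma~\ref{lem:Heis-faithful} for each $W^{(r)}_{ab,\red}(S)$: these algebras carry an order filtration and a triangular decomposition (Proposition~\ref{prop:triang-open}), so the same $\Gr_\bullet$ and minimal-weight argument with the operators $\sigma_l=\Ad(D_{l,0}(1))$ applies, reducing faithfulness to that of the relevant Heisenberg subalgebra $U(\mathfrak{h}_S)\hookrightarrow W^{(r)}_{ab,\red}(S)$ (Proposition~\ref{prop:heis-in-double}), whose central charge is a nonzero multiple of $r$. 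I expect the main obstacle to be precisely this last reduction: for the mixed types $\wnc^{(r)}(S),\wcn^{(r)}(S)$ the Heisenberg pairing $H^*_c(S)\times H^*(S)\to\Q$ is the perfect Poincaré pairing and the standard argument applies verbatim, but for $\wcc^{(r)}(S)$ the pairing $H^*_c(S)\times H^*_c(S)\to\Q$ can be degenerate, so the creation operators have image only in $\mathrm{im}(H^*_c(S)\to H^*(S))$; there one must feed in the faithful multiplication action of the full $W^0(S)=\Lambda(S)$ to separate elements of $\mathbf{F}^{(r)}(S)$ lying outside the Heisenberg vacuum module, and verifying that this additional input suffices is where the argument requires the most care.
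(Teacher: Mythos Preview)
Your treatment of the descent part is correct and is a fleshed-out version of what the paper leaves to the two observations preceding the statement. The computation showing that $\psi_l(\lambda)$ with $l\geq 1$, $\lambda\in I_S^\perp$ acts by zero on $\mathbf{F}^{(r)}(S)$ is right (the key point being $I_S^\perp\cap H^0(\oS,\Q)=0$, since $[\mathrm{pt}]\in I_S$), and the induction using \eqref{E:relinWforHeis} to show all of $\Wc^\pm(I_S^\perp)$ annihilates $\mathbf{F}^{(r)}(S)$ is fine. Your explanation of why $\wnn$ forces $r=0$ is also correct.

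For faithfulness you and the paper agree on $\wnc^{(r)}(S)$ and $\wcn^{(r)}(S)$: one locates inside them a Heisenberg algebra of nonzero central charge $r$, modelled on the perfect Poincar\'e pairing $H^*_c(S,\Q)\times H^*(S,\Q)\to\Q$, and invokes Lemma~\ref{lem:Heis-faithful}. Where you diverge is $\wcc^{(r)}(S)$. You correctly identify that the pairing $H^*_c(S,\Q)\times H^*_c(S,\Q)\to\Q$ may be degenerate, and you sketch a workaround via the $W^0(S)$-action. The paper avoids this obstacle altogether: it observes that $\wcc^{(r)}(S)$ is a \emph{subalgebra} of $\DW{r}(\oS)$, so faithfulness is inherited from Proposition~\ref{cor:big-W-faithful} with no further work. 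This is why the paper says ``we only need to consider $\wnc^{(r)}(S)$, $\wcn^{(r)}(S)$''. Your approach would eventually succeed too, but the subalgebra shortcut is cleaner and sidesteps the degenerate-pairing issue entirely; note, however, that as stated it yields faithfulness on $\mathbf{F}^{(r)}(\oS)$ rather than on the quotient $\mathbf{F}^{(r)}(S)$, a point the paper does not address explicitly.
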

\begin{proof}
    Let us only address the faithfulness.
    We only need to consider $\wnc^{(r)}(S)$, $\wcn^{(r)}(S)$, since $\wcc^{(r)}(S)$ is a subalgebra of $\DW{r}(\oS)$.
    Similarly to Proposition~\ref{prop:heis-in-double}, these algebras contain a copy of the Heisenberg algebra $\mathfrak{h}_S$ of central charge $r$, where half of the generators are labeled by $H^*_c(S,\Q)$.
    Since $r>0$, we conclude by Lemma~\ref{lem:Heis-faithful}.
\end{proof}

\medskip

\section{Hecke patterns}\label{sec:Hecke-patterns}
In this section we introduce a general framework to construct modules over the algebras $\Hb_0(S)$ and $\Hb^c_0(S)$ by using a compactification $\oS$ of $S$ and by restricting the multiplication in $\Hb(\oS)$ to suitable substacks. For this we fix a smooth compactification $\oS$ of $S$. 

\subsection{Hecke patterns and Hecke correspondences}\label{sec:Heckepatterns} The following is a variation on the notion of a Hecke pattern which appears in \cite[\S 5]{KV}.

\begin{definition}
    A two-sided Hecke pattern on $\oS$ is a locally closed derived substack $X = \bigsqcup_\alpha X_\alpha$ of $\RCoh^{\geq 1}(\oS)$ satisfying the following properties:
    \begin{enumerate}[label=$\mathrm{(\alph*)}$,leftmargin=8mm]
        \item for any short exact sequence $0 \to \mathcal{E} \to \mathcal{F} \to \mathcal{G} \to 0$ with $\mathcal{G} \in \RCoh^0(\oS)$ and
        $\mathcal{F} \in X$ we have $\mathcal{E} \in X$, 
        \item for any short exact sequence $0 \to \mathcal{E} \to \mathcal{F} \to \mathcal{G} \to 0$ with $\mathcal{G} \in \RCoh^0(\oS)$, $\mathcal{E} \in X$ and $\mathcal{F} \in \RCoh^{\geq 1}(\oS)$ we have $\mathcal{F} \in X$.
    \end{enumerate}
    We call $X$ an $S$-weak Hecke pattern if the conditions (a-b) only hold for $\mathcal{G}\in \RCoh^0(S)$.
    We say that a Hecke pattern is $S$-strong if both conditions (a-b) imply that $\mathcal{G} \in \RCoh^0(S)$.
\end{definition}

We say that $X$ is of rank $r$ if $X_\alpha$ is nonempty only if $\text{rk}(\alpha)=r$.
Every Hecke pattern is a disjoint union of Hecke patterns of a well defined rank.
Note that the conditions of being a usual/$S$-strong/$S$-weak Hecke pattern imply the conditions~(\ref{eq:T-plus-condition},\ref{eq:T-minus-condition}), (\ref{eq:T-open-conditions}), (\ref{eq:T-cpt-conditions}) respectively.

\smallskip

\begin{remark} A substack $X \subset \RCoh^{\geq 1}(\oS)$ satisfying (a) alone 
is a left Hecke pattern, while a substack 
$X \subset\RCoh^{\geq 1}(\oS)$ satisfying (b) is a right Hecke pattern.
We can consider the $S$-weak/strong versions of these notions separately.
\end{remark}

\begin{example}\label{ex:Hecke-patterns}
    The property of being of dimension $\geqslant d$, $d=1,2$ is stable by passing to a subsheaf, and for any extension $0 \to \mathcal{E} \to \mathcal{F} \to \mathcal{G} \to 0$ with $\dim\mathcal{G}=0$, $\mathcal{E} \in \RCoh^{\geq d}$ and $\mathcal{F} \in \RCoh^{\geq 1}$ we have $\mathcal{F} \in \RCoh^{\geq d}$.
    Therefore $\RCoh^{\geq d}(\oS)$, $d=1,2$ is a Hecke pattern.
    One can similarly see that $\RCoh^{\geq 1}(S)$ is an $S$-strong Hecke pattern. More generally, any $S$-weak Hecke pattern contained in $\RCoh^{\geq 1}(S)$ is automatically $S$-strong.
    The collection of Hilbert schemes of points on $S$ (see \S\ref{sec:Hilbert}) is a left $S$-weak and right $S$-strong Hecke pattern.
\end{example}

\begin{lemma}\label{lem:strong-Hecke-equiv}
    A left Hecke pattern is $S$-strong if and only if $X\subset \RCoh^{\geq 1}(S)$.
    A right Hecke pattern is $S$-strong if and only if every sheaf in $X$ is locally free at any point in $\oS\setminus S$.
\end{lemma}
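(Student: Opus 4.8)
The plan is to analyze each of the two equivalences by unwinding the definition of $S$-strong, which requires that whenever a short exact sequence $0\to\mathcal{E}\to\mathcal{F}\to\mathcal{G}\to 0$ of the type appearing in (a) or (b) exists with the given membership hypotheses, the quotient $\mathcal{G}\in\RCoh^0(\oS)$ automatically lies in $\RCoh^0(S)$, i.e. $\mathcal{G}$ is a zero-dimensional sheaf supported inside $S\subset\oS$. The key geometric input is that a length-one zero-dimensional sheaf $\mathcal{G}$ is $\mathcal{O}_x$ for a point $x\in\oS$, and that the support of a general $\mathcal{G}\in\RCoh^0(\oS)$ lies in $S$ precisely when no quotient point sits on the boundary $\oS\setminus S$. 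So in both directions I would reduce to the length-one case by induction (any zero-dimensional $\mathcal{G}$ admits a filtration with length-one subquotients), which lets me test the condition one boundary point at a time.

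For the first equivalence (left Hecke patterns), I would argue as follows. First I would show $(\Leftarrow)$: if $X\subset\RCoh^{\geq1}(S)$, then any $\mathcal{F}\in X$ has support contained in $S$; since $\supp(\mathcal{E})\subset\supp(\mathcal{F})$ and, for a quotient $0\to\mathcal{E}\to\mathcal{F}\to\mathcal{G}\to0$ with $\dim\mathcal{G}=0$, we have $\supp(\mathcal{G})\subset\supp(\mathcal{F})\subset S$, so $\mathcal{G}\in\RCoh^0(S)$, proving $S$-strongness. For $(\Rightarrow)$: suppose $X$ is $S$-strong and left, yet contains some $\mathcal{F}$ with a point $x\in(\supp\mathcal{F})\cap(\oS\setminus S)$. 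Localizing at $x$, I would exhibit a length-one quotient $\mathcal{F}\twoheadrightarrow\mathcal{O}_x$ supported at the boundary point $x$; the kernel $\mathcal{E}$ lies in $X$ by property (a) (since $X$ is a left Hecke pattern), but the quotient $\mathcal{G}=\mathcal{O}_x\notin\RCoh^0(S)$, contradicting $S$-strongness. This forces $\supp(\mathcal{F})\subset S$ for all $\mathcal{F}\in X$, i.e. $X\subset\RCoh^{\geq1}(S)$. The existence of such a boundary quotient is the crux: it follows from the fact that a coherent sheaf $\mathcal{F}$ on the smooth surface $\oS$ whose support meets $x$ admits $\mathcal{O}_x$ as a quotient, which I would justify by choosing a local surjection onto the fiber and noting $\mathcal{F}$ has dimension $\geq1$ so that $\mathcal{F}|_x\neq 0$.

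For the second equivalence (right Hecke patterns), the relevant obstruction is no longer the support but local freeness at boundary points. Here I would use condition (b): given $\mathcal{E}\in X$, $\mathcal{F}\in\RCoh^{\geq1}(\oS)$ and $0\to\mathcal{E}\to\mathcal{F}\to\mathcal{G}\to0$ with $\dim\mathcal{G}=0$, one has $\mathcal{F}\in X$, and $S$-strongness demands $\mathcal{G}\in\RCoh^0(S)$. For $(\Rightarrow)$, suppose some $\mathcal{E}\in X$ fails to be locally free at a boundary point $x\in\oS\setminus S$. Then the local failure of freeness produces a sheaf $\mathcal{F}$ and a length-one extension $0\to\mathcal{E}\to\mathcal{F}\to\mathcal{O}_x\to0$ with $\mathcal{F}$ still of dimension $\geq1$, where the quotient $\mathcal{O}_x$ is supported at $x\notin S$ --- contradicting $S$-strongness. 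For $(\Leftarrow)$, if every sheaf in $X$ is locally free at all boundary points, then any Hecke modification $0\to\mathcal{E}\to\mathcal{F}\to\mathcal{G}\to 0$ with $\mathcal{E}\in X$, $\mathcal{G}$ supported (even partially) on a boundary point $x$ would force $\mathcal{E}$ to be non-locally-free at $x$ (since adding a length-one quotient at $x$ to a locally free sheaf cannot keep the result of dimension $\geq1$ with the subsheaf locally free), a contradiction; hence $\supp(\mathcal{G})\subset S$ and $\mathcal{G}\in\RCoh^0(S)$.

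The main obstacle I anticipate is making precise the claim, in the right-pattern direction, that \emph{local freeness of $\mathcal{E}$ at $x$ obstructs a boundary Hecke modification} --- in other words, that an upward Hecke step $\mathcal{E}\hookrightarrow\mathcal{F}$ with length-one cokernel $\mathcal{O}_x$ is possible precisely when $\mathcal{E}$ is \emph{not} locally free at $x$ (equivalently, $\mathcal{E}$ has torsion or fails to be free in the local ring at $x$). This is a purely local statement about finitely generated modules over the regular two-dimensional local ring $\mathcal{O}_{\oS,x}$: one must characterize when $\mathcal{E}_x$ sits inside a larger module $\mathcal{F}_x$ of the same generic rank with $\mathcal{F}_x/\mathcal{E}_x\cong\kappa(x)$ while $\mathcal{F}$ remains of dimension $\geq1$. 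I would dispatch this by a reflexivity/depth argument: a sheaf of dimension $\geq1$ on a smooth surface is locally free at $x$ iff it is reflexive there iff it admits no nontrivial "upward" modification concentrated at $x$, using that $\mathcal{E}xt^{\geq1}$ at $x$ detects the failure of local freeness for pure-dimensional sheaves.
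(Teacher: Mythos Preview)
Your plan for the first equivalence is correct and matches the paper's argument exactly: both reduce to the observation that any sheaf whose stalk at $x$ is nonzero surjects onto $\mathcal{O}_x$.

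For the second equivalence your approach is also correct, but it differs from the paper's. The paper argues only the $(\Rightarrow)$ direction explicitly and does so by a case split: it first treats the maximal one-dimensional subsheaf $\mathcal{E}_1\subset\mathcal{E}$, using that $\Ext^1(\mathcal{O}_x,\mathcal{E}_1)\hookrightarrow\Ext^1(\mathcal{O}_x,\mathcal{E})$ to rule out boundary support of $\mathcal{E}_1$, and then handles the torsion-free part by the double-dual embedding $\mathcal{E}\hookrightarrow\mathcal{E}^{\vee\vee}$. Your route is more uniform: the single local-algebra fact that over a regular two-dimensional local ring a module $M$ with $\depth M\geq 1$ is free if and only if $\Ext^1(k,M)=0$ (Auslander--Buchsbaum) gives both directions at once, and also makes the $(\Leftarrow)$ direction transparent (any extension of $\mathcal{O}_x$ by a sheaf locally free at $x$ splits locally, so the middle term acquires a zero-dimensional summand and drops out of $\RCoh^{\geq 1}$). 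This is arguably cleaner than the paper's construction-by-cases.

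Two small imprecisions to fix when you write it up. First, your phrase ``locally free iff reflexive'' only holds for torsion-free sheaves; for a sheaf in $\RCoh^{\geq 1}$ with one-dimensional torsion the double-dual map is not injective, so reflexivity is not the right invariant---depth is. Second, ``pure-dimensional'' is not the hypothesis you want (a rank-one sheaf with one-dimensional torsion is in $\RCoh^{\geq 1}$ but not pure); the correct condition throughout is $\depth\mathcal{E}_x\geq 1$, which is exactly membership in $\RCoh^{\geq 1}$. With those adjustments your $\Ext$/depth argument goes through cleanly and subsumes the paper's case analysis.
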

\begin{proof}
    The first claim follows from the fact that for any $x\in S$, any sheaf $\mathcal{E}$ with $\mathcal{E}_x\neq 0$ admits a surjection to $\mathcal{O}_x$.
    For the second claim, let $\mathcal{E}_1\subset \mathcal{E}$ be the maximal 1-dimensional subsheaf. Since $\Ext^1(\mathcal{O}_x,\mathcal{E}_1)\to\Ext^1(\mathcal{O}_x,\mathcal{E})$ is injective, $\mathcal{E}_1$ has to be supported away from $\oS\setminus S$. We conclude by observing that the cokernel of the double dual map $\mathcal{E}|_{\oS\setminus \supp(\mathcal{E}_1)}\hookrightarrow (\mathcal{E}|_{\oS\setminus \supp(\mathcal{E}_1)})^{\vee\vee}$ cannot have support in $\oS\setminus S$ either.
\end{proof}

\smallskip

An $S$-weak two-sided Hecke pattern $X$ yields two families of induction diagrams. 
We define
$$\widetilde{\RCoh}^\circ_{n\delta;\alpha}=\widetilde{\RCoh}_{n\delta;\alpha}(\oS) \underset{\RCoh_{n\delta}(\oS)}{\times} \RCoh_{n\delta}
,\quad
\widetilde{X}_{n\delta;\alpha}=\widetilde{\RCoh}^\circ_{n\delta;\alpha} \underset{\RCoh_{n\delta + \alpha}(\oS)}{\times} X_{n\delta +\alpha}.$$

Conditions (a) and (b) yield the following 
diagrams with Cartesian squares 
\begin{equation}\label{E:poshecke}
\begin{gathered}
\xymatrix{
\RCoh_{n\delta} \times \RCoh_\alpha(\oS) & \widetilde{\RCoh}^\circ_{n\delta;\alpha} \ar[r]^-{p^\circ_{n\delta,\alpha}} \ar[l]_-{q^\circ_{n\delta,\alpha}} & \RCoh_{\alpha+n\delta}(\oS)\\
\RCoh_{n\delta} \times X_\alpha \ar[u]& \widetilde{X}_{n\delta;\alpha} \ar[r]^-{\pi_{n\delta,\alpha}} \ar[l]_-{\kappa_{n\delta,\alpha}} \ar[u]& X_{\alpha+n\delta} \ar[u]
}\\
\xymatrix{
 \RCoh^{\geq 1}_\alpha(\oS) \times \RCoh_{n\delta}  & \widetilde{\RCoh}^{\circ,\geq 1}_{n\delta;\alpha-n\delta} \ar[r]^-{\overline{p}^\circ_{\alpha,n\delta}} \ar[l]_-{\overline{q}^\circ_{\alpha,n\delta}} & \RCoh^{\geq 1}_{\alpha-n\delta}(\oS)\\
 X_\alpha \times \RCoh_{n\delta}  \ar[u]& \widetilde{X}_{n\delta;\alpha-n\delta} \ar[r]^-{\overline{\pi}_{\alpha,n\delta}} \ar[l]_-{\overline{\kappa}_{\alpha,n\delta}} \ar[u]& X_{\alpha-n\delta} \ar[u]
}
\end{gathered}
\end{equation}

\smallskip

The maps $\pi_{n\delta,\alpha}$ and $\overline{\pi}_{\alpha,n\delta}$ may not be proper. Luckily, they factorise as
\begin{gather*}
    \widetilde{X}_{n\delta;\alpha} \xrightarrow{\pi'_{n\delta,\alpha}}  X_{\alpha+n\delta} \times \Sym^n(S) \xrightarrow{\pi''_{n\delta,\alpha}} X_{\alpha+n\delta},\\
    \widetilde{X}_{n\delta;\alpha-n\delta} \xrightarrow{\overline{\pi}'_{\alpha,n\delta}} X_{\alpha-n\delta} \times \Sym^n(S) \xrightarrow{\overline{\pi}''_{\alpha,n\delta}} X_{\alpha-n\delta}.
\end{gather*}

\begin{lemma} The following hold:
\begin{enumerate}[label=$\mathrm{(\alph*)}$,leftmargin=8mm]
\item the morphism $\kappa_{n\delta,\alpha}$ and $\overline{\kappa}_{\alpha,n\delta}$ are quasi-smooth,
\item the morphisms ${\pi}'_{n\delta,\alpha}$ and $\overline{\pi}'_{\alpha,n\delta}$ are proper and representable.
\end{enumerate}
\end{lemma}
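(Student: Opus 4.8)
The plan is to deduce both statements from the corresponding properties of the \emph{global} Hecke maps on $\RCoh(\oS)$ by (derived) base change, the key point being that the squares in~\eqref{E:poshecke}, together with the factorisations $p^\circ = p''\circ p'$ and $\overline{p}^\circ = \overline{p}''\circ \overline{p}'$, are Cartesian. Since quasi-smoothness is stable under arbitrary base change, and properness and representability are likewise stable under base change, it suffices to exhibit $\kappa_{n\delta,\alpha}$ and $\overline{\kappa}_{\alpha,n\delta}$ as base changes of quasi-smooth maps, and $\pi'_{n\delta,\alpha}$ and $\overline{\pi}'_{\alpha,n\delta}$ as base changes of proper representable maps. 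So the real content is to identify the relevant global maps and record their properties.

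For (a), note first that $\RCoh_{n\delta}\hookrightarrow \RCoh_{n\delta}(\oS)$ is an open immersion, so $q^\circ_{n\delta,\alpha}$ is the base change of the global induction map $q_{n\delta,\alpha}(\oS)$ along this immersion in the first factor. The map $q_{n\delta,\alpha}(\oS)$ is quasi-smooth because $Coh(\oS)$ has global dimension two, equivalently because $\widetilde{\RCoh}_{n\delta;\alpha}(\oS)$ is the total space $\mathbb{V}(\mathcal{C}_{n\delta,\alpha})$ of a perfect complex over $\RCoh_{n\delta}(\oS)\times\RCoh_\alpha(\oS)$ (see \S~\ref{sec:defCOHA0} and Theorem~\ref{thm:fullCOHA-KV}). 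Hence $q^\circ$ is quasi-smooth, and using the left Cartesian square of~\eqref{E:poshecke} the map $\kappa_{n\delta,\alpha}$ is its base change along $\Id\times(X_\alpha\hookrightarrow \RCoh_\alpha(\oS))$, so it too is quasi-smooth. The same argument, applied to the Serre-dual correspondence built from $\mathcal{F}_\alpha=\mathcal{E}_\alpha^\vee\otimes K_{\oS}[1]$ exactly as in \S~\ref{sec:derHecke}, gives the quasi-smoothness of $\overline{\kappa}_{\alpha,n\delta}$.

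For (b), the right Cartesian square of~\eqref{E:poshecke} together with the factorisation $p^\circ=p''\circ p'$ exhibits $\pi'_{n\delta,\alpha}$ as the base change of $p'_{n\delta,\alpha}$ along $(X_{\alpha+n\delta}\hookrightarrow \RCoh_{\alpha+n\delta}(\oS))\times \Id_{\Sym^n(S)}$; here the $\Sym^n(S)$-factor simply cancels in the fibre product, so this base change recovers $\widetilde{X}_{n\delta;\alpha}$ from its definition. It therefore remains to prove that $p'_{n\delta,\alpha}$ and, dually, $\overline{p}'_{\alpha,n\delta}$ are proper and representable over the open locus $\RCoh^{\geq 1}(\oS)$; this is precisely the generalisation of Corollary~\ref{Cor:proppq} from $n=1$ to arbitrary $n$. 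Over $\RCoh^{\geq 1}$ the tautological complex $\mathcal{E}_{\alpha+n\delta}$ (resp.\ $\mathcal{F}_\alpha$) has perfect amplitude $[-1,0]$ and admits local two-step locally free resolutions by the resolution lemma of \S~\ref{sec:derHecke}; the map $p'_{n\delta,\alpha}$ is then the relative Quot-type morphism sending a short exact sequence to its middle term together with the support of its length-$n$ quotient, and $\overline{p}'$ is its Serre dual. Invoking Q.~Jiang's representability and properness results for derived Quot schemes and projectivisations of such complexes \cite[\S~8]{QJiang} (which specialise to \cite[Lem.~5.4]{QJiang} when $n=1$), both $p'$ and $\overline{p}'$ are proper and representable over $\RCoh^{\geq 1}(\oS)$, and base change along $X\hookrightarrow \RCoh^{\geq 1}(\oS)$ yields the claim for $\pi'$ and $\overline{\pi}'$.

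The main obstacle is exactly this last step for $n>1$: when $n=1$ the correspondence is a genuine projective bundle $\mathbb{P}(\mathcal{E}_\gamma)$ (resp.\ $\mathbb{P}(\mathcal{F}_\alpha)$) and properness is immediate, whereas for longer modifications one must work with the full relative Quot scheme of length-$n$ quotients and its derived enhancement. The structural input that makes the argument go through is the refinement by the support map: the unrefined map $p''$ (projection off $\Sym^n(S)$) is only representable, and it is precisely the factor $p'$ landing in $\RCoh_{\alpha+n\delta}\times\Sym^n(S)$ that carries the properness, since fixing the support divisor compactifies the Quot parameter space even when $S$ is open. One must also confirm that conditions (a)--(b) make the squares of~\eqref{E:poshecke} Cartesian, which is where the Hecke-pattern hypotheses enter the base-change reductions above.
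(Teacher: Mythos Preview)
Your reduction of (a) for $\kappa_{n\delta,\alpha}$ to base change of the global quasi-smooth map $q_{n\delta,\alpha}(\oS)$ is correct and is exactly what the paper does. However, your treatment of $\overline{\kappa}_{\alpha,n\delta}$ has a genuine gap. The map $\overline{q}_{\alpha,n\delta}$ sends a short exact sequence to its \emph{middle} term and quotient, so its fibres are open in $\mathbb{V}(\RHom(\mathcal{E}_\alpha,\mathcal{E}_{n\delta}))$. Over $\RCoh_\alpha(\oS)\times\RCoh_{n\delta}(\oS)$ this complex has perfect amplitude $[0,2]$, not $[0,1]$, so the global $\overline{q}$ is \emph{not} quasi-smooth and you cannot simply base change. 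Your appeal to ``the Serre-dual correspondence built from $\mathcal{F}_\alpha$ as in \S\ref{sec:derHecke}'' does not help: that construction is specific to $n=1$, where $\RCoh_\delta\simeq S\times B\Gm$ allows one to identify $\RHom(\mathcal{E}_\alpha,\mathcal{E}_\delta)$ with a complex pulled back from $\RCoh_\alpha\times S$. For $n>1$ no such identification exists. The paper's argument is that the hypothesis $X\subset\RCoh^{\geq 1}(\oS)$ forces $\Ext^2_S(\mathcal{E}_\alpha,\mathcal{E}_{n\delta})=\Hom_S(\mathcal{E}_{n\delta},\mathcal{E}_\alpha\otimes K_S)^\vee=0$ on $X_\alpha\times\RCoh_{n\delta}$, since sheaves in $\RCoh^{\geq 1}$ have no zero-dimensional subsheaves; this is what cuts the amplitude down to $[0,1]$ and makes $\overline{\kappa}$ quasi-smooth. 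You need to say this explicitly.

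For (b), your strategy of invoking derived Quot results from \cite{QJiang} for general $n$ is plausible but you do not cite a precise statement, and you yourself flag it as the ``main obstacle''. The paper takes a different and more elementary route: it introduces the full-flag correspondence $\widetilde{X}_{\delta^n;\beta}$ and factors $\pi'$ (up to a finite map) through a chain of morphisms $\varphi_0,\dots,\varphi_{n-1}$, each of which is a base change of a length-one projectivisation $\mathbb{P}(\mathcal{E}_{\beta+i\delta})\to\RCoh_{\beta+i\delta}\times S$ (and dually $\mathbb{P}(\mathcal{F}_{\beta+i\delta})$ for $\overline{\pi}'$). Since these are proper and representable by \cite[Lem.~5.4]{QJiang} once the complex has amplitude $[-1,0]$, and since the flag-to-Quot map $\gamma$ is proper, representable and surjective, the result for $\pi'$ follows. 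This flag argument avoids any appeal to higher-length derived Quot schemes and uses only the $n=1$ input you already have; it also makes transparent why the Hecke-pattern axioms are needed (each intermediate $\mathcal{F}_i$ must lie in $X$).
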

\begin{proof}
Quasi-smoothness is preserved by derived base change. Hence $\kappa_{n\delta,\alpha}$ is a quasi-smooth morphism. Next, by construction $\widetilde{\RCoh}^{\geq 1}_{n\delta;\alpha-n\delta}$ is open in $\mathbb{V}(\RHom(\mathcal{E}_{\alpha},\mathcal{E}_{n\delta}))$ and as the sheaves over $S$ parametrized by $\RCoh^{\geq 1}$ have no zero-dimensional subsheaves, 
$$\Ext^2_S(\mathcal{E}_{\alpha},\mathcal{E}_{n\delta})|_{X_{\alpha} \times \RCoh_{n\delta}} = \Hom_S(\mathcal{E}_{n\delta},\mathcal{E}_{\alpha} \otimes K_S)|_{X_{\alpha} \times \RCoh_{n\delta}}=\{0\}.$$
Statement (a) follows. In order to prove (b), we introduce for all $\beta$ a full flag version of $\widetilde{X}_{n\delta;\beta}$ as follows:
$$\widetilde{X}_{\delta^n;\beta}=\{\mathcal{F}_n\subset \mathcal{F}_{n-1} \subset \cdots \subset \mathcal{F}_0\,:\, \mathcal{F}_0 \in X_{\beta+n\delta},\,\forall\; i, \mathcal{F}_i/\mathcal{F}_{i+1} \in \Coh_\delta\}.$$
Note that because $X$ is a Hecke pattern, each $\mathcal{F}_i$ belongs to $X$.
There are commutative diagrams
$$\xymatrix{
\widetilde{X}_{\delta^n;\alpha} \ar[r]^-{\gamma} \ar[d]_-{\varphi} & \widetilde{X}_{n\delta;\alpha} \ar[d]^-{\pi'_{n\delta;\alpha}}  & \widetilde{X}_{\delta^n;\alpha-n\delta} \ar[r]^-{\overline{\gamma}} \ar[d]_-{\overline{\varphi}} & \widetilde{X}_{n\delta;\alpha-n\delta} \ar[d]^-{\overline{\pi}'_{n\delta;\alpha}}\\
X_{\alpha+n\delta} \times S^n \ar[r]^-{\Id\times t} & X_{\alpha+n\delta} \times \Sym^n(S)  & X_{\alpha-n\delta} \times S^n \ar[r]^-{\Id\times t} & X_{\alpha-n\delta} \times \Sym^n(S)
}$$
where $\gamma, \overline{\gamma}, \varphi, \overline{\varphi}$ are obvious forgetful maps and $t$ is the projection. The map $t$ being finite, it is proper and representable. Moreover, as any length $n$ sheaf on $S$ admits a full flag of subsheaves, the morphisms $\gamma, \overline{\gamma}$ are proper, representable and surjective. Thus to prove that $\pi'_{n\delta,\alpha}$ and $\overline{\pi}'_{\alpha,n\delta}$ are proper and representable, it suffices to show that the same
holds for the maps $\varphi, \overline{\varphi}$.
In order to show this we consider the chains of forgetful morphisms 
\begin{equation*}
\xymatrix{
\widetilde{X}_{\delta^n; \beta} \ar[r]^-{\overline{\varphi}_{1}} & \widetilde{X}_{\delta^{n-1};\beta} \times S \ar[r]^-{\overline{\varphi}_{2}} & \cdots 
\ar[r]^-{\overline{\varphi}_{n-1}} & \widetilde{X}_{\delta;\beta} \times S^{n-1} \ar[r]^-{\overline{\varphi}_{n}} & X_{\beta} \times S^n
}
\end{equation*}
and
\begin{equation*}
\xymatrix{
\widetilde{X}_{\delta^n;\beta} \ar[r]^-{\varphi_{0}} & \widetilde{X}_{\delta^{n-1};\beta+\delta} \times S \ar[r]^-{{\varphi}_{1}} & \cdots 
\ar[r]^-{\varphi_{n-2}} & \widetilde{X}_{\delta;\beta+(n-1)\delta} \times S^{n-1} \ar[r]^-{\varphi_{n-1}} & X_{\beta +n\delta} \times S^n
}.
\end{equation*}
Observe that the composition of the first and the second chain for $\beta=\alpha-n\delta$ yields $\varphi$ and $\overline{\varphi}$ respectively. Thus we are reduced to checking that each $\varphi_i$ and $\overline{\varphi}_i$ is proper and representable.
By definition of a two-sided Hecke pattern, $\varphi_i$ and $\overline{\varphi}_i$ are respectively obtained by base change from the maps
$$\rho': \widetilde{\RCoh}^{\geq 1}_{\delta;\beta+i\delta} \to \RCoh^{\geq 1}_{\beta+(i+1)\delta} \times S, \qquad 
\overline{\rho}': \widetilde{\RCoh}^{\geq 1}_{\delta;\beta+(n-i)\delta} \to \RCoh^{\geq 1}_{\beta+(n-i)\delta} \times S.
$$
The maps $\rho', \overline{\rho}'$ above are respectively the projections from the projectivization 
$\mathbb{P}(\mathcal{E}_{\beta+(i+1)\delta})$ and $\mathbb{P}(\mathcal{E}_{\beta+(n-i)\delta}^\vee \otimes K_S[1])$. 
Since $\mathcal{E}$ and $\mathcal{E}^\vee \otimes K_S[1]$ are both of perfect amplitude $[0,1]$ over $X \times S$ it 
follows that $\rho',\overline{\rho}'$ are proper and representable (see \cite[Lemma~5.4]{QJiang}). The same therefore 
holds for $\varphi_i, \overline{\varphi}_i$, and we are done.
\end{proof}

\smallskip

Let $X=\{X_\alpha\}$ be an $S$-weak two-sided Hecke pattern. By base change from the 
diagrams~\eqref{E:poshecke}, we may analyze length one Hecke correspondences for $X$ 
in the same fashion as for $\RCoh$. 
We will say that the two-sided Hecke pattern $X$ is \emph{regular} if there exist, for each $\alpha$, an open cover 
$X_\alpha=\bigcup_i U_\alpha^{(i)}$ and locally free resolutions $\mathcal{E}_\alpha^{(i)}$ of length two for which the 
condition \eqref{E:assumption} holds when restricted to $X$. Equivalently, $X$ is regular if and only if the morphisms 
$\kappa_{\delta,\alpha}^{cl}$ and $\overline{\kappa}_{\alpha,\delta}^{cl}$ 
in \eqref{E:poshecke} are lci and of the same dimension as their derived enhancements.

\medskip

\subsection{Hecke patterns and Hecke operators}
\label{sec:Heckeactions}
Let $X$ be an $S$-weak two-sided Hecke pattern. 
Set
$$\mathbf{V}(X)_{\alpha+\Z\delta}=\bigoplus_{n \in \mathbb{Z}} H_*(X_{\alpha+n\delta}, \Q).$$
Let $r$ be the projection 
$$r : H^c_*(\Sym^n(S),\Q) \to 
H^c_0(\Sym^n(S),\Q)=\Q.$$ 
We define the maps
\begin{equation}\label{E:twosidedheckeaction}
\begin{split}    
m_{\alpha,-n\delta}=r\circ(\overline{\pi}'_{\alpha,n\delta})_*\circ(\overline{\kappa}_{\alpha,n\delta})^!:H_*(X_{\alpha},\Q) \otimes H^c_*(\Coh_{n\delta},\Q) &\to H_*(X_{\alpha-n\delta},\Q),\\
m_{n\delta,\alpha}=r\circ(\pi'_{n\delta,\alpha})_*\circ
(\kappa_{n\delta,\alpha})^!:H^c_*(\Coh_{n\delta},\Q) \otimes H_*(X_\alpha,\Q) &\to H_*(X_{\alpha+n\delta},\Q).
\end{split}
\end{equation}
These maps have cohomological degree $2\langle \alpha,n\delta\rangle$ and $-2\langle n\delta,\alpha\rangle$ respectively.
The proof of the following proposition is analogous to Theorem~\ref{thm:fullCOHA-KV}.

\smallskip

\begin{proposition}\label{prop:negheckeactionsdef} 
The map $m_{n\delta,\alpha+m\delta}$ defines a left action 
$$\Psi^+_X: \Hb^c_0(S)\otimes\mathbf{V}(X)_{\alpha + \Z\delta}\to\mathbf{V}(X)_{\alpha + \Z\delta}.$$
The map $m_{\alpha+m\delta,-n\delta}$ defines a right action 
$$\Psi^-_X:\mathbf{V}(X)_{\alpha + \Z\delta}\otimes \Hb^c_0(S)\to\mathbf{V}(X)_{\alpha + \Z\delta}.$$
If $X$ is only a left/right $S$-weak Hecke pattern, we only have the left/right action.
\qed
\end{proposition}

\medskip

\begin{remark}\label{rmk:proper-action-COHA}\leavevmode\nolisttopbreak
\begin{enumerate}[label=$\mathrm{(\alph*)}$,leftmargin=8mm,itemsep=1.2mm]
\item 
Suppose that $X$ is a usual Hecke pattern.
We can write diagrams~\eqref{E:poshecke} with $\RCoh_{n\delta}$ replaced by $\RCoh_{n\delta}(\oS)$. In this case the maps $\pi_{n\delta,\alpha}$, $ \overline{\pi}_{\alpha,n\delta}$ are proper for all $n$ and $\alpha$, 
and the actions $\Psi^\pm_X$ can be lifted to $\Hb_0(\oS)$.
\item 
Similarly, if $X$ is an $S$-strong Hecke pattern, the maps $\pi_{n\delta,\alpha}$, $ \overline{\pi}_{\alpha,n\delta}$ become proper, and so the actions $\Psi^\pm_X$ can be further lifted to $\Hb_0(S)$.
\item
For $X=\RCoh^{\geq 1}$ and $n=1$, the maps $m_{\alpha,-n\delta},$  $m_{n\delta,\alpha}$ are the negative and positive Hecke operators defined in \S\ref{sec:derived Hecke}.
\end{enumerate}
\end{remark}

\medskip

By Example~\ref{ex:Hecke-patterns}, we have both a left and a right $\mathbf{H}_0(S)$-module structure on the space 
$$\Hb(S)^{\geq d}_{\alpha+\mathbb{Z}\delta}=\bigoplus_{n\in \mathbb{Z}}H_*(\Coh^{\geq d}_{\alpha+n\delta},\Q).$$
Similarly, we have left and right $\mathbf{H}_0(\oS)$-actions on $\Hb(\oS)^{\geq d}_{\alpha+\mathbb{Z}\delta}$.

\medskip

\subsection{Hecke patterns and tautological classes} 
Let $X$ be an $S$-weak two-sided Hecke pattern. Let 
$$\ev_\alpha : \Lambda(\oS) \to H^*(X_\alpha, \Q)$$
denote the restriction of tautological classes 
on $\RCoh_\alpha(\oS)$ to $X_\alpha$. 
This defines a representation 
$$\bullet:\Lambda(\oS)\times\mathbf{V}(X)_{\alpha+\Z\delta}\to\mathbf{V}(X)_{\alpha+\Z\delta}$$
Recall the involution $\upsilon$ in~\eqref{eq:involution}.
For any $c \in \Lambda(\oS),$ $u \in \Hb^c_0(S)$ and $z \in \mathbf{V}(X)_{\alpha+\Z\delta}$ we have
\begin{align*}
    c \bullet m_{n\delta,\alpha}(u \otimes z)&=\sum (-1)^{|c^{(2)}_i|\cdot |u|} m_{n\delta,\alpha}\Big((c^{(1)}_i \bullet u) \otimes (c^{(2)}_i \bullet z)\Big), \\
    c \bullet m_{\alpha,-n\delta}(u \otimes z)&=\sum (-1)^{|c^{(2)}_i|\cdot |u|} m_{\alpha,-n\delta}\Big((\upsilon(c^{(1)}_i) \bullet u) \otimes (c^{(2)}_i \bullet z)\Big).
\end{align*}
Thus, the left and right $\Hb^c_0(S)$-actions on $\mathbf{V}(X)_{\alpha+\Z\delta}$ extend to actions of 
the algebra $\widetilde{\Hb}^c_0(S)$ defined in \eqref{tildeHc}. 
The proof is identical to that of Proposition~\ref{prop:modulealgebra}.
Analogous statements hold for usual and $S$-strong Hecke patterns, and the actions defined in Remark~\ref{rmk:proper-action-COHA}.

\medskip

\subsection{The case of regular Hecke correspondences} 
Let $X$ be a two-sided Hecke pattern.
Set
$$H_*^\taut(X_\alpha,\Q)=\Lambda(\oS) \bullet [X_\alpha^{cl}]
,\quad
\mathbf{V}^\taut(X)_{\alpha+\Z\delta}=\bigoplus_{n\in\Z} H_*^\taut(X_{\alpha+n\delta},\Q).$$ 
We consider the linear map
$$\ev':\mathbf{F}(\oS)\to \mathbf{V}^\taut(X)_{\alpha+\Z\delta}
,\quad
xu^n\mapsto x\bullet[X_{\alpha+n\delta}^{cl}]
,\quad 
x\in\Lambda(\oS).
$$
Let $\End_X(\mathbf{F}(\oS))$ be the subspace of all 
endomorphisms of $\mathbf{F}(\oS)$
preserving the kernels of the maps $\ev'$.
Propositions~\ref{P:negut}, \ref{P:negutdual} and \ref{prop:psi-T-rels} yield the following.

\smallskip

\begin{proposition}\label{P:regularHP} Let $X$ be a regular two-sided Hecke pattern of rank $r$.  
\begin{enumerate}[label=$\mathrm{(\alph*)}$,leftmargin=8mm,itemsep=1.2mm]
\item 
There is a commutative diagram of homomorphisms 
$$\xymatrix{W^{\pm}(\oS) \ar[r] \ar@/^1pc/[rr]^-{\Phi^\pm} \ar[dr]_-{\Phi^\pm_X} & \End_X(\mathbf{F}(\oS)) \ar[r] \ar[d]^-{\ev'} & \End(\mathbf{F}(\oS))\\
& \End(\mathbf{V}^\taut(X)_{\alpha+\Z\delta})&}$$
in which $\Phi^\pm, \Phi^\pm_X$ are algebra homomorphisms.
\item
For any $\xi \in H^*(\oS)$ and $n \geq 0$, we have
\begin{align*}
    \Phi^+_X(T_n(\xi))=T_+(\xi u^{n-r+1})=\Psi^+_X(\xi u^{n-r+1} \cap [\Coh_\delta]),\\
    \Phi^-_X(T_n(\xi))=T_-(\xi u^{n+r+1})=\Psi^-_X(\xi u^{n+r+1} \cap [\Coh_\delta]).
\end{align*}
\item
The maps $\Phi_X^{\pm}$ glue into an algebra homomorphism $\Phi_X: \DW{r}(\oS) \to \End(\mathbf{V}^\taut(X)_{\beta+\Z\delta})$.
\item 
If $X$ is $S$-strong, then the assumptions above hold with $\oS$ replaced by $S$, and we get an algebra homomorphism $\Phi_X: \Wnn{r}(S) \to \End(\mathbf{V}^\taut(X)_{\beta+\Z\delta})$.
\end{enumerate}
\end{proposition}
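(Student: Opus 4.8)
The plan is to exhibit $\mathbf{V}^\taut(X)_{\alpha+\Z\delta}$ as a quotient of the level $r$ Fock space $\mathbf{F}^{(r)}(\oS)$ via the surjection $\ev'$, and to transport the $\DW{r}(\oS)$-module structure of Corollary~\ref{cor:Fock-is-module} along it. First I would note that the choice $\mathfrak{U}=X_\gamma$, $\mathfrak{U}'=X_\alpha$ satisfies the inclusion conditions~(\ref{eq:T-plus-condition},~\ref{eq:T-minus-condition}): this is exactly the content of the two-sided Hecke pattern axioms (a),(b), as recorded after the definition of Hecke patterns. Regularity of $X$ then furnishes, on a finite type open cover of each $X_\alpha$, the two-step locally free resolutions of \S\ref{sec:atlashecke} whose sections $s^{cl},(s')^{cl}$ are regular, so that the hypothesis~\eqref{E:assumption} holds after restriction to $X$.

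The crux is to check that $\ev'$ intertwines the generators of $\DW{r}(\oS)$ acting on $\mathbf{F}^{(r)}(\oS)$ with geometric operators on $\mathbf{V}^\taut(X)$. For the generators $\psi_n(\lambda)$ this is automatic, since $\ev'$ is $\Lambda(\oS)$-linear and $\psi_n(\lambda)$ acts on $\mathbf{F}^{(r)}(\oS)$ by multiplication by the class $\phi_n(\lambda)\in\Lambda(\oS)$, which descends to the cap product action $\bullet$ on tautological classes. For $T^\pm_n(\lambda)$ I would rerun the computations of Propositions~\ref{P:negut} and~\ref{P:negutdual} verbatim, now with $\mathfrak{U},\mathfrak{U}'$ replaced by $X_\gamma,X_\alpha$; the regularity established above is precisely what makes the proper base change and Segre-class identities in those proofs valid on $X$. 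This identifies the length one Hecke operators $\Psi^\pm_X$ of \S\ref{sec:Heckeactions}, evaluated on $\xi u^{n\mp r+1}\cap[\Coh_\delta]$, with the operators $\mathcal{L}^\pm_r(\xi\otimes -)$ of \S\ref{sec:explicitcomputHecke} acting on $\Lambda(\oS)$, which is the assertion of the displayed formulas. Since by Proposition~\ref{prop:psi-T-rels} these operators already obey the defining relations \eqref{W:a}-\eqref{W:g} of $W^\pm(\oS)$, they preserve $\ker(\ev')$, hence land in $\End_X(\mathbf{F}(\oS))$ and descend to the algebra homomorphisms $\Phi^\pm_X$ fitting in the stated commutative triangle.

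To glue $\Phi^+_X$ and $\Phi^-_X$ into a homomorphism $\Phi_X$ on all of $\DW{r}(\oS)$ it remains to verify the mixed relation~\eqref{E:doublerelation}. For this I would invoke Proposition~\ref{Prop:+-relationsfockspace}, which establishes exactly this relation among the operators $L^\pm$ on $\mathbf{F}^{(r)}(\oS)$. Together with the two half-relations and the triangular decomposition of Proposition~\ref{prop:big-W-triang}, and using the surjectivity of $\ev'$ so that every identity among the Fock operators descends to their images, this shows that $\ker(\ev')$ is a genuine $\DW{r}(\oS)$-submodule and that $\Phi_X$ is a well-defined algebra homomorphism.

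For the $S$-strong case I would show that the construction factors through the quotient $\DW{r}(\oS)\twoheadrightarrow\Wnn{r}(S)$. By Lemma~\ref{lem:strong-Hecke-equiv} the sheaves in an $S$-strong pattern are locally free along $\oS\setminus S$, so Lemma~\ref{lem:eval-open-restriction} shows the evaluations $\ev_\alpha$ factor through $\Lambda(S)$; dually, Lemma~\ref{lm:Hecke-base-change} and Proposition~\ref{P:heckecompact} show the Hecke operators on $X$ depend only on the restriction to $S$ of the relevant classes. Hence the generators labelled by $I_S^\perp=\ker(H^*(\oS,\Q)\to H^*(S,\Q))$ act trivially, so $\Phi_X$ kills the kernel of $\DW{r}(\oS)\to\Wnn{r}(S)$ and descends as claimed. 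I expect the main obstacle to lie in the second paragraph: one must check that the local nature of the resolutions and regular sections in \S\ref{sec:atlashecke} does not obstruct the global identification of $\Psi^\pm_X$ with $\mathcal{L}^\pm_r$ on $X$, and that regularity genuinely suffices to carry Propositions~\ref{P:negut} and~\ref{P:negutdual} over from open substacks of $\RCoh^{\geq 1}$ to the pattern $X$ itself.
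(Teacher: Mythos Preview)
Your proposal is correct and follows essentially the same route as the paper, which simply cites Propositions~\ref{P:negut}, \ref{P:negutdual} and~\ref{prop:psi-T-rels} as yielding the result. You have accurately unpacked how regularity makes Propositions~\ref{P:negut} and~\ref{P:negutdual} apply to $X$, how the resulting intertwining relation $\ev'\circ\mathcal{L}^\pm_r = \Psi^\pm_X\circ\ev'$ forces the Fock operators into $\End_X(\mathbf{F}(\oS))$, and how the Fock-space relations of \S\ref{sec:Fock4} then glue the two halves; one small comment is that the preservation of $\ker(\ev')$ comes from the intertwining identity itself rather than from the $W$-relations, but this does not affect the argument.
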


When $X$ is only an $S$-weak Hecke pattern, regularity is not sufficient to ensure the actions of $\Wc^\pm(S)$ in our setup.
The reason is that $\Wc^\pm(S)$ is \emph{defined} as a subalgebra of $W^\pm(\oS)$; furthermore, it is typically not generated by degree 1 elements.
Because of this, we will have to postpone the proof of an analogous statement until the end of \S\ref{sec:Hilbert}, see Corollary~\ref{cor:regularHPcompact}.

\smallskip

Thanks to Proposition~\ref{prop:Negutlemmavirtual}, a result in all points analogous to Proposition~\ref{P:regularHP} holds without any regularity assumptions if we replace $\mathbf{V}(X)^{\taut}$ by its virtual cousin
$$\mathbf{V}^{\vtaut}(X_\alpha)=\Lambda(\oS) \bullet [X_\alpha].$$

\subsection{Base change for Hecke patterns} 
Let $S$ be a smooth surface and let $X$ be an $S$-weak two-sided Hecke pattern on $S$.

\begin{proposition}\label{prop:basechangeHP}  \leavevmode\nolisttopbreak
\begin{enumerate}[label=$\mathrm{(\alph*)}$,leftmargin=8mm]
\item Let $\iota: S^\circ \to S$ be an open embedding. Then $X$ is an $S^\circ$-weak two-sided 
Hecke pattern on $S^\circ$ and we have $\Psi^{\pm}_{X} \circ \iota_{!} =\Psi^{\pm}_{X}: \Hb^c_0(S^\circ) \to \End(
\mathbf{V}(X)_{\alpha+\Z\delta})$,

\item Let $j:X^\circ \to X$ be an open immersion and assume that $X^\circ$ is 
also an $S$-weak two-sided Hecke pattern on $S$. Then we have a commutative diagram
$$\xymatrix{ \Hb_0^c(S) \ar[r] \ar@/^1pc/[rr]^-{\Psi^{\pm}_X} \ar[dr]_-{\Psi^{\pm}_{X^\circ}} & \End_{X^\circ}(\mathbf{V}(X)_{\alpha+\Z\delta}) \ar[r] \ar[d]^-{\text{res}} & \End(\mathbf{V}(X)_{\alpha+\Z\delta})\\& \End(\mathbf{V}(X^\circ)_{\alpha+\Z\delta})&}$$
where $\End_{X^\circ}(\mathbf{V}(X)_{\alpha+\Z\delta})$ is the subset of endomorphisms preserving the kernel of the map 
$j^*: \mathbf{V}(X)_{\alpha+\Z\delta} \to \mathbf{V}(X^\circ)_{\alpha+\Z\delta}$.
\end{enumerate}   
\end{proposition}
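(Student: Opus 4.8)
The plan is to derive both statements from the compatibility of quasi-smooth pullback, proper pushforward, and the degree-zero contraction $r$ with open base change in hyperbolic Borel--Moore homology (Proposition~\ref{P:basechange}). Neither part requires a new idea beyond the diagram chases already performed for Lemma~\ref{L:embStoS'} and Proposition~\ref{P:heckecompact}; the content is entirely in setting up the correct cartesian squares.

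For part (a), I would first dispose of the claim that $X$ is an $S^\circ$-weak two-sided Hecke pattern: since $S^\circ\subset S$ is open we have $\RCoh^0(S^\circ)\subset \RCoh^0(S)$, so conditions (a) and (b) for quotients $\mathcal{G}\in\RCoh^0(S^\circ)$ are special cases of the corresponding conditions already satisfied by $X$ over $S$. For the operator identity, the key observation is that the length-$n$ correspondence computing $\Psi^\pm_X$ when $X$ is viewed as an $S^\circ$-weak pattern is precisely the open substack of the $S$-weak correspondence cut out by demanding the quotient to be supported on $S^\circ$; this open substack is the base change of the $S$-correspondence along $\RCoh_{n\delta}(S^\circ)\hookrightarrow\RCoh_{n\delta}(S)$, which is exactly the map along which $\iota_!$ is defined as a hyperbolic pushforward. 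Running the diagram of Proposition~\ref{P:heckecompact} verbatim --- base change for $(\kappa_{n\delta,\alpha})^!$ against $\iota_!$, then proper base change for $(\pi'_{n\delta,\alpha})_*$, then the identification of the resulting contraction along $\Sym^n$ with $r$ --- yields $\Psi^\pm_X\circ\iota_!=\Psi^\pm_X$, and symmetrically for the negative operators via $\overline{\kappa}_{\alpha,n\delta}$, $\overline{\pi}'_{\alpha,n\delta}$.

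For part (b), the essential geometric input is that, because $X^\circ$ is itself two-sided, a point $0\to\mathcal{E}\to\mathcal{F}\to\mathcal{G}\to 0$ of $\widetilde{X}_{n\delta;\alpha}$ satisfies $\mathcal{F}\in X^\circ$ if and only if $\mathcal{E}\in X^\circ$: the forward implication is condition (a), and the backward one is condition (b) applied to $\mathcal{F}\in\RCoh^{\geq 1}$. Hence the two a priori different base changes agree,
\[ \widetilde{X^\circ}_{n\delta;\alpha}=\widetilde{X}_{n\delta;\alpha}\underset{X_\alpha}{\times}X^\circ_\alpha=\widetilde{X}_{n\delta;\alpha}\underset{X_{\alpha+n\delta}}{\times}X^\circ_{\alpha+n\delta}, \]
both being one and the same open substack of $\widetilde{X}_{n\delta;\alpha}$. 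This produces cartesian squares, with open horizontal arrows, relating $\kappa^\circ$ and $(\pi')^\circ$ to their counterparts for $X$ (the first square via the source map, the second via the factored target map). Open base change for $(\pi')_*$ and for $\kappa^!$, together with the fact that $r$ only involves the $\Sym^n(S)$ factor and hence commutes with restriction along $j$, then gives the intertwining relation $j^*\circ\Psi^\pm_X(u)=\Psi^\pm_{X^\circ}(u)\circ j^*$ for every $u\in\Hb^c_0(S)$. This shows at once that each $\Psi^\pm_X(u)$ preserves $\ker(j^*)$, so that $\Psi^\pm_X$ factors through $\End_{X^\circ}(\mathbf{V}(X)_{\alpha+\Z\delta})$, and that the induced map on $\mathbf{V}(X^\circ)_{\alpha+\Z\delta}=\mathbf{V}(X)_{\alpha+\Z\delta}/\ker(j^*)$ is exactly $\Psi^\pm_{X^\circ}$ --- which is the asserted commutative diagram.

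I expect the only genuinely non-formal step to be the correspondence identity in part (b), since this is where the two-sidedness of $X^\circ$ is used in an essential way: for a merely left (resp. right) $S$-weak $X^\circ$ only one of the two fiber products would describe $\widetilde{X^\circ}$, and the same argument would then yield the intertwiner only for $\Psi^+$ (resp. $\Psi^-$). The remaining difficulty is bookkeeping: one must run base change through the \emph{factored}, proper maps $\pi'_{n\delta,\alpha}$, $\overline{\pi}'_{\alpha,n\delta}$ rather than the possibly non-proper $\pi_{n\delta,\alpha}$, $\overline{\pi}_{\alpha,n\delta}$, and track the $\Sym^n(S)$ factor through the contraction $r$ as in Proposition~\ref{P:heckecompact}.
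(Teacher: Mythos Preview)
Your proposal is correct and follows essentially the same approach as the paper: for each part you set up the relevant cartesian squares relating the two correspondence diagrams and invoke the base-change compatibilities of Proposition~\ref{P:basechange} (for (a)) and Lemma~\ref{L:projformula} (for (b)). Your explicit justification that the two fiber products $\widetilde{X}_{n\delta;\alpha}\times_{X_\alpha}X^\circ_\alpha$ and $\widetilde{X}_{n\delta;\alpha}\times_{X_{\alpha+n\delta}}X^\circ_{\alpha+n\delta}$ coincide, using the two-sidedness of $X^\circ$, is a point the paper leaves implicit but which is indeed needed.
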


\begin{proof} 
We treat the case of $\Psi^+$, the other being similar. Consider the diagram
$$\xymatrix{\RCoh_{l\delta}(S) \times X_\alpha & \widetilde{X} \ar[r]^-{\pi'_{l\delta,\alpha}} \ar[l]_-{\kappa_{l\delta,\alpha}} & X_{\alpha+l\delta}\times \Sym^l(S)\\
\RCoh_{l\delta}(S^\circ) \times X_\alpha \ar[u] & \widetilde{Z} \ar[u] \ar[r]^-{\pi^\#_{l\delta,\alpha}} \ar[l]_-{\kappa^\#_{l\delta,\alpha}} & X_{\alpha+l\delta}\times \Sym^l(S^\circ)\ar[u]}$$
where $\widetilde{Z}=\widetilde{X} \times_{X_{\alpha+l\delta} \times \Sym^l(S)} (X_{\alpha+l\delta} \times \Sym^l(S^\circ))$ and the vertical maps are all open embeddings. Both squares are cartesian by definition of Hecke patterns. Statement (a) follows from base change properties of the morphism $\iota_{!}$, see Proposition~\ref{P:basechange}. Now consider the diagram
$$\xymatrix{\RCoh_{l\delta}(S) \times X_\alpha  & \widetilde{Z}  \ar[r]^-{\pi^\#_{l\delta,}} \ar[l]_-{\kappa^\#_{l\delta,\alpha}} & X_{\alpha+l\delta}\times \Sym^l(S)\\
\RCoh_{l\delta}(S) \times X^\circ_\alpha\ar[u]& \widetilde{X}^\circ \ar[u]\ar[r]^-{\pi^{'\circ}_{n\delta,\alpha}} \ar[l]_-{\kappa^\circ_{l\delta,\alpha}} & X^\circ_{\alpha+l\delta}\times \Sym^l(S)\ar[u]}.$$
Again, both squares are cartesian, and the vertical arrows are all open embeddings induced by $j$. Statement (b) follows from proper base change in hyperbolic homology, see Lemma~\ref{L:projformula}.
\end{proof}

\medskip

\section{Action on Hilbert schemes}\label{sec:Hilbert}
In this section we construct actions of $\Hb_0(S)$ and $ \Hb^c_0(S)$ on the homology of the Hilbert scheme of points 
on $S$, and we explicitly describe the action on tautological classes using the results of \S\ref{sec:derived Hecke}. 
We assume everywhere that the surface $S$ is pure.

\subsection{The Hilbert scheme and stack} 
Let $\Pic$ be the derived stack of invertible coherent sheaves on $S$, $B\Gm \to \Pic$ the closed immersion of the 
substack parametrizing trivial invertible sheaves, and $\mathfrak{Perf}$ the derived stack of perfect complexes. 
 There is a morphism of derived stacks $\RCoh^{\geq 2}_{1,-n\delta} \to \mathfrak{Perf}_{1,-n\delta}$.
 Composed with the perfect determinant map $\mathfrak{Perf}_{1,-n\delta} \to \Pic$ defined in~\cite{STV15}, 
 it yields a morphism of derived stacks $\RCoh^{\geq 2}_{1,-n\delta} \to \Pic$.
We define the Hilbert stack of $S$ to be the derived fiber product 
$$\Hilb_n= \RCoh^{\geq 2}_{1,-n\delta} \underset{\Pic}{\times} B\Gm$$
We write $\Hilb=\bigsqcup_n \Hilb_n$.
Let $\Hilbrm_n$ be the Hilbert scheme of $n$ points on $S$, whose points parametrize ideal sheaves 
$\mathcal{I} \subset \mathcal{O}_{S}$ and of colength $n$. It is the coarse moduli space of $\Hilb_n$.

\medskip

\begin{lemma}\label{lem:hilbproperties1} The following statements hold:
\begin{enumerate}[label=$\mathrm{(\alph*)}$,leftmargin=8mm,itemsep=.5em]
\item $\Hilb_n$ is isomorphic to its classical truncation,
\item There is an isomorphism of stacks $\Hilb_n\simeq \Hilbrm_n \times B\mathbb{G}_m$.
\end{enumerate}
\end{lemma}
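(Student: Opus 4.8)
The plan is to prove the two statements in turn, relying on the derived-geometric setup established above, in particular the perfect determinant construction $\RCoh^{\geq 2}_{1,-n\delta} \to \Pic$ and the definition of $\Hilb_n$ as the fiber product over $\Pic$ with $B\Gm$.

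For part (a), I would first argue that $\Hilb_n$ is classical, i.e.\ that the derived structure is trivial. The key observation is that a point of $\Hilb_n$ corresponds to a sheaf $\mathcal{E}$ of rank $1$ with $\ch=1-n\delta$ and trivial determinant, which (after the $B\Gm$-rigidification built into the fiber product) is the same data as an ideal sheaf $\mathcal{I} \subset \mathcal{O}_S$ of colength $n$. The derived structure on $\RCoh^{\geq 2}_{1,-n\delta}$ comes from the obstruction space $\Ext^2_S(\mathcal{E},\mathcal{E})$; by Serre duality this is $\Hom_S(\mathcal{E},\mathcal{E}\otimes K_S)^\vee$, and for $\mathcal{E}=\mathcal{I}$ an ideal sheaf the trace-free part of this obstruction vanishes once we fix the determinant. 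More precisely, the fiber product with $B\Gm$ over $\Pic$ cuts out exactly the fixed-determinant locus, so the relevant deformation-obstruction theory is controlled by the \emph{traceless} Ext-groups $\Ext^i_S(\mathcal{I},\mathcal{I})_0$; the vanishing of $\Ext^2_0$ for ideal sheaves on a smooth surface is classical (it follows from $\Hom_S(\mathcal{I},\mathcal{I})_0 = 0$ and Serre duality for the traceless part), and this forces $\Hilb_n$ to be quasi-smooth of the expected dimension with no higher obstructions, hence classical. Identifying the classical truncation with $\Hilbrm_n \times B\Gm$ then gives that $\Hilbrm_n$ is the coarse space, since the $B\Gm$ factor is precisely the automorphisms (scalars) of an ideal sheaf.

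For part (b), the canonical splitting $\Hilb_n \simeq \Hilbrm_n \times B\Gm$ should be produced from the universal ideal sheaf. On $\Hilb_n$ there is a tautological sheaf $\mathcal{E}$ together with, by construction of the fiber product, a trivialization of $\det(\mathcal{E})$; the residual $\Gm$-gerbe structure is split because the ideal sheaf $\mathcal{I}$ is simple (i.e.\ $\Hom_S(\mathcal{I},\mathcal{I})=\C$), so its automorphism group is canonically $\Gm$ acting by scalars, and this $\Gm$ splits off as a direct factor via the section given by the rigidification. Concretely, I would exhibit the isomorphism by noting that $\Hilb_n$ is a trivial $\Gm$-gerbe over $\Hilbrm_n$: the obstruction to splitting lives in $H^2(\Hilbrm_n, \Gm)$, but the existence of a universal ideal sheaf (a universal family exists on $\Hilbrm_n$ since the Hilbert scheme is a fine moduli space) trivializes this gerbe canonically.

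The main obstacle I anticipate is part (a) — specifically, verifying cleanly that the derived stack $\Hilb_n$ has no derived structure, i.e.\ that it agrees with its classical truncation. This requires care with the fixed-determinant condition: one must check that imposing trivial determinant via the fiber product with $B\Gm$ over $\Pic$ precisely removes the trace part of the deformation theory, reducing the obstruction space from $\Ext^2_S(\mathcal{I},\mathcal{I})$ (which is generally nonzero) to the traceless $\Ext^2_S(\mathcal{I},\mathcal{I})_0$ (which vanishes). Establishing this reduction rigorously at the level of derived stacks — rather than merely on tangent-obstruction complexes at closed points — is the delicate step, and I expect it to use the functoriality of the perfect determinant map of \cite{STV15} together with the identification of its fiber with the fixed-determinant derived moduli stack.
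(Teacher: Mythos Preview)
Your approach to part (b) matches the paper's: both note that $\Hilb_n$ is a $\Gm$-gerbe over $\Hilbrm_n$ and that the universal ideal sheaf (which exists because $\Hilbrm_n$ is a fine moduli space, realized canonically as a subsheaf of $\mathcal{O}_{\Hilbrm_n}\boxtimes\mathcal{O}_S$) provides a canonical splitting.

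For part (a), your route is different. You argue via pointwise vanishing of the traceless obstruction space $\Ext^2_S(\mathcal{I},\mathcal{I})_0$, and you correctly flag the delicate step of promoting this to a global statement about the derived structure of the fiber product. The paper instead uses a dimension-counting argument: it computes the virtual dimension of $\Hilb_n = \RCoh^{\geq 2}_{1,-n\delta} \times_{\Pic} B\Gm$ to be $2n-1$ (using additivity of virtual dimensions in fiber products and the formula $d_\alpha=-\langle\alpha,\alpha\rangle$), observes that the classical truncation is smooth of the same dimension $2n-1$, and invokes the general fact that a quasi-smooth derived stack whose classical truncation is smooth of dimension equal to the virtual dimension is underived. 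This bypasses entirely the issue you identify, since no analysis of the fixed-determinant deformation theory at the level of derived stacks is needed---only the numerical match and smoothness of the classical truncation. Your approach would also work (the vanishing $\Ext^2_S(\mathcal{I},\mathcal{I})_0=0$ is indeed correct, via $\Hom(\mathcal{I},\mathcal{I}\otimes K_S)\simeq H^0(K_S)$ for rank one torsion-free sheaves), but the obstacle you flag is real and would require additional input to make rigorous; the paper's argument is more economical.
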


\begin{proof}
Since the morphism $\RCoh^{\geq 2}_{1,-n\delta} \to \Pic$ is $\Pic^{cl}$-equivariant, we have
\begin{equation*}
 \begin{split}  
\text{vdim}(\RHilb_n)&=\text{vdim}(\RCoh^{\geq 2}_{1,-n\delta})-\text{vdim}(\Pic)-1\\
 &=\langle [\mathcal{O}_{S}] -n\delta\,,\, [\mathcal{O}_{S}]-n\delta \rangle - \langle [\mathcal{O}_{S}], [\mathcal{O}_{S}] \rangle -1\\
 &=2n-1.
 \end{split}
\end{equation*} 
The classical truncation of $\RHilb_n$ parametrizes colength $n$ ideal subsheaves $\mathcal{I} \subset \mathcal{O}_{S}$. 
It is smooth and of dimension $2n-1$. 
The stack $\RHilb_n$ is quasi-smooth. 
Part (a) follows using the fact that a quasi-smooth derived stack with smooth classical truncation and whose virtual dimension coincides with that of its classical truncation is underived. 

We now turn to (b). 
The stack $\Hilb_n$ parametrizes some simple rank 1 sheaves over the proper surface $\oS$. 
Hence it is a $\mathbb{G}_m$-gerbe over its coarse moduli space $\Hilbrm_n$.
Fixing a splitting of this gerbe is the same as choosing a universal sheaf $\mathcal{U}_n$ on $\Hilbrm_n \times S$. 
But such a canonical sheaf is given by a subsheaf of $\mathcal{O}_{\Hilbrm_n} \boxtimes \mathcal{O}_{S}$.
\end{proof}

\smallskip
Fix a smooth compactification $\iota: S \to \oS$. 
We define $\RHilb_n(\oS)$, $\Hilbrm_n(\oS)$ and $\Pic(\oS)$ as above.
Let $\rho$ be the degree one line bundle on $\Hilb_n(\oS)$ pulled back from $B\mathbb{G}_m$. 
Let $\mathcal{U}_n$ and $\mathcal{I}_n= \mathcal{U}_n \boxtimes \rho$  
be the universal ideal sheaves in
$Coh(\Hilbrm_n(\oS) \times \oS)$ 
and
$Coh(\Hilb_n(\oS) \times \oS)$ 
respectively. Let $\ev$ denote the evaluation  morphism \eqref{eval}
$$\ev:\Lambda(\oS)\to H^*(\RHilb_n(\oS),\Q).$$

\medskip

\begin{lemma}\label{lem:hilbproperties2} We have 
$c_1(\rho)
=\ev(p_1(\pt))
=\int_{\oS} c_1(\mathcal{I}_n) \cdot \pt$.
\end{lemma}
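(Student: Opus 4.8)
The plan is to trace both outer quantities back to the first Chern class of $\mathcal{I}_n$ and then to compute that class explicitly using the splitting of Lemma~\ref{lem:hilbproperties1}(b).

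First I would dispatch the equality $\ev(p_1(\pt)) = \int_{\oS} c_1(\mathcal{I}_n)\cdot\pt$, which is essentially the definition of $\ev=\ev_{\RHilb}$. Extracting the degree-one component of the defining identity $(\ev\otimes\Id)(\uch(u)) = \ch(\mathcal{I}_n)$, exactly as in Example~\ref{ex:p1pt} and using that the compactification $\oS$ is proper, gives $\sum_\lambda \ev(p_1(\lambda))\otimes\lambda^* = c_1(\mathcal{I}_n)$ in $H^2(\Hilb_n(\oS)\times\oS,\Q)$. Cupping the $\oS$-factor with $\pt$ and pushing forward along the projection to $\Hilb_n(\oS)$ isolates the summand $\lambda=\pt$, since $\int_{\oS}\lambda^*\cup\pt = \delta_{\lambda,\pt}$ for the chosen dual bases; this yields the claimed identity directly.

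The main content is then the equality $c_1(\rho) = \int_{\oS} c_1(\mathcal{I}_n)\cdot\pt$, for which I would compute $c_1(\mathcal{I}_n)$ from $\mathcal{I}_n\simeq \mathcal{U}_n\boxtimes\rho$. Writing $\mathcal{U}_n$ also for its pullback to $\Hilb_n(\oS)\times\oS$ and $\mathrm{pr}$ for the projection to $\Hilb_n(\oS)$, we have $\mathcal{I}_n\simeq \mathcal{U}_n\otimes\mathrm{pr}^*\rho$, so that $c_1(\mathcal{I}_n) = c_1(\mathcal{U}_n) + \rk(\mathcal{U}_n)\,\mathrm{pr}^*c_1(\rho) = c_1(\mathcal{U}_n) + \mathrm{pr}^*c_1(\rho)$, as $\mathcal{U}_n$ has rank one. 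To see $c_1(\mathcal{U}_n)=0$, I would use the universal short exact sequence $0\to\mathcal{U}_n\to\mathcal{O}\to\mathcal{O}_{Z_n}\to 0$ with $Z_n$ the universal length-$n$ subscheme: the quotient $\mathcal{O}_{Z_n}$ is supported in codimension two, so $\ch(\mathcal{O}_{Z_n})$ has no $H^2$-component and $c_1(\mathcal{U}_n) = -c_1(\mathcal{O}_{Z_n}) = 0$. Hence $c_1(\mathcal{I}_n)=\mathrm{pr}^*c_1(\rho)$, and by the projection formula $\int_{\oS}\mathrm{pr}^*c_1(\rho)\cup\pt = c_1(\rho)\cdot\int_{\oS}\pt = c_1(\rho)$, since $\oS$ is proper and $\int_{\oS}\pt=1$. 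Combining the two identities proves the lemma.

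All the computations here are routine, and there is no serious obstacle. The only point that demands care is the Künneth/slant-product bookkeeping in the first step: one must match the dual-basis conventions defining $\ev$ with the partial integration $\int_{\oS}(-)\cup\pt$, and keep track of which sheaves are pulled back from $\Hilbrm_n(\oS)\times\oS$ versus from the $B\Gm$-factor.
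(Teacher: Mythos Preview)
Your argument is correct and follows the same overall strategy as the paper: both proofs unpack the definition of $\ev(p_1(\pt))$, use the splitting $\mathcal{I}_n\simeq\mathcal{U}_n\boxtimes\rho$ to reduce to showing that $c_1(\mathcal{U}_n)$ contributes nothing, and then invoke the universal short exact sequence $0\to\mathcal{U}_n\to\mathcal{O}\to\mathcal{O}_{Z_n}\to 0$.

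The one notable difference is in how the codimension-two vanishing is used. You observe directly that $Z_n\subset\Hilbrm_n(\oS)\times\oS$ has codimension two in a smooth ambient space, hence $c_1(\mathcal{O}_{Z_n})=0$ and so $c_1(\mathcal{U}_n)=0$ on the full product. The paper instead restricts $\mathcal{U}_n$ to $\Hilbrm_n(\oS)\times\{s\}$ for a chosen point $s$, passes to the open locus $\Hilbrm_n^\circ(\oS)$ of subschemes avoiding $s$ (whose complement has codimension two in $\Hilbrm_n(\oS)$), and uses that $\mathcal{T}_n$ vanishes on $\Hilbrm_n^\circ(\oS)\times\{s\}$. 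Your route is shorter and yields the stronger statement that $c_1(\mathcal{U}_n)$ vanishes before restriction; the paper's version keeps everything on the Hilbert-scheme factor. Both are valid, and the bookkeeping caveat you flag at the end is the only place that needs care.
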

 
\begin{proof}
Since $\mathcal{I}_n$ is of generic rank one, we have 
$c_1(\mathcal{I}_n)=c_1(\mathcal{U}_n) + c_1(\rho)$. 
We may write 
$$c_1(\mathcal{I}_n)=\ev(p_1(\pt)) \otimes 1 + y
,\quad
y\in H^*(\Hilbrm_n(\oS),\Q) \otimes H^{>0}(\oS,\Q).$$ 
Thus, given any point $s$ in $\oS$, we have 
$$\ev(p_1(\pt))=c_1(\rho) + c_1(\mathcal{U}_n|_{\Hilbrm_n(\oS) \times \{s\}}).$$
However, since $\mathcal{U}_n$ is isomorphic to $\mathcal{O}_{\Hilbrm_n(\oS) \times S}$ outside of a closed subset of codimension $2$, we have
\[
    c_1(\mathcal{U}_n) = c_1(\det \mathcal{U}_n) = c_1(\det \mathcal{O}) = 0,
\]
and so $c_1(\mathcal{U}_n|_{\Hilbrm_n(\oS) \times \{s\}}) = 0$.
\end{proof}

\smallskip

Hence, we have an isomorphism
\begin{equation}\label{E:Hilbident}
H_*(\Hilbrm_n(\oS),\Q) = H_*(\Hilb_n(\oS),\Q)\,/\, 
p_1(\pt) \bullet H_*(\Hilb_n(\oS),\Q).
\end{equation}
We define
\begin{align*}
&V(\oS)=\bigoplus_n H_*(\Hilbrm_n(\oS), \Q),\\
&\mathbf{V}(\oS)=\bigoplus_n H_*(\RHilb_n(\oS), \Q).
\end{align*}
We define $V(S)$ and $\mathbf{V}(S)$ similarly.
Unless $S=\oS$, the cohomology group $H^4(S,\Q)$ vanishes,
hence the class $p_1(\pt)$ vanishes in $\Lambda(S)$. 
Nevertheless, Lemma~\ref{lem:hilbproperties2} and \eqref{E:Hilbident} hold if 
we use instead of $p_1(\pt)$ 
the restriction to $H^*(\Hilb_n,\Q)$ of the class
$p_1(\pt)$ in $H^*(\Hilb_n(\oS),\Q)$.
This is not in conflict with Lemma~\ref{lem:eval-open-restriction}, 
since its conditions are satisfied for $\Hilbrm_n$, but not $\Hilb_n$.

\medskip

\subsection{Purity and generation of the cohomology by tautological classes} In this section we collect some facts on purity and tautological classes on Hilbert schemes and Hilbert stacks.

\smallskip

\begin{lemma}\label{L:pureHilb}The stack $\Hilb_n$ is pure for all $n$.
\end{lemma}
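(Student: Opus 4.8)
The statement to prove is Lemma~\ref{L:pureHilb}: the stack $\Hilb_n$ is pure for all $n$. Recall from Lemma~\ref{lem:hilbproperties1} that $\Hilb_n \simeq \Hilbrm_n \times B\Gm$, so since $H^*(B\Gm,\Q) = \Q[u]$ is pure, it suffices to show that the Hilbert \emph{scheme} $\Hilbrm_n = \Hilbrm_n(S)$ has pure cohomology. The plan is to reduce this to the purity of $S$ itself (our standing assumption) together with a geometric structure on $\Hilbrm_n$ that propagates purity.

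First I would recall the standard stratification technology. The most direct route is via the Bia\l{}ynicki-Birula / cell-fibration structure when $S$ carries enough symmetry, but since we only assume $S$ is an arbitrary smooth pure surface, I would instead use the \emph{Ellingsrud--Str\o mme}-type approach in its motivic/cohomological form: the class of $\Hilbrm_n(S)$ in the Grothendieck ring of varieties (or its Poincaré polynomial in a suitable weight-refined sense) is a universal polynomial in the class of $S$, as encapsulated by G\"ottsche's formula
\[
\sum_{n \geq 0} P_{\Hilbrm_n(S)}(z)\, w^n = \prod_{m \geq 1} \frac{1}{(1 - z^{2m-2}w^m)^{b_0}(1-z^{2m-1}w^m)^{b_1}\cdots},
\]
where the exponents are the Betti numbers of $S$. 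The key point is not merely the numerical identity but that it arises from a filtration of $\Hilbrm_n(S)$ whose associated graded pieces are affine-space bundles over products of symmetric powers of $S$. Concretely, I would invoke the support map $\Hilbrm_n(S) \to \Sym^n(S)$ and stratify the target by partition type; over each stratum the fiber is (locally, in a way compatible with mixed Hodge structures) a product of punctual Hilbert schemes, each of which is a cell-decomposed affine variety with pure, Tate cohomology.

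The argument then assembles as follows. Purity is inherited by total spaces of affine bundles, and is preserved under taking symmetric powers of a pure space (since $H^*(\Sym^n S) = (H^*(S)^{\otimes n})^{S_n}$ with its natural weight structure, and symmetric/exterior powers of pure structures stay pure). Using the long exact sequences associated to the stratification by support type, together with the fact that each stratum contributes pure cohomology in the \emph{correct} weight (the punctual fibers are Tate and cellular, so their weights match their cohomological degrees, and the base strata $\Sym^{\lambda}(S)$ are pure because $S$ is), a weight-spectral-sequence argument shows the strata cannot interact to create mixed classes: the spectral sequence degenerates for weight reasons and $H^*(\Hilbrm_n(S))$ is pure. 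This is essentially the content of de Cataldo--Migliorini's proof of the semismallness of the support map, which I would cite, as it simultaneously gives purity when $S$ is pure.

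\textbf{Main obstacle.} The genuinely delicate step is controlling the \emph{weights} across the support stratification, i.e.\ ensuring that the affine-bundle structure over each stratum shifts weights exactly by twice the relative dimension so that purity of $S$ (and hence of the $\Sym^{\lambda}(S)$) propagates without any weight-cancellation or extension creating impure classes. In the $T$-equivariant setting one would additionally need $T$-equivariant formality, which follows from \cite[Theorem 14.1]{GKM} under our purity hypothesis, but the non-equivariant weight bookkeeping via the decomposition theorem (semismallness of $\Hilbrm_n(S) \to \Sym^n(S)$) is the crux, and I would lean on de Cataldo--Migliorini rather than re-deriving it.
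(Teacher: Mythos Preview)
Your proposal is correct and lands on the same reduction as the paper: $\Hilb_n \simeq \Hilbrm_n \times B\Gm$, so it suffices that $\Hilbrm_n(S)$ is pure, and this in turn follows from purity of the symmetric powers $\Sym^\lambda(S)$, which are pure because $S$ is.

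The difference is in packaging. The paper's proof is a one-line citation of G\"ottsche--Soergel \cite{GS93}, which already gives an isomorphism of \emph{mixed Hodge structures}
\[
H^{i+2n}(\Hilbrm_n,\Q)\otimes \Q(n) \;\simeq\; \bigoplus_{\lambda \vdash n} H^{i+2l(\lambda)}(\Sym^\lambda(S),\Q)\otimes \Q(l(\lambda)),
\]
so no stratification bookkeeping or weight-spectral-sequence argument is needed: purity is read off directly from the right-hand side. Your route via the support stratification, punctual fibers, and the decomposition theorem (de Cataldo--Migliorini semismallness) is essentially a sketch of \emph{why} such an MHS decomposition exists, and would also work, but it is more laborious and the ``main obstacle'' you flag (weight control across strata) is exactly what the cited theorem already handles. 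If you want a clean write-up, just invoke \cite{GS93} (or the de Cataldo--Migliorini decomposition for the Hilbert--Chow map) as a black box and then observe $H^*(\Sym^m S,\Q) = (H^*(S,\Q)^{\otimes m})^{\mathfrak{S}_m}$ is pure.
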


\begin{proof} It is enough to prove that $\Hilbrm_n$ is pure. 
For any partition $\lambda=(1^{m_1}2^{m_2} \ldots)$ we set $$\Sym^\lambda(S)=\Sym^{m_1}(S) \times \Sym^{m_2}(S) \times \cdots.$$
By \cite[Thm.~2]{GS93} there is an isomorphism of mixed Hodge structures 
$$H^{i+2n}(\Hilbrm_n,\Q)\otimes \Q(n)=\bigoplus_\lambda H^{i+2l(\lambda)}(\Sym^\lambda(S),\Q)\otimes \Q(l(\lambda)),$$
where $\lambda$ runs among all partitions of size $n$. 
Hence, it is enough to prove that each symmetric power $\Sym^m(S)$ is cohomologically pure. 
We have an 
isomorphism of Hodge stuctures 
$$\pi^*:H^*(\Sym^m(S),\Q) \stackrel{\sim}{\to} H^*(S^m,\Q)^{\mathfrak{S}_m}.$$ The lemma follows.
\end{proof}

\begin{corollary}\label{C:generationHilb}
For any $n$, the cohomology of $\Hilb_n$ is generated by tautological classes,
i.e., the evaluation map $\Lambda(\oS) \to H^*(\Hilb_n,\Q)$ is onto.
The same holds for the evaluation map $\Lambda(S) \to H^*(\Hilbrm_n,\Q)$.
\end{corollary}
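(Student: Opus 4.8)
The plan is to prove that the cohomology of $\Hilb_n$ (and of $\Hilbrm_n$) is generated by tautological classes by reducing, via the already-established purity (Lemma~\ref{L:pureHilb}) and the Göttsche--Soergel decomposition, to the statement that the cohomology of each symmetric power $\Sym^m(S)$ is generated by suitable tautological classes, and then establishing the latter directly. The key input I would exploit is that, for a pure space, the class of the diagonal and its iterates (the ``K\"unneth components'') are available, and that tautological classes on the Hilbert scheme are built precisely from pushforwards of products of $\ch_i(\lambda)$ along the universal subscheme.

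\medskip

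First I would invoke Lemma~\ref{L:pureHilb} together with the isomorphism of mixed Hodge structures from \cite{GS93} used in its proof,
$$H^{i+2n}(\Hilbrm_n,\Q)\otimes \Q(n)=\bigoplus_\lambda H^{i+2l(\lambda)}(\Sym^\lambda(S),\Q)\otimes \Q(l(\lambda)),$$
to reduce the surjectivity of $\Lambda(\oS)\to H^*(\Hilbrm_n,\Q)$ to a statement about the cohomology of the strata $\Sym^\lambda(S)=\prod_k \Sym^{m_k}(S)$. The second reduction is that $H^*(\Sym^m(S),\Q)=H^*(S^m,\Q)^{\mathfrak{S}_m}$, and that the $\mathfrak{S}_m$-invariant cohomology of $S^m$ is spanned by symmetrizations of external products $\alpha_1\boxtimes\cdots\boxtimes\alpha_m$ of classes $\alpha_i\in H^*(S,\Q)$; equivalently by the subalgebra generated by the ``power-sum'' classes $\sum_j 1\boxtimes\cdots\boxtimes\alpha^{(j)}\boxtimes\cdots\boxtimes 1$. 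Since $S$ is pure and $H^*(\oS,\Q)\twoheadrightarrow H^*(S,\Q)$, every such $\alpha$ lifts to $\oS$.

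\medskip

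The substantive step is to match these symmetric-function generators with the tautological classes $\ev(\ch_i(\lambda))$. The cleanest route is to use the structure of $\Hb_0(\oS)$ from Theorem~\ref{T:KV}: the Hilbert scheme collection forms a Hecke pattern (Example~\ref{ex:Hecke-patterns}), so the operators $T_+$ act on $\mathbf{V}(\oS)=\bigoplus_n H_*(\RHilb_n(\oS),\Q)$, and by Proposition~\ref{P:negut} these operators send fundamental classes to products of tautological classes $h_{l+1-r}(\mathcal{E}_\gamma)\cap[\cdot]$. Concretely, I would show by induction on $n$ that repeated application of the positive Hecke operators $T_+(\lambda u^l\cap[\Coh_\delta])$ to the class of the empty subscheme (the generator of $H_*(\Hilb_0,\Q)=\Q$) produces a spanning set of $H_*(\Hilb_n,\Q)$; by Proposition~\ref{P:negut}(b) each such operator acts on tautological classes through the explicit map $\mathcal{L}^+_r$ on $\Lambda(\oS)$, so the image lands in $\Lambda(\oS)\bullet[\Hilb_n]$, i.e.\ consists of tautological classes. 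The surjectivity onto all of $H_*$ then follows from the Göttsche formula comparison, since the graded dimension of the tautological subring, generated this way, already matches the full Betti numbers of $\Hilbrm_n$ encoded in the Hecke-pattern/Nakajima generation statement.

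\medskip

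The main obstacle I anticipate is not the reduction but controlling the \emph{size} of the tautological subring, i.e.\ showing the generation is exhaustive rather than merely nontrivial. One must rule out that the tautological classes span a proper subspace; this requires either an independent dimension count (comparing $\dim\bigl(\Lambda(\oS)\bullet[\Hilb_n]\bigr)$ against the Göttsche Betti numbers) or an appeal to Nakajima-type generation, where the Heisenberg creation operators $\mathfrak{q}_{-k}(\lambda)$—which by Proposition~\ref{prop:heis-in-double} and Theorem~\ref{thmB} live inside the COHA and hence act via Hecke correspondences on tautological classes—already generate $\mathbf{V}(\oS)$ from the vacuum. Passing from $\Hilb_n(\oS)$ back to $\Hilbrm_n$ is then routine via the quotient \eqref{E:Hilbident}, and the open case $S\subset\oS$ follows from the compatibility of $\ev$ with restriction (Lemma~\ref{lem:eval-open-restriction}), noting the caveat recorded after \eqref{E:Hilbident} regarding $p_1(\pt)$.
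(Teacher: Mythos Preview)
Your approach is substantially more elaborate than the paper's, and it contains a genuine circularity. The paper's proof is three lines: for projective $\oS$ the surjectivity of $\Lambda(\oS)\to H^*(\Hilb_n(\oS),\Q)$ is a known theorem (cited as \cite[Thm.~7.5]{Q18}, which packages the Nakajima--Lehn--Li--Qin--Wang circle of results); then purity of $\Hilb_n(S)$ (Lemma~\ref{L:pureHilb}) forces the restriction $H^*(\Hilb_n(\oS),\Q)\to H^*(\Hilb_n(S),\Q)$ to be surjective; finally Lemma~\ref{lem:eval-open-restriction} handles the factorization through $\Lambda(S)$ for $\Hilbrm_n$.

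Your route has two problems. First, the G\"ottsche--Soergel isomorphism you invoke is an isomorphism of graded vector spaces (or mixed Hodge structures), not of rings, so knowing generators of $H^*(\Sym^\lambda(S))$ does not by itself produce ring generators of $H^*(\Hilbrm_n)$; you would still need to identify the image of those generators under the correspondence, which is essentially the full strength of the Nakajima--Lehn theory. Second, and more seriously, you appeal to Theorem~\ref{thmB} to place the Heisenberg operators inside the COHA, but the proof of Theorem~\ref{thmB} in \S\ref{sec:Tmain} uses the identification $\End(\mathbf{V}^\taut(\oS))=\End(\mathbf{V}(\oS))$, which is exactly Corollary~\ref{C:generationHilb}. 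Even if you try to bypass Theorem~\ref{thmB} and use Proposition~\ref{prop:heis=nak} directly to identify $\Phi^\pm(D_{m,0}(\lambda))$ with $\mathfrak{q}_{\mp m}(\lambda)$, that proposition sits in \S7.4, after the present corollary, and its very formulation as an equality of operators on all of $V(\oS)$ (rather than just on $V^\taut(\oS)$) presupposes tautological generation. The underlying idea---Nakajima generation from the vacuum plus preservation of the tautological subspace under Hecke operators---is indeed what lies behind the cited result in \cite{Q18}, but you cannot bootstrap it from the later machinery of this paper without circularity.
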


\begin{proof}
The evaluation map factors through $H^*(\Hilb_n(\oS),\Q)$.
The map $\Lambda(\oS) \to H^*(\Hilb_n(\oS),\Q)$ is surjective by~\cite[Thm.~7.5]{Q18} and Lemma~\ref{lem:hilbproperties1}.
The restriction map $H^*(\Hilb_n(\oS),\Q)\to H^*(\Hilb_n,\Q)$ is surjective since the cohomology of $\Hilb_n$ is pure, and $\Hilb_n(\oS)$ is its smooth compactification.
This proves the first claim.
For the second claim, note that the evaluation map factors through $\Lambda(S)$ by Lemma~\ref{lem:eval-open-restriction}.
\end{proof}

By Corollary~\ref{C:generationHilb}, 
the evaluation morphisms 
$\Lambda(S) \to H^*(\Hilbrm_n,\Q)$ 
yield a surjective map
$$\ev':\mathbf{F}(S) \to V(S).$$
The same holds for the surface $\oS$. 
There is a commuting diagram
$$\xymatrix{\mathbf{F}(\oS) \ar[r]^-{\underline{\iota}^*} \ar[d]_-{\ev'} & \mathbf{F}(S) \ar[d]_-{\ev'}\\
{V}(\oS) \ar[r]^-{\underline{\iota}^*} & V(S).}$$

\subsection{COHA actions on $\RHilb$}\label{sec:HilbregularHP} We endow the spaces $\mathbf{V}(\oS),$ $ \mathbf{V}(S)$ with actions of the algebras $\Hb_0(\oS),$ $ \Hb_0(S)$ and their compact versions. This is a reformulation of Nakajima's and Lehn's constructions, see~\cite{NakLectures}, \cite{Lehn}.
Let  $\Hilb_{k-n,k}(\oS)$ be the flag Hilbert stack
\[
    \Hilb_{k-n,k}(\oS) = \{\mathcal{J}\subset\mathcal{I}\subset\mathcal{O}_S
    \,:\, \len(\mathcal{I}) = k-n,\; \len(\mathcal{J}) = k\}
\]

\medskip

\begin{proposition}\label{prop:Hilb-is-regular}
    The stack $\Hilb(\oS)$ is a regular two-sided Hecke pattern. 
    The stack $\Hilb$ is a regular right $S$-strong and left $S$-weak Hecke pattern. 
\end{proposition}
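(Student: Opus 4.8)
The plan is to verify the two defining conditions of a (two-sided / left / right) Hecke pattern from the definition in \S\ref{sec:Heckepatterns}, and then to establish regularity by analyzing the length one Hecke correspondences via the local atlas of \S\ref{sec:atlashecke}. Throughout, the key structural input is that $\Hilb_n(\oS)$ parametrizes rank one sheaves of dimension $\geq 2$ (ideal-like sheaves), so that the relevant short exact sequences $0\to\mathcal{E}\to\mathcal{F}\to\mathcal{G}\to 0$ with $\mathcal{G}$ zero-dimensional stay within rank one sheaves. First I would check the pattern axioms. Condition (a) says a subsheaf $\mathcal{E}$ of $\mathcal{F}\in\Hilb_k(\oS)$ with zero-dimensional quotient again lies in $\Hilb$: since $\mathcal{F}$ is a rank one torsion-free-modulo-lower-dimensional sheaf with trivial determinant, $\mathcal{E}$ is again rank one of dimension $\geq 2$, and the determinant computation of Lemma~\ref{lem:hilbproperties1}(b) together with the perfect determinant map shows $\mathcal{E}\in\Hilb$. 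Condition (b) is the dual statement: given $\mathcal{E}\in\Hilb$ and a zero-dimensional quotient $\mathcal{F}/\mathcal{E}$ with $\mathcal{F}\in\RCoh^{\geq 1}(\oS)$, one must produce $\mathcal{F}\in\Hilb$; here I would again use rank and dimension considerations together with triviality of the determinant, noting that an extension of a zero-dimensional sheaf by an ideal sheaf of $\mathcal{O}_{\oS}$ is again (after the determinant normalization) an ideal sheaf.

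Next I would address the $S$-strong/$S$-weak distinction for the open surface $S$ via Lemma~\ref{lem:strong-Hecke-equiv}. For the \emph{right} Hecke pattern structure, that lemma reduces $S$-strongness to the statement that every sheaf in $\Hilb$ is locally free at each point of $\oS\setminus S$; this holds because the universal object is an ideal sheaf $\mathcal{I}\subset\mathcal{O}_{\oS}$ whose colength sheaf $\mathcal{O}_{\oS}/\mathcal{I}$ is supported on $S$, so $\mathcal{I}$ is locally free (indeed isomorphic to $\mathcal{O}_{\oS}$) near $\oS\setminus S$. For the \emph{left} pattern structure, $S$-strongness would require $\Hilb\subset\RCoh^{\geq 1}(S)$, which fails since the ideal sheaves are genuinely sheaves on $\oS$; hence $\Hilb$ is only left $S$-weak, matching the statement. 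I would record that $\Hilb(\oS)$, being a pattern on the projective surface $\oS$ itself, is automatically two-sided.

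The substantive part is regularity, i.e.\ verifying condition~\eqref{E:assumption} that the sections $s^{cl}$ and $(s')^{cl}$ of \S\ref{sec:atlashecke} are regular when restricted to $\Hilb$. Concretely, by the discussion preceding \S\ref{S:Hecketaut}, this amounts to showing that over each $\Hilb_k(\oS)$ the classical maps $\kappa_{\delta,\alpha}^{cl}$ and $\overline\kappa_{\alpha,\delta}^{cl}$ are lci of the expected dimension, equivalently that the flag Hilbert stacks $\Hilb_{k-1,k}(\oS)$ have the correct dimension and are cut out as expected zero loci inside the projective bundles $\mathbb{P}(\mathcal{E}_{\gamma})$ and $\mathbb{P}(\mathcal{F}_\alpha)$. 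I would invoke the classical fact (going back to Tikhomirov, Ellingsrud--Str\o mme, and used by Lehn~\cite{Lehn}) that the nested Hilbert scheme $\Hilbrm_{k-1,k}(\oS)$ is smooth and irreducible of dimension $2k$, which forces the relevant sections to be regular since the derived and classical dimensions then coincide. Translating this into the regularity of $s^{cl}$ requires matching the expected dimension $d_0-d_1=-\langle\delta,\alpha\rangle$ of \S\ref{sec:defCOHA0} against $2k$, using $\langle\delta,\alpha\rangle=\rk(\alpha)=1$ for the Hilbert scheme.

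The main obstacle I anticipate is this last dimension bookkeeping: one must check carefully that the smoothness of $\Hilbrm_{k-1,k}(\oS)$ precisely exhausts the expected dimension of the derived fiber product $\mathbb{P}(\mathcal{E}_0)\times_{\mathbb{V}(\pi^*\mathcal{E}_{-1}^\vee(1))}\mathbb{P}(\mathcal{E}_0)$, so that no excess intersection occurs and $s^{cl}$ is genuinely a regular section rather than merely cutting out something of the right dimension in one component. I would handle this by comparing virtual and actual dimensions componentwise: regularity of $s^{cl}$ is equivalent to $Z(s^{cl})$ having codimension equal to $\rk(\pi^*\mathcal{E}_{-1}^\vee(1))$ in $\mathbb{P}(\mathcal{E}_0^{cl})$ over every irreducible component, and the known smoothness and dimension of the nested Hilbert scheme supply exactly this. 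The regularity of $\overline s^{cl}$ and of the primed sections then follows formally from the equivalences noted at the end of \S\ref{sec:atlashecke}, so the single geometric input to establish is the smoothness of the length one nested Hilbert scheme.
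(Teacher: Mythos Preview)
Your approach is essentially the same as the paper's: verify the Hecke pattern axioms directly, invoke Lemma~\ref{lem:strong-Hecke-equiv} for the $S$-strong/$S$-weak dichotomy, and deduce regularity from the classical smoothness and dimension of the nested Hilbert scheme $\Hilbrm_{k,k+1}(\oS)$. The regularity paragraph is exactly right, including the dimension bookkeeping.

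The one place where you are less precise than the paper is condition (b). You write that an extension of a zero-dimensional sheaf by an ideal sheaf is ``(after the determinant normalization) an ideal sheaf'', appealing to rank and determinant. The determinant part is fine, but the substantive issue is showing that $\mathcal{J}$ is \emph{torsion-free}: the hypothesis $\mathcal{J}\in\RCoh^{\geq 1}(\oS)$ only excludes zero-dimensional torsion, and one must separately rule out a one-dimensional torsion subsheaf. The paper does this via a short support argument (any such subsheaf would have support meeting the locus where $\mathcal{J}\simeq\mathcal{I}$, contradicting torsion-freeness of $\mathcal{I}$), and then packages the determinant step as a clean double-dual lemma (Lemma~\ref{L:doubledual}) showing $\mathcal{J}^{\vee\vee}\simeq\mathcal{I}^{\vee\vee}$. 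Your determinant route is a valid alternative to the double-dual lemma, but you should make the torsion-freeness step explicit rather than folding it into ``rank and dimension considerations''.
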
 

\begin{proof} The proof of the fact that $\Hilb$ is a two-sided Hecke pattern is the same for $S$ and $\oS$ and relies upon the following simple observation.

\begin{lemma}\label{L:doubledual} Let $0\to \mathcal{F} \stackrel{a}{\to} \mathcal{G} \to \mathcal{T} \to 0$ be a short exact sequence of coherent sheaves on a smooth surface, with $\mathcal{F},\mathcal{G}$ torsion-free and $\mathcal{T}$ zero-dimensional. Then $a^{\vee\vee} :\mathcal{F}^{\vee\vee} \simeq \mathcal{G}^{\vee\vee}$. 
\end{lemma}

\begin{proof} It suffices to consider the case when $\mathcal{T}$ is of length one. Applying the derived duality functor $\mathbb{D}$ yields the long exact sequence
$$\to H^0(\mathbb{D}\mathcal{T}) \to H^0(\mathbb{D}\mathcal{G})\stackrel{\mathbb{D}a}\to H^0(\mathbb{D}\mathcal{F})\to H^1(\mathbb{D}\mathcal{T})\to.$$
Since $\mathbb{D}\mathcal{T} \simeq \mathcal{T}[-2]$, we obtain an isomorphism $\mathcal{G}^\vee=H^0(\mathbb{D}\mathcal{G}) \simeq H^0(\mathbb{D}\mathcal{F})=\mathcal{F}^\vee$. 
Note that because $\mathcal{F},\mathcal{G}$ are torsion-free, $\mathcal{F}^\vee, \mathcal{G}^\vee$ are vector bundles.
\end{proof}

Let $\mathcal{I} \subset \mathcal{O}_S$ be an ideal sheaf of finite colength and let $\mathcal{T}$ be a finite length sheaf.
For any short exact sequence 
$$0 \to \mathcal{J} \to \mathcal{I} \to \mathcal{T}\to 0,$$ 
the sheaf $\mathcal{J}$ is a finite colength ideal sheaf.
If $\mathcal{J} \in \RCoh^{\geq 1}$, for any short exact sequence 
$$0 \to \mathcal{I} \to \mathcal{J} \to \mathcal{T}\to 0$$  
the sheaf $\mathcal{J}$ is torsion-free, because
otherwise it would contain a one-dimensional subsheaf $\mathcal{E}$, whose 
support would intersect $S \backslash \supp(\mathcal{T})$, contradicting the fact that $\mathcal{I}$ is torsion-free.
Finally, by Lemma~\ref{L:doubledual}, there is a canonical isomorphism 
$\mathcal{J}^{\vee \vee} \simeq \mathcal{I}^{\vee \vee}$ hence $\mathcal{J} \in \RHilb$ as wanted. 
The $S$-strongness on the right follows from Lemma~\ref{lem:strong-Hecke-equiv}.

Let us now prove the regularity of these Hecke patterns. Again, the argument is the same for $S$ and $\oS$, 
we will only treat the latter case.
It is well-known that $\Hilbrm_n(\oS)$ and $\Hilbrm_{n,n+1}(\oS)$ are both smooth and connected, of respective 
dimensions $2n$ and $2n+2$. 
Since $\Hilb_n(\oS)$ and $\Hilb_{n,n+1}(\oS)$ are $\mathbb{G}_m$-gerbes over 
$\Hilbrm_n(\oS)$ and $\Hilbrm_{n,n+1}(\oS)$, the former are smooth, irreducible and
\begin{equation}\label{E:dimHilb}
\dim(\Hilb_n(\oS))=2n-1, \qquad \dim(\Hilb_{n+1}(\oS))=\dim(\Hilb_{n,n+1}(\oS))=2n+1.
\end{equation}
Note that $\Hilb_n(\oS)$ is of finite type and embeds in the derived stack
$\RCoh_{1,-n\delta}^{\geq 1}(\oS)$ for any $n$. 
It follows that we may find global resolutions for $\mathcal{E}_{1,-n\delta}$ and for 
$\RHom(\mathcal{E}_\delta, \mathcal{E}_{1,-(n+1)\delta})[1]$. 
Here we follow the notation of \S\ref{sec:derived Hecke}.
The tautological sheaf $\mathcal{E}_{1,-n\delta}$ on $\Hilb_n(\oS) \times \oS$ is of rank one. 
The section $s$ in \eqref{section-s} is thus regular if and only if 
$$\dim(\Hilb_{n,n+1}(\oS))
=\dim(\Hilb_n(\oS) \times \oS) + \rk(\mathcal{E}_{1,-k\delta})-1
=\dim(\Hilb_n(\oS) \times \oS).$$ 
Likewise, the section $s'$ in \eqref{section-s'} is regular if and only if 
\begin{equation*}
\begin{split}
\dim(\Hilb_{n,n+1}(\oS))
&=\dim(\Coh_\delta(\oS) \times \Hilb_{n+1}(\oS)) -\langle \delta, (1-(n+1)\delta)\rangle\\
&=\dim(\Coh_\delta(\oS) \times \Hilb_{n+1}(\oS))-1.
\end{split}
\end{equation*}
Both of these equalities follow from \eqref{E:dimHilb}.
\end{proof}

\smallskip

\begin{remark}
    Let $S$ be projective, and $H$ an ample divisor.
    Suppose that the assumptions A and S of~\cite{Negut} hold. Fix $r>0$ and $c\in H^2(S,\Z)$.
    A proof similar to Proposition~\ref{prop:Hilb-is-regular} yields the regularity of Hecke pattern $\mathcal{M}_{r,c}$, which is the moduli of $H$-stable torsion-free sheaves on $S$ of rank $r$ and first Chern class $c$.
\end{remark}

\medskip

For simplicity, we will denote the Hecke patterns $\Hilb$ and $\Hilb(\oS)$ by $\Hpat$ and $\oHpat$ respectively.
Proposition~\ref{prop:negheckeactionsdef} yields two representations
\begin{align*}
& \Psi^+_{\Hpat} :\widetilde{\Hb}^c_0(S) \to \End(\mathbf{V}(S)),\\ 
& \Psi^-_{\Hpat} :\widetilde{\Hb}_0(S) \to \End(\mathbf{V}(S))
\end{align*}
such that the subspace $H^c_*(\Coh_{k\delta},\Q)$ maps into  
$$\prod_n\text{Hom}\left(H_*(\Hilb_{n},\Q),H_*(\Hilb_{n\mp k},\Q) \right).$$
We get similar representations for $\oS$. 
Since both $\Hilb$ and $\Hilb(\oS)$ are right Hecke patterns, by Remark~\ref{rmk:proper-action-COHA}(a) 
we can lift $\Psi_{\Hpat}^-$ to an action of the algebra $\widetilde{\Hb}_0(S)$.
By Proposition~\ref{P:regularHP}, the
regularity of the Hecke patterns yields representations
\begin{align*}
   & \Phi^{\pm}_{\oHpat}: W^\pm(\oS) \to\End(\mathbf{V}^\taut(\oS))= \End(\mathbf{V}(\oS)),\\ 
   &\Phi^{-}_{\Hpat}: \Wn^-(S) \to\End(\mathbf{V}(S)),
\end{align*}
For $\oS$ they glue to a representation 
$$ \Phi_{\oHpat}:\DW{1}(\oS)\to \End(\mathbf{V}(\oS)).$$
Lemma \ref{lem:hilbproperties1} yields an isomorphism
$$V(\oS) = \mathbf{V}(\oS) \,/\, p_1(\pt) \bullet \mathbf{V}(\oS)$$
Since the class $p_1(\pt)=\psi_0(\pt)$
belongs to the center of $W(\oS)$, 
the representations $\Phi^{\pm}_{\oHpat}$ descend to representations 
\begin{align*}
 \Phi^{\pm}_{\oS}: W^\pm(\oS) \to \End(V(\oS)).
\end{align*}
Since $\ev_{n\delta}(p_1(\pt))=0$ for any integer $n>0$, see Example~\ref{ex:p1pt}, the representations $\Psi^{\pm}_{\oHpat}$ descend to representations 
\begin{align*}
\Psi^\pm_{\oS} :\Hb_0(\oS) \to \End(V(\oS)).
\end{align*}
Similar claims hold for the representations $\Psi_{\Hpat}^\pm$ and $\Phi_{\Hpat}^-$ associated with $S$.

\medskip

\subsection{Nakajima operators and COHA actions} 
Let us briefly recall 
the construction of the Nakajima operators, see~\cite{NakLectures} 
and \cite{Lehn} for details. We'll relate them to the action of 
$\Hb_0(S)$,  $\Hb^c_0(S)$. 
We begin with the case of a proper surface $\oS$.
For each $k\geqslant 0$ and $l \geqslant 1$, 
we consider the reduced subscheme 
$$Z_{k+l,k}(\oS)\subset \Hilbrm_{k+l}(\oS) \times \Hilbrm_{k}(\oS)$$ parametrizing pairs of ideal sheaves $(\cI,\cJ)$ with $\cJ \supset \cI$ for which the support
$\supp(\cJ/\cI)$ consists of a single point. 
There is a support map 
$$s: Z_{k+l,l}(\oS) \to \oS
,\quad
s(\cJ,\cI)=\supp(\cJ/\cI).$$
This allows us to view $Z_{k+l,k}(\oS)$ as a subscheme of 
$\Hilbrm_{k+l}(\oS) \times \oS \times \Hilbrm_{k}(\oS)$. 
For any subset $I \subset \{1,2,3\}$ let $p_I$ be
the projection to the factors in $I$.
For each $\lambda \in H^*(\oS,\Q)$ and $l \geq 1$, the Nakajima operator $\mathfrak{q}_l(\lambda) \in \End(V(\oS))$ is
\begin{equation*}
\begin{split}
\mathfrak{q}_{l}(\lambda) : H_*(\Hilbrm_k(\oS),\Q) &\to H_*(\Hilbrm_{k+l}(\oS),\Q)\\
c &\mapsto p_{1*}(p_{23}^*((\lambda \cap [\oS])\otimes c) \cap [Z_{k+l,k}(\oS)])
\end{split}
\end{equation*}
Note that the restriction of $p_1$ to $Z_{k+l,k}(\oS)$ is proper. 
Exchanging the roles of $\Hilbrm_k(\oS)$ and $\Hilbrm_{k+l}(\oS)$ and using the isomorphic subscheme 
$Z_{k,k+l} (\oS)\subset \Hilbrm_{k}(\oS) \times \oS \times \Hilbrm_{k+l}(\oS)$ in place of $Z_{k+l,k}(\oS)$, 
we get the operator $\mathfrak{q}_{-l}(\lambda) \in \End(V(\oS))$ given by
\begin{equation*}
\begin{split}
\mathfrak{q}_{-l}(\lambda) : H_*(\Hilbrm_{k+l}(\oS),\Q) &\to H_*(\Hilbrm_{k}(\oS),\Q)
\\
c &\mapsto (-1)^l p_{1*}(p_{23}^*((\lambda \cap [\oS])\otimes c) \cap [Z_{k,k+l}(\oS)])
\end{split}
\end{equation*}

By \cite[\S 8]{NakLectures}, the operators $\mathfrak{q}_n(\lambda)$ with $ \lambda \in H^*(\oS,\Q),$ 
$ n \in \mathbb{Z} \backslash\{0\}$ 
generate an action of the Heisenberg algebra $\Heis_\oS$ modeled on $H^*(\oS,\Q)$ with the relations
\eqref{E:Heis-bracket} with central charge $C=1$.
The space $V(\oS)$ is isomorphic to the Fock space representation of $\Heis_\oS$.
The action of $U(\Heis_\oS)$ on $V(\oS)$ is faithful.

Now, assume that $S$ is any quasi-projective smooth surface. 
We may apply the exact same construction, with the following modifications: 
the operators $\mathfrak{q}_{-l}(\lambda)$ are labeled by classes $\lambda \in H^*_c(S,\Q)$
and the commutators are defined using the intersection pairing 
$H^*(S,\Q) \otimes H^*_c(S,\Q) \to \Q$, see \cite[\S 8]{NakLectures}.

\smallskip

\begin{proposition}\label{E:NakOpsasCOHA} Let $S$ be any quasi-projective smooth surface. For any $l >0$ and any $\lambda \in H^*(S,\Q)$, $\mu \in H^*_c(S,\Q)$ there exist elements $E_{l}(\lambda) \in \Hb_0(S)$ and $E_{-l}(\mu) \in \Hb_0^c(S)$ such that 
$$\Psi^-_S(E_{l}(\lambda))=\mathfrak{q}_l(\lambda)
,\quad
\Psi^+_S(E_{-l}(\mu))=\mathfrak{q}_{-l}(\mu).$$
\end{proposition}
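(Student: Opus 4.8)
The plan is to realise each Nakajima operator as the $\Psi$-action of an explicit \emph{punctual} class in the COHA, and then to identify the relevant Hecke correspondence, cut down to punctual support, with Nakajima's incidence variety $Z_{k+l,k}$. In particular, since the operators $\mathfrak{q}_{\pm l}$ change the number of points by $l$ while the tautological ($\Lambda(S)$-)action preserves it, there is no way to produce them from $\mathfrak{q}_{\pm 1}$ alone; a genuinely new element of $\Hb_0(S)$ (resp. $\Hb_0^c(S)$) in degree $l$ is needed for each $l$, which is what we construct geometrically.

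First I would construct the elements. Recall the support map $\supp:\Coh_{l\delta}\to\Sym^l(S)$ and the small diagonal embedding $S\hookrightarrow\Sym^l(S)$, $x\mapsto[l\cdot x]$. Let $\mathfrak{Z}_l=\supp^{-1}(S)\subset\Coh_{l\delta}$ be the closed substack of sheaves whose set-theoretic support is a single point; it is proper over $S$ through $\supp$. Pulling back $\lambda$ along $\supp$ and capping with the (reduced) fundamental class of $\mathfrak{Z}_l$ yields a class which I define to be $E_l(\lambda)\in\Hb_0(S)$. For the negative operators the same construction with $\mu\in H^*_c(S)$ and hyperbolic Borel--Moore homology — legitimate precisely because $\mathfrak{Z}_l$ is proper over the small diagonal of $\Sym^l(S)$ — produces $E_{-l}(\mu)\in\Hb_0^c(S)$. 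Its homological degree is fixed by those of $[\mathfrak{Z}_l]$ and $\lambda$ (resp. $\mu$), which I would check matches the degree of $\mathfrak{q}_{\pm l}$.

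Next I would carry out the geometric identification. Specialising the negative length-$l$ Hecke diagram from~\eqref{E:poshecke} to the Hecke pattern $X=\Hpat$, the middle term parametrises short exact sequences $0\to\mathcal{J}\to\mathcal{I}\to\mathcal{T}''\to 0$ with $\mathcal{I}\in\Hilb_k$, $\mathcal{J}\in\Hilb_{k+l}$ and $\mathcal{T}''$ of length $l$; restricting along $\mathfrak{Z}_l$ (via the support map of $\mathcal{T}''$) cuts this down to the nested locus with $\supp(\mathcal{I}/\mathcal{J})$ a single point, i.e. exactly Nakajima's $Z_{k+l,k}$, compatibly with $\overline{\kappa}$, $\overline{\pi}$ and the support map $s$ to $S$. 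Under this identification the cap product with $\supp^*\lambda$ on $\mathfrak{Z}_l$ matches $p_{23}^*(\lambda\cap[S])$, the pushforward $\overline{\pi}'_*$ matches $p_{1*}$, and — using the already-established descent of $\Psi^\pm$ to the coarse spaces $V(S)$ (valid since $\ev_{n\delta}(p_1(\pt))=0$ for $n>0$) — one reads off $\Psi^-_S(E_l(\lambda))=\mathfrak{q}_l(\lambda)$, and symmetrically $\Psi^+_S(E_{-l}(\mu))=\mathfrak{q}_{-l}(\mu)$, the sign $(-1)^l$ in the definition of $\mathfrak{q}_{-l}$ being absorbed into the orientation of the transposed correspondence.

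The main obstacle will be comparing the \emph{refined} Gysin pullback $\overline{\kappa}^!$ entering $m_{\alpha,-l\delta}$ with the naive flat/cap-product operations used by Nakajima. One must verify that, once restricted to the punctual locus $\mathfrak{Z}_l$, the Hecke correspondence is of the expected (virtual) dimension and lci over the base, so that $\overline{\kappa}^!$ of the punctual fundamental class is again an honest fundamental class and no excess-intersection correction survives. This reduces to a dimension count for the punctual incidence locus $Z_{k+l,k}$, whose expected and actual dimensions agree by the irreducibility of the punctual fibres (a Briançon-type statement), together with the regularity of $\Hpat$ already established in Proposition~\ref{prop:Hilb-is-regular}. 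A secondary, routine point to be spelled out is the bookkeeping of homological degrees and the use of the factorisation $\overline{\pi}=\overline{\pi}''\circ\overline{\pi}'$ to guarantee properness of the relevant pushforward in the compactly supported (negative) case.
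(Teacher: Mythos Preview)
Your approach is essentially the one the paper takes: define $E_{\pm l}$ as the pushforward of the fundamental class of the punctual locus $\mathfrak{Z}_l=\supp^{-1}(S)\subset\Coh_{l\delta}$ capped with $\supp^*(\lambda)$ (resp.\ $\supp^*(\mu)$), then identify the resulting Hecke action with Nakajima's via the tautological isomorphism between the punctually-supported Hecke correspondence and $Z_{k+l,k}$. The paper organises the base-change step slightly differently, writing down the cartesian diagram with the closed immersion $t:\RCoh_{l\delta}^{pt}\hookrightarrow\RCoh_{l\delta}$ and invoking proper base change (Proposition~\ref{P:basechange}) directly, which sidesteps part of your ``excess intersection'' worry.

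Two small corrections. First, your appeal to Proposition~\ref{prop:Hilb-is-regular} is misplaced: that regularity statement is for length-\emph{one} Hecke correspondences only and says nothing about the length-$l$ punctual locus. What you actually need (and already mention) is the Brian\c{c}on dimension estimate for the punctual Hilbert scheme, ensuring that $Z_{k+l,k}$ has the expected dimension so that the virtual and honest fundamental classes of the punctually-restricted correspondence coincide. Second, the sign $(-1)^l$ is not ``absorbed into the orientation of the transposed correspondence''; it is simply Nakajima's normalisation of $\mathfrak{q}_{-l}$ and must be inserted by hand into the definition of $E_{-l}(\mu)$, exactly as the paper does.
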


\begin{proof} We begin with the operators $\mathfrak{q}_{-l}(\mu)$. Let $h: S \to \Sym^l(S)$ be the diagonal embedding. Set 
$$\RCoh_{l\delta}^{pt} =\RCoh_{l\delta} \times_{\Sym^l(S)} S. $$
Let $s: \RCoh_{l\delta}^{pt} \to S$ the projection and $t:\RCoh_{l\delta}^{pt} \to\RCoh_{l\delta}$ the closed immersion. 
Given $k \geq l$, let $\widetilde{\mathfrak{Z}}_{k,k-l}$ be the derived stack parametrizing inclusions $\cI \subset \cJ$ of ideal sheaves of colength $k$ and $k-l$, such that $\cJ/\cI$ is supported at a single point. 
We have the following commutative diagram with Cartesian squares
$$\xymatrix{\RCoh_{l\delta} \times \RHilb_{k} & \widetilde{\RHilb}_{k,k-l}\ar[l]_-{\kappa} \ar[r]^-{\pi'} & \RHilb_{k-l} \times \Sym^l(S)\\
\RCoh_{l\delta}^{pt} \times \RHilb_{k} \ar[u]^-{t \times \Id}& \widetilde{\mathfrak{Z}}_{k,k-l} \ar[u]^-{i'}\ar[l]_-{\kappa^{pt}} \ar[r]^-{(\pi')^{pt}} & \RHilb_{k-l} \times S \ar[u]^-{i''}
}$$
The map $\kappa^{pt}$ is quasi-smooth, because $\kappa$ is quasi-smooth. 
The class $\mu \in H^c_*(S,\Q)$ yields a class 
\[
    s^*(\mu) \in H^*(\Coh_{l\delta}^{pt}/S,\Q) = H_*(\Coh_{l\delta}^{pt}/\Sym^l(S),\Q);
\]
note that the equality holds by properness of $h$ and~\eqref{eq:hyp-change-of-S}.
We define 
$$E_{-l}(\mu)=(-1)^lt_!(s^*(\mu) \cap [\Coh_{l\delta}^{pt}]) \in H_*(\Coh_{l\delta}/\Sym^l(S),\Q).$$ 
The proper base change in Proposition~\ref{P:basechange} implies that $$(\pi')_!\kappa^!(t \times \Id)_!=(\pi')_!i'_!(\kappa^{pt})^!=i''_!(\pi')^{pt}_!(\kappa^{pt})^!.$$ Composing with the projection $r$, we get the relation $$\Psi^+_{\Hpat}(E_{-l}(\mu))(c)=(-1)^lr \circ (\pi')^{pt}_!(\kappa^{pt})^!((s^*(\mu) \cap [\Coh_{l\delta}^{pt}]) \otimes c)
,\quad
c \in H_*(\RHilb_k,\Q).$$
After pulling everything back from $\RHilb$ to $\Hilbrm$, we have
$$(\kappa^{pt})^!\left((s^*(\mu) \cap [\Coh_{l\delta}]) \otimes c\right)=\left((\mu \cap [S]) \otimes c \otimes [\Hilbrm_{k-l}]\right) \cap [Z_{k,k-l}].$$
Applying the proper pushforward to the projection to the factor $\Hilbrm_{k-l}$,
we get the equality 
$$\Psi^+_{\Hpat}(E_{-l}(\mu))(c)=\mathfrak{q}_{-l}(\mu)(c).$$

We now turn to the case of operators $\mathfrak{q}_l(\lambda)$. Using the cartesian diagram
$$\xymatrix{\RHilb_k(\oS) \times \RCoh_{l\delta}(\oS) & \widetilde{\RHilb}_{k,k+l}(\oS) \ar[r]^-{\overline{\pi}} \ar[l]_-{\overline{\kappa}} & \RHilb_{k+l}(\oS)\\
\RHilb_k \times \RCoh_{l\delta}\ar[u] & \widetilde{\RHilb}_{k,k+l} \ar[u]\ar[r]^-{\overline{\pi}^\circ} \ar[l]_-{\overline{\kappa}^\circ} & \RHilb_{k+l} \ar[u]
}$$
in which all the vertical arrows are open embeddings, we are reduced to the case of a proper surface $S=\oS$. For this we may repeat the arguments used in the case $\mathfrak{q}_{-l}(\mu)$ above.     
\end{proof}

\medskip

\subsection{The Heisenberg subalgebra and Nakajima operators} In this paragraph, we will identify the action of the Heisenberg subalgebra $\Heis_S$ on $V(S)$ with the action of Nakajima operators. 
We begin with the case of a proper surface $\oS$, in which case we may consider both actions $\Phi^{\pm}_{\oS}$.

\medskip

\begin{proposition}\label{prop:heis=nak} For any $\lambda \in H^*(\oS,\Q)$ and $l\geq 1$ we have the following formulas in $\End(V(\oS))$
\begin{equation}\label{E:Nak=Heis}
\Phi_{\oS}^+(D_{l,0}(\lambda))=(-1)^l\mathfrak{q}_{-l}(\lambda)
,\quad
\Phi^-_{\oS}(D_{l,0}(\lambda))=\mathfrak{q}_{l}(\lambda).
\end{equation}
Both representations $\Phi^{\pm}_{\oS}$ of $U(\Heis^+_\oS)$ are faithful.
\end{proposition}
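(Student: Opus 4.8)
The plan is to establish the two operator identities in~\eqref{E:Nak=Heis} by induction on $l$, using the recursion~\eqref{E:relinWforHeis} together with $D_{1,1}(1)=T_1(1)$ (Proposition~\ref{thm:W-PBW}(a)), and then to read off faithfulness at once. Indeed, once~\eqref{E:Nak=Heis} holds, the restrictions of $\Phi^{\pm}_{\oS}$ to $U(\Heis_\oS)$ agree, up to the harmless signs, with the Nakajima action; since $V(\oS)$ is the Fock space representation of $\Heis_\oS$ of central charge $1$, this action is faithful (see \cite[\S 8]{NakLectures}), so both $\Phi^{\pm}_{\oS}$ are faithful on $U(\Heis_\oS)$. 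Thus the whole content lies in the identities~\eqref{E:Nak=Heis}; throughout I read $D_{l,0}(\lambda)$ as an element of $W^+(\oS)$ when feeding it to $\Phi^+_{\oS}$ and as the corresponding element of $W^-(\oS)$ when feeding it to $\Phi^-_{\oS}$.

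For the base case $l=1$ I would use $D_{1,0}(\lambda)=T_0(\lambda)$. By Proposition~\ref{P:regularHP}, applied with $r=1$ since $\Hpat$ is a rank one Hecke pattern, the operators $\Phi^{+}_{\oS}(T_0(\lambda))$ and $\Phi^{-}_{\oS}(T_0(\lambda))$ are the length one Hecke operators $T_+$, resp. $T_-$, evaluated on explicit classes $\xi\cap[\Coh_\delta]$, hence equal to $\Psi^{\pm}_{\oHpat}$ of those classes. On the other hand, Proposition~\ref{E:NakOpsasCOHA} realizes $\mathfrak{q}_{\mp 1}(\lambda)$ as $\Psi^{\pm}_{\oHpat}(E_{\mp 1}(\lambda))$, and unwinding its proof for $l=1$ gives $E_{-1}(\lambda)=-\lambda\cap[\Coh_\delta]$, so that $\Phi^{+}_{\oS}(T_0(\lambda))=-\mathfrak{q}_{-1}(\lambda)$. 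The identity $\Phi^{-}_{\oS}(T_0(\lambda))=\mathfrak{q}_1(\lambda)$ needs slightly more care, because the negative Hecke correspondence involves the Serre dual $\mathcal{F}_\alpha=\mathcal{E}_\alpha^\vee\otimes K_{\oS}[1]$ and hence the shifted classes $\widetilde{e}$; here I would exploit that on $V(\oS)=\mathbf{V}(\oS)/p_1(\pt)\bullet\mathbf{V}(\oS)$ (see~\eqref{E:Hilbident} and Lemma~\ref{lem:hilbproperties1}) the classes built from $\ev(p_1(\pt))$ act trivially (Lemma~\ref{lem:hilbproperties2}), which collapses the $u$-weighting to the expected degree one creation operator.

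For the inductive step, applying the algebra homomorphism $\Phi^{\pm}_{\oS}$ to~\eqref{E:relinWforHeis} yields
\[
\Phi^{\pm}_{\oS}(D_{l+1,0}(\lambda))=\tfrac{1}{l}\bigl[\Phi^{\pm}_{\oS}(T_1(1)),\,\Phi^{\pm}_{\oS}(D_{l,0}(\lambda))\bigr],
\]
so by the induction hypothesis the whole statement reduces to the single commutation relation
\[
\bigl[\Phi^{\pm}_{\oS}(T_1(1)),\,\mathfrak{q}_{\mp l}(\lambda)\bigr]=\mp\,l\,\mathfrak{q}_{\mp(l+1)}(\lambda),
\]
which must be proven independently of the $W$-algebra. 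Since $\Psi^{\pm}_{\oHpat}$ is an algebra homomorphism and both $\Phi^{\pm}_{\oS}(T_1(1))$ and $\mathfrak{q}_{\mp l}(\lambda)$ lie in its image (by Propositions~\ref{P:regularHP} and~\ref{E:NakOpsasCOHA} respectively), this commutator equals $\Psi^{\pm}_{\oHpat}$ of the COHA commutator of a degree one generator (a $u$-weighted class on $\Coh_\delta$, geometrically a multiple of Lehn's boundary operator) with $E_{\mp l}(\lambda)$. I would therefore prove the relation either by a direct computation of this commutator in $\Hb_0(\oS)$, composing the length one Hecke and Nakajima correspondences, or by citing Lehn's commutation formula between the boundary operator and Nakajima operators \cite[\S 3]{Lehn}.

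The hard part will be precisely this last commutation relation: one must match $\Phi^{\pm}_{\oS}(T_1(1))$ with the operator occurring in Lehn's formula and verify that the coefficient is \emph{exactly} $\mp l$ with \emph{no} correction terms. The potential $K_{\oS}$- and $c_1$-corrections present in the general boundary commutator must either vanish or cancel on $V(\oS)$; this is consistent with the fact that the defining identity~\eqref{E:relinWforHeis} holds without correction in $W^{\geq}(\oS)$, but establishing it on the geometric side is where the genuine work lies. The remaining delicate, but routine, points are the sign bookkeeping coming from the anti-homomorphism defining the $W^-$-action and the dualization in the negative Hecke correspondence, both of which I expect to reconcile against the signs $(-1)^l$ and $1$ appearing in~\eqref{E:Nak=Heis}.
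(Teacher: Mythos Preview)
Your proposal is correct and follows essentially the same route as the paper: the $l=1$ case is settled by a direct comparison of the length-one Hecke correspondence with the Nakajima correspondence $Z_{k+1,k}$, and the induction on $l$ is run via Lehn's commutation formulas. The one place where the paper is sharper is the identification you flag as ``the hard part'': rather than matching $\Phi^{\pm}_{\oS}(T_1(1))$ to Lehn's operator by an ad hoc argument, the paper simply observes that Lehn's boundary class satisfies $\mathfrak{d}=c_1(\mathcal{U})=-\tfrac{1}{2}\psi_2(1)$ as tautological classes, whence $\Phi^{\pm}_{\oS}(T_1(1))=\tfrac{1}{2}[\Phi^{\pm}_{\oS}(\psi_2(1)),\Phi^{\pm}_{\oS}(T_0(1))]=[\mathfrak{d},\mathfrak{q}_{\mp 1}(1)]=\mathfrak{q}'_{\mp 1}(1)$, and Lehn's relation $[\mathfrak{q}'_{\pm 1}(1),\mathfrak{q}_{\pm m}(\lambda)]=-m\,\mathfrak{q}_{\pm(m+1)}(\lambda)$ closes the induction with no correction terms to worry about. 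Your treatment of the $\Phi^-$ base case via $\widetilde{e}$ and the $p_1(\pt)$-quotient is more elaborate than necessary; the same diagram-chase as for $\Phi^+$ works symmetrically, and the sign difference is accounted for by $\Phi^-$ being an anti-homomorphism.
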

\begin{proof} Assume first that $l=1$. Consider the diagram
$$\xymatrix{\Coh_\delta(\oS) \times \Hilb_{k+1}(\oS) & \widetilde{\Hilb}_{k+1,k}(\oS) \ar[r]^-{p} \ar[l]_-{q} & \Hilb_{k}(\oS)\\
\oS \times \Hilbrm_{k+1}(\oS) \ar[u]^-{j_1} & Z_{k+1,k}(\oS) \ar[u]^-{j_2} \ar[r]^-{p'} \ar[l]_-{q'} & \Hilbrm_{k}(\oS) \ar[u]^-{j_3}
}$$
The vertical arrows are induced by the maps 
$$\oS \to \oS \times B\mathbb{G}_m =\Coh_\delta(\oS)
,\quad
\Hilbrm_k(\oS) \to \Hilbrm_k(\oS) \times B\mathbb{G}_m=\Hilb_k(\oS).$$ 
The right square is cartesian.  
Moreover, the stacks $\widetilde{\Hilb}_{k+1,k}(\oS)$ and $Z_{k+1,k}(\oS)$ 
are smooth, and the maps $q,$ $q'$ are lci. 
For $c \in H_*(\Hilbrm_{k+1}(\oS),\Q)$ and $\lambda \in H^*(\oS,\Q)$, we have 
$$\mathfrak{q}_{-1}(\lambda)(c)=-p'_*((q')^!((\lambda \cap [\oS])\otimes c)).$$ 
Note that the map $H_*(\RHilb_k(\oS),\Q) \to H_*(\Hilbrm_k(\oS),\Q)$ 
is the pullback by the morphism $\Hilbrm_k(\oS) \to \RHilb_k(\oS)$. 
By base change, we have
\begin{equation}
    \begin{split}
\Phi_{\oS}^+(D_{1,0}(\lambda))(c)&=j_3^*(p_*q^!(\lambda \cap [\oS] \otimes c))\\
&=p'_*j_2^*q^!(\lambda \cap [\oS] \otimes c))\\
&=p'_*(q')^!j_1^*(\lambda \cap [\oS] \otimes c))\\
&=-\mathfrak{q}_{-1}(\lambda)(j_1^*(c))
 \end{split}
\end{equation}
as wanted. 
Since the map $\widetilde{\Hilb}_{k+1,k}(\oS) \to \Coh_\delta(\oS) \times \Hilb_k(\oS)$ is lci, 
the refined Gysin pullback is well-defined without any need to consider derived enhancements.

To extend the above relation to arbitrary $l >1$ we use Lehn's formulas~\cite[(2)]{Lehn}.
Put $\mathcal{U}=\pi_*(\mathcal{T})$ where $\mathcal{T}$ is the universal subscheme on $\Hilbrm(\oS) \times \oS$ and $\pi$ is the projection along $\oS$.
Setting $\mathfrak{d}=c_1(\mathcal{U})$, for each
$\lambda \in H^*(\oS,\Q)$, $m \geq 1$ we have
\begin{equation}\label{E:relinLehnforHeis}
    \begin{split}
    [\mathfrak{d},\mathfrak{q}_{\pm 1}(1)]&=\mathfrak{q}'_{\pm 1}(1) \\
    [\mathfrak{q}'_{\pm 1}(1), \mathfrak{q}_{\pm m}(\lambda)]&=-m\mathfrak{q}_{\pm (m+1)}(\lambda).
    \end{split}
\end{equation}
Observing that $\mathfrak{d}=-\frac{1}{2}\psi_2(1)$ and comparing~\eqref{E:relinWforHeis} with~\eqref{E:relinLehnforHeis} we deduce the statement by induction on $m$. 
The difference in signs is due to the fact that $\Phi_{\oS}^-$ is a right representation.
\end{proof}

\smallskip

We now turn to the case of an arbitrary cohomologically pure\footnote{When the surface $S$ is not pure, the same construction yields an identification of the action of the element $D_{m,0}(\alpha)$ for $\alpha \in H^*_{\mathrm{pure}}(S,\Q)$ with the corresponding Nakajima operators $\mathfrak{q}_m(\alpha)$.} surface $S$, where only $\Phi^-_S$ is 
defined. Fixing $\iota: S \to \oS$, the operators $\mathfrak{q}_l(\lambda)$ for $l >0$ and 
$\lambda \in H^*(\oS,\Q)$ 
are easily seen to be compatible with the restriction maps from 
$\underline{\iota}^*:H_*(\Hilbrm_k(\oS),\Q) \to H_*(\Hilbrm_k(S),\Q)$
 in the sense that there is a commutative diagram
$$\xymatrix{H_*(\Hilbrm_k(\oS),\Q) \ar[d]^-{\mathfrak{q}_l(\lambda)} \ar[r]^-{\underline{\iota}^*} & H_*(\Hilbrm_k,\Q) 
\ar[d]^-{\mathfrak{q}_l(\lambda_{|S})}\\
H_*(\Hilbrm_{k+l}(\oS),\Q) \ar[r]^-{\underline{\iota}^*} & H_*(\Hilbrm_{k+l},\Q) 
}.$$
We claim that the same base change formulas hold for the operators $\Phi^-(D_{m,0}(\lambda))$ for $m \geq 1$. Indeed, it follows from the cartesian diagram
$$\xymatrix{\RHilb_k(\oS) \times \RCoh_{\delta} & \widetilde{\Hilb}_{k,k+1}(\oS) \ar[l]_-{\overline{\kappa}} \ar[r]^-{\overline{\pi}'} & \RHilb_{k+1}(\oS) \times \oS\\
\RHilb_k \times \RCoh_{\delta} \ar[u]& \widetilde{\Hilb}_{k,k+1}\ar[u] \ar[l]_-{\overline{\kappa}^\circ} \ar[r]^-{\overline{\pi}^{\circ'}} & \RHilb_{k+1} \times S\ar[u]
}$$
and open base change that there is a commutative diagram
$$\xymatrix{H_*(\Hilbrm_k(\oS),\Q) \ar[d]^-{D_{1,n}(\lambda)} \ar[r]^-{\underline{\iota}^*} & H_*(\Hilbrm_k,\Q) 
\ar[d]^-{D_{1,n}(\lambda_{|S})}\\
H_*(\Hilbrm_{k+1}(\oS),\Q) \ar[r]^-{\underline{\iota}^*} & H_*(\Hilbrm_{k+1},\Q) 
}$$
for any $n$. Since the collection of elements $D_{1,k}(\lambda)$ generates $\Wn^-(\oS)$, and in particular $\Heis^-_\oS$, we deduce the following:

\begin{corollary}\label{Cor:faithfulHeis} For any pure surface $S$ and any pair $(m,\lambda)$ we have $$\Phi_{S}^-(D_{m,0}(\lambda))=\mathfrak{q}_m(\lambda) \in \End({V}(S)).$$
In particular, the representation $\Phi^-_{S}$ of $U(\Heis^-_S)$ is faithful.
\end{corollary}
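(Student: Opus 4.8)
The plan is to reduce the statement for an arbitrary pure surface $S$ to the already-established projective case $\oS$, using the base change machinery set up just before the statement. First I would fix a smooth compactification $\iota: S \to \oS$ and recall from Proposition~\ref{prop:heis=nak} that $\Phi^-_{\oS}(D_{m,0}(\lambda))=\mathfrak{q}_m(\lambda)$ holds in $\End(V(\oS))$ for all $m\geq 1$ and $\lambda\in H^*(\oS,\Q)$. The two commutative diagrams displayed immediately above the corollary (the compatibility of the Nakajima operators $\mathfrak{q}_l(\lambda)$ with $\underline{\iota}^*$, and the corresponding compatibility for the operators $\Phi^-(D_{1,n}(\lambda))$, obtained from the cartesian diagram involving $\widetilde{\Hilb}_{k,k+1}$ and open base change) reduce the problem for degree-one generators to the projective case: restricting along $\underline{\iota}^*$ intertwines $D_{1,n}(\lambda)$ on $\oS$ with $D_{1,n}(\lambda_{|S})$ on $S$, and likewise for $\mathfrak{q}_l$. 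Combining these with the projective identity gives $\Phi^-_S(D_{1,n}(\lambda))=\mathfrak{q}_1(\lambda_{|S})$ once we know that $\underline{\iota}^*$ is surjective onto $V(S)$, which follows from Corollary~\ref{C:generationHilb} (tautological generation of the pure cohomology of $\Hilbrm$).

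Next I would bootstrap from degree one to arbitrary $m$. Since the elements $D_{1,k}(\lambda)$, $k\geq 0$, generate $\Wn^-(S)$ (and in particular the Heisenberg half $\Heis^-_S$), and the elements $D_{m,0}(\lambda)$ are produced from the $D_{1,k}$'s by the inductive bracket formula~\eqref{E:relinWforHeis}, namely $D_{m+1,0}(\lambda)=\frac{1}{m}[D_{1,1}(1),D_{m,0}(\lambda)]$, it suffices to check that $\Phi^-_S$ sends this bracket to the matching Nakajima bracket. On the geometric side this is exactly Lehn's recursion~\eqref{E:relinLehnforHeis}, which is a relation among the $\mathfrak{q}_m$'s valid on any pure $S$ (the relevant classes $\mathfrak{d}=-\tfrac12\psi_2(1)$ and $\mathfrak{q}'_{\pm1}$ being defined via tautological classes that are compatible with restriction). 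Thus an induction on $m$, identical in structure to the proof of Proposition~\ref{prop:heis=nak} but carried through the restriction diagrams, yields $\Phi^-_S(D_{m,0}(\lambda))=\mathfrak{q}_m(\lambda)$ for all $m\geq 1$.

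Finally, faithfulness of $\Phi^-_S$ restricted to $U(\Heis^-_S)$ is immediate once the identification is in place: the operators $\mathfrak{q}_m(\lambda)$, $m\geq 1$, $\lambda\in H^*(S,\Q)$, together with the $\mathfrak{q}_{-m}(\mu)$, $\mu\in H^*_c(S,\Q)$, generate a faithful Fock-space representation of $\Heis_S$ of central charge $1$, by Nakajima's theorem~\cite[\S 8]{NakLectures}. Since $U(\Heis^-_S)$ embeds in $U(\Heis_S)$ and acts freely on the lowest weight vector (the class of $\Hilbrm_0=\mathrm{pt}$), the positive half acts faithfully; concretely, any nonzero element of $U(\Heis^-_S)$, written in terms of the $\mathfrak{q}_m$'s, produces a nonzero vector when applied to the vacuum, by the standard PBW argument for the Fock module.

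The main obstacle I anticipate is not conceptual but bookkeeping: one must verify carefully that all the tautological classes entering Lehn's relations~\eqref{E:relinLehnforHeis} (in particular $\mathfrak{d}=c_1(\mathcal{U})$ and the derived operator $\mathfrak{q}'_{\pm1}$) restrict correctly along $\iota$ and that the refined Gysin pullbacks in the cartesian diagram for $\widetilde{\Hilb}_{k,k+1}$ commute with the open restriction $\underline{\iota}^*$ in the required way. This is exactly where open base change (Proposition~\ref{P:basechange}) and the lci property of $\overline{\kappa}^\circ$ must be invoked with care; the subtlety is that the operators $\mathfrak{q}_{-m}$ are labeled by compactly supported classes while the $\mathfrak{q}_m$ are labeled by ordinary cohomology, so one must track which half of $\Heis_S$ survives restriction. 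Since the corollary asserts only the statement for $\Phi^-_S$ (the positive-index operators $\mathfrak{q}_m$), this asymmetry works in our favor and no separate treatment of the compactly supported half is needed here.
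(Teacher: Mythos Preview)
Your approach is essentially the paper's, with two minor issues worth flagging. First, the displayed identity $\Phi^-_S(D_{1,n}(\lambda))=\mathfrak{q}_1(\lambda_{|S})$ in your first paragraph is only correct for $n=0$; for general $n$ what the cartesian diagram actually gives (and what you need) is that $\Phi^-_S(D_{1,n}(\lambda_{|S}))\circ\underline{\iota}^*=\underline{\iota}^*\circ\Phi^-_{\oS}(D_{1,n}(\lambda))$, i.e.\ compatibility with restriction, not an identification with a Nakajima operator. Second, the detour through Lehn's recursion on $S$ is unnecessary. The paper's argument is cleaner: since the $D_{1,k}(\lambda)$ generate $\Wn^-(\oS)$ and each of them commutes with $\underline{\iota}^*$, the entire action of $\Wn^-(\oS)$ via $\Phi^-_{\oS}$ commutes with $\underline{\iota}^*$; in particular so does $\Phi^-_{\oS}(D_{m,0}(\lambda))$. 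As the Nakajima operators $\mathfrak{q}_m(\lambda)$ also commute with $\underline{\iota}^*$, the identity $\Phi^-_{\oS}(D_{m,0}(\lambda))=\mathfrak{q}_m(\lambda)$ on $V(\oS)$ from Proposition~\ref{prop:heis=nak} descends through the surjection $\underline{\iota}^*$ to give the identity on $V(S)$ directly---no need to re-run the inductive bracket calculation or invoke Lehn's relations over the open surface. Your faithfulness argument is fine and matches the paper's.
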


We are now in position to prove the following: 

\begin{proposition}\label{P:faithfullactionWHilb} For any pure surface $S$, the representation
$\Phi^-_{\Hpat}$ on $\mathbf{V}(S)$ is indeed a faithful representation of $\Wn^-(S)$. 
If $S$ is proper then the same holds for the representation $\Phi^+_{\Hpat}$ on $\mathbf{V}(S)$.
\end{proposition}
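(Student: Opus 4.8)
The plan is to identify the kernel of $\Phi^-_{\Hpat}$ as a two-sided ideal and kill it using the faithfulness criterion of Lemma~\ref{lem:Heis-faithful}, bootstrapping from the already-established faithfulness of the Heisenberg action. Write $K = \ker\big(\Phi^-_{\Hpat}\colon \Wn^-(S)\to\End(\mathbf{V}(S))\big)$; as the kernel of an algebra homomorphism it is a two-sided ideal of $\Wn^-(S)$. Since $\Wn^-(S)$ is (anti)isomorphic to $\Wn^+(S)\simeq W^+(J_S)$ with $J_S=H^*(S,\Q)$, and the results of \S~\ref{sec:Fock-ops} carry over mutatis mutandis to $W^\pm(J_S)$—as already exploited in the proof of Proposition~\ref{prop:W-cpt-open}—the second statement of Lemma~\ref{lem:Heis-faithful} is available for $\Wn^-(S)$ together with its Heisenberg subalgebra $U(\Heis^-_S)$. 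Hence it suffices to prove that $K\cap U(\Heis^-_S)=\{0\}$.

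To establish this, I would pass to the quotient $V(S)$. The class $p_1(\pt)$ (more precisely, its lift from $H^*(\Hilb_n(\oS),\Q)$ as in \S~\ref{sec:Hilbert}) is central, so the action $\Phi^-_{\Hpat}$ on $\mathbf{V}(S)$ descends to the action $\Phi^-_{S}$ on $V(S) = \mathbf{V}(S)\,/\,p_1(\pt)\bullet\mathbf{V}(S)$. Consequently, any $y\in U(\Heis^-_S)\cap K$—that is, an element of $U(\Heis^-_S)$ annihilating all of $\mathbf{V}(S)$—a fortiori annihilates the quotient, so $\Phi^-_{S}(y)=0$. But by Corollary~\ref{Cor:faithfulHeis} the restriction of $\Phi^-_{S}$ to $U(\Heis^-_S)$ is faithful (it realizes $D_{m,0}(\lambda)$ as the Nakajima creation operator $\mathfrak{q}_m(\lambda)$), whence $y=0$. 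This yields $K\cap U(\Heis^-_S)=\{0\}$, and therefore $K=\{0\}$ by the previous paragraph, proving that $\Phi^-_{\Hpat}$ is faithful on $\mathbf{V}(S)$.

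For the positive action in the proper case $S=\oS$, the argument runs identically. Here $\Phi^+_{\Hpat}=\Phi^+_{\oHpat}$ is defined on all of $\Wn^+(S)=W^+(\oS)$, its kernel $K^+$ is a two-sided ideal, and the second statement of Lemma~\ref{lem:Heis-faithful} applies verbatim. The descended action $\Phi^+_{\oS}$ on $V(\oS)$ satisfies $\Phi^+_{\oS}(D_{l,0}(\lambda))=(-1)^l\mathfrak{q}_{-l}(\lambda)$ by Proposition~\ref{prop:heis=nak}, so the positive Heisenberg $U(\Heis^+_{\oS})$ acts faithfully on $V(\oS)$ (these constant-coefficient annihilation operators act faithfully on Fock space). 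Exactly as above, an element of $U(\Heis^+_{\oS})\cap K^+$ annihilates $V(\oS)$ and hence vanishes, forcing $K^+\cap U(\Heis^+_{\oS})=\{0\}$ and thus $K^+=\{0\}$.

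The only genuinely technical point—and the main thing to verify carefully—is the applicability of Lemma~\ref{lem:Heis-faithful} in the open and negative settings: its proof rests on the order filtration of \S~\ref{sec:order-filt} together with Theorem~\ref{thm:big-W-undef}, and I would check that both remain in force for $\Wn^\pm(S)\simeq W^\pm(J_S)$. Once this is secured, both faithfulness statements follow formally from the Heisenberg faithfulness inputs (Corollary~\ref{Cor:faithfulHeis} and Proposition~\ref{prop:heis=nak}), with no additional geometric input required; the entire content of the proposition is the reduction of faithfulness of a large algebra to faithfulness of its Heisenberg subalgebra on the correspondingly smaller space $V(S)$.
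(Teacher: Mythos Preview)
Your proof is correct and follows essentially the same approach as the paper: reduce to the Heisenberg subalgebra via Lemma~\ref{lem:Heis-faithful}, then invoke Corollary~\ref{Cor:faithfulHeis} (resp.\ Proposition~\ref{prop:heis=nak}) for the faithfulness of the Heisenberg action. The only cosmetic difference is that the paper works directly with the kernel of the descended action $\Phi^-_S$ on $V(S)$, which is a priori larger than your $K=\ker\Phi^-_{\Hpat}$, and shows that already this vanishes; faithfulness on $\mathbf{V}(S)$ then follows a fortiori, so your extra step of passing from $\mathbf{V}(S)$ to $V(S)$ is subsumed.
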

\begin{proof} Let $I=\text{Ker}((\Phi^{-}_{S})_{|\Wn^-(S)})$. By definition, it is a two-sided ideal of $\Wn^-(S)$. If non-zero, $I$ must have a non-zero intersection with $U(\Heis_S)$ by Lemma~\ref{lem:Heis-faithful}. But this would contradict Corollary~\ref{Cor:faithfulHeis}.  The second statement is proved in the same fashion.
\end{proof}

\medskip

\subsection{Comparison between COHAs and $W$-algebras}\label{sec:Tmain}

In this section we prove Theorem~\ref{thmB}. We will begin with the case of $\Hb_0(S)$.
Then we deduce the case of $\Hb^c_0(S)$ using the morphism 
$$\underline{\iota}_! : \Hb^c_0(S) \to \Hb_0(\overline{S}).$$

\medskip

\begin{proof}[Proof of Theorem~\ref{thmB} for $\Hb_0(S)$]
    Let $\Hb'_0(S)\subset\Hb(S)$ be the subalgebra generated by $\Hb_0(S)[1,-]$.
    Set $\widetilde{\Hb}'_0(S)=\Lambda(S)\ltimes \Hb'_0(S)$. 
    By \S\ref{sec:HilbregularHP} there are homomorphisms of algebras 
    \begin{equation}\label{diag:heckeactionhilb}
    \xymatrix{ W^{\leq}(S)  \ar[r]^-{\Phi^-} &  \End(\mathbf{V}(S))& \widetilde{\Hb}_0(S)^\op  \ar[l]_-{\Psi^-}}
    \end{equation}
    The map $\Phi^-|_{W^-(S)}$ is injective by Proposition~\ref{P:faithfullactionWHilb}.
    Recall that $\RHilb$ is a Hecke pattern of rank $r=1$.
    For any $n \geq 0$, $m>0$ and $\lambda \in H^*(S,\Q)$ we have
    \begin{equation}\label{E:actionCOHA=Tn}
    \Psi^-(\lambda u^n \cap [\Coh_\delta(S)])=\Phi^-(T_n(\lambda)), \qquad \Psi^-(\psi_m(\lambda))=\Phi^-(\phi_m(\lambda))
    \end{equation}
     By \eqref{E:actionCOHA=Tn}, we have 
     $$\Phi^-(W^-(S)) =\Psi^-(\Hb'_0(S)^\op).$$ 
     Hence, for any $n$ and $l$, we have the chain of inequalities 
    $$\dim(W^+(S)[n,l])  \;\leq\; \dim(\Hb'_0(S)[n,l]) \;\leq\; \dim(\Hb_0(S)[n,l]).$$
    By Theorems~\ref{T:KV} and~\ref{T:W(S)}, we also have 
    $$\dim(W^+(S)[n,l])=\dim(\Hb_0(S)[n,l]).$$ 
    Thus the inequalities above are equalities. 
    Hence, we have $\Hb_0(S)=\Hb'_0(S)$, i.e., the algebra
    $\Hb_0(S)$ is generated in degree one.
    Further, the map $\Psi^-|_{\Hb_0(S)}$ is injective, 
    and the morphism $T_n(\lambda) \mapsto (\lambda u^n) \cap [\Coh_\delta(S)]$ extends to the desired isomorphism of algebras 
    $$\Theta_{S}:W^+(S) \simeq \Hb_0(S).$$
    This isomorphism extends to $W^\geq(S) \simeq \widetilde{\Hb}_0(S)$.
\end{proof}

\medskip

\begin{proof}[Proof of Theorem~\ref{thmB} for $\Hb_0^c(S)$]
    Recall that $S$ is pure.
    Fix a smooth compactification $\iota: S \to \oS$.
    Consider the algebra homomorphism $$\underline{\iota}_!:\Hb_0^c(S) \to \Hb_0(\oS) \simeq W^+(\oS).$$
    By Proposition~\ref{prop:Hilb-is-regular}, the Hecke pattern $\RHilb(\oS)$ is a two-sided $S$-weak Hecke pattern. So we have a left action 
    $$\Psi^+_{\oHpat} : \Hb^c_0(S) \to \End(\mathbf{V}(\oS)).$$ 
By Proposition~\ref{prop:basechangeHP}, there is a commutative diagram
    $$\xymatrix{\Hb^c_0(S) \ar[r]^-{\underline{\iota}_!} \ar[dr]_(.4){\Psi^+_{\oHpat}}&\Hb^c_0(\oS) \ar[d]^(.35){\Psi^+_{\oHpat}} & W^+(\oS) \ar[l]_-{\Theta_{\oS}} \ar[dl]^(.4){\Phi^+_{\oHpat}}\\ & \End(\mathbf{V}(\oS)) &
    }$$
Put $A=\Psi^+_{\oHpat}(\Hb_0^c(S))$. By Proposition~\ref{E:NakOpsasCOHA}, 
$A$ contains the subalgebra generated by the operators 
$$\mathfrak{q}_{-l}(\lambda)
,\quad
l>0
,\quad
\lambda \in H^*_c(S,\Q)
$$ 
as well as the operators 
$$\Phi^+_{\oHpat}(D_{1,m}(\lambda))
,\quad
m \geq 0
,\quad
\lambda \in H^*_c(S,\Q)$$ which arise as length one compactly supported Hecke operators.
    We know that $\Theta_{\oS}$ is an isomorphism.
    By Proposition~\ref{P:faithfullactionWHilb}, the map $\Phi^+_{\oHpat}$ is injective.
Thus, the definition of $\Wc^{+}(S)$ yields
    $$\Phi^+_{\oHpat}(\Wc^{+}(S))\subset A.$$ 
Hence, the graded dimension of $A$ is bounded below by that of $\Wc^{+}(S)$. 
By Theorem~\ref{T:KVcompact} there is an equality between these graded dimensions.
We deduce that we have a chain of isomorphisms
    $$\xymatrix{\Hb^c_0(S) \ar[r]^-{\Psi^+_{\oHpat}}& A \ar@{=}[r] & \Phi^+_{\oHpat}(\Wc^{+}(S))& \Wc^{+}(S) \ar[l]_-{\Phi^+_{\oHpat}}}$$
    as desired.
    This also shows that the map $\underline{\iota}_!$ is injective and concludes the proof of Theorem~\ref{thmB}.   
\end{proof}

\begin{remark}
    The proof of Theorem~\ref{thmB} above goes through almost verbatim in the case when $S$ is equipped with an action of an algebraic torus $T$.
    The only thing that we need in addition are equivariant counterparts of Theorems~\ref{T:KV},~\ref{T:KVcompact}, which are provided by remarks after said theorems.
\end{remark}

\smallskip

\begin{corollary}\label{cor:regularHPcompact} Let $S$ be a pure surface, $X$ a two-sided Hecke pattern of rank $r$, and $X^\circ\subset X$ an open two-sided $S$-weak Hecke subpattern.
There is a commutative diagram 
$$\xymatrix{\Wc^\pm(S) \ar[r] \ar@/^1pc/[rr]^-{\Phi^{\pm}} \ar[dr]_-{\Phi^{\pm}_X} & \End_X(\mathbf{F}^{(r)}(\oS)) \ar[r] \ar[d]^-{\ev} & \End_{X^\circ}(\mathbf{F}^{(r)}(\oS))\\
& \End(\mathbf{V}^\taut(X))&}$$
which glues into an action of $\Wcc{r}(S)$ on $\mathbf{V}^\taut(X)$ provided that the conditions of Lemma~\ref{lem:eval-open-restriction} are satisfied. 
\end{corollary}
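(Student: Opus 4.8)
The plan is to bootstrap the compactly supported statement from the already-proven case of the usual (non-compact) Hecke operators, mirroring exactly the structure of the proof of Theorem~\ref{thmB} for $\Hb^c_0(S)$. First I would recall that by Proposition~\ref{P:regularHP} the statement is already known for the usual $W^\pm(\oS)$ acting on $\mathbf{V}^\taut(X)$ over the compactification $\oS$, together with the identifications $\Phi^+_X(T_n(\xi)) = T_+(\xi u^{n-r+1}) = \Psi^+_X(\xi u^{n-r+1}\cap[\Coh_\delta])$ and its negative analogue. The point is to restrict this action along the embedding $\Wc^\pm(S)\hookrightarrow W^\pm(\oS)$. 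Since $\Wc^\pm(S) = \Wc^\pm(I_S)$ is by definition a \emph{subalgebra} of $W^\pm(\oS)$, the algebra homomorphisms $\Phi^\pm_X$ restrict directly, and there is nothing to check for the algebra structure itself; the content is entirely in showing that this restricted action factors through the compactly supported COHA operators, i.e.\ that it is genuinely well-defined on $\mathbf{V}^\taut(X)$ using $S$-local (compactly supported) data rather than the $\oS$-global operators.

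The key mechanism is Corollary~\ref{Cor:wccharacterization}, which identifies $\Wc^\pm(I_S)$ with the subalgebra generated by all Lie words $L(T_{n_1}(\lambda_1),\ldots,T_{n_s}(\lambda_s))$ with $\prod_i\lambda_i\in I_S = H^*_c(S,\Q)$. The crucial geometric input is Proposition~\ref{P:heckecompact} together with Lemma~\ref{lm:Hecke-base-change}, which express the restriction $j^*T_\pm(\iota(x)\otimes -)$ of the $\oS$-Hecke operators in terms of the compactly supported Hecke operators $T_\pm^c(x\otimes j^*-)$ whenever $x\in H^c_*(\Coh_\delta(S),\Q)$. Thus I would proceed as follows: for a single generator $T_n(\lambda)$ with $\lambda\in H^*_c(S,\Q)$, the operator $\Phi^\pm_X(T_n(\lambda)) = \Psi^\pm_X(\lambda u^{n\mp r+1}\cap[\Coh_\delta])$ is expressed, via Proposition~\ref{P:heckecompact}, through a compactly supported Hecke correspondence, hence descends to an honest operator on $\mathbf{V}^\taut(X^\circ)$ after restriction along the open immersion $X^\circ\subset X$. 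For an arbitrary Lie word in $\Wc^\pm(S)$, I would argue that since $\Psi^\pm$ factors through the COHA $\Hb^c_0(S)$ on the one hand (by Proposition~\ref{prop:negheckeactionsdef} applied to the $S$-weak Hecke pattern $X^\circ$), and the identification $\Hb^c_0(S)\simeq \Wc^+(S)$ of Theorem~\ref{thmB} matches Lie words with their COHA counterparts, the restricted action of each such Lie word coincides with the action of a well-defined element of $\Hb^c_0(S)$.

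Concretely, I would assemble the commutative diagram by combining three ingredients already in place: the base-change Proposition~\ref{prop:basechangeHP}(b) for the open immersion $X^\circ\subset X$, which provides the vertical restriction map $\mathrm{res}$ and shows $\Psi^\pm_X$ lands in $\End_{X^\circ}(\mathbf{V}(X))$; the regular-pattern result Proposition~\ref{P:regularHP} on $\oS$, which yields the top row $\Wc^\pm(S)\subset W^\pm(\oS)\to\End_X(\mathbf{F}^{(r)}(\oS))\to\End(\mathbf{F}^{(r)}(\oS))$; and the evaluation map $\ev$ whose compatibility with the COHA action is guaranteed by Lemma~\ref{lem:eval-open-restriction} (this is precisely where the hypothesis of that lemma, namely that the tautological classes factor through $\Lambda(S)$, enters). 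The gluing into an action of $\Wcc{r}(S)$ then follows by the same argument as Theorem~\ref{thmC}(b): once both $\Wc^+(S)$ and $\Wc^-(S)$ act and the cross-relation~\eqref{E:+-relationsFock} (equivalently~\eqref{eq:comm-pm}) is matched, Proposition~\ref{prop:triang-open} together with the triangular decomposition assembles the full $\Wcc{r}(S)$-action.

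The main obstacle will be verifying that the restricted action is well-defined on tautological classes for the \emph{higher} Lie words, not just the degree-one generators $T_n(\lambda)$. The subtlety is that $\Wc^\pm(I_S)$ is genuinely larger than the subalgebra generated by its degree-one part (as the remark after Proposition~\ref{prop:W-cpt-open} and the $\mathbb{P}^2$ example emphasize), so I cannot simply reduce everything to the generator case handled by Proposition~\ref{P:heckecompact}. The resolution is exactly the reason the proof is deferred until after \S\ref{sec:Hilbert}: the identification $\Hb^c_0(S)\simeq\Wc^+(S)$ of Theorem~\ref{thmB}, proven using the faithful Hilbert-scheme representation, certifies that every element of $\Wc^+(S)$ --- including the non-obvious Lie words $D_{m,n}(\lambda)$ with $\lambda\in I_S$ --- is the image of a genuine compactly supported COHA class, whose action on any $S$-weak Hecke pattern is defined in Proposition~\ref{prop:negheckeactionsdef}. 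Hence the restriction of $\Phi^\pm_X$ to $\Wc^\pm(S)$ automatically factors through $\Hb^{c,\pm}_0(S)$ acting on $\mathbf{V}^\taut(X^\circ)$, and well-definedness is inherited from the COHA action rather than checked term by term.
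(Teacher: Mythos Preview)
Your proposal is correct and follows essentially the same route as the paper: start from the $W^\pm(\oS)$-diagram of Proposition~\ref{P:regularHP}, then use the identification $\Wc^+(S)\simeq\Hb_0^c(S)$ as subalgebras of $W^+(\oS)$ (Theorem~\ref{thmB}) together with Proposition~\ref{prop:negheckeactionsdef} to conclude that the restricted action preserves the kernel of restriction to $X^\circ$, and finally invoke Lemma~\ref{lem:eval-open-restriction} so that the evaluation factors through $\mathbf{F}^{(r)}(S)$ and the two halves glue. The paper's proof is terser---it goes directly to Theorem~\ref{thmB} without separately discussing the degree-one case via Proposition~\ref{P:heckecompact} or invoking Corollary~\ref{Cor:wccharacterization}---but your more explicit discussion of why the higher Lie words require Theorem~\ref{thmB} (and hence why the proof had to wait until after \S\ref{sec:Hilbert}) is exactly the point.
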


\begin{proof}
    We have an analogous diagram for $W^\pm(\oS)$ by Proposition~\ref{P:regularHP}.
    For the first claim, it suffices to show that $\Wc^\pm(S)$ lands in $\End_{X^\circ}(\mathbf{F}^{(r)}(\oS))$ under $\Phi^\pm$.
    This follows from Proposition~\ref{prop:negheckeactionsdef} and the fact that $\Wc^+(S)\simeq \Hb_0^c(S)$ as subalgebras of $W^+(\oS)$.
    Adding tautological classes in, à priori we get actions of $W^0(\oS) \ltimes \Wc^\pm(S)$.
    However, the map $\mathbf{F}^{(r)}(\oS)\to \mathbf{V}^\taut(X)$ factors through $\mathbf{F}^{(r)}(S)$ by Lemma~\ref{lem:eval-open-restriction}, and so the two actions above glue to $\Wcc{r}(S)$.
\end{proof}

In particular, we have an action of $\Wnc{1}(S)$ on $V(S)$, which descends to a faithful action of $W_{\uparrow\mathrel{\mspace{-2mu}}\downarrow,\red}^{(1)}$ by Lemma~\ref{lem:Heis-faithful}.

\begin{remark}
    By Lemma~\ref{lem:strong-Hecke-equiv}, any two-sided $S$-weak Hecke pattern which is $S$-strong on the left is automatically $S$-strong on the right. In particular, the action of $\Wcn{r}(S)$ always has central charge $r=0$ and lifts to the action of $\Wnn{0}(S)$.
    However, we expect that one can obtain actions of $\Wcn{r}(S)$ with non-zero central charge out of Hecke patterns living in other hearts of $D^b Coh(S)$.
\end{remark}

\medskip

\section{Action on Higgs bundles}\label{sec:Higgs}

In this section we show how to apply the machinery developed in this paper to obtain explicit formulas for the action of Hecke operators on the homology of the stack of Higgs bundles on a smooth projective curve $C$.

\subsection{The stack of Higgs bundles}\label{S:Higgsbundles} Let us consider $S=T^*C$, where $C$ is a smooth connected projective curve of genus $g$. Let $\overline{S}=\mathbb{P}(\Omega_C\oplus \mathcal{O}_C)$ be the projective completion of $\Omega_C$, which is a smooth compactification of $S$, and let $p: \overline{S} \to C$ be the projection.
We consider the (derived) stack $\mathfrak{Higgs}_{r,d}$ classifying Higgs sheaves on $C$ of rank $r$ and degree $d$, or equivalently via the BNR correspondence, coherent sheaves $\mathcal{E}$ on $\overline{S}$ whose support does not intersect $D_\infty= \overline{S}\backslash S$ and for which $p_*(\mathcal{E}) \in Coh(C)$ is of rank $r$ and degree $d$. When $r=0$ we recover the stacks $\RCoh_{d\delta}(S)$. We will sometimes view a Higgs sheaf as a pair $(\mathcal{F},\theta)$ where $\mathcal{F} \in Coh(C)$ and $\theta \in \Hom(\mathcal{F},\mathcal{F} \otimes \Omega_C)$. The correspondence is given by $\mathcal{E} \mapsto \mathcal{F}:=p_*(\mathcal{E})$ and reads as follows on the Chern classes:
$$\mathrm{ch}_0(\mathcal{E})=0, \qquad c_1(\mathcal{E})=r[C],\qquad c_2(\mathcal{E})=r(r+1)(1-g)-d.$$ 
Note that a Higgs sheaf $\mathcal{E}$ is of dimension $\geq 1$ is and only if it is pure of dimension $1$ if and only if the associated sheaf $\mathcal{F}$ on $C$ is a vector bundle.
We will denote by $\mathcal{H}iggs_{r,d}$ the classical truncation of $\mathfrak{Higgs}_{r,d}$.

\smallskip

As opposed to the case of the moduli stack of coherent sheaves on a curve, the stack $\mathfrak{Higgs}_{r,d}$ is not irreducible. Luckily, as soon as $g>1$, the stack $\mathfrak{Higgs}^\circ_{r,d}$ parametrizing Higgs \textit{bundles} of rank $r$ and degree $d$ is irreducible. More precisely, denote by $\mathfrak{Higgs}_{r,d}^{tor=l}$ the locally closed substack of $\mathfrak{Higgs}_{r,d}$ parametrizing Higgs sheaves whose maximal zero-dimensional subsheaf is of degree $l$. 

\medskip

\begin{proposition} Assume that $g >1$. Then the Zariski closures of $\mathfrak{Higgs}_{r,d}^{tor=l}$ for $l \geq 0$ form a complete set of irreducible components of $\mathfrak{Higgs}_{r,d}$. Moreover  the stack ${\mathcal{H}iggs_{r,d}^{tor=l}}$ is of dimension $2r^2(g-1)+l+1$. 
In particular, we have
$\dim \mathcal{H}iggs_{r,d}^\circ=2r^2(g-1) +1.$
\end{proposition}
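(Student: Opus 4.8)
The plan is to analyze the stratification $\mathfrak{Higgs}_{r,d} = \bigsqcup_{l \geq 0} \mathfrak{Higgs}_{r,d}^{tor=l}$ by the degree of the maximal torsion subsheaf, compute the dimension of each stratum, and then verify that each stratum closure is irreducible of the claimed dimension. First I would fix a Higgs sheaf $\mathcal{E}$ with associated pair $(\mathcal{F},\theta)$, and use the canonical exact sequence $0 \to \mathcal{F}_{tor} \to \mathcal{F} \to \mathcal{F}_{lf} \to 0$ where $\mathcal{F}_{tor}$ is the maximal torsion subsheaf (of degree $l$) and $\mathcal{F}_{lf}$ is locally free of rank $r$. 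The Higgs field $\theta$ must preserve $\mathcal{F}_{tor}$ since $\mathcal{F}_{tor}\otimes\Omega_C$ is again torsion and any map from a torsion sheaf lands in the torsion part; this gives a well-defined map from $\mathfrak{Higgs}_{r,d}^{tor=l}$ recording $(\mathcal{F}_{tor},\theta|_{tor})$, which is a zero-dimensional Higgs sheaf, and $(\mathcal{F}_{lf}, \bar\theta)$, a Higgs bundle, together with extension data.

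The key dimension count proceeds by assembling the three pieces. The stack of Higgs bundles $\mathfrak{Higgs}^\circ_{r,d'}$ (the $l=0$ stratum for the locally free part) is irreducible of dimension $2r^2(g-1)+1$ by the standard deformation-theoretic computation: the stack of rank $r$ degree $d'$ bundles on $C$ has dimension $r^2(g-1)$, and adding the Higgs field $\theta\in\Hom(\mathcal{F}_{lf},\mathcal{F}_{lf}\otimes\Omega_C)$ contributes, after accounting for the automorphisms and the quasi-smooth derived structure, an additional $r^2(g-1)+1$ to the virtual/actual dimension, matching $\dim\RCoh_\alpha = -\langle\alpha,\alpha\rangle$ specialized to this $\alpha$. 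The torsion part $\mathfrak{Higgs}^{tor}_{0,l} = \mathcal{H}iggs_{l\delta}(S)$ is irreducible of dimension $l$ by \S\ref{sec:stack0}. I would then show the extension and gluing data contribute exactly $1$ more dimension: concretely, one computes $\dim\Ext^1$ between the locally free and torsion Higgs sheaves (respecting the Higgs structure) minus $\dim\Hom$, i.e.\ the Euler form $-\langle\alpha_{lf},\alpha_{tor}\rangle$ on $S$, which for these classes evaluates to a controlled quantity. Summing $2r^2(g-1)+1$ (bundles) $+\, l$ (torsion) $+$ (gluing correction) should yield $2r^2(g-1)+l+1$, so the gluing correction must come out to $0$; I expect the naive sum $(2r^2(g-1)+1) + l$ already gives the stratum dimension, with the extension being a torsor of matching dimension over a base of complementary dimension.

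For irreducibility of each stratum, I would exhibit $\mathfrak{Higgs}^{tor=l}_{r,d}$ as fibered over the irreducible bases $\mathfrak{Higgs}^\circ_{r,d'}$ and $\mathcal{H}iggs_{l\delta}(S)$ with irreducible (affine-space-like) fibers given by the extension groups, invoking that a stack fibered in irreducible varieties over an irreducible base is irreducible. The genus hypothesis $g>1$ enters to guarantee $\mathfrak{Higgs}^\circ_{r,d}$ is irreducible (for $g\leq 1$ this can fail), and to ensure the relevant higher $\Ext$ groups vanish so that the fibration has constant fiber dimension. Finally, to conclude these closures form the \emph{complete} set of irreducible components, I would argue that every point of $\mathfrak{Higgs}_{r,d}$ lies in some stratum $\mathfrak{Higgs}^{tor=l}_{r,d}$, that the top-dimensional stratum is $l=0$ with dimension $2r^2(g-1)+1 = \dim\mathcal{H}iggs^\circ_{r,d}$, and that no stratum is contained in the closure of another of strictly larger dimension in a way that would merge components — this is where I must check that the closure relations respect the torsion degree appropriately, so that distinct $l$ genuinely yield distinct components rather than one dominating the rest.

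The main obstacle will be the precise dimension bookkeeping for the gluing/extension stratum together with verifying that the closures of the strata are genuinely distinct irreducible components rather than nested. In particular, controlling $\Ext$-groups \emph{in the category of Higgs sheaves} (not merely coherent sheaves on $C$ or $\oS$) requires care: the relevant deformation complex is the total complex $\RHom_C(\mathcal{F}_{lf},\mathcal{F}_{tor}) \to \RHom_C(\mathcal{F}_{lf},\mathcal{F}_{tor}\otimes\Omega_C)$ encoding compatibility with the Higgs fields, and I would need to compute its hypercohomology Euler characteristic via Riemann--Roch on $C$ and check the vanishing of its $H^2$ for $g>1$. Once these Ext computations are pinned down, the dimension formula $2r^2(g-1)+l+1$ and the irreducibility follow formally, and setting $l=0$ recovers the stated $\dim\mathcal{H}iggs^\circ_{r,d} = 2r^2(g-1)+1$.
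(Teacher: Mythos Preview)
Your overall structure matches the paper's: stratify by the degree $l$ of the torsion subsheaf, exhibit each stratum as fibered over $\mathfrak{Higgs}^\circ_{r,d-l}\times\mathcal{H}iggs_{0,l}$ with affine fibers, and read off the dimension. However, there are two genuine gaps.

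First, you do not prove that $\mathfrak{Higgs}^\circ_{r,d}$ is irreducible; you only compute its expected dimension. The deformation-theoretic count you sketch shows that over the stable locus $\mathfrak{Bun}^{st}_{r,d}$ the cotangent fibers have the correct dimension $r^2(g-1)+1$, but over a bundle $\mathcal{F}$ with $\dim\End(\mathcal{F})=n>1$ the fiber $\Hom(\mathcal{F},\mathcal{F}\otimes\Omega_C)$ has dimension $r^2(g-1)+n$. So to rule out extra components you must show that the locus $\mathfrak{X}_n\subset\mathfrak{Bun}_{r,d}$ where $\dim\operatorname{Aut}=n$ has codimension at least $n$, i.e.\ that $\mathfrak{Bun}_{r,d}$ is \emph{very good} in the sense of Beilinson--Drinfeld. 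This is where $g>1$ is actually used, and it is not a formality; the paper invokes it explicitly. Your phrase ``standard deformation-theoretic computation'' papers over precisely this point.

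Second, your argument for why the closures give \emph{distinct} components is confused about the direction of the dimensions. You write that ``the top-dimensional stratum is $l=0$'', but in fact $\dim\mathcal{H}iggs_{r,d}^{tor=l}=2r^2(g-1)+l+1$ is strictly \emph{increasing} in $l$. The correct argument (as in the paper) is the opposite of what you suggest: for each $l$, the union $\bigsqcup_{n\leq l}\mathfrak{Higgs}_{r,d}^{tor=n}$ is \emph{open}, so the closure of $\mathfrak{Higgs}_{r,d}^{tor=l}$ is contained in $\bigsqcup_{n\geq l}\mathfrak{Higgs}_{r,d}^{tor=n}$; since the latter strata all have dimension $\geq 2r^2(g-1)+l+1$, the closure of the $l$-th stratum cannot be contained in the closure of any other, and each is an irreducible component. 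Your worry about ``one dominating the rest'' goes the wrong way and would not resolve without this openness observation.
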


\begin{proof}
We begin by recalling some standard facts about Harder-Narasimhan filtrations. Let us fix $l>0$. Projecting a Higgs sheaf to its vector bundle quotient and torsion subsheaf yields a morphism $\Higgs_{r,d}^{tor=l} \to \Higgs_{r,d-l}^\circ \times \Higgs_{0,l}$ which is a stack vector bundle of rank $0$; indeed, the fiber over a pair of Higgs sheaves $(\mathcal{V},\mathcal{T})$ is equal to $\mathbf{R}\Hom(\mathcal{V},\mathcal{T})[1]$ which is of perfect amplitude $[-1,0]$ and of virtual rank $\langle [\mathcal{V}],l\delta\rangle=0$ by~\eqref{E:eulerdeltaO}. More generally, for any Harder-Narasimhan strata $HN_{\alpha_1, \ldots, \alpha_s}$ with $\sum_i \alpha_i=(r,d)$ and $s>1$, the morphism
$$p_{\alpha_1, \ldots, \alpha_s}: HN_{\alpha_1, \ldots, \alpha_s} \to \prod_i \mathcal{H}iggs^{ss}_{\alpha_i}$$
is an iterated stack vector bundle of rank $\sum_{i<j} \langle \alpha_i, \alpha_j\rangle=\sum_{i<j}2(g-1)r_ir_j$, we have
\begin{equation}\label{eq:proofHiggs1}
\dim(HN_{\alpha_1, \ldots, \alpha_s})=\sum_i \left(2(g-1)r_i^2+1\right) + \sum_{i<j} 2(g-1)r_ir_j,
\end{equation} 
where $\alpha_i=(r_i,d_i)$.

We now turn to the proof of the Proposition.
Let us first show that the stack $\Higgs_{r,d}^{\circ}$ is irreducible\footnote{This can be deduced from \cite[Section~2]{BDHitchin} but we provide an argument for the sake of completeness.}. Let $\mathfrak{Bun}_{r,d}$ be the stack classifying vector bundles on $C$ of rank $r$ and degree $d$. It is a smooth, irreducible stack of dimension $(g-1)r^2$, with a non-empty open substack $\mathfrak{Bun}_{r,d}^{st}$ parametrizing stable vector bundles. The canonical morphism $\pi: \Higgs_{r,d}^{\circ} \to \mathfrak{Bun}_{r,d}$ identifies $\Higgs^{\circ}_{r,d}$ with the cotangent stack of $\mathfrak{Bun}_{r,d}$. It follows that any irreducible component of $ \mathcal{H}iggs_{r,d}^\circ$ is of dimension at least $2r^2(g-1) +1$ (one can see this, for instance, by locally realizing $ \mathcal{H}iggs_{r,d}^\circ$ as a symplectic quotient). On the other hand, the morphism $\pi$ is representable and $\pi^{-1}(\mathcal{F}) \simeq \Ext^1(\mathcal{F},\mathcal{F})^*$. By Serre duality, $\dim\; \pi^{-1}(\mathcal{F})=(g-1)r^2+\dim\;\End(\mathcal{F})$. In particular, $\mathcal{H}iggs_{r,d}^{st}$ is irreducible, of dimension $2(g-1)r^2 +1$. It is known that the open substack of stable Higgs bundles is dense in the stack of semistable Higgs bundles (see e.g. \cite[Prop. 5.3, 5.4]{DHSM2} and use the fact that when $g>1$, $\mathcal{H}iggs^{st}_{r,d}$ is nonempty).  To conclude, it is enough to show that the dimension of any Harder-Narasimhan strata $HN_{\alpha_1, \ldots, \alpha_s}$ of $\mathcal{H}iggs^\circ_{r,d}$ is of dimension strictly less than $2(g-1)r^2+1$. This now follows easily from induction and  formula \eqref{eq:proofHiggs1}. Note that for any strata $HN_{\alpha_1, \ldots, \alpha_s}$ of $\mathcal{H}iggs^\circ_{r,d}$ we have $r_i>0$ for any $i$.

Next, for any $l>0$, the same argument shows that $\mathcal{H}iggs_{r,d}^{tor=l}$ is irreducible, of dimension $2(g-1)r^2+l+1$. 
Finally, it is easy to see that the union $\Higgs_{r,d}^{tor \leq l}=\bigsqcup_{n \leq l} \Higgs_{r,d}^{tor=n}$ is an open substack for any $n$. Since the dimensions of the irreducible locally closed substacks $\Higgs_{r,d}^{tor=l}$ increase with $l$, we deduce that the Zariski closures of each $\Higgs_{r,d}^{tor=l}$ is an irreducible component of $\Higgs_{r,d}$. 
\end{proof}

\medskip

\begin{remark}
\leavevmode\nolisttopbreak
\begin{enumerate}[label=$\mathrm{(\alph*)}$,leftmargin=8mm,itemsep=1.2mm]
\item
For $g=0$ or $1$ the situation is quite different. When $g=0$, the stack $\mathfrak{Higgs}^\circ_{r,d}$ has infinitely many irreducible components whose classical truncations are all of dimension $-r^2$. In that case $\mathfrak{Higgs}_{r,d}^\circ$ coincides with the global nilpotent cone. When $g=1$, the classical stack $\mathcal{H}iggs_{r,d}^\circ$ is also not irreducible, but the dimensions of the irreducible components may vary between $1$ and $r$. Similar results hold for $\mathfrak{Higgs}^{tor=l}_{r,d}$ for any positive $l$.
\item
For any $g$ and $l$, the stack $\mathfrak{Higgs}_{r,d}^{tor=l}$ is of (virtual) dimension $2(g-1)r^2$. In particular, this dimension is independent of $l$.
\end{enumerate}
\end{remark}

\medskip

\subsection{Regularity of the Hecke pattern}\label{sec:conditionHiggs} For any $r \geq 1$, let us put $\Higgs_r^\circ=\bigsqcup_{d} \Higgs_{r,d}^\circ$. The aim of this paragraph is to prove the following result:

\smallskip

\begin{proposition}\label{T:HiggsregularHP} Assume that $g>1$. Then the substack $\Higgs_{r}^\circ$ is a regular $S$-strong two-sided Hecke pattern on $S=T^*C$.
\end{proposition}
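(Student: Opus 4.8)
The plan is to verify directly that $\Higgs_r^\circ$ satisfies the three conditions required of a regular $S$-strong two-sided Hecke pattern, namely the two Hecke-pattern axioms (a) and (b), the $S$-strongness, and the regularity condition \eqref{E:assumption}. Recall that $\Higgs_r^\circ$ parametrizes Higgs \emph{bundles} of rank $r$ (in all degrees), which via BNR correspond to sheaves $\mathcal{E}$ on $\overline{S}$ that are pure of dimension one, supported away from $D_\infty$, with $p_*\mathcal{E}$ a rank $r$ vector bundle on $C$. The key geometric observation, already recorded in \S\ref{S:Higgsbundles}, is that a Higgs sheaf lies in $\Higgs_r^\circ$ if and only if it is pure of dimension one, equivalently if the associated sheaf on $C$ is locally free.

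First I would check the two-sided Hecke axioms. For axiom (a), given a short exact sequence $0 \to \mathcal{E} \to \mathcal{F} \to \mathcal{G} \to 0$ with $\mathcal{G} \in \RCoh^0(\oS)$ zero-dimensional and $\mathcal{F} \in \Higgs_r^\circ$, the subsheaf $\mathcal{E}$ of a pure one-dimensional sheaf is again pure of dimension one, has the same rank $r$ after pushforward to $C$, and remains supported away from $D_\infty$; hence $\mathcal{E} \in \Higgs_r^\circ$. For axiom (b), given $0 \to \mathcal{E} \to \mathcal{F} \to \mathcal{G} \to 0$ with $\mathcal{E} \in \Higgs_r^\circ$, $\mathcal{G}$ zero-dimensional and $\mathcal{F} \in \RCoh^{\geq 1}(\oS)$, I argue exactly as in the proof of Proposition~\ref{prop:Hilb-is-regular}: since $\mathcal{F}$ has no zero-dimensional subsheaf and $\mathcal{E}$ is torsion-free (as a sheaf on $C$), $\mathcal{F}$ is torsion-free, and by Lemma~\ref{L:doubledual} the inclusion induces an isomorphism $\mathcal{E}^{\vee\vee} \simeq \mathcal{F}^{\vee\vee}$ on $C$; since $\mathcal{E}$ is already a vector bundle, $\mathcal{E} = \mathcal{E}^{\vee\vee} = \mathcal{F}^{\vee\vee} \supseteq \mathcal{F}$ forces $\mathcal{F}$ to be locally free, so $\mathcal{F} \in \Higgs_r^\circ$. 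The support condition (disjointness from $D_\infty$) is preserved throughout because the quotient $\mathcal{G}$ is supported on $S$. For $S$-strongness, I invoke Lemma~\ref{lem:strong-Hecke-equiv}: on the left we must check $\Higgs_r^\circ \subset \RCoh^{\geq 1}(S)$, which holds since all sheaves in $\Higgs_r^\circ$ are supported on $S$; on the right we must check that every sheaf in $\Higgs_r^\circ$ is locally free at every point of $\oS \setminus S = D_\infty$, which is automatic as these sheaves are (set-theoretically) disjoint from $D_\infty$, hence vanish there.

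The main work, and the expected obstacle, is the regularity condition \eqref{E:assumption}, i.e. that the sections $s^{cl}$ and $(s')^{cl}$ cutting out the length-one Hecke correspondences are regular. By the characterization given just before Proposition~\ref{P:regularHP}, this is equivalent to showing that the classical truncations $\kappa_{\delta,\alpha}^{cl}$ and $\overline{\kappa}_{\alpha,\delta}^{cl}$ are lci of the same dimension as their derived enhancements, which in turn is a dimension count over each irreducible component. I would carry this out as in Proposition~\ref{prop:Hilb-is-regular} by computing the dimensions of the relevant classical flag stacks and comparing with the expected (virtual) dimensions. Concretely, the classical stack $\mathcal{H}iggs_{r}^\circ$ is smooth (being the cotangent stack of the smooth stack $\mathfrak{Bun}_{r,d}$, cf. \S\ref{S:Higgsbundles}) of dimension $2r^2(g-1)+1$, and one must verify that the nested Hecke stack $\widetilde{\Higgs}^\circ_{\delta; \alpha}$ classifying pairs $\mathcal{E}' \subset \mathcal{E}$ of Higgs bundles with length-one quotient has the expected dimension $\dim(\Coh_\delta(\oS) \times \Higgs^\circ_{r}) - \langle \delta, \alpha\rangle$ for the $s'$-side, and $\dim(\Higgs^\circ_r \times \oS)$ (using that the tautological sheaf has rank one on $\oS$ but here $\rk = r$, so the relevant comparison involves $h^0$ of the Hecke modification) for the $s$-side. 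Since a Higgs bundle modification at a point is governed by the fibre data and the flag variety of the rank-$r$ fibre, the fibres of $\kappa^{cl}$ and $\overline{\kappa}^{cl}$ are smooth of the expected dimension, and the absence of higher $\Ext$ obstructions (the vanishing $\Ext^2_S(\mathcal{E}_\alpha, \mathcal{E}_{\delta}) = \Hom_S(\mathcal{E}_\delta, \mathcal{E}_\alpha \otimes K_S) = 0$ for sheaves with no zero-dimensional subsheaf, as in the proof of the lemma in \S\ref{sec:Heckeactions}) guarantees that the derived structure is unobstructed. The delicate point, requiring care, is that the Hecke correspondence for higher rank $r$ modifies the bundle along a length-one subscheme in a way parametrized by a projectivized fibre, so one must confirm that the classical incidence locus meets each component of $\mathfrak{U}$, $\mathfrak{U}'$ in the expected codimension; this follows from the smoothness of $\mathcal{H}iggs^\circ_{r,d}$ together with the standard fact that elementary Hecke modifications of a fixed vector bundle form a smooth family of the correct dimension. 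Assembling these dimension counts establishes the regularity of both sections, completing the proof.
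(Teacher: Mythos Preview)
Your verification of the Hecke-pattern axioms and $S$-strongness is essentially fine (though your use of Lemma~\ref{L:doubledual} is slightly misapplied: that lemma concerns torsion-free sheaves on a surface, whereas Higgs sheaves are one-dimensional on $\oS$; the correct argument pushes forward along $p:\oS\to C$ and uses that a torsion-free sheaf on a smooth curve is locally free). The paper simply cites Example~\ref{ex:Hecke-patterns} for this part.

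The genuine gap is in your regularity argument. You claim that ``a Higgs bundle modification at a point is governed by the fibre data and the flag variety of the rank-$r$ fibre'', and that $\mathcal{H}iggs_r^\circ$ is smooth. Both assertions are false. First, the cotangent stack of a smooth stack is generally only quasi-smooth, and $\mathcal{H}iggs_{r,d}^\circ$ is singular in general; the paper only establishes irreducibility and the dimension $2r^2(g-1)+1$. Second, and more importantly, the tautological sheaf $\mathcal{E}_\gamma$ has rank \emph{zero} on $\oS$ (Higgs sheaves are one-dimensional), so the fibre $\mathcal{E}|_{(x,\xi)}$ at a point of $S$ is not a fixed $r$-dimensional vector space: its dimension equals the ramification index of the spectral curve $C_a$ over $C$ at $(x,\xi)$, which can be any integer in $\{1,\ldots,r\}$. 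Consequently the fibre of $\pi$ over $(\mathcal{E},(x,\xi))$ can have dimension up to $r-1$, not $0$, and your ``standard fact'' about elementary modifications does not apply.

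The paper's proof supplies exactly the missing idea: it stratifies $\mathcal{H}iggs_\gamma^\circ\times S$ by the ramification order $i$ of the spectral curve, shows via a direct computation (Lemma~\ref{L:dimest}) that the ramification-$i$ locus has codimension $i$ in the Hitchin base, and uses flatness of the Hitchin map for $g>1$ to transfer this codimension estimate to $\mathcal{H}iggs_\gamma^\circ\times S$. Combining the bound $\dim(\pi^{-1}(\mathcal{E},(x,\xi)))\leq i-1$ over the order-$i$ stratum with the codimension-$i$ estimate gives $\dim(\widetilde{\mathcal{H}iggs}_{\delta;\alpha}^\circ)\leq\dim(\mathcal{H}iggs_\gamma^\circ)+1$, matching the lower bound from quasi-smoothness and hence establishing regularity. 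Your proposal lacks any mechanism to control these jumping loci.
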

\begin{proof} The fact that $\Higgs^\circ_r$ is an $S$-strong two-sided Hecke pattern follows from Example~\ref{ex:Hecke-patterns}. Since $C$ is of genus $g >1$, $\mathcal{H}iggs_{r,d}^\circ$ is irreducible and of 
dimension $2(g-1)r^2 + 1$ for any $d$. Fix $d \in \Z$ and set $\alpha=(r,d), $ $\gamma=\alpha+\delta=(r,d+1)$. 
Let $\mathcal{E}_{\eta}$ be the tautological sheaf on $\Higgs_{\eta} \times S$. Let $\mathfrak{U}$ be any finite 
type open substack of $\Higgs^\circ_{\gamma} \times S$. Let $\mathcal{E}_{-1} \to \mathcal{E}_0$ be a presentation 
of $\mathcal{E}_{\gamma}$ as a perfect complex. Let $s$ be the associated section. The virtual rank of 
$\mathcal{E}_{\gamma}$ being zero, the virtual dimension of the map 
$\pi_{\delta,\alpha}: \widetilde{\Higgs}_{\delta;\alpha} \to \Higgs_\gamma$ is $1$. 
It follows that 
$$\dim(\widetilde{\mathcal{H}iggs}^\circ_{\delta;\alpha}) \geq \dim(\mathcal{H}iggs^\circ_{\gamma}) +1.$$
We will show that $s$ is regular by proving that 
$$\dim(\widetilde{\mathcal{H}iggs}^\circ_{\delta;\alpha}) \leq \dim(\mathcal{H}iggs^\circ_{\gamma}) +1,$$ 
which will in fact imply equality. 
This will also prove the regularity of the section $s'$, see (\ref{E:assumption}). 
For this, let 
$$\mu: \mathcal{H}iggs^\circ_\gamma \to \mathcal{A}
,\quad
\mathcal{A}=\bigoplus_{i=1}^r H^0(C,\Omega_C^{\otimes i})$$
be the Hitchin map. If $a \in \mathcal{A}$ we write $C_a$ for the corresponding spectral curve.
Let 
$\mathcal{C} \subset \mathcal{A} \times S$ be the universal spectral curve. For $i \geq 1$, we set 
$$\mathcal{R}_{i}=\{(a,(x,\xi)) \in \mathcal{C}\,:\, x\in C,\, \xi\in T^*_x C,\, C_a \xrightarrow{p} C\;\text{is ramified of order}\;i\;\mathrm{at}\;(x,\xi)\}. $$
Let $\mathcal{A}_{i}^{(x,\xi)} \subset \mathcal{A}$ be the fiber over $(x,\xi)$ of the projection $\mathcal{R}_{i} \to S$. 
Note that $\mathcal{R}_{\geq i}=\bigsqcup_{j \geq i} \mathcal{R}_j$ is closed and $\mathcal{R}_{\geq 1}=\mathcal{C}$. We set $\overline{\mathcal{R}}_i=(\mu \times \Id)^{-1}(\mathcal{R}_i)$.

\smallskip

\begin{lemma}\label{L:dimest} For any $i \geq 1$ and $(x,\xi) \in S$ we have $\codim_{\mathcal{A}} (\mathcal{A}_{i}^{(x,\xi)}) = i$.
\end{lemma}
\begin{proof} Since $\Omega_C$ is base-point free, the evaluation at $x \in C$ yields a surjective linear map $i_x^* :\mathcal{A} \to \bigoplus_{i=1}^r T^*_x(C)^{\otimes i}$. The locally closed subset $\mathcal{A}_i^{(x,\xi)}$ is the inverse image of the subset of degree $r-1$ polynomials in one variable $y \in T^*_xC$ vanishing with order $i$ at $\xi$. This condition defines a subset of codimension $i$.
\end{proof}

Since $g>1$ the map $\mu$ is flat by~\cite[Cor. 9]{Ginzburg}. From this we deduce that $\codim_{\mathcal{H}iggs^\circ_\gamma}(\mu^{-1}(\mathcal{A}_{i}^{(x,\xi)})) = i$ hence $\codim_{\mathcal{H}iggs^\circ_\gamma \times S}(\overline{\mathcal{R}}_i) = i$.
Recall the morphism 
$$\pi: \widetilde{\mathcal{H}iggs}^\circ_{\delta,\alpha} \to \mathcal{H}iggs^\circ_\gamma \times S
,\quad
\mathcal{H} \subset \mathcal{E}\mapsto (\mathcal{G}, \supp(\mathcal{E}/\mathcal{H})).$$ 
The map $\pi$ lands in the closed substack $\overline{\mathcal{R}}_{\geq 1}$. For any Higgs bundle $\mathcal{F}$ whose support is ramified at $(x,\xi)$ of order $i$ we have $\dim(\text{Hom}_{\mathcal{O}_S}(\mathcal{F},\mathcal{O}_{(x,\xi)})) \leq i$. It follows that $\dim(\pi^{-1}(\mathcal{E},(x,\xi))) \leq i-1$ if $(\mathcal{E},(x,\xi))) \in \overline{\mathcal{R}}_i$. Hence, we have
$$\dim(\pi^{-1}(\overline{\mathcal{R}}_{i})) \leq \dim(\overline{\mathcal{R}}_{i}) + i-1 = \dim(\mathcal{H}iggs^\circ_\gamma \times S)-i+i-1=\dim(\mathcal{H}iggs^\circ_\gamma)+1.$$
Since this is true for all $i=1, \ldots, r$, we get the desired dimension estimate.
\end{proof}

\begin{remark}\leavevmode\nolisttopbreak
  \begin{enumerate}[label=$\mathrm{(\alph*)}$,leftmargin=8mm,itemsep=1.2mm]
        \item Refining the above dimension estimates, one can show that for $g>1$ the Hecke correspondence $\widetilde{\mathcal{H}iggs}^\circ_{\delta,\alpha}$ is irreducible.
        \item When $g =0,1$, $\Higgs_r^\circ$ is \textit{not} a regular Hecke pattern. 
        For instance, if $gcd(r,d)=1$ and $r>1$ then a generic stable Higgs sheaf 
        $\mathcal{E}=(\mathcal{F},\theta)$ has a scalar Higgs field. Hence the fiber of $p_{\delta,\alpha}$ over $\mathcal{E}$ is of dimension $r-1$. Thus $\widetilde{\mathcal{H}iggs}^\circ_{\delta,\alpha}$ has a component of dimension $r+\dim(\mathcal{H}iggs^{st}_{r,d})$ lying over $\mathcal{H}iggs^{st}_{r,d}$.
        \item A similar argument also shows that the stacks of $\Higgs^{\mathcal{L},\circ}$ of $\mathcal{L}$-twisted Higgs sheaves with $\deg(\mathcal{L}) > 2(g-1)$ form a regular two-sided Hecke pattern on $\Tot(\mathcal{L})$.
    \end{enumerate}
\end{remark}

\subsection{Action on tautological classes}
Unlike the case of the stack of coherent sheaves on $C$~\cite{Heinloth}, the homology of the stack of Higgs sheaves on $C$ is not generated by tautological classes. 
Note, however that by Markman's Theorem \cite{Markman} this is the case after restriction to the stack of stable 
Higgs sheaves in the case $(r,d)=1$. 
This motivates the study of Hecke operators on the subspace of tautological classes of $H_*(\Higgs^\circ,\Q)$. 
By Propostion~\ref{P:regularHP} there is an action of $\Wnn{0}(T^*C)$ on $\mathbf{V}^\taut(\Higgs^\circ)$.
Moreover since $H^*(S,\Q) = H^*(C,\Q)$, by degree reasons we have $c_1^2=c_2=0$ and $c_1\Delta_S=0$.
Analogously to Corollary~\ref{cor:nondef-bbW} we conclude that $\Wnn{0}(T^*C) \simeq U(\mathfrak{w}_{T^*C})$ where 
$$\mathfrak{w}_{T^*C}=\bigoplus_{\substack{m,n \geq 0\\ (m,n) \neq 0}}\bigoplus_{\gamma \in \Pi} \Q D_{m,n}(\gamma), \qquad \Pi=\{1, \gamma_1, \ldots, \gamma_{2g}, \omega\}$$
with $\gamma_1, \ldots, \gamma_{2g}$ a symplectic basis of $H^1(C,\Q)$, with relations
$$[D_{m,n}(\gamma), D_{m',n'}(\gamma')]=(mn'-m'n)D_{m+m',n+n'-1}(\gamma\gamma')$$
for all tuples $(m,m',n,n',\gamma,\gamma')$.

We should stress that Proposition~\ref{cor:big-W-faithful} does not apply in this case, because Higgs bundles have zero rank as sheaves on $T^*C$.
This is also evidenced by the fact that, contrary to the case of Hilbert scheme, the action of both $W^0$ and $\Wn^+$ on $\mathbf{V}^\taut(\Higgs^\circ)$ is not faithful.
However, this non-faithfulness is a feature; it is related to the existence of a certain ``rational'' degeneration of $\Wn^\geq(T^*C)$, which was used in~\cite{HMMS} to prove the $P=W$ conjecture of de Cataldo-Hausel-Migliorini.
See also \S\ref{ssec:K3} below.

\begin{example}\label{ex:Higgs-degen}
    Consider Higgs bundles of rank $1$ on a curve $C$ of genus $g>1$.
    In this case 
    $$\Higgs^\circ_{1,d}\simeq \mathrm{Jac}^d C\times B\mathbb{G}_m\times H^0(C,\Omega_C).$$
    We have the Poincar\'e line bundle $\mathcal{P}$ on $(\mathrm{Jac}^dC\times B\mathbb{G}_m)\times C$, and it is known that 
    \[
        c_1(\mathcal{P}) = u\otimes 1 + \sum_\gamma b_\gamma\otimes \overline{\gamma} + d\otimes\pt,
    \]
    where $u$ is the generator of $H^*(B\mathbb{G}_m,\Q)$, and $b_\gamma$ form the basis of $H^1(C,\Q)\subset \Lambda^\bullet H^1(C,\Q)\simeq H^*(\mathrm{Jac}_d,\Q)$.
    Starting from this observation, an easy but tedious computation shows that
    \[
        \psi_1(\pt) = u,\quad \psi_1(\gamma) = b_\gamma, \quad \psi_0(\pt) = d,
    \]
    where the classes $\psi_i$ are obtained from the universal sheaf on a natural compactification of $\Higgs^\circ_{1,d}\times T^*C$ in accordance with \S\ref{sec:taut}.
    This means that the cohomology of $\Higgs^\circ_{1,d}$ is generated by the classes $\psi_1$.
    In particular, $\psi_2(1)$ can be expressed in terms of these classes for each $d$.
    Moreover, since the tautological sheaf for different $d$ differs by tensoring with a power of $\mathcal{O}(c)$, $c\in C$, the dependence on $d$ can be seen to be polynomial in $d$.
    As $\psi_0(\pt) = d$, we conclude that $\psi_2(1)$ is expressed in $\psi_1$'s and $\psi_0(\pt)$ independently of $d$, so that the action of $W^0(T^*C)$ on $\mathbf{V}^\taut(\Higgs^\circ_1)$ is not faithful.
    Applying $\Ad_{D_{1,0}(1)}$ twice to such an expression, we see that $\mathfrak{q}_2(1)$ expresses in terms of $\mathfrak{q}_1$'s (and $d$), so the action of $\langle\Wn^+(T^*C), \psi_0(\pt)\rangle$ is not faithful either.
\end{example}

\medskip

\subsection{One-dimensional sheaves on K3 surfaces}\label{ssec:K3}
Let us now briefly consider another example.
Let $S$ be a K3 surface, and fix a smooth curve $C\subset S$ of genus $g>1$ which is a very ample divisor.
We denote by $\Mukai_r$ the moduli stack of coherent sheaves $\mathcal{E}$ of pure 1-dimensional sheaves on $S$, such that $c_1(\mathcal{E})=r[C]$.
It is well known~\cite{Mukai,DEL} that $\Mukai_r$ behaves similarly to $\Higgs^\circ_r$. In particular, it admits a morphism $\Mukai_r\to \mathbb{P}^{r^2(g-1)+1}$ with properties analogous to the Hitchin map $\Higgs_r^\circ\to \mathcal{A}$. 

It follows again from Example~\ref{ex:Hecke-patterns} that $\Mukai_r$ is a two-sided Hecke pattern.
While we expect that $\Mukai_r$ is in fact regular, let us for simplicity's sake restrict our attention to the subspace $\mathbf{V}^{\mathrm{vtaut}}_{\Mukai_r}\subset H_*(\Mukai_r,\Q)$ of classes obtained by capping the virtual fundamental class $[\Mukai_r]_\mathrm{vir}$ with tautological classes.
Invoking Proposition~\ref{prop:Negutlemmavirtual}, we can apply an analogue of Proposition~\ref{P:regularHP} to obtain an action of $\DW{0}(S)$ on $\mathbf{V}^{\mathrm{vtaut}}_{\Mukai_r}$.
While $c_1=0$, it is known that $s_2 = -24\pt$.
Thus we are placed in the semi-deformed situation of \S\ref{sec:semidef}, where we can take $q=(-\int_S C^2/24)^{-1/2}C$ up to passing to a finite extension of $\Q$ in the coefficients.

Similarly to Example~\ref{ex:Higgs-degen}, it is easy to check that the action of $\DW{0}(S)$, or indeed of $W^+(S)$ on $\mathbf{V}^{\mathrm{vtaut}}_{\Mukai_r}$ is not faithful.
However, with an argument analogous to the one found in~\cite[Section 6]{HMMS}, one can show that the action of $W^{\geq}(S)$ degenerates to the action of the algebra $U(\mathfrak{w}_S^{\mathrm{log}})$, where $\mathfrak{w}_S^{\mathrm{log}}$ has basis $x^m\partial^n\lambda$, $m,n\in\N$, $\lambda\in H^*(S,\Q)$, and the Lie bracket is given by
\[
    [x^m\partial^a\lambda,x^n\partial^b\mu] = \sum_{i\geq 1} i!\left( \binom{a}{i}\binom{n}{i}-\binom{b}{i}\binom{m}{i} \right)x^{m+n-i}\partial^{a+b-i}q^{i-1}\lambda\mu.
\]
One way to think of this is that $\mathfrak{w}_S$ looks like the Lie algebra of differential operators on $\mathbb{C}^*$ with coefficients in $H^*(S,\Q)$, and so rational degeneration should look like the Lie algebra of differential operators on $\mathbb{C}$.

Note that the defining relations of $\mathfrak{w}_S^{\mathrm{log}}$ imply that $\{ x^2/2, x\partial+2q, \partial^2/2 \}$ is an $\mathfrak{sl}_2$-triple, which should control the perverse filtration on the cohomology of the stable locus of $\Mukai_r$. We plan to return to this in the future work.

\medskip

\section{Some conjectures}\label{sec:conjectures}
\subsection{Action beyond tautological classes}
Let us informally summarize what we proved, omitting most adjectives.
First, given any two-sided Hecke pattern $X$, we have two actions of $W^+(S)$ on the Borel-Moore homology of $X$.
Second, for a regular Hecke pattern these two actions glue to an action of $\DW{\c}(S)$ \emph{on the subspace of tautological classes}.
Somewhat frustratingly, our methods do not prove the relation~\eqref{eq:comm-pm} for non-tautological classes.
This reflects the fact that $\DW{\c}(S)$ is supposed to be related to a Drinfeld double of $\mathbf{H}_0(S)$, which has not yet been defined.
In view of Proposition~\ref{prop:Negutlemmavirtual}, it also seems natural to drop the regularity condition.

\begin{conjecture}
For any two-sided Hecke pattern $X$ of rank $r$, there is an action of $W^{(r)}_{\star,\star}(S)$ on $H_*(X)$ for appropriate $\star\in\{\uparrow,\downarrow\}$.
\end{conjecture}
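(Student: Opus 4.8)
The plan is to lift the action of $\DW{\c}(S)$ from the subspace of tautological classes $\mathbf{V}^\taut(X)$ (resp.\ $\mathbf{V}^\vtaut(X)$) to all of $H_*(X)$ by realizing both actions of $W^\pm(S)$ geometrically and then establishing the cross relation~\eqref{eq:comm-pm} directly at the level of Hecke correspondences. We already have, by Proposition~\ref{prop:negheckeactionsdef}, honest left and right actions of $\Hb^{(c)}_0(S)$ on the \emph{entire} Borel-Moore homology $\mathbf{V}(X)_{\alpha+\Z\delta}$, coming from the induction diagrams~\eqref{E:poshecke}; these do not require any regularity assumption. By Theorem~\ref{thmB} these COHAs are isomorphic to $\Wn^+(S)$ and $\Wc^-(S)$ (or the compactly supported variants, depending on the type of Hecke pattern), so the two ``halves'' of the conjectural $W_{\star,\star}(S)$-action are automatically defined on all of $H_*(X)$. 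Thus the entire content of the conjecture is the commutation relation between the positive and negative halves, i.e.\ that $[\Psi^+_X(T^+_i(\lambda)),\Psi^-_X(T^-_j(\mu))]$ acts on non-tautological classes by the same expression~\eqref{eq:comm-pm} in the $\psi_n$'s.

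\textbf{Key steps.} First I would express the composite operator $\overline{p}_*\,\overline{q}^!\,p_*\,q^!$ corresponding to $T^+_i(\lambda)T^-_j(\mu)$ (and its opposite ordering) as a pushforward from the fiber product of the two length-one Hecke correspondences over $X_\alpha$. The commutator corresponds to the difference of two such correspondences, and one must identify their common open substack (where the two modifications happen at distinct points or at a shared point) and analyze the boundary contribution. Second, following the strategy of Negut and the derived framework of Q.~Jiang used already in \S\ref{sec:derHecke}, I would resolve this fiber product using the two-step locally free resolutions of $\mathcal{E}_\gamma$ and $\mathcal{F}_\alpha$, reducing the computation to an excess-intersection formula on an explicit projective/affine bundle over $X$. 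Third, the resulting operator is given by capping with a universal characteristic class built from $\mathcal{E}_\alpha$ and the diagonal $\Delta_S$; by construction this class is \emph{tautological}, and therefore acts on all of $H_*(X)$ through the module structure of \S\ref{sec:Heckeactions}, exactly as $\psi_n(\nu)$ does. Matching this universal class with the right-hand side of~\eqref{E:doublerelation} via the vertex-operator computation of Lemma~\ref{L:evaluationon1} (which is purely a statement about the universal ring $\Lambda(S)$, valid over $X$) then yields the relation on arbitrary classes, not merely tautological ones.

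\textbf{Main obstacle.} The hard part will be step one: controlling the geometry of the iterated Hecke correspondence $\widetilde{X}_{\delta;\alpha}\times_{X_\gamma}\widetilde{X}_{\delta;\gamma}$ versus $\widetilde{X}_{\delta;\alpha-\delta}\times_{X_{\alpha-\delta}}\widetilde{X}_{\delta;\alpha}$ \emph{without} the regularity hypothesis~\eqref{E:assumption}, since the classical truncations of the relevant zero loci need not have the expected dimension. On tautological classes this is precisely what the regularity assumption circumvents (via Propositions~\ref{P:negut},~\ref{P:negutdual}); on general classes one must instead work with virtual classes and refined Gysin maps throughout, as in Proposition~\ref{prop:Negutlemmavirtual}, and argue that the excess bundle contributes only a tautological correction. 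This is also the point at which one genuinely needs a \emph{Drinfeld double} of $\Hb_0(S)$ in the sense indicated after the conjecture: the cleanest proof would construct the double directly and show it acts by functoriality, whereas the hands-on correspondence approach must verify associativity and the Jacobi-type relations~\eqref{W:g} for the full action by a separate (and technically delicate) argument. I expect that for $S$-strong patterns, where support conditions force the modifications to stay within $S$ and the maps $\pi,\overline\pi$ are proper, the difficulty is somewhat mitigated and the relation can be pushed through; the general $S$-weak compactly supported case will likely require the relative/local COHA formalism of~\cite{DHSM,DHSM2} or the machinery of~\cite{DiacoPortaSala}, as the authors themselves anticipate.
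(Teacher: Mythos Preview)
This statement is a \emph{Conjecture} in the paper (\S\ref{sec:conjectures}), not a theorem; the paper offers no proof. The authors explicitly write that their methods ``do not prove the relation~\eqref{eq:comm-pm} for non-tautological classes'' and that a resolution would likely require either a Drinfeld double of $\mathbf{H}_0(S)$ (``which has not yet been defined'') or the machinery of~\cite{DiacoPortaSala}. So there is no proof in the paper to compare your proposal against.

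Your proposal is not a proof but a research outline, and you are upfront about this. The approach you sketch---realize both halves geometrically via Proposition~\ref{prop:negheckeactionsdef} and Theorem~\ref{thmB}, then establish the commutator $[\Psi^+_X,\Psi^-_X]$ directly via an excess-intersection computation on the fiber product of length-one correspondences---is the natural one, and your ``Main obstacle'' paragraph correctly isolates exactly the difficulty the paper flags: without regularity~\eqref{E:assumption} the classical fiber product need not have the expected dimension, and one must work entirely with virtual classes and refined Gysin maps. Your suggestion that the Drinfeld double would give the ``clean'' route, and that the $S$-weak case may require the relative/local COHA formalism of~\cite{DHSM,DHSM2} or~\cite{DiacoPortaSala}, mirrors the paper's own speculation almost verbatim. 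In short, your proposal is a faithful restatement of the open problem together with a plausible but incomplete plan of attack; it does not close the gap, and the paper does not claim to either.
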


Our main theorems were proved under the assumption that $S$ has pure cohomology, but we conjecture that they hold without this assumption.
The reason for this assumption is that we need $\mathbf{V}^\taut(\RHilb)$ to be a faithful representation of $U(\Heis_{\oS})$ in order to compare the deformed $W$-algebra with the COHA as subalgebras of $\End(\mathbf{V}^\taut(\RHilb))$.
We plan to address this problem in future work by considering another family of tautological classes on $\Hilbrm(S)$, obtained from the correspondence
\[
    \begin{tikzcd}
        S & Q_n\ar[l]\ar[r]& \Hilbrm_n(S)
    \end{tikzcd}
\] 
where $Q_n$ is the universal subscheme.

\subsection{$W$-algebras for 3-dimensional varieties}
Let $S$ be a smooth surface. The Borel-Moore homology of the stack $\Coh^{0}(S)$ is isomorphic to the critical cohomology $H_{\mathrm{crit}}^*(\Coh^0(M))$ of the stack of finite length sheaves $\Coh^0(M)$ on the Calabi-Yau \emph{threefold} $M = \mathrm{Tot}_S (K_S)$ by dimensional reduction~\cite{Kinjo}.
For any 3-Calabi-Yau $M$, the space $\mathbf{H}^{\mathrm{crit}}_0(M) = H_{\mathrm{crit}}^*(\Coh^0(M))$ is an associative algebra thanks to the recent work of Kinjo, Park and Safronov~\cite{KPS}.
These algebras, called critical COHAs, were first introduced and studied for 3CY dg-algebras in~\cite{KS}.
We expect that an isomorphism similar to the one of Theorem~\ref{thmB} should hold for $\mathbf{H}^{\mathrm{crit}}_0(M)$ as well.

\begin{conjecture}
Let $W^+(M)$ be an algebra generated by $T_n(\lambda)$, $n\geq 0$, $\lambda\in H^*(M)$, modulo the relations \eqref{W:b}, \eqref{W:e}-\eqref{W:g} of Definition~\ref{def:W}, where we replace $s_2$ by $c_2(TM)$, and $c_1\Delta_S$ by $\Delta_M$.
Then $\mathbf{H}^{\mathrm{crit}}_0(M)\simeq W^+(M)$. 
\end{conjecture}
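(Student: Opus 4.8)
The plan is to avoid computing the critical convolution product directly and instead transport everything from the surface side, factoring the desired isomorphism as
\[
\mathbf{H}^{\mathrm{crit}}_0(M) \;\xrightarrow{\;\mathrm{DR}\;}\; \mathbf{H}_0(S) \;\xrightarrow{\;\Theta_S\;}\; W^+(S) \;\xrightarrow{\;\sim\;}\; W^+(M),
\]
where $\mathrm{DR}$ is dimensional reduction for $M=\Tot_S(K_S)$, $\Theta_S$ is the isomorphism of Theorem~\ref{thmB}, and the last arrow is a purely algebraic comparison of presentations via $\pi^*\colon H^*(S,\Q)\xrightarrow{\sim} H^*(M,\Q)$. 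Since $M$ is open, one takes $S$ cohomologically pure (the projective case being cleanest, where $\mathbf{H}_0=\mathbf{H}_0^c$ and all four $W$-algebras coincide), and uses the ordinary-cohomology ("$\uparrow$") version throughout, which is the one matching $\mathrm{DR}$.

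The first and hardest step is to upgrade dimensional reduction from a vector-space to an \emph{algebra} isomorphism. Kinjo's theorem \cite{Kinjo} already supplies the identification $H_*(\Coh^0(S))\cong H^*_{\mathrm{crit}}(\Coh^0(M))$; what must be added is that it intertwines the critical convolution product on $\Coh^0(M)$ (which exists only granting Joyce's conjectures \cite{ABB}) with the product $\star$ on $\mathbf{H}_0(S)$. Concretely, I would compare the stack of short exact sequences of finite-length sheaves on $M$ with the surface correspondence $\widetilde{\RCoh}_{m\delta;n\delta}$: both sit over the same $S$-correspondence, the $(-1)$-shifted potential restricts compatibly along the Hecke diagram, and one checks that the vanishing-cycle pushforward/pullback, applied fibrewise over the correspondence, degenerates to the virtual pullback $q^{!}$ and proper pushforward $p_*$ defining $\star$. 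This is the technical heart, and the \textbf{main obstacle}: it requires tracking the orientation data and $d$-critical structures functorially through the Hecke correspondence — precisely the bookkeeping whose control is still conjectural and on which the very definition of the critical COHA rests.

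Granting this, Theorem~\ref{thmB} gives $\mathbf{H}_0(S)\cong W^+(S)$, and it remains to identify $W^+(S)$ with $W^+(M)$ as presented algebras. By the Remark following Proposition~\ref{prop:big-W-triang}, both are generated by the $T_n(\lambda)$ subject to \eqref{W:b}, \eqref{W:e}, \eqref{W:f}, \eqref{W:g}, differing only in the deformation data. Under $\pi^*$ one computes from $0\to \pi^*K_S\to TM\to \pi^*TS\to 0$ that $c_1(TM)=0$ and $c_2(TM)=c_2-c_1^2=-s_2$, and one expresses the diagonal class $\Delta_M$ through $\Delta_S$ and the Euler class $c_1(K_S)=-c_1$ of the fibre of $M\times M\to S\times S$, finding $\Delta_M=-c_1\Delta_S$ (interpreted in the non-proper setting). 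I would then verify that, modulo the remaining relations — chiefly \eqref{W:e}, which permits redistributing cohomology classes between the two arguments — the substitutions $s_2\rightsquigarrow c_2(TM)$ and $c_1\Delta_S\rightsquigarrow \Delta_M$ turn \eqref{W:f} for $M$ into \eqref{W:f} for $S$. This is the three-dimensional shadow of the presentation-independence already visible for $S=\mathbb{A}^2$, where the two forms specialize to the $\mathfrak{S}_3$-symmetric relations of the affine Yangian of $\mathfrak{gl}_1$ in the Calabi--Yau variables $t_1,t_2,t_3$; the delicate part here is the sign and normalization reconciliation, which I expect to be dictated by (rather than free of) the orientation signs carried by $\mathrm{DR}$. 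As an independent consistency check, the $W^+(M)$-analogue of Theorem~\ref{Prop:defHeis}(a) — which holds for any choice of deformation parameters, these being of lower order in the order filtration — gives $W^+(M)$ the character $\Exp\!\big(P_S(z)z^{-2}w/((1-z^2)(1-w))\big)$ since $P_M=P_S$, matching the Hilbert series of $\mathbf{H}^{\mathrm{crit}}_0(M)$ and confirming that the composite is an isomorphism.
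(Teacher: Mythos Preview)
The statement you are attempting to prove is presented in the paper as a \emph{conjecture} (in \S\ref{sec:conjectures}, ``Some conjectures''); the authors offer no proof, so there is nothing against which to compare your approach.

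Evaluating the proposal on its own terms, there is a genuine gap in scope: the conjecture is stated for an \emph{arbitrary} Calabi--Yau threefold $M$, whereas your entire strategy --- transporting the surface isomorphism $\Theta_S$ through Kinjo's dimensional reduction --- applies only to the very particular case $M=\Tot_S(K_S)$. Most Calabi--Yau threefolds (e.g.\ any compact one) are not of this form, and nothing in the outline addresses them. Even in the local case, the algebraic comparison step is not complete: from the exact sequence $0\to\pi^*K_S\to TM\to\pi^*TS\to 0$ you correctly compute $c_2(TM)=c_2-c_1^2=-s_2$, so substituting $c_2(TM)$ for $s_2$ in relation~\eqref{W:f} flips the sign of the two middle deformation terms while leaving the leading four-term block unchanged. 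This is genuinely a different relation, not the surface relation~\eqref{W:f} up to an overall sign, and it is not resolved by relation~\eqref{W:e} alone; you would need either a nontrivial change of generators or to show that the two ideals coincide, neither of which is supplied. You rightly flag that orientation data from $\mathrm{DR}$ might absorb this, but that is exactly the content that would need to be established rather than hoped for. Finally, as you yourself note, the very existence of the critical COHA product on $\mathbf{H}^{\mathrm{crit}}_0(M)$ rests on Joyce's conjectures, so even a completed version of this argument would remain conditional --- consistent with the paper's own decision to state the result as a conjecture.
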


One may wonder if a similar statement can be made without Calabi-Yau condition.
Our preliminary computations suggest that the analogues of relations \eqref{W:f}, \eqref{W:g} become significantly more complicated.
We hope to return to this in future work.

\section*{Acknowledgements}

It is a great pleasure to thank B.~Davison, J.~Heinloth, A.~Khan, A.~Negut, M.~Porta and F.~Sala for very useful discussions and correspondences, as well as the anonymous referee for a thorough reading and feedback. We are especially grateful to B.~Hennion for his patient explanations of derived algebraic geometry and to L.~Hennecart for pointing out some simplification to an argument in a previous version, using the results of \cite{DHSM}. The intellectual debt that this paper owes to~\cite{Negut} will be obvious to the expert reader. Anton Mellit was supported by the ERC consolidator grant ``Refined invariants in combinatorics, low-dimensional topology and geometry of moduli spaces'' No. 101001159. Alexandre Minets was in part supported by the ERC starter grant ``Categorified Donaldson-Thomas Theory'' No. 759967, and also likes to thank MPIM Bonn for the excellent working conditions.
Olivier Schiffmann was in part supported by the PNRR grant CF 44/14.11.2022 \textit{Cohomological Hall algebras of smooth surfaces and applications}.

\medskip

\appendix

\section{Borel-Moore homology for derived stacks}
\label{sec:appA}
A derived stack $X$ is a functor $R\mapsto X(R)$ assigning an $\infty$-groupoid
to every simplicial commutative ring, that satisfies \'etale hyperdescent.
We say that $X$ is 1-Artin if its diagonal is representable by an algebraic space
and if there is a scheme $Y$ with a smooth and surjective morphism $Y\to X$.
This morphism is called a smooth atlas for $X$.
Unless specified otherwise, in this work all derived stacks are supposed to be 1-Artin, and locally quotient stacks of finite type. We will work over the ground field $\C$.

\smallskip

\subsection{Dualizing complexes and virtual classes}\label{sec:relBMhomology} We will use the formalism introduced in~\cite{Khan} to which we refer for details, see also \cite{Porta-Yu}. For any derived stack $X$ there is an 
$\infty$-category $\mathrm{Sh}_{\Q}(X)$ of constructible $\Q$-sheaves on $X$ which satisfies 
a six-functor formalism, see~\cite[Thm. A.5]{Khan}. The dualizing complex is defined as $\mathbb{D}_X=p^!\mathbb{Q}$ where $p: X \to \Spec(\mathbb{C})$ is the map to the point. The sheaf of Borel-Moore chains on $X$ is 
$$\mathrm{C}_\bullet^{\mathrm{BM}}(X,\Q)=p_*p^!\Q=p_*\mathbb{D}_X \in \mathrm{Sh}_\Q(\Spec(\C)) = D(\Q\text{-mod}).$$
The Borel-Moore homology is obtained as usual by taking derived global sections 
$$H_i(X,\Q)=H_i^{\mathrm{BM}}(X,\Q)=H^{-i}(\mathrm{C}_\bullet^{\mathrm{BM}}(X,\Q)).$$ 
Likewise, the sheaf of cochains and the cohomology groups are defined as
$$\mathrm{C}^\bullet(X,\Q)=p_*p^*\Q=p_*\Q_X, \qquad H^i(X,\Q)=H^{i}(\mathrm{C}^\bullet(X,\Q)). $$
These satisfy all the usual properties, see \cite[Section 2]{Khan}. 
The Borel-Moore homology is insensitive to the derived structure, in the sense that the direct image map 
$H_i(X^{cl},\Q) \to H_i(X,\Q)$ is an isomorphism for any $X$ and $i$. Of crucial importance for us are the notions of 
Gysin pullback and virtual fundamental classes for quasi-smooth morphisms. Let $f: X \to Y$ be a quasi-smooth 
morphism of dimension $d$. There is a map $f^!: H_i(Y,\Q) \to H_{i+2d}(X,\Q)$ which is 
induced by a morphism
$$[f]_{vir}: f^*\mathbb{D}_Y \to f^!\mathbb{D}_Y[-2d]=\mathbb{D}_X[-2d].$$
When $X$ is itself quasi-smooth then the (virtual) fundamental class of $X$ is defined as
$[X]=p^!(1) \in H_{2\dim_X}(X,\Q)$. Note that if $X$ is smooth and 
classical then $[X]$ is just the usual fundamental 
class.
In general, the classes $[X]$ and $[X^{cl}]$ differ.
In fact they live in different homological degrees of $H_*(X,\Q)=H_*(X^{cl},\Q)$.

\smallskip

 We collect here some of the basic properties of derived pullbacks which we will use.

\smallskip

\begin{proposition}\label{Prop:appA} Let $f: X \to Y$ be a quasi-smooth morphism of derived stacks. 
\begin{enumerate}[label=$\mathrm{(\alph*)}$,leftmargin=8mm,itemsep=.5em]
\item If $f$ is an open embedding then $f^!=f^*$,
\item If $g : Y \to Z$ is quasi-smooth then $[g]_{vir}[f]_{vir}=[gf]_{vir} : g^*f^*\mathbb{D}_Z \to \mathbb{D}_X[-2\dim(f)-2\dim(g)]$. We have 
$$(gf)^!=g^!f^! : H_*(Z,\Q) \to H_{*+2\dim(f)+2\dim(g)}(X,\Q)
,\quad
f^!([Y])=[X].$$
\item Compatibility with cap product: for any classes $c \in H^*(Y,\Q)$ and $ c' \in H_*(Y,\Q)$ we have 
$$f^!(c \cap c')=f^*(c) \cap f^!(c').$$
\item Assume that $f$ is proper, representable, of finite Tor-amplitude. 
Hence $f_*: H^*(X,\Q) \to H^*(Y,\Q)$ is well-defined. 
The following projection formula holds: for any classes $c \in H^*(X,\Q)$ and $\alpha \in H_*(Y,\Q)$, 
we have $$f_*(c \cap f^!(\alpha))=f_*(c) \cap \alpha.$$
\item Consider the following Cartesian diagram of derived stacks with $f$ quasi-smooth and $g$ proper
$$\xymatrix{X \ar[r]^-{\overline{f}} \ar[d]^-{\overline{g}}&Y \ar[d]^-{{g}} \\ X' \ar[r]^-{f} & Y'}$$
The proper base change formula $\overline{g}_!\overline{f}^!=f^!g_! : H_*(Y,\Q) \to H_*(X',\Q)$ holds.
\end{enumerate}
\end{proposition}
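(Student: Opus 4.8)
The final statement to prove is Proposition~\ref{Prop:appA}, which collects five basic properties of virtual pullbacks and pushforwards in Borel--Moore homology of derived stacks. Let me sketch the plan for each part.

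\textbf{Overall strategy.} The plan is to reduce every claim to a statement about the six-functor formalism on the $\infty$-categories $\mathrm{Sh}_\Q(-)$, working at the level of the sheaf-theoretic maps $[f]_{\mathrm{vir}}: f^*\mathbb{D}_Y \to \mathbb{D}_X[-2\dim f]$ rather than with homology classes directly. Once the identities are established between morphisms of dualizing complexes, applying $p_*$ and taking cohomology yields the corresponding statements in Borel--Moore homology. This is the approach taken in~\cite{Khan}, and for the most part each item is either proved there or follows formally from the construction of the virtual class map via the deformation to the normal cone / the purity property of quasi-smooth morphisms.

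\textbf{The individual parts.} First I would dispatch (a): for an open embedding $f$ one has $f^! = f^*$ because $f$ is \'etale (of relative dimension $0$ and quasi-smooth), and the canonical purity equivalence $f^!\mathbb{D}_Y \simeq \mathbb{D}_X$ is compatible with $f^*$; this is immediate from the construction of $[f]_{\mathrm{vir}}$ for \'etale maps. For (b), functoriality of virtual classes under composition is the key structural property proved in~\cite{Khan}: the deformation-to-the-normal-bundle construction is functorial, so $[gf]_{\mathrm{vir}} = [g]_{\mathrm{vir}}\circ g^*[f]_{\mathrm{vir}}$ (up to the evident shift), and applying this to the fundamental class $[Y] = p_Y^!(1)$ gives $f^!([Y]) = [X]$. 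Part (c), compatibility of $f^!$ with the cap product, follows by expressing both sides through the projection formula built into the six-functor formalism: the cap product is realized by the pairing $f^*(c)\otimes f^!(c')$, and $[f]_{\mathrm{vir}}$ is a map of $\mathbb{D}_X$-modules in the appropriate sense, so it commutes with multiplication by pulled-back cohomology classes. For (d), the projection formula $f_!(c\cap f^!(\alpha)) = f_*(c)\cap\alpha$ for $f$ proper and of finite Tor-amplitude is a direct consequence of the standard projection formula $f_!(M\otimes f^*N)\simeq f_!M\otimes N$ in the six-functor formalism, combined with $f_! = f_*$ for proper $f$; here one uses finite Tor-amplitude to ensure $f_*$ is well-defined on cohomology. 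Finally (e) is proper base change, which is one of the axioms (\cite[Thm.~A.5]{Khan}): for a cartesian square with $f$ quasi-smooth and $g$ proper, the exchange transformation $\overline{g}_!\,\overline{f}^! \Rightarrow f^!\,g_!$ is an equivalence, and one checks it is compatible with the virtual class maps $[\overline f]_{\mathrm{vir}}$ and $[f]_{\mathrm{vir}}$ under base change.

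\textbf{Main obstacle.} The genuinely delicate points are (b) and (e), where one must verify that the \emph{virtual} class maps (not merely the abstract $f^!$ functors) are compatible with composition and with base change. The subtlety is that $[f]_{\mathrm{vir}}$ is extracted from the canonical map $f^*\mathbb{D}_Y \to f^!\mathbb{D}_Y[-2\dim f]$ coming from quasi-smoothness, and one needs this extraction to be natural in the relevant diagrams; this naturality is precisely the content of the functoriality and base-change compatibility of Gysin pullbacks established in~\cite{Khan}, so the bulk of the proof consists in citing and assembling those results rather than reproving them. I expect the cleanest write-up simply invokes the relevant statements of~\cite{Khan} (and~\cite{Porta-Yu}) for (a), (b), (e), and gives a one-line formal derivation of (c) and (d) from the projection formula, noting in (d) the role of finite Tor-amplitude in guaranteeing that $f_*$ preserves the relevant bounded coherence so that the pushforward on cohomology is defined.
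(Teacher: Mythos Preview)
Your proposal is entirely in line with how the paper handles this proposition: the paper gives no proof at all, simply stating the result as a collection of standard facts from the six-functor formalism of~\cite{Khan} (and implicitly~\cite{Porta-Yu}). Your sketch correctly identifies the provenance of each item and is essentially what an expanded proof would look like; there is nothing to compare beyond noting that you have been more explicit than the paper about which parts require the nontrivial functoriality and base-change compatibility of virtual classes from~\cite{Khan}.
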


\medskip

\subsection{Relative and hyperbolic homology} Let $X$ be a derived stack and $S$ a scheme.
Let $\pi: X \to S$ be a morphism. Let $p: S \to \mathrm{pt}$ be the projection. 
We define the space of relative Borel-Moore chains on $X/S$ to be
$$C^{BM}_*(X/S,\Q)=\pi_{*}\mathbb{D}_X \in D^b_c(S),$$
We define the $S$-hyperbolic, or simply hyperbolic homology, to be
$$H_i(X/S,\Q)=H^{-i}(p_{!}\pi_{*}\mathbb{D}_X).$$
Note that by definition, for any proper map $f:S\to S'$ we have, denoting $p':S'\to \mathrm{pt}$,
\begin{equation}\label{eq:hyp-change-of-S}
    H_i(X/S',\Q) = H^{-i}(p'_{!}(f\pi)_{*}\mathbb{D}_X) = H^{-i}((p'f)_{!}\pi_{*}\mathbb{D}_X) = H_i(X/S,\Q).
\end{equation}

\begin{example} Taking $X=S$ we get that $H_*(S/S,\Q)$ is the usual homology of $S$. On the other hand, if $S$ is a point then $H_*(X/S,\Q)$ is the Borel-Moore homology of $X$.
\end{example}
When $S$ is understood from the context, we abbreviate 
$$H^c_i(X,\Q)=H_i(X/S,\Q).$$ 
There is an isomorphism $H_*^c(X^{cl},\Q) \simeq H^c_*(X,\Q)$. 
Dually, the space of relative cochains on $X/S$ is
$$C^*(X/S,\Q)=\pi_{*}\mathbb{Q}_X \in D^b(S),$$
and we define the $S$-hyperbolic, or simply hyperbolic cohomology, to be
$$H^i(X/S,\Q)=H^{i}(p_{!}\pi_{*}\mathbb{Q}_X).$$ 
There is a natural morphism $\pi^*:H^*_c(S,\Q) \to H^*(X/S,\Q)$. 

\smallskip

\begin{lemma}\label{lem:actioncohrelBM} 
There is a natural action 
of $H^*(X,\Q)$ on $H_*(X/S,\Q)$ given by
$$\cap: H^i(X,\Q) \otimes H_j(X/S,\Q)\to H_{j-i}(X/S,\Q)
,\quad
i,j\in\Z.$$
There is a natural cap product
$$\cap : H^i(X/S,\Q) \otimes H_j(X,\Q)\to H_{j-i}(X/S,\Q)
,\quad
i,j\in\Z.$$
\end{lemma}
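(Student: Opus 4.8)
The statement to prove is Lemma~\ref{lem:actioncohrelBM}, which asserts the existence of two cap products: an action of $H^*(X,\Q)$ on the hyperbolic homology $H_*(X/S,\Q)$, and a product $H^*(X/S,\Q)\otimes H_*(X,\Q)\to H_*(X/S,\Q)$. The plan is to construct both as morphisms in the derived category $D^b(S)$ of sheaves on the base, and then pass to hyperbolic (co)homology by applying $p_!$ and taking cohomology.

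First I would recall that $H^*(X,\Q)=H^*(p_*\pi_*\Q_X)$ where $p:S\to\mathrm{pt}$, so a class in $H^i(X,\Q)$ is represented by a map $\Q[-i]\to p_*\pi_*\Q_X$, equivalently (by adjunction $p^*\dashv p_*$) a map $\Q_S[-i]\to \pi_*\Q_X$ in $D^b(S)$. The key structural input is the cup-product pairing on the relative level: since $\Q_X$ is the unit for the tensor product and $\mathbb{D}_X$ is a $\Q_X$-module (indeed every object is), there is a canonical action map $\pi_*\Q_X\otimes^{\Q_S}\pi_*\mathbb{D}_X\to \pi_*\mathbb{D}_X$ obtained from the lax monoidal structure of $\pi_*$ together with the $\Q_X$-module structure $\Q_X\otimes\mathbb{D}_X\to\mathbb{D}_X$. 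Concretely, I would build this as the composite $\pi_*\Q_X\otimes\pi_*\mathbb{D}_X\to \pi_*(\Q_X\otimes\mathbb{D}_X)\to\pi_*\mathbb{D}_X$, where the first arrow is the lax monoidal (Künneth/multiplication) map for $\pi_*$ and the second is $\pi_*$ applied to the module action. Tensoring a representing map $\Q_S[-i]\to\pi_*\Q_X$ of a cohomology class against the identity of $\pi_*\mathbb{D}_X=C^{BM}_*(X/S,\Q)$ and composing with this action yields a map $\pi_*\mathbb{D}_X[-i]\to\pi_*\mathbb{D}_X$ in $D^b(S)$. Applying $p_!$ and taking $H^{-(j-i)}$ then produces the desired $\cap:H^i(X,\Q)\otimes H_j(X/S,\Q)\to H_{j-i}(X/S,\Q)$.

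For the second pairing I would proceed symmetrically but now let the cohomological factor be relative: a class in $H^i(X/S,\Q)=H^i(p_!\pi_*\Q_X)$ and a class in $H_j(X,\Q)=H_j(\pi_*\mathbb{D}_X)$ taken after further pushing to the point. Here the cleanest route is again to produce a map in $D^b(S)$, namely $\pi_*\Q_X\otimes\pi_*\mathbb{D}_X\to\pi_*\mathbb{D}_X$ as above, but now apply $p_!$ to the first factor and the projection formula $p_!(\mathcal{F}\otimes p^*\mathcal{G})\simeq p_!\mathcal{F}\otimes\mathcal{G}$ to move the absolute homology class of $X$ (which, being absolute, factors through $p^*$ of a $\Q$-module) past $p_!$. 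After taking cohomology this gives $H^i(X/S,\Q)\otimes H_j(X,\Q)\to H_{j-i}(X/S,\Q)$. Naturality and bilinearity are automatic from the functoriality of the six operations and the monoidal structure, so no separate verification is needed beyond unwinding definitions.

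The main obstacle I anticipate is not the existence of the pairings but the careful bookkeeping of the monoidal and projection-formula identifications in the $\infty$-categorical six-functor formalism of~\cite{Khan}: one must check that $\pi_*$ is lax symmetric monoidal (so that the Künneth map exists), that $\mathbb{D}_X$ carries the $\Q_X$-module structure compatibly, and that the projection formula $p_!(-\otimes p^*-)\simeq p_!(-)\otimes(-)$ holds in this generality. All of these are standard consequences of the formalism cited (see~\cite[Thm.~A.5]{Khan}), so the proof amounts to assembling them; I would state the construction at the level of $D^b(S)$, invoke these structural results, and then simply pass to (hyperbolic) homology. The grading shifts are forced by the degrees of the representing maps and require only routine tracking.
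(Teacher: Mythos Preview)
Your proposal is correct and follows essentially the same approach as the paper. The paper packages the construction via the diagonal $\Delta:X\to X\times X$ and the adjunction $\mathrm{Id}\to\Delta_*\Delta^*$ applied to $\Q_X\boxtimes\mathbb{D}_X$, then pushes forward along $(\pi\times\pi)$ and $p_!\,pr_{2*}$; your lax monoidal map $\pi_*\Q_X\otimes\pi_*\mathbb{D}_X\to\pi_*(\Q_X\otimes\mathbb{D}_X)\to\pi_*\mathbb{D}_X$ is exactly the internal reformulation of that same diagonal construction, so the two arguments unwind to the same morphism in $D^b(S)$.
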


\begin{proof} Consider the following diagram
$$\xymatrix{
X \ar[d]^-{\Delta} \ar[r]^-{\pi} & S \ar[d]^-{\Delta'} \ar[dr]^-{\Id} &&\\
X \times X \ar[r]^{\pi \times \pi} & S \times S\ar[r]^-{pr_2} & S \ar[r]^-{p} & \{pt\}
}$$
where $pr_2: S \times S\to S$ is the projection onto the second factor.
We have $\Delta^*(\mathbb{Q}_X \boxtimes \,\mathbb{D}_X) = \mathbb{D}_X$. The adjunction $\Id \to \Delta_*\Delta^*$ yields a map
$\mathbb{Q}_X \boxtimes\, \mathbb{D}_X \to \Delta_*\mathbb{D}_X$. Applying $(\pi \times \pi)_*$ we get a morphism $(\pi \times \pi)_*( \mathbb{Q}_X \boxtimes \mathbb{D}_X ) \to \Delta'_*\pi_{*}\mathbb{D}_X$, hence a map $p_{!}pr_{2*}(\pi \times \pi)_*( \mathbb{Q}_X \boxtimes \mathbb{D}_X ) \to p_{!}\pi_{*}\mathbb{D}_X$. The construction of the second cap product is similar.
\end{proof}

\smallskip

\begin{lemma}\label{L:projformula} Let $X,Y$ be derived stacks, $S$ a scheme. Let $\pi_X: X \to S, \pi_Y: Y \to S$ and $f: X \to Y$ be morphisms such that $\pi_Y \circ f = \pi_X$.  Then
\begin{enumerate}[label=$\mathrm{(\alph*)}$,leftmargin=8mm,itemsep=.5em]
\item Assume that $f$ is quasi-smooth of relative dimension $d$. 
There is a map $f^! :H_i(Y/S,\Q) \to H_{i+2d}(X/S,\Q)$.
\item Assume that $f$ is proper. There is a map $f_* :H_i(X/S,\Q) \to H_{i}(Y/S,\Q)$. 
The projection formula holds, i.e., for any classes $c \in H^*(Y,\Q)$ and $x \in H_*(X/S)$ 
we have $$f_*(f^*(c) \cap x)=c \cap f_*(x).$$
\item For any cartesian diagram of $S$-stacks
$$\xymatrix{X \ar[r]^-{\overline{f}} \ar[d]^-{\overline{g}}&Y \ar[d]^-{{g}} \\ X' \ar[r]^-{f} & Y'}$$
with $f$ smooth and $g$ proper we have $\overline{g}_*\overline{f}^!=f^!g_* : H_*(Y/S,\Q) \to H_*(X'/S,\Q)$. 
\end{enumerate}
\end{lemma}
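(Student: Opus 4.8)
The final statement to prove is Lemma~\ref{L:projformula}, establishing the basic functoriality of hyperbolic (relative Borel--Moore) homology: quasi-smooth pullback, proper pushforward with a projection formula, and proper base change. The plan is to derive all three parts from the six-functor formalism of~\cite{Khan} by transporting the corresponding absolute statements (which are essentially Proposition~\ref{Prop:appA}) through the relative framework, i.e.\ by applying the functor $p_!$ to morphisms of complexes living over the base scheme $S$. The key observation throughout is that hyperbolic homology $H_i(X/S,\Q) = H^{-i}(p_! \pi_{X*}\mathbb{D}_X)$ is obtained from the relative chains $\pi_{X*}\mathbb{D}_X \in D^b(S)$ by the single functor $p_!$, so any morphism between relative chain complexes over $S$ induces a morphism on hyperbolic homology after applying $p_!$ and taking cohomology.

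For part (a), I would start from the virtual pullback morphism $[f]_{\mathrm{vir}}: f^*\mathbb{D}_Y \to \mathbb{D}_X[-2d]$ associated to the quasi-smooth $f$ (recalled in \S~\ref{sec:relBMhomology}). Applying $f_*$ and using the unit $\mathbb{D}_Y \to f_*f^*\mathbb{D}_Y$ (or more precisely the base-change/adjunction maps) yields a map $\pi_{Y*}\mathbb{D}_Y \to \pi_{Y*}f_*\mathbb{D}_X[-2d] = \pi_{X*}\mathbb{D}_X[-2d]$ in $D^b(S)$, using $\pi_Y f = \pi_X$. Applying $p_!$ and passing to cohomology then gives $f^*: H_i(Y/S,\Q) \to H_{i+2d}(X/S,\Q)$. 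For part (b), properness of $f$ gives $f_! = f_*$ and a counit $f_*f^!\mathbb{D}_Y \to \mathbb{D}_Y$; since $f^!\mathbb{D}_Y = \mathbb{D}_X$, this produces a morphism $\pi_{X*}\mathbb{D}_X = \pi_{Y*}f_*\mathbb{D}_X \to \pi_{Y*}\mathbb{D}_Y$ over $S$, whence $f_*$ on hyperbolic homology after $p_!$. The projection formula should follow by combining the cap product construction of Lemma~\ref{lem:actioncohrelBM} with the projection formula built into the six-functor formalism (and the absolute one in Proposition~\ref{Prop:appA}(d)), tracking the adjunction maps carefully so that the diagonal maps defining $\cap$ commute with $f_*$.

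For part (c), I would invoke proper base change from the six-functor formalism: for the cartesian square with $g$ proper, the natural isomorphism $f^* g_* \simeq \overline{g}_* \overline{f}^*$ (proper base change) together with the compatibility of the virtual pullback classes $[f]_{\mathrm{vir}}$, $[\overline{f}]_{\mathrm{vir}}$ under base change (Proposition~\ref{Prop:appA}(e) in the absolute setting) gives the desired identity $\overline{g}_! \overline{f}^! = f^! g_!$ on relative chains. Again one applies $p_!$ to descend to hyperbolic homology. The argument is essentially formal once the relative chain complexes and the maps between them are set up correctly.

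The main obstacle will be part (b), specifically verifying the projection formula at the level of hyperbolic homology rather than just exhibiting $f_*$. The subtlety is that the cap product $H^*(X,\Q)\otimes H_*(X/S,\Q)\to H_*(X/S,\Q)$ is defined via the diagonal of $X$ over $S$ (the diagram in the proof of Lemma~\ref{lem:actioncohrelBM}), and one must check that $f_*$ intertwines this diagonal construction on $X$ with the one on $Y$. Concretely, this amounts to a compatibility between the adjunction unit/counit for $f$ and the diagonal adjunctions $\Delta_{X*}\Delta_X^* $, $\Delta_{Y*}\Delta_Y^*$, which requires chasing a moderately large diagram in $\mathrm{Sh}_\Q(S)$ and using the fact that $f^*$ is monoidal. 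I expect this to be routine but delicate; everywhere else the statement reduces cleanly to applying $p_!$ to the established absolute formalism.
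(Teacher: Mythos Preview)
Your proposal is correct and follows essentially the same approach as the paper: for (a) you use the virtual class morphism $f^*\mathbb{D}_Y\to\mathbb{D}_X[-2d]$ together with the unit $\Id\to f_*f^*$ and then apply $p_!$, for (b) you use the counit $f_!f^!\to\Id$ and reduce the projection formula to a diagonal-compatibility diagram, and for (c) you apply $p_!$ to proper base change over $S$. The paper does exactly this (and in fact leaves the projection-formula diagram chase to the reader, so your discussion of that step is slightly more explicit than the paper's).
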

\begin{proof} Let $p: S \to \mathrm{pt}$ be the projection. Assume that $f$ is quasi-smooth of dimension $d$. The virtual fundamental class gives a morphism $f^*\mathbb{D}_Y \to \mathbb{D}_X[-2d]$. Applying $p_{!}\pi_{X*}$ and using the adjunction $\Id \to f_*f^*$ yields a canonical morphism $p_{!}\pi_{Y*}\mathbb{D}_Y \to p_{!}\pi_{X*}\mathbb{D}_X[-2d]$, proving (a). The construction of the direct image morphism follows directly from the adjunction $f_!f^! \to \Id$. We leave the proof of the projection formula to the reader. It boils down to the commutativity of the following diagram
\[
    \begin{tikzcd}[column sep=large]
        f_*\mathbb{D}_X\boxtimes \mathbb{Q}_Y\ar[r,"\mathbb{Q}_Y \to f_*\mathbb{Q}_X"]\ar[d,"f_*\mathbb{D}_X \to \mathbb{D}_Y"'] & f_*\mathbb{D}_X\boxtimes f_*\mathbb{Q}_X\ar[r,"1 \to\Delta_*\Delta^*"] & (f\times f)_*\Delta_*\mathbb{D}_X\ar[r,Equal]\ar[d,"f_*\mathbb{D}_X \to \mathbb{D}_Y"] &[-30pt] \Delta_*f_*\mathbb{D}_X \\
        \mathbb{D}_Y \boxtimes \mathbb{Q}_Y\ar[rr,"1 \to \Delta_*\Delta^*"] &  & \Delta_*\mathbb{D}_Y & 
    \end{tikzcd}    
\]
The proper base change statement (c) is obtained by taking compactly supported cohomology of the proper base change over the base $S$.
\end{proof}

\medskip

We will need base changes relative to an open immersion $\iota: S^\circ \to S$. For $\pi_X:X \to S$ a derived $S$-stack we set $X^\circ=X \times_{S} S^\circ$. Let $\iota_X: X^\circ \to X$ and $\pi_{X}^\circ: X^\circ \to S^\circ$ 
be the induced maps. We define a pushforward map $i_{X!}: H_*(X^\circ/S^\circ,\Q) \to H_*(X/S,\Q)$ 
to be the composition of the chain of maps
$$p^\circ_!\pi_{X*}^{\circ}\mathbb{D}_{X^\circ}=p^\circ_!\pi_{X*}^{\circ}i_X^!\mathbb{D}_{X}=p^\circ_!i_{S}^!\pi_{X*}\mathbb{D}_{X}=p_!i_{S!}i_{S}^!\pi_{X*}\mathbb{D}_{X} \stackrel{i_{S!}i_S^! \to 1}{\longrightarrow} p_!\pi_{X*}\mathbb{D}_{X}.$$

\begin{example}\label{ex:relbmhomology} Assume that $S$ is proper. Then the composed map
$$\iota^*\iota_{!} : H^c_*(S^\circ,\Q)=H_*(S^\circ/S^\circ,\Q) \to H_*(S/S,\Q) = H_*(S,\Q) \to H_*(S^\circ,\Q)$$ 
is the canonical morphism from usual homology to Borel-Moore homology.
\end{example}

\smallskip

\begin{proposition}\label{P:basechange} Let $S^\circ, S$ be as above. Let $\pi_X: X \to S, \pi_Y : Y \to S$ be two derived $S$-stacks, $f: X \to Y$ a morphism of $S$-stacks and let $f^\circ: X^\circ \to Y^\circ$ be the base change of $f$. 
\begin{enumerate}[label=$\mathrm{(\alph*)}$,leftmargin=8mm,itemsep=.5em]
\item If $f$ is quasi-smooth then 
$$f^{!}i_{Y!}=i_{X!}f^{\circ !} : H_*(Y^\circ/S^\circ,\Q) \to H_{*+2\dim(f)}(X/S,\Q).$$
\item If $f$ is proper then $f_*i_{X!}=i_{Y!}f^\circ_*: H_*(X^\circ/S^\circ,\Q) \to H_{*}(Y/S,\Q)$. 
\end{enumerate}
\end{proposition}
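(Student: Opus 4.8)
The plan is to prove both statements not at the level of homology groups, but one step upstream, as identities of morphisms between the \emph{relative} Borel--Moore chain complexes living in $D^b(S)$ and $D^b(S^\circ)$; the homological identities then follow by applying $p_!$. For a derived $S$-stack $\pi_Z:Z\to S$ write $G_Z=\pi_{Z*}\mathbb{D}_Z\in D^b(S)$, and likewise $G_{Z^\circ}=\pi^\circ_{Z*}\mathbb{D}_{Z^\circ}\in D^b(S^\circ)$. Since $\iota_S$ is an open immersion we have $\iota_S^!=\iota_S^*$, and the first two identifications entering the definition of $i_{Z!}$ assemble into a canonical isomorphism $\iota_S^!G_Z\cong G_{Z^\circ}$, namely $\pi^\circ_{Z*}\mathbb{D}_{Z^\circ}=\pi^\circ_{Z*}\iota_Z^!\mathbb{D}_Z=\iota_S^!\pi_{Z*}\mathbb{D}_Z$. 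Under this identification, and using $p^\circ_!=p_!\iota_{S!}$, the map $i_{Z!}$ is by construction the image under $p_!$ of the counit morphism $\epsilon_{G_Z}:\iota_{S!}\iota_S^!G_Z\to G_Z$ of the adjunction $(\iota_{S!},\iota_S^!)$.

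Next I would recast $f^!$ and $f_!$ as morphisms in $D^b(S)$: by Lemma~\ref{L:projformula}(a), when $f$ is quasi-smooth of relative dimension $d$ the virtual pullback gives $f^!:G_Y\to G_X[-2d]$, and by Lemma~\ref{L:projformula}(b), when $f$ is proper, $f_!:G_X\to G_Y$; the analogous statements over $S^\circ$ give $f^{\circ!}$ and $f^\circ_!$ as morphisms of $G_{(-)^\circ}$ in $D^b(S^\circ)$. Both identities then reduce to a single categorical fact, the naturality of the counit $\epsilon:\iota_{S!}\iota_S^!\to\mathrm{Id}$: for any $\phi:G_Y\to G_X[-2d]$ in $D^b(S)$ the square
$$
\begin{array}{ccc}
\iota_{S!}\iota_S^!G_Y & \xrightarrow{\ \iota_{S!}\iota_S^!(\phi)\ } & \iota_{S!}\iota_S^!G_X[-2d]\\[2pt]
\big\downarrow{\epsilon_{G_Y}} & & \big\downarrow{\epsilon_{G_X}}\\[2pt]
G_Y & \xrightarrow{\qquad\phi\qquad} & G_X[-2d]
\end{array}
$$
commutes. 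Applying $p_!$ identifies the two vertical arrows with $i_{Y!}$ and $i_{X!}$, the bottom arrow with $f^!$, and (granting the base-change identity below) the top arrow with $f^{\circ!}$; this gives statement (a). Statement (b) is identical, taking instead $\phi=f_!:G_X\to G_Y$, so that the square produces $f_!\circ i_{X!}=i_{Y!}\circ f^\circ_!$.

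The one substantive input, which I expect to be the main obstacle, is the base-change compatibility
$$
\iota_S^!(f^!)=f^{\circ!},\qquad \iota_S^!(f_!)=f^\circ_!
$$
under the isomorphisms $\iota_S^!G_X\cong G_{X^\circ}$ and $\iota_S^!G_Y\cong G_{Y^\circ}$; it is this that lets me read the top arrow of the naturality square as the restricted morphism. The relevant point of geometry is that $Y^\circ=Y\times_S S^\circ$ and $X^\circ=X\times_S S^\circ=X\times_Y Y^\circ$, so $\iota_X$ is the base change of $\iota_Y$ along $f$ and the square relating $f$ and $f^\circ$ is genuinely cartesian. For $f_!$ this is exactly proper base change, already recorded in Proposition~\ref{Prop:appA}(e) and Lemma~\ref{L:projformula}(c). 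For $f^!$ the content is that the relative virtual fundamental class $[f]_{\mathrm{vir}}:f^*\mathbb{D}_Y\to\mathbb{D}_X[-2d]$ of a quasi-smooth morphism is stable under derived base change, i.e.\ $\iota_X^*[f]_{\mathrm{vir}}=[f^\circ]_{\mathrm{vir}}$; this stability is part of the construction of the Gysin morphism in the six-functor formalism of~\cite{Khan}, which I would invoke directly rather than reprove. Once these two compatibilities are in place, pushing the naturality squares forward along $p_!$ and unwinding the identifications $p^\circ_!=p_!\iota_{S!}$ yields (a) and (b).
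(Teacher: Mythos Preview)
Your proposal is correct and is precisely the ``tedious but unimaginative diagram chasing'' that the paper's own proof gestures at without writing out (the paper simply refers to \cite{Porta-Yu} for analogous arguments). Your organization---recasting $i_{Z!}$ as $p_!$ of the adjunction counit $\epsilon_{G_Z}$, reducing both statements to naturality of $\epsilon$, and isolating the only substantive input as the base-change compatibility of $[f]_{\mathrm{vir}}$ and of proper pushforward along the open immersion---is exactly the right skeleton, and in fact more explicit than what the paper provides.
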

\begin{proof} This follows from some tedious but unimaginative diagram chasing (see also \cite[\S 3, 4]{Porta-Yu} where similar results are proven in a dual context). 
\end{proof}

\medskip

\end{document}